
\documentclass[12pt,reqno]{amsart}
\usepackage{fullpage}

\newtheorem{theorem}{Theorem}[section]
\newtheorem{lemma}[theorem]{Lemma}

\newtheorem{cor}[theorem]{Corollary}
\newtheorem{conj}[theorem]{Conjecture}

\newtheorem{prob}[theorem]{Problem}
\usepackage{graphicx}
\usepackage{color}
\usepackage[dvipsnames]{xcolor}
\usepackage{subfigure}
\usepackage{amssymb}
\usepackage{amsmath,mathrsfs}
\usepackage{colonequals}
\usepackage{hyperref}

\theoremstyle{definition}
\newtheorem{definition}[theorem]{Definition}

\newtheorem{example}[theorem]{Example}

\newtheorem{assumption}[theorem]{Assumption}
\newtheorem{remark}[theorem]{Remark}


\renewcommand{\subset}{\subseteq}
\renewcommand{\supset}{\supseteq}
\renewcommand{\epsilon}{\varepsilon}

\newcommand{\abs}[1]{\left|#1\right|}                   
\newcommand{\vnorm}[1]{\left\|#1\right\|}    
\newcommand{\vnormt}[1]{\left\|#1\right\|}    
\newcommand{\vnormtf}[1]{\|#1\|}                         

\newcommand{\Z}{\mathbb{Z}}                             

\newcommand{\E}{\mathbb{E}}
\renewcommand{\L}{L}
\newcommand{\mL}{\mathcal{L}}

\newcommand{\R}{\mathbb{R}}


\newcommand{\embolden}[1]{\textbf {#1}}
\newcommand{\redA}{\Sigma}
\newcommand{\redb}{\partial^{*}}
\newcommand{\redS}{\partial^{*}\Sigma}
\newcommand{\sdimn}{n}
\newcommand{\adimn}{n+1}
\newcommand{\scon}{\lambda}
\newcommand{\pcon}{\delta}
\newcommand{\epschoice}{\frac{1}{100}(m+\vnormt{w}^{2})^{-1}}

\newcommand{\pen}{\sqrt{\frac{\pi}{2}}\sum_{i=1}^{m}\vnormtf{\int_{\Omega_{i}}(x-\overline{w}^{(i)})\gamma_{\adimn}(x)dx}^{2}}   
\newcommand{\pens}{\sqrt{\frac{\pi}{2}}\sum_{i=1}^{m}\vnormtf{\int_{\Omega_{i}^{(s)}}(x-\overline{w}^{(i)})\gamma_{\adimn}(x)dx}^{2}}   
\newcommand{\pentil}{\sqrt{\frac{\pi}{2}}\sum_{i=1}^{m}\vnormtf{\int_{\widetilde{\Omega}_{i}}(x-\overline{w}^{(i)})\gamma_{\adimn}(x)dx}^{2}}   
\newcommand{\peshs}{\sqrt{\frac{\pi}{2}}\vnormtf{\int_{\Omega^{(s)}}(x-\overline{w})\gamma_{\adimn}(x)dx}^{2}}   
\newcommand{\pender}{\sum_{i=1}^{m}\vnormtf{\sum_{j\in\{1,\ldots,m\}\colon j\neq i}\int_{\Sigma_{ij}}(x-\overline{w}^{(i)})f_{ij}\gamma_{\adimn}(x)dx}^{2}}   

\begin{document}

\title{Stable Gaussian Minimal Bubbles}

\author{Steven Heilman}
\address{Department of Mathematics, University of Southern California, Los Angeles, CA 90089-2532}
\email{stevenmheilman@gmail.com}
\date{\today}
\thanks{Supported by NSF Grant DMS 1839406.}

\begin{abstract}
It is shown that $3$ disjoint sets with fixed Gaussian volumes that partition $\mathbb{R}^{n}$ with nearly minimum total Gaussian surface area must be close to adjacent $120$ degree sectors, when $n\geq2$.  These same results hold for any number $m\leq n+1$ of sets partitioning $\mathbb{R}^{n}$, conditional on the solution of a finite-dimensional optimization problem (similar to the endpoint case of the Plurality is Stablest Problem, or the Propeller Conjecture of Khot and Naor).  When $m>3$, the minimal Gaussian surface area is achieved by the cones over a regular simplex.  We therefore strengthen the Milman-Neeman Gaussian multi bubble theorem to a ``stability'' statement.  Consequently, we obtain the first known dimension-independent bounds for the Plurality is Stablest Conjecture for three candidates for a small amount of noise (and for $m>3$ candidates, conditional on the solution of a finite-dimensional optimization problem).  In particular, we classify all stable local minima of the Gaussian surface area of $m$ sets.  We focus exclusively on volume-preserving variations of the sets, avoiding the use of matrix-valued partial differential inequalities. Lastly, we remove the convexity assumption from our previous result on the minimum Gaussian surface area of a symmetric set of fixed Gaussian volume.
\end{abstract}
\maketitle
%
%
%
%
%

\section{Introduction}\label{secintro}

This paper is a continuation of \cite{heilman18}.  There we showed that $3$ disjoint sets with fixed Gaussian volumes that partition $\mathbb{R}^{n+1}$ with minimum total Gaussian surface area must be adjacent $120$ degree sectors, when $n+1\geq2$, assuming an extra technical condition.  Under the same technical assumption, we also showed that $4$ disjoint sets with fixed Gaussian volumes that partition $\mathbb{R}^{n+1}$ with minimum total Gaussian surface area must cones over a regular simplex when $n+1\geq3$.  The analogous statement for any number $m$ of sets in $\R^{n+1}$ with $n+1\geq m-1$ was proven in \cite{milman18b}, unconditionally.  We refer to \cite{heilman18} for some discussion and motivation for this problem.  The present paper simplifies and improves the result of \cite{milman18b} in various ways.

When the Gaussian measure is replaced with Lebesgue measure, the main result of this paper (Theorem \ref{mainthm2} below) is not known even for two sets.  Some recent results \cite{cicalese16,cicalese17} have shown that $2$ disjoint sets in $\R^{2}$ with fixed Lebesgue measures with nearly minimum total surface area must be close to a Euclidean ``double bubble,'' whose boundary consists of three spherical caps meeting at $120$ degree angles.  The analogous statement for three or more sets seems to be quite difficult.  Even for two sets in $\R^{3}$ (using Lebesgue measure instead of the Gaussian measure), this same statement seems difficult.  On the other hand, our results (for the Gaussian measure) hold for any number of $m\leq n+2$ sets in $\R^{\adimn}$.  It is rather crucial that these results and proofs work for all values of $\adimn$ simultaneously such that $\adimn\geq m-1$.  That is, these results and proofs are dimension-independent.  The case $m>n+2$ seems nontrivial for technical reasons, but for applications we are only concerned with $m-1\leq\adimn$.  It is rather crucial that the number of independent translations of $\R^{n+1}$ (i.e. $n+1$) is greater than or equal to $m-1$.  When $m-1>n+1$, this property no longer holds, and a key part of the proof does not work.  Also, the Gaussian results imply some weak statements for other log-concave measures; see \cite[Theorem 10.7]{milman18b}.  For two sets, the main result of the present paper was proven in \cite{barchiesi16}.  In fact, our methods synthesize and elaborate upon \cite{barchiesi16,heilman18,milman18b}.  However, unlike the papers \cite{milman18a,milman18b}, we avoid the use of a matrix-valued differential inequality, and we instead focus only on (Gaussian) volume-preserving transformations.

There are two applications of an improvement of the result of \cite{milman18b}, both of which are described in more detail in \cite{isaksson11}.  The present work seems to give the first nontrivial results toward the general conjecture stated in \cite{isaksson11}, thereby achieving the first dimension-independent applications of the conjecture of \cite{isaksson11}.  First, by a Central Limit Theorem, an isoperimetric problem for the Gaussian measure can be (essentially) equivalently stated as an inequality for discrete functions.  Such an inequality is called the ``plurality is stablest'' conjecture from social choice theory.  This problem \cite{isaksson11} says that if votes are cast in an election between $m$ candidates, if every candidate has an equal chance of winning, and if no one person has a large influence on the outcome of the election, then taking the plurality is the most noise-stable way to determine the winner of the election.  That is, plurality is the voting method where the outcome is least likely to change due to independent, uniformly random changes to the votes.  The latter conjecture is a generalization of the ``majority is stablest conjecture'' proven in \cite{mossel10}.

The second application of our results is sharp hardness for the MAX-$m$-CUT problem \cite{khot07,isaksson11}.  The MAX-$m$-CUT problem asks for the partition of the vertices of an undirected graph into $m$ disjoint sets that maximizes the number of edges going between the $m$ sets.  This problem is NP-hard, so one cannot solve it in any reasonable time for a large graph, e.g. with $10^{4}$ vertices and $10^{6}$ edges.  However, one can find a partition of the vertices achieving a positive fraction of the maximum number of cut edges in polynomial time \cite{frieze95}.  In the case $m=2$, this fraction is approximately $.87856$.  And assuming the Unique Games Conjecture \cite{khot02,khot18}, the constant $.87856$ is the best possible \cite{khot07}.  In the case $m>2$, the analogous result is the conjecture \cite{isaksson11} that the present paper addresses (for a certain range of parameters).

We now state the problems of interest more formally.

We ask for the minimum total Gaussian surface area of $m$ disjoint volumes in $\R^{\adimn}$ whose union is all of $\R^{\adimn}$.  The case $m=1$ is then vacuous.  The case $m=2$ results in two half spaces.  That is, a set $\Omega\subset\R^{\adimn}$ lying on one side of a hyperplane has the smallest Gaussian surface area $\int_{\partial\Omega}\gamma_{\sdimn}(x)dx$ among all (measurable) sets of fixed Gaussian measure $\int_{\Omega}\gamma_{\adimn}(x)dx$ \cite{sudakov74}.  Here, $\forall$ $k\geq1$, we define
\begin{flalign*}
\gamma_{k}(x)&\colonequals (2\pi)^{-k/2}e^{-\vnormt{x}^{2}/2},\qquad
\langle x,y\rangle\colonequals\sum_{i=1}^{\adimn}x_{i}y_{i},\qquad
\vnormt{x}^{2}\colonequals\langle x,x\rangle,\\
&\qquad\forall\,x=(x_{1},\ldots,x_{\adimn}),y=(y_{1},\ldots,y_{\adimn})\in\R^{\adimn},\\
\int_{\partial\Omega}\gamma_{\sdimn}(x)dx&\colonequals\liminf_{\epsilon\to0^{+}}\frac{1}{\epsilon}\gamma_{\sdimn}(\{x\in\R^{\adimn}\colon x\notin\Omega\,\wedge\,\exists\,y\in\Omega,\, \vnormt{x-y}<\epsilon\}).
\end{flalign*}
If $\partial\Omega$ is a $C^{\infty}$ manifold, then we can interpret $\int_{\partial\Omega}\gamma_{\sdimn}(x)dx$ by locally parameterizing $\partial\Omega$, then locally integrating the function $\gamma_{\sdimn}(x)$ by multiplying by the Jacobian determinant of the parametrization.

\begin{remark}
Unless otherwise stated, all Euclidean sets in this work are assumed to be Lebesgue measurable.
\end{remark}

\subsection{Review of the Gaussian Multi-Bubble Theorem}

For an introduction to Gaussian isoperimetry, see \cite[Section 1.1]{heilman18}.

The following is our main problem of interest.

\begin{prob}[\embolden{Gaussian Multi-Bubble Problem}, {\cite{hutchings97,hutchings02,corneli08}}]\label{prob1}
Let $m\geq3$.  Fix $a_{1},\ldots,a_{m}>0$ such that $\sum_{i=1}^{m}a_{i}=1$.  Find measurable sets $\Omega_{1},\ldots\Omega_{m}\subset\R^{\adimn}$ with $\cup_{i=1}^{m}\Omega_{i}=\R^{\adimn}$ and $\gamma_{\adimn}(\Omega_{i})=a_{i}$ for all $1\leq i\leq m$ that minimize
$$\sum_{1\leq i<j\leq m}\int_{(\partial\Omega_{i})\cap(\partial\Omega_{j})}\gamma_{\sdimn}(x)dx,$$
subject to the above constraints.
\end{prob}

\begin{figure}[ht!]
\centering
\def\svgwidth{.4\textwidth}
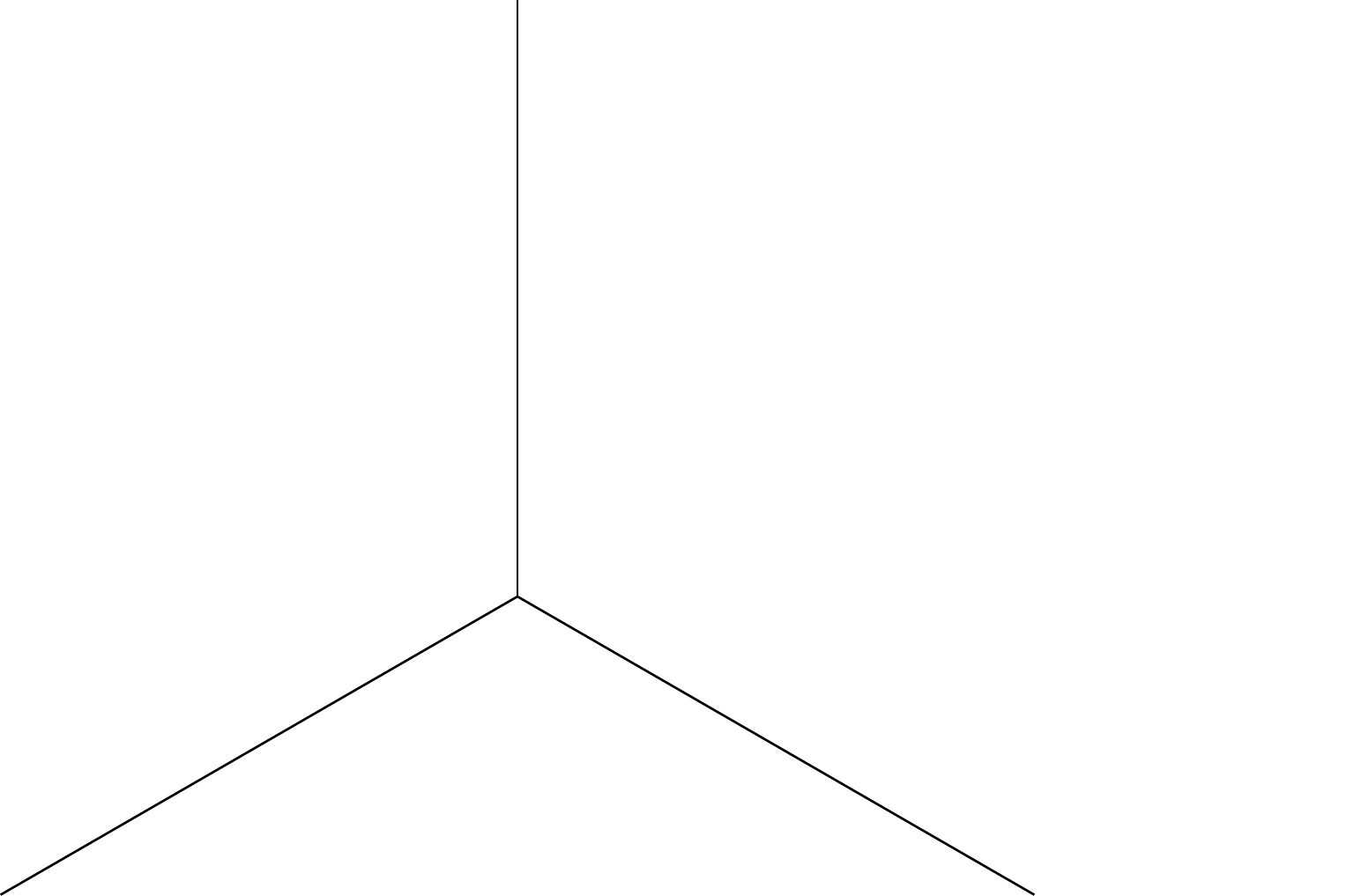
\caption{Optimal Sets for Conjecture \ref{conj0} in the case $m=3$, $\adimn=2$.}
\end{figure}

\begin{conj}[\embolden{Gaussian Multi-Bubble Conjecture} {\cite{hutchings02,corneli08,isaksson11}}]\label{conj0}
Let $\Omega_{1},\ldots\Omega_{m}\subset\R^{\adimn}$ minimize Problem \ref{prob1}.  Assume that $m-1\leq\adimn$.  Let $z_{1},\ldots,z_{m}\in\R^{\adimn}$ be the vertices of a regular simplex in $\R^{\adimn}$ centered at the origin.  Then $\exists$ $w\in\R^{\adimn}$ such that, for all $1\leq i\leq m$,
\begin{equation}\label{wdef}
\Omega_{i}=w+\{x\in\R^{\adimn}\colon\langle x,z_{i}\rangle=\max_{1\leq j\leq m}\langle x,z_{j}\rangle\},\qquad\gamma_{\adimn}(\Omega_{i})=a_{i}.
\end{equation}
\end{conj}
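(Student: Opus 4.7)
The plan is to combine existence and regularity theory for minimizing Gaussian clusters with a second-variation argument that uses only (Gaussian) volume-preserving test vector fields---in particular, constant translation vector fields---taking advantage of the dimensional hypothesis $m-1\le n+1$ to ensure enough independent translations are available. First I would appeal to standard geometric measure theory (compactness for cluster perimeter minimizers in the Gaussian setting) to extract an actual minimizer $\Omega_{1},\ldots,\Omega_{m}$, then invoke the regularity theory for almost-minimizing clusters to conclude that each pairwise interface $\Sigma_{ij}\colonequals \partial^{*}\Omega_{i}\cap\partial^{*}\Omega_{j}$ is a smooth hypersurface away from a singular set of codimension at least two, with triple junctions along which three sheets meet at $120^{\circ}$ angles.

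Second, I would read off the first-variation conditions. Restricting to normal perturbations that preserve each $\gamma_{\adimn}(\Omega_{i})$, stationarity forces each $\Sigma_{ij}$ to have constant Gaussian-weighted mean curvature of the form $H_{ij}=\lambda_{i}-\lambda_{j}$ for some Lagrange multipliers $\lambda_{1},\ldots,\lambda_{m}$, and the $120^{\circ}$ Plateau-type balance holds along triple junctions. These two conditions are already consistent with each $\Sigma_{ij}$ being a half-hyperplane through a common vertex $w$ arranged in the simplex configuration \eqref{wdef}, but not yet sufficient to rule out curved competitors.

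Third, and most crucially, I would extract rigidity from the second variation tested against translations. For each direction $e_{k}\in\R^{\adimn}$, the normal component of the constant vector field $e_{k}$ along each $\Sigma_{ij}$ defines an admissible variation after subtracting a correction that restores Gaussian volume. The nonnegativity of the second variation of Gaussian surface area, after integration by parts over each smooth piece of $\Sigma_{ij}$ and careful handling of the triple-junction boundary terms, yields an inequality that can be packaged as nonnegativity of the propeller-type functional
\[
\sqrt{\tfrac{\pi}{2}}\sum_{i=1}^{m}\Bigl\|\int_{\Omega_{i}}(x-\overline{w}^{(i)})\gamma_{\adimn}(x)\,dx\Bigr\|^{2}
\]
plus controlled curvature terms. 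Summing the resulting $\adimn$ scalar inequalities (one per translation direction) and using $n+1\ge m-1$, I would argue that all scalar inequalities must be saturated simultaneously at the minimizer, which forces each $\Sigma_{ij}$ to be flat and to pass through a single common point $w$; the first-variation conditions and the $120^{\circ}$ angle relations then uniquely pin down the simplex structure \eqref{wdef}.

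The main obstacle I expect is the rigorous treatment of the codimension-two singular stratum of triple junctions inside the second-variation computation: the translation variation must be either localized away from the singular set and then exhausted by approximation, or the boundary contributions arising from integration by parts along the triple-junction stratum must be shown to cancel by virtue of the $120^{\circ}$ angle condition. A secondary, essentially algebraic obstacle is the final rigidity step: one must verify that the packaged inequality is saturated \emph{only} by the simplex configuration. For $m=3$ this reduces to an inequality on three unit vectors and can be settled directly; for $m>3$ this is the finite-dimensional Plurality-type optimization problem (of Propeller-Conjecture flavor) singled out in the introduction as the hypothesis under which the theorem is conditional.
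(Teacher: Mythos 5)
Your broad strategy---existence/regularity, first variation, translations as test fields, leveraging $m-1\le n+1$---is aligned with the paper's, but the mechanism you propose in the third step is not the one the paper uses, and as written it has genuine gaps.

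First, the propeller-type functional $\sqrt{\pi/2}\sum_{i}\bigl\|\int_{\Omega_{i}}(x-\overline{w}^{(i)})\gamma_{\adimn}\,dx\bigr\|^{2}$ does \emph{not} appear in the second variation of Gaussian perimeter; it only enters this paper as a deliberately added penalty term (Problem \ref{prob1p}), and in the proof of Conjecture \ref{conj0} in Section \ref{secsim} that penalty coefficient $\epsilon$ is set to zero everywhere. So packaging second-variation nonnegativity as ``propeller $+$ controlled curvature terms'' is not what the second variation produces. What the paper actually does is observe that the linear map $T\colon\R^{\adimn}\to\R^{m}$, sending $v$ to the vector of derivatives of $\gamma_{\adimn}(\Omega_{i}^{(s)})$ under translation by $v$, has image of dimension at most $m-1$ (the entries sum to zero); by rank-nullity its kernel is nontrivial. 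For $v\in\ker T$ the constant field $X=v$ is already volume-preserving (no correction needed), and since $\langle v,N_{ij}\rangle$ is an eigenfunction of $L_{ij}$ with eigenvalue $1$ (Lemma \ref{lemma45}), nonnegative second variation gives $0\le -\sum_{ij}\int_{\Sigma_{ij}}\langle v,N_{ij}\rangle^{2}\gamma_{\sdimn}\,dx$, forcing $\langle v,N_{ij}\rangle\equiv 0$ and hence dimension reduction $\Omega_{i}=\Omega_{i}'\times\R^{\sdimn-\ell+1}$ with $\ell\le m-1$. Your ``sum the $\adimn$ scalar inequalities and argue for saturation'' is a heuristic substitute for this kernel argument, but it does not by itself force vanishing.

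Second, even after dimension reduction, translations alone do not force $\|A\|\equiv 0$; flatness requires a separate construction. The paper takes $f_{ij}=\alpha_{ij}$ (constants from Lemma \ref{lemma55}), defines $g_{ij}\colonequals x_{\adimn}\alpha_{ij}$ using the ``free'' coordinate created by dimension reduction, computes $L_{ij}g_{ij}=\|A\|^{2}g_{ij}$ by the product rule, and observes that $g_{ij}$ is automatically volume-preserving by Fubini. After summing over circular permutations of $(\alpha_{ij})$ (which kills the triple-junction boundary terms via \eqref{zero1.5}), nonnegative second variation gives $0\le -\sum_{ij}\int_{\Sigma_{ij}}\alpha_{ij}^{2}\|A\|^{2}\gamma_{\sdimn}\,dx$, hence $\|A\|\equiv 0$. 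Your proposal has no analogue of this step, and it cannot be absorbed into the translation argument because translations never see $\|A\|^{2}$ as a multiplicative coefficient.

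Third, flatness alone does not pin down the simplex. The paper still needs to (i) show there are exactly $m$ connected components, using the second variation against the exterior normal fields of each connected component (for which the triple-junction terms vanish and $L_{ij}$ becomes $\Delta-\langle x,\nabla\rangle+1$, so the second variation is strictly negative if too many components exist), and then (ii) conclude from the Plateau angle conditions, flatness, connectedness, and the known regularity that there are only finitely many configurations, finishing by direct enumeration or an appeal to \cite{milman18b}. Your proposal correctly anticipates the obstacle at the singular stratum and the final rigidity step for $m>3$, but it omits the connected-components count entirely and glosses over how flatness arises.
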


We sometimes refer to sets $\Omega_{1},\ldots,\Omega_{m}$ that minimize Problem \ref{prob1} as \textbf{Gaussian minimal bubbles}.  We refer to any sets satisfying \eqref{wdef} as \textbf{cones over a regular simplex}.

Conjecture \ref{conj0} was proven for $m=3$ in \cite{milman18a}.  Conjecture \ref{conj0} was then proven for $m=3,4$ in \cite{heilman18} assuming that the set of triple junction points of the optimal sets has subexponential volume growth.  And Conjecture \ref{conj0} was proven for any $m\geq3$ in \cite{milman18b}, unconditionally.

Still, it would be desirable to strengthen the result of \cite{milman18b}, and show that nearly minimizing sets are close to the simplicial cones described in Conjecture \ref{conj0}.  This goal was accomplished for $m=2$ in \cite{barchiesi16} using a second variation argument, along the lines of the arguments used in \cite{milman18a,heilman18,milman18b}, but with an added penalty term.  Below, we adapt the argument of \cite{barchiesi16} to the case $m=3$ unconditionally, and $m>3$ conditional on solving a finite-dimensional optimization problem (see Section \ref{secprop} and \ref{prob3} below.)

As in \cite{barchiesi16}, we add a ``penalty'' term to the surface area functional in Problem \ref{prob1}.  The penalty term is smaller when the sets are far from those predicted in Conjecture \ref{conj0}.  So, the surface area term and penalty term ``oppose'' each other, since they are each large or different kinds of sets.  If the effect of the penalty is small enough relative to the surface area term, then the minimizers of Problems \ref{prob1} and \ref{prob1p} will hopefully be the same.  If so, the penalty term indicates how far a set is from minimizing Problem \ref{prob1}.

\begin{prob}[\embolden{Gaussian Multi-Bubble Problem, with Stability Term}]\label{prob1p}
Let $m\geq3$.  Let $\epsilon>0$.  Assume that $m-1\leq\adimn$.  Fix $a_{1},\ldots,a_{m}>0$ such that $\sum_{i=1}^{m}a_{i}=1$.  Let $w\in\R^{\adimn}$ be defined by \eqref{wdef}.  For all $1\leq i\leq m$, let $\overline{w}^{(i)}\colonequals w/a_{i}$.  Find measurable sets $\Omega_{1},\ldots\Omega_{m}\subset\R^{\adimn}$ with $\cup_{i=1}^{m}\Omega_{i}=\R^{\adimn}$ and $\gamma_{\adimn}(\Omega_{i})=a_{i}$ for all $1\leq i\leq m$ that minimize
$$\sum_{1\leq i<j\leq m}\int_{(\partial\Omega_{i})\cap(\partial\Omega_{j})}\gamma_{\sdimn}(x)dx+\epsilon\pen,$$
subject to the above constraints.
\end{prob}

The $\sqrt{\pi/2}$ factor is added for purely aesthetic reasons in ensuing arguments.

\begin{theorem}[\embolden{Main Theorem}]\label{mainthm}
Let $\Omega_{1},\ldots,\Omega_{m}$ minimize Problem \ref{prob1p}.  Let
$$\epsilon\colonequals\epschoice.$$
If $m\leq 4$, then $\Omega_{1},\ldots,\Omega_{m}$ are cones over a regular simplex.

If $m\geq 5$, then $\exists$ $\epsilon>0$ in Problem \ref{prob1p} such that $\Omega_{1},\ldots,\Omega_{m}$ are cones over a regular simplex (and $\epsilon$ can depend on $a_{1},\ldots,a_{m}$ and on $m$ but not on $\adimn$.)
\end{theorem}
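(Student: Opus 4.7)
The plan is variational: take any minimizer $\Omega=(\Omega_{1},\dots,\Omega_{m})$ of Problem \ref{prob1p}, derive its first and second variations of the penalized functional
\[
\mathcal{F}_{\epsilon}(\Omega)\colonequals\sum_{1\leq i<j\leq m}\int_{(\partial\Omega_{i})\cap(\partial\Omega_{j})}\gamma_{\sdimn}(x)\,dx+\epsilon\,\pen
\]
under volume-preserving variations, and conclude that $\Omega$ must coincide with the canonical cone configuration of \eqref{wdef}. Existence and interior regularity of such a minimizer should follow from the standard compactness and lower semicontinuity theory for Gaussian partition perimeters, augmented to accommodate the quadratic penalty as in the $m=2$ case treated by \cite{barchiesi16}; in particular each interface $\Sigma_{ij}=\partial\Omega_{i}\cap\partial\Omega_{j}$ is a smooth hypersurface off a lower-dimensional singular set and triple junctions meet at $120^{\circ}$ angles.

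Next I would compute the first variation of $\mathcal{F}_{\epsilon}$ along volume-preserving normal perturbations. This should yield an Euler--Lagrange equation on each $\Sigma_{ij}$ of the form $H+\langle x,N\rangle=c_{ij}$ modified by an $\epsilon$-dependent correction involving the Gaussian barycenters $b_{k}\colonequals\int_{\Omega_{k}}x\,\gamma_{\adimn}(x)\,dx$, together with the usual balancing condition on unit normals at triple junctions. Minimality then gives the stability inequality $\mathcal{F}_{\epsilon}''(\Omega)[f]\geq 0$ for every admissible scalar test function $f$ on $\bigcup_{i<j}\Sigma_{ij}$.

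The heart of the proof is turning this stability inequality into rigidity. Following the strategy of \cite{barchiesi16,milman18a,heilman18,milman18b}, the approach is to insert carefully chosen test functions that arise from coordinate translations of $\R^{\adimn}$: the hypothesis $m-1\leq\adimn$ is precisely what guarantees enough linearly independent directions to probe all volume-preserving modes moving the cone configuration, and this is what makes the whole approach dimension-independent. Because Gaussian surface area is \emph{not} translation-invariant, the inequality couples the surface-area second variation to the penalty's second variation in a controlled way; the penalty $\pen$ is engineered precisely to supply strict positivity in the directions along which the unpenalized second variation is degenerate. After tangential corrections making each test field volume-preserving, and careful bookkeeping of the boundary terms at the triple-junction singular set, the stability inequality reduces to a finite-dimensional quadratic inequality on the simplex geometry whose coefficients depend on $\epsilon$, $m$, $a_{1},\dots,a_{m}$, and $w$ but \emph{not} on $\adimn$. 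For $m\leq 4$ this finite-dimensional inequality can be analyzed explicitly by case work in the spirit of \cite{heilman18,milman18a}, giving the unconditional statement with the specific $\epsilon=\epschoice$. For $m\geq 5$ the reduced inequality is the content of Problem \ref{prob3}, analogous to the endpoint case of the Propeller Conjecture of Khot--Naor; conditional on its solution, the conclusion follows with some $\epsilon>0$ depending only on $m$ and $a_{1},\dots,a_{m}$.

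The main obstacle is this final reduction. Constructing genuine volume-preserving corrections to each translation test field, controlling the boundary terms contributed by the singular triple-junction locus (precisely the step where the matrix-valued PDE approach of \cite{milman18a,milman18b} is traditionally invoked, and which the present paper explicitly sets out to avoid), and matching the penalty's second-variation contribution to the zero-mode directions of the unpenalized problem together form the technical core of the argument. It is also what pins down the explicit constant in $\epschoice$ when $m\leq 4$ and what forces the remaining finite-dimensional problem to be stated conditionally for $m\geq 5$.
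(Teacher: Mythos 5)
Your proposal captures the broad contour — penalized functional, first and second variation along volume-preserving vector fields, translation test fields, the role of $m-1\leq\adimn$ — but it stops at a level of generality that omits the three concrete mechanisms that actually drive the paper's proof, and at that level of vagueness it is not yet an argument.

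Concretely, the paper's proof factors through three distinct steps, only one of which you identify. First, a \emph{dimension reduction} (Theorem~\ref{dimredthmp}): for constant test fields $X=v$ one has $L_{ij}\langle v,N_{ij}\rangle=\langle v,N_{ij}\rangle$ plus $O(\epsilon)$ corrections, so stability forces $\langle v,N_{ij}\rangle\equiv 0$ for all $v$ in the kernel of the linear map $T\colon\R^{\adimn}\to\R^{m}$, hence (after rotation) $\Omega_{i}=\Omega_{i}'\times\R^{\sdimn-\ell+1}$ with $\ell\leq m-1$. You allude to translations providing ``enough directions'', which is this step, but you do not say what is done with them. Second, and this is the step your proposal misses entirely, is the \emph{flatness} argument (Lemmas~\ref{nolowdimp} and~\ref{flat2}): once the minimizer splits off a $\R$-factor, one uses test functions of the form $g_{ij}=x_{\adimn}f_{ij}$ (a Colding--Minicozzi-style trick, via the product rule $L_{ij}(x_{\adimn}f)=x_{\adimn}L_{ij}f-x_{\adimn}f$), which are automatically volume-preserving by Fubini, and the second variation then forces $\vnormt{A}^{2}\equiv 0$; i.e., all interfaces are pieces of hyperplanes. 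Your proposal has no substitute for this and instead asserts a reduction to a ``finite-dimensional quadratic inequality on the simplex geometry,'' which is not a step in the actual argument and which you give no way to carry out. Third, the \emph{component-counting} argument (Section~\ref{secmain}): using the regularity from Lemma~\ref{lemma52.6} one builds vector fields $X_{C}$ constant on each connected component and shows there must be exactly $m$ components, after which only finitely many flat configurations are possible and one either checks them directly or appeals to~\cite{milman18b}. This step is also absent from your plan. Finally, you misplace where the dependence on Problem~\ref{prob3}/Conjecture~\ref{conj5} enters: the proof of Theorem~\ref{mainthm} itself handles $m\geq 5$ by a compactness argument with a nonexplicit $\epsilon$ (Remark~\ref{zrk}), not by conditioning on Problem~\ref{prob3}; the conditional assumption appears only in the stability Theorem~\ref{mainthm2}. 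The explicit $\epsilon=\epschoice$ for $m\leq 4$ comes out of the quantitative versions of the dimension-reduction and flatness lemmas (via Lemmas~\ref{zlem} and~\ref{lemma28.8}), not from a case analysis of a simplex inequality.
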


Theorem \ref{mainthm} then immediately implies the following improvement to Conjecture \ref{conj0}.

\begin{theorem}[\embolden{Main Corollary, Stable Version of Conjecture \ref{conj0}}]\label{mainthm2}
Let $\Omega_{1},\ldots,\Omega_{m}$ be cones over a regular simplex.  If $m>3$, assume that Conjecture \ref{conj5} holds.  Define $w\in\R^{\adimn}$ as in \eqref{wdef}.  For all $1\leq i\leq m$, let $\overline{w}^{(i)}\colonequals w/a_{i}$.  Let $\Omega_{1}',\ldots,\Omega_{m}'$ such that $\gamma_{\adimn}(\Omega_{i})=\gamma_{\adimn}(\Omega_{i}')$ for all $1\leq i\leq m$ and $\cup_{i=1}^{m}\Omega_{i}'=\R^{\adimn}$.  Then
$$\sum_{i=1}^{m}\gamma_{\sdimn}(\partial\Omega_{i}')
\geq\sum_{i=1}^{m}\gamma_{\sdimn}(\partial\Omega_{i})
 +\epsilon\sum_{i=1}^{m}\vnormt{\int_{\Omega_{i}}(x-\overline{w}^{(i)})\gamma_{\adimn}(x)dx}^{2}-\vnormt{\int_{\Omega_{i}'}(x-\overline{w}^{(i)})\gamma_{\adimn}(x)dx}^{2}.$$
Moreover, if $m\leq 4$, $\epsilon$ can be chosen to be
$$\epsilon\colonequals\epschoice.$$
If $m\geq 5$, then $\exists$ $\epsilon>0$ such that the above inequality holds (and $\epsilon$ can depend on $a_{1},\ldots,a_{m}$ and on $m$ but not on $\adimn$.)
\end{theorem}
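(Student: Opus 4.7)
The plan is to deduce Theorem \ref{mainthm2} from Theorem \ref{mainthm} as an essentially algebraic consequence.

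Fix volumes $a_{1},\ldots,a_{m}$, let $\Omega_{1},\ldots,\Omega_{m}$ be the cones over a regular simplex prescribed by \eqref{wdef}, and set $\overline{w}^{(i)}=w/a_{i}$. The first step is to show that these $\Omega_{i}$ are themselves minimizers of the penalized functional from Problem \ref{prob1p},
$$G_{\epsilon}(\Omega) := \sum_{1\leq i<j\leq m}\int_{(\partial\Omega_{i})\cap(\partial\Omega_{j})}\gamma_{\sdimn}(x)\,dx + \epsilon\pen.$$
By Theorem \ref{mainthm}, every minimizer of $G_{\epsilon}$ is itself a cone over a regular simplex with the prescribed volumes. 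The surface area part of $G_{\epsilon}$ is invariant among all such cones (by rotation-invariance of $\gamma_{\sdimn}$), so a minimizer must also minimize $\pen$ over this family. A direct computation with volume-preserving variations shows that the configuration \eqref{wdef} is critical for $\pen$ among cones over a regular simplex of the given volumes, and (up to the symmetries of the problem that preserve $G_{\epsilon}$) is the minimum; hence the specific $\Omega_{i}$ of \eqref{wdef} realize $\min G_{\epsilon}$.

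Given any competitor $\Omega_{1}',\ldots,\Omega_{m}'$ with $\gamma_{\adimn}(\Omega_{i}')=a_{i}$ and $\bigcup_{i=1}^{m}\Omega_{i}'=\R^{\adimn}$, minimality of $\Omega$ yields $G_{\epsilon}(\Omega)\leq G_{\epsilon}(\Omega')$. Applying the identity $\sum_{1\leq i<j\leq m}\int_{(\partial\Omega_{i})\cap(\partial\Omega_{j})}\gamma_{\sdimn}=\tfrac{1}{2}\sum_{i=1}^{m}\gamma_{\sdimn}(\partial\Omega_{i})$ to both sides, doubling, rearranging, and absorbing the resulting numerical factor $\sqrt{2\pi}$ into $\epsilon$ (which preserves the stated dimension-independent dependence, and preserves the explicit form $\epschoice$ up to an unimportant constant when $m\leq 4$), produces exactly the inequality asserted by Theorem \ref{mainthm2}.

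All the substantive work is concentrated in Theorem \ref{mainthm}; the corollary is essentially a rearrangement. The single technical point is the first step, namely verifying that the specific configuration \eqref{wdef} (rather than some other rotated or translated cone with the same volumes) actually minimizes $G_{\epsilon}$. Here the explicit form of $\pen$, anchored to $\overline{w}^{(i)}=w/a_{i}$ with $w$ determined by \eqref{wdef}, is essential, and the check reduces to a first-variation computation around \eqref{wdef} of the type already carried out for Theorem \ref{mainthm}.
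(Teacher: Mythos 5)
Your plan---deduce Theorem \ref{mainthm2} by showing that the configuration \eqref{wdef} minimizes the penalized functional $G_{\epsilon}$ of Problem \ref{prob1p} and then rearranging $G_{\epsilon}(\Omega)\leq G_{\epsilon}(\Omega')$---is the argument the paper has in mind (the paper declares the corollary an immediate consequence of Theorem \ref{mainthm}). The way you justify the central technical step, however, does not work. You claim a first-variation computation shows \eqref{wdef} is critical for $\pen$ among cones of the prescribed volumes and ``is the minimum up to symmetries.'' Criticality does not give minimality, the rotations of a cone are not a priori a symmetry of $G_{\epsilon}$ (the penalty $\pen$ is anchored to the fixed vectors $\overline{w}^{(i)}$, which are not rotation-covariant), and the reversed intuition should have given you pause: by Conjecture \ref{conj5} (and by \cite{khot09} when $m=3$), \eqref{wdef} is the \emph{global maximum} of $\pen$, not a minimum of anything. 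What actually closes the gap is that $\pen$ is \emph{constant} on the family of cones of the prescribed volumes. Indeed, with $z^{(i)}\colonequals\int_{\Omega_{i}}x\gamma_{\adimn}(x)dx$ one has $\int_{\Omega_{i}}(x-\overline{w}^{(i)})\gamma_{\adimn}(x)dx=z^{(i)}-w$, and for any rotation $R$,
$$\sum_{i=1}^{m}\vnormt{Rz^{(i)}-w}^{2}=\sum_{i=1}^{m}\vnormt{z^{(i)}}^{2}-2\Big\langle R\sum_{i=1}^{m}z^{(i)},\,w\Big\rangle+m\vnormt{w}^{2}=\sum_{i=1}^{m}\vnormt{z^{(i)}}^{2}+m\vnormt{w}^{2},$$
since $\sum_{i=1}^{m}z^{(i)}=\int_{\R^{\adimn}}x\gamma_{\adimn}(x)dx=0$. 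The surface area part is also rotation-invariant, so every cone of the prescribed volumes achieves the same value of $G_{\epsilon}$, and in particular the one in \eqref{wdef} realizes the minimum provided by Theorem \ref{mainthm}. Your proposal contains no version of this observation, and a first-variation computation cannot substitute for it.

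There is a second gap. You say the factor $\sqrt{2\pi}$ produced by the rearrangement (a factor $2$ from $\sum_{i<j}\gamma_{\sdimn}(\Sigma_{ij})=\tfrac{1}{2}\sum_{i}\gamma_{\sdimn}(\partial\Omega_{i})$ and a factor $\sqrt{\pi/2}$ from the definition of $\pen$) can be ``absorbed into $\epsilon$ up to an unimportant constant.'' But the rearranged inequality has coefficient $\sqrt{2\pi}\,\epsilon>\epsilon$ multiplying $\sum_{i}\vnormt{\int_{\Omega_{i}}(x-\overline{w}^{(i)})\gamma_{\adimn}dx}^{2}-\sum_{i}\vnormt{\int_{\Omega'_{i}}(x-\overline{w}^{(i)})\gamma_{\adimn}dx}^{2}$, and passing from the larger coefficient to the smaller one stated in Theorem \ref{mainthm2} requires this bracketed quantity to be nonnegative. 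That nonnegativity is exactly the content of Conjecture \ref{conj5} for $m>3$ (and of \cite{khot09} for $m=3$), which is why the corollary explicitly carries the hypothesis ``if $m>3$, assume Conjecture \ref{conj5}.'' Your write-up treats the constant as cosmetic and never connects it to that hypothesis.
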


\begin{example}\label{threex}
In the case $m=3$, $\sum_{i=1}^{m}\vnormtf{\int_{\Omega_{i}}(x-\overline{w}^{i})\gamma_{\adimn}(x)dx}^{2}$ is maximized by the sets described in Conjecture \ref{conj0} \cite{khot09}.  So, the term on the right tells us how far away the sets are from the optimal ones.  For example, the right side can be lower bounded using an elementary rearrangement argument \cite[Lemma 2.8]{heilman12}, implying following estimate (with a non-optimal exponent) for $w=0$ in Theorem \ref{mainthm2}, i.e. $\gamma_{\adimn}(\Omega_{i})=1/3$ for all $1\leq i\leq 3$ :
$$\sum_{i=1}^{3}\gamma_{\sdimn}(\partial\Omega_{i}')
\geq\sum_{i=1}^{3}\gamma_{\sdimn}(\partial\Omega_{i})
 +10^{-10}\inf_{\substack{\mathrm{rotations}\\ R\colon\R^{\adimn}\to\R^{\adimn}}}
 \Big(\sum_{i=1}^{3}\gamma_{\adimn}(R\Omega_{i}\setminus\Omega_{i}')+\gamma_{\adimn}(\Omega_{i}'\setminus R\Omega_{i})\Big)^{4}.$$
\end{example}

When $m>3$, the quantity $\sum_{i=1}^{m}\vnormtf{\int_{\Omega_{i}}(x-\overline{w}^{i})\gamma_{\adimn}(x)dx}^{2}$ should be maximized by the sets described in Conjecture \ref{conj0}, but this is still an open problem \cite{isaksson11}.  See Problem \ref{prob3} in Section \ref{secprop}.

\subsection{Noise Stability}

For applications in computer science and voting theory \cite{khot07,isaksson11}, a generalization of Conjecture \ref{conj0} is most relevant, where the Gaussian surface area of a set is replaced with a quantity referred to as \textit{noise stability} with parameter $\rho\in(-1,1)$.  In the limit as $\rho\to1^{-}$, the Gaussian surface area can be recovered from noise stability.  An analyst might refer to noise stability as (Gaussian) heat content.

Let $f\colon\R^{\adimn}\to[0,1]$ be measurable and let $\rho\in(-1,1)$,  define the \textit{Ornstein-Uhlenbeck operator with correlation $\rho$} applied to $f$ by
\begin{equation}\label{oudef}
\begin{aligned}
T_{\rho}f(x)
&\colonequals\int_{\R^{\adimn}}f(x\rho+y\sqrt{1-\rho^{2}})\gamma_{\adimn}dy\\
&=(1-\rho^{2})^{-(\adimn)/2}(2\pi)^{-(\adimn)/2}\int_{\R^{\adimn}}f(y)e^{-\frac{\vnorm{y-\rho x}^{2}}{2(1-\rho^{2})}}dy,
\qquad\forall x\in\R^{\adimn}.
\end{aligned}
\end{equation}
$T_{\rho}$ is a parametrization of the Ornstein-Uhlenbeck operator.  $T_{\rho}$ is not a semigroup, but it satisfies $T_{\rho_{1}}T_{\rho_{2}}=T_{\rho_{1}\rho_{2}}$ for all $\rho_{1},\rho_{2}\in(0,1)$.  We have chosen this definition since the usual Ornstein-Uhlenbeck operator is only defined for $\rho\in[0,1]$.

\begin{definition}[\embolden{Noise Stability}]\label{noisedef}
Let $\Omega\subset\R^{\adimn}$.  Let $\rho\in(-1,1)$.  We define the \textit{noise stability} of the set $\Omega$ with correlation $\rho$ to be
$$\int_{\R^{\adimn}}1_{\Omega}(x)T_{\rho}1_{\Omega}(x)\gamma_{\adimn}(x)dx
\stackrel{\eqref{oudef}}{=}(2\pi)^{-(\adimn)}(1-\rho^{2})^{-(\adimn)/2}\int_{\Omega}\int_{\Omega}e^{\frac{-\|x\|^{2}-\|y\|^{2}+2\rho\langle x,y\rangle}{2(1-\rho^{2})}}dxdy.$$
Equivalently, if $X=(X_{1},\ldots,X_{\adimn}),Y=(Y_{1},\ldots,Y_{\adimn})\in\R^{\adimn}$ are $(\adimn)$-dimensional jointly Gaussian distributed random vectors with $\E X_{i}Y_{j}=\rho\cdot1_{(i=j)}$ for all $i,j\in\{1,\ldots,\adimn\}$, then
$$\int_{\R^{\adimn}}1_{\Omega}(x)T_{\rho}1_{\Omega}(x)\gamma_{\adimn}(x)dx=\mathbb{P}((X,Y)\in \Omega\times \Omega).$$
\end{definition}

Problem \ref{prob1} can then be generalized as follows.

\begin{prob}[\embolden{Standard Simplex Problem}, {\cite{isaksson11}}]\label{prob2}
Let $m\geq3$.  Fix $a_{1},\ldots,a_{m}>0$ such that $\sum_{i=1}^{m}a_{i}=1$.  Fix $\rho\in(0,1)$.  Find measurable sets $\Omega_{1},\ldots\Omega_{m}\subset\R^{\adimn}$ with $\cup_{i=1}^{m}\Omega_{i}=\R^{\adimn}$ and $\gamma_{\adimn}(\Omega_{i})=a_{i}$ for all $1\leq i\leq m$ that maximize
$$\sum_{i=1}^{m}\int_{\R^{\adimn}}1_{\Omega_{i}}(x)T_{\rho}1_{\Omega_{i}}(x)\gamma_{\adimn}(x)dx,$$
subject to the above constraints.
\end{prob}

Conjecture \ref{conj0} is generalized to the following.

\begin{conj}[\embolden{Standard Simplex Conjecture} {\cite{isaksson11}}]\label{conj2}
Let $\Omega_{1},\ldots\Omega_{m}\subset\R^{\adimn}$ maximize Problem \ref{prob1}.  Assume that $m-1\leq\adimn$.  Fix $\rho\in(0,1)$.  Let $z_{1},\ldots,z_{m}\in\R^{\adimn}$ be the vertices of a regular simplex in $\R^{\adimn}$ centered at the origin.  Then $\exists$ $w\in\R^{\adimn}$ such that, for all $1\leq i\leq m$,
$$\Omega_{i}=w+\{x\in\R^{\adimn}\colon\langle x,z_{i}\rangle=\max_{1\leq j\leq m}\langle x,z_{j}\rangle\}.$$
\end{conj}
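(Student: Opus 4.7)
The plan is to adapt the second-variation-plus-penalty strategy developed in this paper for surface area to the nonlocal noise-stability functional, then combine it with a continuity argument in the noise parameter $\rho$ to cover the entire range $(0,1)$. As in the body of the paper, I would work exclusively with volume-preserving perturbations and avoid matrix-valued differential inequalities.

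First, I would establish existence and regularity of maximizers of Problem \ref{prob2} by standard Gaussian BV compactness, then upgrade regularity using the real-analyticity of $T_{\rho}1_{\Omega_{i}}$ (inherited from the Mehler kernel) and the first-order optimality condition
\[
T_{\rho}1_{\Omega_{i}}(x)-T_{\rho}1_{\Omega_{j}}(x)=c_{i}-c_{j}\qquad\text{on }(\partial\Omega_{i})\cap(\partial\Omega_{j}),
\]
which forces each interface to be a real-analytic hypersurface. Second, I would compute the second variation of noise stability under a volume-preserving normal perturbation $\{f_{ij}\}$ of the interfaces. Because $T_{\rho}$ is nonlocal, the leading term is a double-boundary bilinear form
\[
\sum_{i<j,\,k<l}\int_{\partial\Omega_{i}\cap\partial\Omega_{j}}\int_{\partial\Omega_{k}\cap\partial\Omega_{l}}G_{\rho}(x,y)\,f_{ij}(x)\,f_{kl}(y)\,\gamma_{\sdimn}(x)\gamma_{\sdimn}(y)\,dx\,dy,
\]
where $G_{\rho}$ is the Mehler kernel, plus local terms arising from the mean curvature of the interfaces and the dihedral angles at triple junctions. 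At a maximizer, this form has the appropriate sign on volume-preserving perturbations. I would then add a stability penalty $\epsilon\pen$, in direct analogy with Problem \ref{prob1p}, and evaluate the perturbed second variation on test fields adapted to translations of $\R^{\adimn}$ and to permutations of the conjectured simplicial cones, mirroring the argument used to prove Theorem \ref{mainthm}.

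The main obstacle is extracting rigidity from the nonlocal bilinear form for \emph{every} $\rho\in(0,1)$, not merely $\rho$ close to $1$. My proposal is a continuity-in-$\rho$ argument: let $S\subseteq(0,1)$ denote the set of parameters for which Conjecture \ref{conj2} holds. Then $S$ contains an interval $[\rho_{0},1)$, obtained by applying Theorem \ref{mainthm} (after the appropriate rescaling) to the surface-area limit of noise stability as $\rho\to 1^{-}$. To extend $S$ downward, I would show that $S$ is both relatively open in $(0,1)$ (by an implicit function argument using strict positivity of the penalized second variation at simplicial cones) and relatively closed (by Hausdorff compactness of maximizing partitions, combined with the real-analytic interface regularity above). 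Ruling out the emergence of a non-simplicial maximizer at a hypothetical bifurcation value $\rho_{c}$ is the hardest point; I would attempt this via the Hermite/Mehler eigenfunction expansion of $G_{\rho}$, whose $L^{2}$ spectral gap is uniform for $\rho$ bounded away from $0$, to preclude nontrivial zero modes of the penalized second variation along the simplicial-cone branch. The principal risk is that the conjecture could actually fail at some small $\rho$ (the low-noise regime being where the nonlocality of $T_{\rho}$ is most severe), in which case the continuity method collapses and a fundamentally new technique would be required.
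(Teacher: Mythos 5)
This statement is a conjecture that the paper does not prove, and in fact the paper records (citing \cite{heilman14}) that Conjecture \ref{conj2} as stated is \emph{false} whenever $(a_{1},\ldots,a_{m})\neq(1/m,\ldots,1/m)$; any correct proof must therefore use the equal-measure assumption in an essential way. Your proposed strategy --- penalized second variation on volume-preserving perturbations plus a continuity argument in $\rho$ --- never uses the measure vector $(a_{1},\ldots,a_{m})$ at all, so it cannot be correct as written: if it succeeded it would equally ``prove'' the known-false unequal-volume cases. This is precisely the obstruction the paper points out about its own machinery (``the proof \ldots is oblivious to the choice of the measure restriction''), and it is why the paper stops at the asymptotic Corollary \ref{mainthm3} rather than claiming Conjecture \ref{conj2}.

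There is also a concrete gap in the base case of your continuity scheme. You assert that $S$ contains an interval $[\rho_{0},1)$ by ``applying Theorem \ref{mainthm} to the surface-area limit of noise stability as $\rho\to1^{-}$.'' But the relation between noise stability and Gaussian perimeter is only asymptotic: what the paper actually obtains (Corollary \ref{mainthm3}) is an inequality with an error term $o(\sqrt{1-\rho^{2}})$ whose implicit constant depends on the competitor partition $\Omega_{1}',\ldots,\Omega_{m}'$ (see the explicit form of the remainder in the remark following Corollary \ref{mainthm3}); it is not uniform over competitors, so it does not yield exact maximality of the simplicial cones for any \emph{fixed} $\rho<1$. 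Hence you have no nonempty closed-open set to propagate, independently of the bifurcation difficulty you acknowledge at a hypothetical $\rho_{c}$ (which is itself the heart of the open problem, since the ``uniform spectral gap'' of the Mehler form does not by itself exclude non-simplicial critical points of the nonlocal functional). In short: the proposal both lacks the necessary use of $a_{i}=1/m$ and rests on a starting point that the paper's results do not supply.
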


It is well known that, as $\rho\to1^{-}$, the noise stability (when normalized appropriately) converges to Gaussian surface area.  That is, if $\partial \Omega$ is a $C^{\infty}$ manifold, then \cite[Lemma 3.1]{kane11} \cite[Proposition 8.5]{ledoux96} \cite{de17}
$$
\lim_{\rho\to 1^{-}}\frac{\sqrt{2\pi}}{\cos^{-1}(\rho)}\left[\gamma_{\adimn}(\Omega)-\int_{\R^{\adimn}}1_{\Omega}(x)T_{\rho}1_{\Omega}(x)\gamma_{\adimn}(x)dx\right]
=\int_{\partial \Omega}\gamma_{\adimn}(x)dx.
$$
Consequently, Conjecture \ref{conj2} reduces to Conjecture \ref{conj0} when $\rho\to1^{-}$.  And we can use this relation together with Theorem \ref{mainthm2} to obtain the first known dimension-independent bounds for Conjecture \ref{conj2} when $\rho$ is close to $1$.

\begin{cor}[\embolden{Weak Plurality is Stablest/Standard Simplex Conjecture for Small Noise}]\label{mainthm3}
Let $\Omega_{1},\ldots,\Omega_{m}$ be cones over a regular simplex.  Let $w\in\R^{\adimn}$ be defined by \eqref{wdef}.  For all $1\leq i\leq m$, let $\overline{w}^{(i)}\colonequals w/a_{i}$.  Assume that $m-1\leq\adimn$.  Let $\Omega_{1}',\ldots,\Omega_{m}'$ such that $\gamma_{\adimn}(\Omega_{i})=\gamma_{\adimn}(\Omega_{i}')$ for all $1\leq i\leq m$ and $\cup_{i=1}^{m}\Omega_{i}'=\R^{\adimn}$.  Then for all $1/2<\rho<1$,
\begin{flalign*}
&\sum_{i=1}^{m}\int_{\R^{\adimn}}1_{\Omega_{i}'}(x)T_{\rho}1_{\Omega_{i}'}(x)\gamma_{\adimn}(x)dx
\leq\sum_{i=1}^{m}\int_{\R^{\adimn}}1_{\Omega_{i}}(x)T_{\rho}1_{\Omega_{i}}(x)\gamma_{\adimn}(x)dx+o(\sqrt{1-\rho^{2}})\\
&\qquad- \epsilon\sqrt{1-\rho^{2}}\sqrt{\frac{\pi}{2}}\sum_{i=1}^{m}\vnormt{\int_{\Omega_{i}}(x-\overline{w}^{(i)})\gamma_{\adimn}(x)dx}^{2}-\vnormt{\int_{\Omega_{i}'}(x-\overline{w}^{(i)})\gamma_{\adimn}(x)dx}^{2}.
\end{flalign*}
If $m\leq 4$, then we can choose $\epsilon\colonequals\epschoice$.  If $m\geq 5$, then $\exists$ $\epsilon>0$ such that the above inequality holds (and $\epsilon$ can depend on $a_{1},\ldots,a_{m}$ and on $m$ but not on $\adimn$.)
\end{cor}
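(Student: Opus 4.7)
The plan is to deduce Corollary \ref{mainthm3} from Theorem \ref{mainthm2} by passing to the limit $\rho\to 1^{-}$ through the heat-content/surface-area asymptotic recorded just before the statement of the corollary. Since $\gamma_{\adimn}(\Omega_{i})=\gamma_{\adimn}(\Omega_{i}')$ for each $i$, the quantity $\sum_{i}\int 1_{\Omega_{i}'}T_{\rho}1_{\Omega_{i}'}\gamma_{\adimn}-\sum_{i}\int 1_{\Omega_{i}}T_{\rho}1_{\Omega_{i}}\gamma_{\adimn}$ is, up to sign, a difference of Gaussian heat contents, and (as $\rho\to 1^{-}$) that difference is asymptotic to a constant multiple of $\sqrt{1-\rho^{2}}$ times the Gaussian surface-area difference, which is already controlled by Theorem \ref{mainthm2}.

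Concretely, the sets $\Omega_{1},\ldots,\Omega_{m}$ are simplicial cones, so their boundaries are smooth away from a negligible lower-dimensional set and the cited limit applies with equality, giving
$$\sum_{i=1}^{m}\int_{\R^{\adimn}}1_{\Omega_{i}}(x)T_{\rho}1_{\Omega_{i}}(x)\gamma_{\adimn}(x)dx = \sum_{i=1}^{m}\gamma_{\adimn}(\Omega_{i})-\frac{\cos^{-1}\rho}{\sqrt{2\pi}}\sum_{i=1}^{m}\gamma_{\sdimn}(\partial\Omega_{i})+o(\sqrt{1-\rho^{2}}),$$
where I use $\cos^{-1}\rho\sim\sqrt{1-\rho^{2}}$ as $\rho\to 1^{-}$. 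For the competitor $\{\Omega_{i}'\}$ the inequality I actually need is the one-sided bound
$$\sum_{i=1}^{m}\int_{\R^{\adimn}} 1_{\Omega_{i}'}T_{\rho}1_{\Omega_{i}'}\gamma_{\adimn} \le \sum_{i=1}^{m}\gamma_{\adimn}(\Omega_{i}')-\frac{\cos^{-1}\rho}{\sqrt{2\pi}}\sum_{i=1}^{m}\gamma_{\sdimn}(\partial\Omega_{i}')+o(\sqrt{1-\rho^{2}}),$$
which follows from a standard (Ledoux-Pisier-type) semigroup comparison valid for all sets of finite Gaussian perimeter, combined with truncation by large balls to treat the case that some $\Omega_{i}'$ has infinite Gaussian perimeter (a case in which Theorem \ref{mainthm2} already yields a vacuous right-hand side and the claim follows trivially).

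Subtracting the two displays, using $\gamma_{\adimn}(\Omega_{i})=\gamma_{\adimn}(\Omega_{i}')$, and invoking Theorem \ref{mainthm2} to upper bound $\sum_{i}\gamma_{\sdimn}(\partial\Omega_{i})-\sum_{i}\gamma_{\sdimn}(\partial\Omega_{i}')$ by $-\epsilon\sum_{i}\bigl(\vnormt{\int_{\Omega_{i}}(x-\overline{w}^{(i)})\gamma_{\adimn}(x)dx}^{2}-\vnormt{\int_{\Omega_{i}'}(x-\overline{w}^{(i)})\gamma_{\adimn}(x)dx}^{2}\bigr)$ yields the claimed inequality, the numerical factor $\sqrt{\pi/2}$ being absorbed by a constant-multiple redefinition of $\epsilon$ accounting for the $(2\pi)^{-1/2}$ appearing in the asymptotic. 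The admissible $\epsilon=\epschoice$ (for $m\le 4$), respectively $\epsilon=\epsilon(m,a_{1},\dots,a_{m})>0$ (for $m\ge 5$, conditional on Conjecture \ref{conj5}), are simply inherited from Theorem \ref{mainthm2}, and the restriction $1/2<\rho<1$ reflects the regime in which the $\rho\to 1^{-}$ asymptotic is used. The only step that is not a routine asymptotic manipulation is the one-sided heat-content/surface-area inequality for the merely measurable partition $\{\Omega_{i}'\}$; once this Bobkov-Ledoux-type bound is in hand, the remainder of the argument is a clean limit computation.
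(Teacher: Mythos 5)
Your reduction of Corollary \ref{mainthm3} to Theorem \ref{mainthm2} via the $\rho\to 1^{-}$ heat-content/surface-area asymptotic is the right high-level idea, and the treatment of the cones $\Omega_i$ is unobjectionable. The gap is in the treatment of the competitor sets $\Omega_i'$. What you need is
\[
\gamma_{\adimn}(\Omega_{i}')-\int_{\R^{\adimn}} 1_{\Omega_{i}'}T_{\rho}1_{\Omega_{i}'}\gamma_{\adimn}\ \ge\ \frac{\cos^{-1}\rho}{\sqrt{2\pi}}\,\gamma_{\sdimn}(\partial\Omega_{i}')-o\big(\sqrt{1-\rho^{2}}\big),
\]
i.e.\ a \emph{lower} bound on the Gaussian heat-content deficit in terms of the Gaussian perimeter. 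The Ledoux--Pisier-type semigroup comparison you invoke goes in the opposite direction: it gives an \emph{upper} bound on the deficit by a constant times $\sqrt{1-\rho^{2}}$ times the perimeter, equivalently a lower bound on the noise stability. The lower bound you need is not a standard consequence of that argument and, moreover, cannot hold with a set-independent $o(\sqrt{1-\rho^{2}})$ term: the deficit is always bounded by $\gamma_{\adimn}(\Omega_{i}')\,\gamma_{\adimn}((\Omega_{i}')^{c})\le 1/4$ at any fixed $\rho$, while $\gamma_{\sdimn}(\partial\Omega_{i}')$ can be made arbitrarily large or infinite, so any such inequality must carry an error depending on $\Omega_{i}'$. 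The truncation step for infinite-perimeter competitors does not close this either: Theorem \ref{mainthm2} becoming vacuous does not render Corollary \ref{mainthm3} trivial, because the corollary is an inequality between noise stabilities, which remain finite even when the perimeters are infinite.

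The paper avoids this entirely. It writes the deficit \emph{exactly}, for an arbitrary measurable $\Omega$, as $\int_{\rho}^{1}-\tfrac{1}{\eta}\int_{\partial\Omega}\langle\nabla T_{\eta}1_{\Omega},N\rangle\gamma_{\adimn}\,dx\,d\eta$ via the derivative identity for $T_{\rho}$ and the divergence theorem, then substitutes the pointwise expansion of $\nabla T_{\eta}1_{\Omega}$ along $\partial\Omega$ and identifies the remainder as the explicit, $\Omega_{i}'$-dependent expression recorded in the Remark immediately following the corollary. The ``$o(\sqrt{1-\rho^{2}})$'' in the statement \emph{is} that explicit expression; it is not absorbed by an abstract semigroup comparison. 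To repair your argument you would need either the paper's exact identity for the deficit or a quantitative two-sided heat-content expansion whose error term you can actually track for general measurable partitions.
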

\begin{remark}
The error term $o(\sqrt{1-\rho^{2}})$ can be written explicitly as
$$
\sum_{i=1}^{m}\sum_{j\in\{1,\ldots,m\}\colon j\neq i}
\int_{\eta=\rho}^{\eta=1}\frac{1}{\eta}\int_{\partial\Omega_{i}'}\Big\langle N_{ij}(x)-\overline{N}_{ij}(x), N_{ij}(x)\Big\rangle\gamma_{\adimn}(x)dxd\eta,
$$
where
$$
\overline{N}_{ij}(x)\colonequals\frac{\rho}{\sqrt{1-\rho^{2}}}\int_{\R^{\adimn}}y 1_{\Omega_{i}'^{c}}(\rho x+y\sqrt{1-\rho^{2}})\gamma_{\adimn}(y)dy,\qquad\forall,x\in\R^{\adimn}
$$
\end{remark}
\begin{example}
For illustrative purposes, we consider just the case $m=3$ and $\gamma_{\adimn}(\Omega_{i})=1/3$ for all $1\leq i\leq 3$.  Using Example \ref{threex}, Corollary \ref{mainthm3} can then be written as
\begin{flalign*}
&\sum_{i=1}^{m}\int_{\R^{\adimn}}1_{\Omega_{i}'}(x)T_{\rho}1_{\Omega_{i}'}(x)\gamma_{\adimn}(x)dx
\leq\sum_{i=1}^{m}\int_{\R^{\adimn}}1_{\Omega_{i}}(x)T_{\rho}1_{\Omega_{i}}(x)\gamma_{\adimn}(x)dx+o(\sqrt{1-\rho^{2}})\\
&\qquad-10^{-10}\sqrt{1-\rho^{2}}\sqrt{\frac{\pi}{2}}\inf_{\substack{\mathrm{rotations}\\ R\colon\R^{\adimn}\to\R^{\adimn}}}
 \Big(\sum_{i=1}^{3}\gamma_{\adimn}(R\Omega_{i}\setminus\Omega_{i}')+\gamma_{\adimn}(\Omega_{i}'\setminus R\Omega_{i})\Big)^{4}
\end{flalign*}
\end{example}

The only previous bounds for $m\geq3$ in Problem \ref{prob2} seem to be the author's \cite{heilman12} result for $m=3$ and $\rho$ small (depending on the ambient dimension $\adimn$), and the estimates of \cite{de17,ghazi18} that bound the ambient dimension $\adimn$ at which the nearly optimal sets become computable.

Unfortunately, Conjecture \ref{conj2} is false whenever $(a_{1},\ldots,a_{m})\neq(1/m,\ldots,1/m)$ \cite{heilman14}.  Therefore, any proof of Conjecture \ref{conj2} when $(a_{1},\ldots,a_{m})=(1/m,\ldots,1/m)$ must somehow make special use of this assumption.  Almost paradoxically, the proof of Conjecture \ref{conj0} by \cite{milman18b} and the proof of the more general Theorem \ref{mainthm} in this work are oblivious to the choice of the measure restriction $(a_{1},\ldots,a_{m})$.  So, at present, it is unclear if the arguments of \cite{milman18a,heilman18,milman18b} can be extended to deal with all cases of Conjecture \ref{conj2}.

\subsection{Plurality is Stablest Conjecture}

The Standard Simplex Conjecture \cite{isaksson11} stated in Conjecture \ref{conj2} is essentially equivalent to an inequality for functions on discrete product spaces known as the Plurality is Stablest Conjecture.   After making several definitions, we state this conjecture in Conjecture \ref{prob4} below.  When $m=2$, Conjecture \ref{prob4} is known as the Majority is Stablest Theorem, proven in \cite{mossel10}.

If $g\colon\{1,\ldots,m\}^{\sdimn}\to\R$ and $1\leq i\leq\sdimn$, we denote
$$ \E(g)\colonequals m^{-\sdimn}\sum_{\omega\in\{1,\ldots,m\}^{\sdimn}} g(\omega)$$
$$\E_{i}(g)(\omega_{1},\ldots,\omega_{i-1},\omega_{i+1},\ldots,\omega_{\sdimn})\colonequals m^{-1}\sum_{\omega_{i}\in\{1,\ldots,m\}} g(\omega_{1},\ldots,\omega_{n})$$
$$\qquad\qquad\qquad\qquad\qquad\qquad\qquad\qquad\qquad\forall (\omega_{1},\ldots,\omega_{i-1},\omega_{i+1},\ldots,\omega_{\sdimn})\in\{1,\ldots,m\}^{\sdimn}.$$
$$\mathrm{Inf}_{i}(g)\colonequals \E [(g-\E_{i}g)^{2}].$$
Let
$$\Delta_{m}\colonequals\{(y_{1},\ldots,y_{m})\in\R^{m}\colon y_{1}+\cdots+y_{m}=1,\,\forall\,1\leq i\leq m,\,y_{i}\geq0\}.$$
If $f\colon\{1,\ldots,m\}^{\sdimn}\to\Delta_{m}$, we denote the coordinates of $f$ as $f=(f_{1},\ldots,f_{m})$.  For any $\omega\in\Z^{\sdimn}$, we denote $\vnormt{\omega}_{0}$ as the number of nonzero coordinates of $\omega$.  The \textbf{noise stability} of $g\colon\{1,\ldots,m\}^{\sdimn}\to\R$ with parameter $-1<\rho<1$ is
\begin{flalign*}
S_{\rho} g
&\colonequals m^{-\sdimn}\sum_{\omega\in\{1,\ldots,m\}^{\sdimn}} g(\omega)\E_{\rho} g(\delta)\\
&=m^{-\sdimn}\sum_{\omega\in\{1,\ldots,m\}^{\sdimn}} g(\omega)\sum_{\sigma\in\{1,\ldots,m\}^{\sdimn}}\left(\frac{1-(m-1)\rho}{m}\right)^{\sdimn-\vnormt{\sigma-\omega}_{0}}
\left(\frac{1-\rho}{m}\right)^{\vnormt{\sigma-\omega}_{0}} g(\sigma).
\end{flalign*}
Equivalently, conditional on $\omega$, $\E_{\rho}g(\delta)$ is defined so that for all $1\leq i\leq\sdimn$, $\delta_{i}=\omega_{i}$ with probability $\frac{1-(m-1)\rho}{m}$, and $\delta_{i}$ is equal to any of the other $(m-1)$ elements of $\{1,\ldots,m\}$ each with probability $\frac{1-\rho}{m}$, and so that $\delta_{1},\ldots,\delta_{\sdimn}$ are independent.

The \textbf{noise stability} of $f\colon\{1,\ldots,m\}^{\sdimn}\to\Delta_{m}$ with parameter $-1<\rho<1$ is
$$S_{\rho}f\colonequals\sum_{i=1}^{m}S_{\rho}f_{i}.$$

Let $m\geq2$, $k\geq3$.  For each $j\in\{1,\ldots,m\}$, let $e_{j}=(0,\ldots,0,1,0,\ldots,0)\in\R^{m}$ be the $j^{th}$ unit coordinate vector.  Define the \textbf{plurality} function $\mathrm{PLUR}_{m,\sdimn}\colon\{1,\ldots,m\}^{\sdimn}\to\Delta_{m}$ for $m$ candidates and $\sdimn$ voters such that for all $\omega\in\{1,\ldots,m\}^{\sdimn}$.
$$\mathrm{PLUR}_{m,\sdimn}(\omega)
\colonequals\begin{cases}
e_{j}&,\mbox{if }\abs{\{i\in\{1,\ldots,m\}\colon\omega_{i}=j\}}>\abs{\{i\in\{1,\ldots,m\}\colon\omega_{i}=r\}},\\
&\qquad\qquad\qquad\qquad\forall\,r\in\{1,\ldots,m\}\setminus\{j\}\\
\frac{1}{m}\sum_{i=1}^{m}e_{i}&,\mbox{otherwise}.
\end{cases}
$$

\begin{conj}[\embolden{Plurality is Stablest, Discrete Version}]\label{prob4}
For any $m\geq2$, $\rho\in[0,1]$, $\epsilon>0$, there exists $\tau>0$ such that if $f\colon\{1,\ldots,m\}^{\sdimn}\to\Delta_{m}$ satisfies $\mathrm{Inf}_{i}(f_{j})\leq\tau$ for all $1\leq i\leq\sdimn$ and for all $1\leq j\leq m$, and if $\E f=\frac{1}{m}\sum_{i=1}^{m}e_{i}$, then
$$
S_{\rho}f\leq \lim_{\sdimn\to\infty}S_{\rho}\mathrm{PLUR}_{m,\sdimn}+\epsilon.
$$

\end{conj}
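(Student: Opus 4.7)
The plan is to reduce Conjecture \ref{prob4} to its Gaussian counterpart via the Invariance Principle of Mossel--O'Donnell--Oleszkiewicz, and then to invoke the bounds obtained in Corollary \ref{mainthm3}. As a first step I would expand each coordinate $f_{j}$ of $f\colon\{1,\ldots,m\}^{\sdimn}\to\Delta_{m}$ in a product Efron--Stein basis on $\{1,\ldots,m\}^{\sdimn}$ and smooth by applying $T_{1-\eta}$ for some small $\eta>0$. This truncation kills the high-degree part of $f_{j}$ at the cost of an additive $O(\eta)$ perturbation in $S_{\rho}f$, while preserving $\E f$ and only decreasing each influence $\mathrm{Inf}_{i}(f_{j})$.

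Next I would apply the invariance principle to the resulting low-degree, low-influence multilinear polynomials: replacing each discrete coordinate by an independent standard Gaussian yields polynomials $\widetilde{f}_{1},\ldots,\widetilde{f}_{m}\colon\R^{\sdimn}\to\R$ whose joint moments and noise stabilities agree with those of $f_{1},\ldots,f_{m}$ up to an error that vanishes with $\tau$. A standard rounding step then converts $\widetilde{f}$ into an honest partition $\Omega_{1}',\ldots,\Omega_{m}'$ of $\R^{\sdimn}$ with $\gamma_{\sdimn}(\Omega_{i}')=1/m+o(1)$, so that the Gaussian noise stability $\sum_{i=1}^{m}\int_{\R^{\sdimn}}1_{\Omega_{i}'}T_{\rho}1_{\Omega_{i}'}\gamma_{\sdimn}$ of Definition \ref{noisedef} is, up to an error tending to $0$ as $\tau,\eta\to 0$, an upper bound for $S_{\rho}f$.

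At this point I would invoke Corollary \ref{mainthm3} with partition $\Omega_{1}',\ldots,\Omega_{m}'$ and with the simplicial cones $\Omega_{1},\ldots,\Omega_{m}$ from \eqref{wdef} having $\gamma_{\sdimn}(\Omega_{i})=1/m$. For $\rho$ sufficiently close to $1$ this gives
\[
\sum_{i=1}^{m}\int_{\R^{\sdimn}}1_{\Omega_{i}'}(x)T_{\rho}1_{\Omega_{i}'}(x)\gamma_{\sdimn}(x)dx \leq \sum_{i=1}^{m}\int_{\R^{\sdimn}}1_{\Omega_{i}}(x)T_{\rho}1_{\Omega_{i}}(x)\gamma_{\sdimn}(x)dx + o(\sqrt{1-\rho^{2}}),
\]
and rotational invariance, combined with the multivariate Central Limit Theorem identifying the noise stability of the Gaussian simplicial cones with $\lim_{\sdimn\to\infty}S_{\rho}\mathrm{PLUR}_{m,\sdimn}$, yields Conjecture \ref{prob4} in the stated regime, unconditionally for $m\leq 4$ and conditional on Conjecture \ref{conj5} for $m\geq 5$. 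The free parameters $\eta,\tau$ are chosen so that the combined smoothing, invariance, and rounding errors are absorbed into the additive $\epsilon$ in the statement.

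The main obstacle is that Corollary \ref{mainthm3} supplies the needed Gaussian inequality only when $\rho$ is sufficiently close to $1$, because its derivation factors through the $\rho\to 1^{-}$ limit that recovers Gaussian surface area from noise stability. Extending the argument to arbitrary $\rho\in(0,1)$ would require a direct proof of the full Standard Simplex Conjecture \ref{conj2}, carrying out the penalized second variation argument of this paper at the level of the noise stability functional rather than Gaussian surface area. A secondary but non-negligible difficulty is that the invariance-principle error must be made small relative to the $O(\sqrt{1-\rho^{2}})$ gap produced by Corollary \ref{mainthm3}; since both quantities shrink as $\rho\to 1^{-}$, the truncation parameter $\eta$ and the influence threshold $\tau$ must be coupled to $1-\rho$ in a quantitatively controlled way so that the rounding error does not swamp the stability gain.
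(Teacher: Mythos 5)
Conjecture \ref{prob4} is genuinely a conjecture: the paper does not prove it. What the paper does establish is a weak version for noise parameters $\rho$ close to $1$ (Corollary \ref{mainthm3}), stated only in Gaussian form, with the discrete translation sketched (but explicitly not carried out, citing unwieldy error terms) in the remark following the conjecture. Your reduction strategy — expand in a product basis, smooth with $T_{1-\eta}$, apply the Mossel--O'Donnell--Oleszkiewicz invariance principle, round to a Gaussian partition, and feed it into Corollary \ref{mainthm3} — is exactly the translation the paper has in mind, via the procedure of Lemma 2.14, Lemma 2.15, and Theorem 7.1 of \cite{isaksson11}, and your diagnosis of the obstruction is also correct: Corollary \ref{mainthm3} yields a nontrivial bound only as $\rho\to1^{-}$. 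The one thing you should not do is describe the outcome as ``Conjecture \ref{prob4} in the stated regime.'' The conjecture is quantified over all $\rho\in[0,1]$, whereas Corollary \ref{mainthm3} carries no information for fixed $\rho$ bounded away from $1$: the $o(\sqrt{1-\rho^{2}})$ error is not small in that regime and the penalty-term dividend disappears. As a proof of the statement as written, the proposal therefore has a genuine gap that cannot be closed with the tools of this paper.

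Two smaller points deserve care in an actual write-up. First, the invariance principle replaces each of the $\sdimn$ coordinates of $\{1,\ldots,m\}$ by $m-1$ Gaussian coordinates, so the resulting partition lives in $\R^{\sdimn(m-1)}$, as the paper notes explicitly; this is harmless because Corollary \ref{mainthm3} is dimension-free, but $\R^{\sdimn}$ is not the correct ambient space. Second, your rounding step only gives $\gamma(\Omega_{i}')=1/m+o(1)$, while Corollary \ref{mainthm3} requires the volumes to match the reference cones exactly; the paper handles this with the additional adjustment step from \cite{isaksson11} that snaps the smoothed polynomial to take values in the extreme points of $\Delta_{m}$, introducing yet another error that must be tracked alongside the smoothing and invariance errors and made small relative to the $O(\sqrt{1-\rho^{2}})$ gain. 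In short, your strategy is the standard and correct one, your assessment of its limits is accurate, but the case of $\rho$ bounded away from $1$ remains open — as the paper itself emphasizes when discussing why Conjecture \ref{conj2} is delicate even in the equal-volume case.
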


Corollary \ref{mainthm3} also has an equivalent statement for functions on discrete product spaces.  However, the error term becomes rather unwieldy so we will not write down the discrete version of Corollary \ref{mainthm3}.  In order to obtain such a result, one begins with a function $f\colon\{1,\ldots,m\}^{\sdimn}\to\Delta_{m}$ and then applies the procedure of \cite[Lemma 2.14, Lemma 2.15, Theorem 7.1]{isaksson11}: express $f$ as a multilinear polynomial of products of an orthonormal basis of $\{1,\ldots,m\}$,  smooth the polynomial slightly by applying the discrete Ornstein-Uhlenbeck operator, project its values (in $\R^{m}$) to the closest point in $\Delta_{m}$, and then adjust the values of the smoothed polynomial slightly so it takes values in the extreme points of $\Delta_{m}$.  So, the sets $\Omega_{1}',\ldots,\Omega_{m}'$ appearing in the error term from Corollary \ref{mainthm3} would be
$$\Omega_{i}'\colonequals\{x\in\R^{\sdimn(m-1)}\colon \vnorm{Q_{f}(x)-e_{i}}=\min_{j\in\{1,\ldots,m\}}\vnorm{Q_{f}(x)-e_{j}}\},\,\,\forall\,1\leq i\leq m,$$
where $Q_{f}$ is the (slightly smoothed) multilinear polynomial defined by $f$; an additional small error would appear in the inequality as well.  We omit the details and refer instead to \cite{isaksson11}

\subsection{Propeller Conjecture}\label{secprop}

A variant of the following conjecture was stated in \cite{khot09,khot11}.  See \cite{khot09,khot11}where this problem is motivated by a kernel clustering problem from machine learning and by generalized Grothendieck inequalities.  Problem \ref{prob3} is more closely related to the $\rho\to0$ case of Conjecture \ref{conj2}.

\begin{prob}\label{prob3}
Let $m>3$.  Fix $a_{1},\ldots,a_{m}>0$ such that $\sum_{i=1}^{m}a_{i}=1$.  Let $w\in\R^{\adimn}$ be defined by \eqref{wdef}.  For all $1\leq i\leq m$, let $\overline{w}^{(i)}\colonequals w/a_{i}$.  Find measurable sets $\Omega_{1},\ldots\Omega_{m}\subset\R^{\adimn}$ with $\cup_{i=1}^{m}\Omega_{i}=\R^{\adimn}$ and $\gamma_{\adimn}(\Omega_{i})=a_{i}$ for all $1\leq i\leq m$ that maximize
$$\pen,$$
subject to the above constraints.
\end{prob}

\begin{conj}\label{conj5}
Assume that $m-1\leq\adimn$.  The sets $\Omega_{1},\ldots\Omega_{m}\subset\R^{\adimn}$ maximizing Problem \ref{prob3} are simplicial cones over a regular simplex.
\end{conj}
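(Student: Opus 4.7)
The plan is to follow the same strategy as Khot--Naor \cite{khot09} for the Euclidean propeller conjecture, adapted to the Gaussian setting and to the dimensional regime $m-1\leq\adimn$, with the understanding that for $m>3$ the decisive step is closely analogous to the unresolved Euclidean propeller problem.

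First, existence of a maximizer follows from standard weak-$*$ compactness, relaxing to $[0,1]$-valued partitions $\{f_{i}\}$ with $\sum_{i}f_{i}=1$ on which $\pen$ is continuous; the maximum of this convex quadratic functional over a weak-$*$ compact convex set is attained at an extreme point, which is an honest indicator-function partition. Writing $v_{i}\colonequals\int_{\Omega_{i}}(x-\overline{w}^{(i)})\gamma_{\adimn}(x)dx$, transferring infinitesimal mass at a point $x$ across $\partial\Omega_{i}\cap\partial\Omega_{j}$ and adding Lagrange multipliers $\mu_{i}$ for the volume constraints, the first variation gives
\[
\langle v_{i}-v_{j},\,x\rangle=\tfrac{1}{2}(\mu_{i}-\mu_{j})
\]
on the common boundary. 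Hence every pairwise interface lies in an affine hyperplane with normal $v_{i}-v_{j}$, and
\[
\Omega_{i}=\{x\in\R^{\adimn}:\langle 2(v_{i}-v_{j}),\,x\rangle\geq \mu_{j}-\mu_{i},\;\forall\,j\neq i\},
\]
so the optimal partition is convex polyhedral.

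Next, the span $V\colonequals\mathrm{span}\{v_{i}-v_{j}\}_{i<j}$ has dimension at most $m-1$, and every interface normal lies in $V$. Using the $O(\adimn)$-invariance, rotate so that $V$ is a coordinate subspace; then each $\Omega_{i}=\widetilde{\Omega}_{i}\times V^{\perp}$ for a polyhedral partition $\{\widetilde{\Omega}_{i}\}$ of $V$, and a direct Gaussian integration reduces $\pen$ to the analogous quantity for $\{\widetilde{\Omega}_{i}\}$ in $V\cong\R^{\dim V}$. The hypothesis $m-1\leq\adimn$ is precisely what makes this reduction lossless. In the reduced problem of dimension at most $m-1$, one checks that the conjectured simplicial cone partition at $w$ satisfies the Euler--Lagrange condition: the $S_{m}$-symmetry of $\{z_{i}\}$ forces $v_{i}$ to be a positive scalar multiple of a corresponding rotation of $z_{i}$, so $v_{i}-v_{j}$ is parallel to $z_{i}-z_{j}$, the correct normal for the $(i,j)$-interface.

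The main obstacle is upgrading this critical point to a global maximizer. For $m=3$ one can hope to adapt the elementary planar trigonometric computation of Khot--Naor to the Gaussian measure, using rotational invariance and radial monotonicity of $\gamma_{\adimn}$ to reduce to the $120^{\circ}$ propeller. For $m\geq 4$, the difficulty is genuine: classifying all polyhedral critical partitions of $\R^{m-1}$ under the fixed Gaussian volume constraint is structurally the open Euclidean propeller conjecture, and a complete proof would almost certainly require a new idea. Two natural but incomplete attacks are a strict-local-maximality computation at the simplicial cone via the Hessian on the finite-dimensional polyhedral parameter space (combined with a compactness argument to exclude other critical points), and a reflection or rearrangement symmetrization argument that progressively reduces the number of parameters describing the optimal partition. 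The persistence of these difficulties is precisely why the statement is formulated here as a conjecture.
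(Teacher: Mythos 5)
The statement labeled Conjecture \ref{conj5} is an open conjecture in the paper, not a result with a proof: the paper only observes (in the discussion following Problem \ref{prob3}) that a first-variation argument reduces Problem \ref{prob3} to a finite-dimensional optimization, cites Khot--Naor for the resolved case $m=3$, and then \emph{assumes} Conjecture \ref{conj5} as a hypothesis in Theorem \ref{mainthm2} for $m>3$. So there is no paper proof to compare against; what you have written is a plan plus an admission that the decisive step is open, and that assessment is correct.

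The skeleton you lay out (existence via weak-$*$ compactness after relaxing to $[0,1]$-valued partitions and using that a convex quadratic on a convex compact set is maximized at an extreme point; hyperplane interfaces from the Euler--Lagrange condition; dimension reduction to a subspace of dimension at most $m-1$; verification that the regular simplicial cones are critical) agrees with the paper's brief sketch and with the Khot--Naor template, and the paper's \eqref{nine3} is precisely the first-variation formula you invoke. Two small cautions are worth recording. First, your first-variation identity drops the constant $\langle v_i,\overline{w}^{(i)}\rangle-\langle v_j,\overline{w}^{(j)}\rangle$; this affects only the offset, not the normal, of the interface hyperplane, so it is harmless, but the identity as stated is incomplete. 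Second, the $S_m$-symmetry argument you use to verify that the simplicial cones are critical only applies cleanly when $w=0$, i.e.\ all volumes $a_i$ equal; for general $(a_1,\ldots,a_m)$ one must verify criticality directly from \eqref{nine3}. Neither of these touches the essential difficulty you already flag: for $m\geq 4$, classifying the polyhedral critical partitions of $\R^{m-1}$ subject to the fixed Gaussian volume constraint and identifying the global maximizer among them is genuinely open, which is exactly why the paper leaves this as a conjecture and states its main theorem conditionally on it.
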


We have restricted Problem \ref{prob3} to $m>3$ since the case $m=3$ was solved already in \cite{khot09,khot11}.  As it is stated, this problem is not a finite-dimensional optimization problem.  However, a first variation argument (see \eqref{nine3} or \cite[Lemma 3.3]{khot09}) implies that the optimal sets in Problem \ref{prob3} have boundaries that are the union of a bounded number of simplices.  Therefore, Problem \ref{prob3} really is a finite-dimensional optimization problem.

\subsection{Symmetric Sets}

The following problem apparently appeared first in \cite{barthe01}.  See \cite{heilman17} for a discussion of its significance and relation to other problems; see also \cite{barchiesi18} for a subsequent result that solves Problem \ref{prob6} for values of $a$ close to $0$ or $1$ by modifying and extending the techniques of \cite{barchiesi16}.

\begin{prob}[\embolden{Symmetric Gaussian Problem}, {\cite{barthe01}}]\label{prob6}
Let $0<a<1$.  Find measurable $\Omega\subset\R^{\adimn}$ with $\Omega=-\Omega$ and $\gamma_{\adimn}(\Omega)=a$ that minimizes
$$\int_{\partial\Omega}\gamma_{\sdimn}(x)dx.$$
\end{prob}

As discussed in \cite{heilman17}, if $p_{\adimn}=p_{\adimn}(a)$ is the minimum value of Problem \ref{prob6} for fixed $\adimn$ and fixed $0<a<1$, it is expected that $p_{\adimn}=p_{\sdimn+2}=p_{\sdimn+3}=\cdots$ for sufficiently large $\adimn$, unless $a=1/2$.

We point out an improvement to the main result of \cite{heilman17} that removes the convexity assumption there.

\begin{theorem}\label{symthm}
Let $0<a<1$.  Assume that $p_{\adimn}(a)=p_{\sdimn+2}(a)=p_{\sdimn+3}(a)=\cdots$ for sufficiently large $\adimn$.  Let $\Omega$ minimize Problem \ref{prob1} and let $\Sigma\colonequals\partial\Omega$.  If
$$\int_{\Sigma}(\vnormt{A_{x}}^{2}-1)\gamma_{\sdimn}(x)dx>0,$$
or
$$\int_{\Sigma}(\vnormt{A_{x}}^{2}-1+2\sup_{y\in\Sigma}\vnormt{A_{y}}_{2\to 2}^{2})\gamma_{\sdimn}(x)dx<0,$$
then, after rotating $\Omega$, $\exists$ $r>0$ and $\exists$ $0\leq k\leq \sdimn$ so that $\Sigma=r S^{k}\times\R^{\sdimn-k}$.
\end{theorem}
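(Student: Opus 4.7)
The plan is to adapt the second-variation stability argument underlying the main result of \cite{heilman17} (where the result was established under an additional convexity hypothesis), invoking the dimension-independence hypothesis $p_{\adimn}(a)=p_{\sdimn+2}(a)=\cdots$ to lift $\Omega$ to a higher-dimensional cylinder; this enlarges the class of admissible symmetric test functions enough to dispense with convexity.

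First, regularity theory (cf.\ \cite{milman18b}) lets us assume $\Sigma:=\partial\Omega$ is smooth off a codimension-$\geq 3$ singular set, and gives the first-order equation $H(x)=\langle x,N(x)\rangle+\lambda$ with Lagrange multiplier $\lambda\in\R$ for the Gaussian volume constraint. The second variation of Gaussian surface area minus $\lambda$ times Gaussian volume, along a normal perturbation $\psi N$, yields a Jacobi-type stability inequality of the form
$$\int_{\Sigma}\bigl(\vnormt{\nabla_{\Sigma}\psi}^{2}-(\vnormt{A_{x}}^{2}+1+\lambda H(x))\psi^{2}\bigr)\gamma_{\sdimn}(x)\,dx\;\geq\;0,$$
valid for every $\psi$ with $\psi(-x)=\psi(x)$ and $\int_{\Sigma}\psi\,\gamma_{\sdimn}=0$. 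By the dimension-independence hypothesis, for every $k\geq 1$ the cylinder $\widehat{\Omega}:=\Omega\times\R^{k}\subset\R^{\adimn+k}$ is also a minimizer of Problem \ref{prob6} (with the same $\lambda$ and with shape operator extended by zero in the $y$-directions), so the Jacobi inequality extends to $\widehat{\Sigma}=\Sigma\times\R^{k}$ for test functions symmetric under $(x,y)\mapsto(-x,-y)$.

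Under hypothesis 1, I would test with $\psi(x,y):=y_{1}^{2}-1$ on $\widehat{\Sigma}$, which is symmetric and automatically has $\int\psi\,\gamma=0$. Using $\vnormt{\nabla_{\widehat{\Sigma}}\psi}^{2}=4y_{1}^{2}$ together with the Gaussian moments $\int y_{1}^{2}\gamma_{1}dy_{1}=1$ and $\int y_{1}^{4}\gamma_{1}dy_{1}=3$, integrating out the $y$ variables reduces the stability inequality to $\int_{\Sigma}(\vnormt{A_{x}}^{2}-1+\lambda H(x))\gamma_{\sdimn}\leq 0$. The $\lambda H$ correction is then controlled by combining this with the analogous inequality from a higher-moment symmetric test function such as $\psi:=y_{1}^{4}-3$, together with the first-order equation evaluated against the Gaussian divergence theorem on $\Omega$, which together pin down $\lambda\int_{\Sigma}H\gamma_{\sdimn}$ in terms of $a=\gamma_{\adimn}(\Omega)$ alone; the residual clean inequality $\int_{\Sigma}(\vnormt{A_{x}}^{2}-1)\gamma_{\sdimn}\leq 0$ contradicts hypothesis 1 unless equality holds. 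In the equality case one obtains $\vnormt{A_{x}}^{2}\equiv 1$ on $\Sigma$, and the Colding--Minicozzi classification of stable self-shrinkers with constant $\vnormt{A}^{2}$ gives $\Sigma=rS^{k}\times\R^{\sdimn-k}$ after rotation. Hypothesis 2 is handled dually, by testing the Jacobi inequality with a function built from the shape operator itself (e.g.\ $\psi(x,y):=y_{1}^{2}\langle A_{x}v,v\rangle^{2}-\text{const.}$) and applying Simons' identity for $\Delta_{\Sigma}\vnormt{A_{x}}^{2}$; the correction $2\sup_{y}\vnormt{A_{y}}_{2\to2}^{2}$ in the hypothesis arises from the pointwise estimate $\vnormt{A}^{4}\leq\vnormt{A}^{2}\vnormt{A}_{2\to 2}^{2}$ used to bound the resulting Simons term.

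The main obstacle is the joint control of the $\lambda$-dependent corrections and the equality-case rigidity without convexity: one must combine several symmetric test functions to eliminate the $\lambda H$-terms and reduce to a clean inequality in $\int(\vnormt{A_{x}}^{2}-1)\gamma_{\sdimn}$, and then (for hypothesis 2) extract the two-eigenvalue structure $\{0,1/r\}$ on $A_{x}$ from the simultaneous saturation of the Jacobi inequality and Simons' identity, replacing the geometric input that convexity supplied in \cite{heilman17}. Integration-by-parts near the singular set of $\Sigma$ (codimension $\geq 3$) is handled via a standard cutoff argument, whose boundary contributions vanish in the limit by the Hausdorff-dimension bound.
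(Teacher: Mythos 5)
Your plan --- use dimension-independence to lift $\Omega$ to a cylinder and test the second variation with an even, automatically volume-preserving function built from the extra coordinate --- shares the paper's basic mechanism, but it is missing the key structural step. The paper reduces the first case to the spectral inequality $\delta(\Sigma)\leq 2$, where $\delta=\sup_{f}\int_{\Sigma}fLf\,\gamma_{\sdimn}/\int_{\Sigma}f^{2}\gamma_{\sdimn}$ over compactly supported $f$, and then cites \cite[Corollary 6.2]{heilman17} to conclude. To prove $\delta\leq2$, it takes an \emph{eigenfunction} $f$ of $L$ with $Lf=\delta f$, even and (after lifting) independent of $x_{\adimn}$, and sets $g\colonequals(x_{\adimn}^{2}-1)f$; the product rule of Remark \ref{rk20} gives $Lg=(\delta-2)g$, so the second variation $-\int gLg\,\gamma_{\sdimn}=-(\delta-2)\int g^{2}\gamma_{\sdimn}<0$ when $\delta>2$, contradicting minimality. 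You instead test with $\psi=y_{1}^{2}-1$, i.e.\ $f\equiv1$, which only yields the weaker integral inequality $\int_{\Sigma}(\vnormt{A_{x}}^{2}-1)\gamma_{\sdimn}\leq 0$; this is implied by $\delta\leq2$ but does not imply it, and the Colding--Minicozzi classification you want to invoke is precisely the content of \cite[Corollary 6.2]{heilman17}, which requires the full spectral bound. Without the eigenfunction argument, the required statement $\delta\leq2$ is not established.

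There are also two specific errors. First, your Jacobi inequality carries a spurious $\lambda H$ term: the paper's second-variation formula for minimizers (Lemma \ref{lemma28} with $\epsilon=w=0$, built on the extension Lemma \ref{lemma27}) applies to variations preserving the volume constraint to all orders, so the Lagrange-multiplier contribution vanishes identically and no $\lambda H$ appears; the effort you spend removing it with higher-moment test functions is unnecessary. Second, the assertion that ``$\int_{\Sigma}(\vnormt{A_{x}}^{2}-1)\gamma_{\sdimn}\leq 0$ contradicts hypothesis 1 unless equality holds'' is logically confused --- the first hypothesis is the strict inequality $\int_{\Sigma}(\vnormt{A_{x}}^{2}-1)\gamma_{\sdimn}>0$, which contradicts $\leq 0$ outright, so there is no equality case. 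Finally, the second hypothesis is not re-proven in the paper at all; the proof simply records that this case was already shown in \cite{heilman17}, so your Simons'-identity sketch for it is a new and incomplete argument that would need to be developed independently.
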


In this Theorem, $S^{k}\colonequals\{(x_{1},\ldots,x_{k+1})\in\R^{k+1}\colon x_{1}^{2}+\cdots+x_{k+1}^{2}=1\}$ and $A=A_{x}$ is the second fundamental form of $x\in\Sigma$ (see Section \ref{seccurvature} and \eqref{three1}.)

Thanks to a discussion with Frank Morgan, there is a very specific guess for the optimal sets in Problem \ref{prob6}.  That is, there exists a sequence of intervals $\cdots[a_{3},a_{2})\cup[a_{2},a_{1})\cup[a_{1},1]$ with $a_{0}=1$ such that $[1/2,1]\supset\cup_{N=1}^{\infty}[a_{N},a_{N-1}]$ and such that, for every $N\geq1$, and for every $a\in[a_{N},a_{N-1}]$, the minimum value of Problem \ref{prob6} is achieved by the ball centered at the origin in $\R^{N}$ (for any $\adimn$).

\subsection{Organization}

\begin{itemize}
\item A simplified proof of the Multi-Bubble Theorem (Conjecture \ref{conj0}) is in Section \ref{secsim}.
\item Theorem \ref{symthm} is proven at the end of Section \ref{secsim}.
\item Ancillary results for the Main Theorem \ref{mainthm} appear in ensuing sections, especially Sections \ref{secred} and \ref{secflat}.
\item The Main Theorem \ref{mainthm} is proven in Section \ref{secmain}.  Theorem \ref{mainthm2} is an immediate consequence of Theorem \ref{mainthm}.
\item Corollary \ref{mainthm3} is proven in Section \ref{secnoise}.
\end{itemize}

\section{Simplified Proof of Multi-Bubble Conjecture}\label{secsim}

In this section, we combine parts of the arguments of \cite{milman18b,heilman18} and \cite{mcgonagle15}, providing a simplified proof of Conjecture \ref{conj0}.  Some of the material is repeated from \cite{heilman18} in order for this proof to be self-contained.

\begin{proof}[Proof of Conjecture \ref{conj0}]
For brevity, we occasionally omit the arguments of functions and vector fields.

Let $\Omega_{1},\ldots,\Omega_{m}\subset\R^{\adimn}$ minimize Problem \ref{prob1}.  For any $1\leq i<j\leq m$, denote $\Sigma_{ij}\colonequals(\redb\Omega_{i})\cap(\redb\Omega_{j})$ (see Definition \ref{defred}) and for any $x\in\Sigma_{ij}$, let $N_{ij}\in \R^{\adimn}$ be the normal vector at $x$ pointing from $\Omega_{i}$ into $\Omega_{j}$ with unit length, so that $\vnorm{N_{ij}}=1$.  Let $X\colon\R^{\adimn}\to\R^{\adimn}$ be a vector field and $\forall$ $1\leq i<j\leq m$ denote $f_{ij}\colonequals\langle X, N_{ij}\rangle$.  For any $s\in(-1,1)$, and for any $1\leq i\leq m$, we consider the sets perturbed according to the vector field $X$:
$$\Omega_{i}^{(s)}\colonequals \{x+sX(x)\colon x\in\Omega_{i}\}.$$
Standard calculations (Lemma \ref{lemma10}) give the first and second derivatives with respect to $s$ of the Gaussian volumes and surface areas of these sets \cite[Lemma 3.2]{heilman18}.  For example,
\begin{equation}\label{zero1}
\frac{d}{ds}|_{s=0}\gamma_{\adimn}(\Omega_{i}^{(s)})=\int_{\redb\Omega_{i}}\sum_{j\in\{1,\ldots,m\}\colon j\neq i}\langle X,N_{ij}\rangle\gamma_{\adimn}(x)dx,\qquad\forall\,1\leq i\leq m.
\end{equation}
If $X$ is chosen to be volume-preserving, i.e. if $\frac{d}{ds}|_{s=0}\gamma_{\adimn}(\Omega_{i}^{(s)})=0$ for all $1\leq i\leq m$, then the second derivative of Gaussian surface area satisfies \cite[Lemma 3.10]{heilman18} (Lemma \ref{lemma28} with $\epsilon=0$)
\begin{equation}\label{zero2}
\begin{aligned}
&\frac{d^{2}}{ds^{2}}|_{s=0}\sum_{1\leq i<j\leq m}\int_{\Sigma_{ij}^{(s)}}\gamma_{\sdimn}(x)dx
=\sum_{1\leq i<j\leq m}-\int_{\Sigma_{ij}}f_{ij}L_{ij}f_{ij}\gamma_{\sdimn}(x)dx\\
&\qquad\qquad\qquad+\sum_{1\leq i<j<k\leq m}\int_{\redS_{ij}\cap\redS_{jk}\cap \redS_{ki}}
\Big([\nabla_{\nu_{ij}}f_{ij}+q_{ij}f_{ij}]f_{ij}+[\nabla_{\nu_{jk}}f_{jk}+q_{jk}f_{jk}]f_{jk}\\
 &\qquad\qquad\qquad\qquad\qquad\qquad\qquad\qquad\qquad+[\nabla_{\nu_{ki}}f_{ki}+q_{ki}f_{ki}]f_{ki}\Big)\gamma_{\sdimn}(x)dx.
\end{aligned}
\end{equation}
Here
\begin{equation}\label{zero4}
L_{ij}\colonequals \Delta-\langle x,\nabla \rangle+\vnorm{A}^{2}+1,\qquad
\mathcal{L}_{ij}\colonequals \Delta-\langle x,\nabla \rangle,\qquad
\forall\,1\leq i<j\leq m,
\end{equation}
where $\Delta$ is the Laplacian (sum of second derivatives) on the surface $\Sigma_{ij}$, $\nabla$ is the gradient on $\Sigma_{ij}$, $A=A_{x}$ is the matrix of first order partial derivatives of the unit exterior normal vector $N(x)$ (see \eqref{three1}), $\vnorm{A}^{2}$ is the sum of the squares of entries of $A$ at $x\in\Sigma_{ij}$, $\nu_{ij}$ is the unit exterior normal to $\partial\Sigma_{ij}$ and $q_{ij}\colonequals[\langle \nabla_{\nu_{kj}}\nu_{kj},N_{kj}\rangle+\langle \nabla_{\nu_{ki}}\nu_{ki},N_{ki}\rangle]/\sqrt{3}$, so that
\begin{equation}\label{zero1.5}
q_{ij}+q_{jk}+q_{ki}=0,\qquad\forall\,1\leq i<j<k\leq m,
\end{equation}
since $N_{ij}=-N_{ji}$ by Definition of $N_{ij}$.

We now state two Lemmas.  Assuming these Lemmas, we prove Conjecture \ref{conj0}, thereby simplifying the proof of \cite{milman18b}.

\begin{lemma}[\embolden{Dimension Reduction}]\label{dimredthm}
Suppose $\Omega_{1},\ldots\Omega_{m}\subset\R^{\adimn}$ minimize Problem \ref{prob1}.  Then there exists $0\leq \ell\leq m-1$ and there exist $\Omega_{1}',\ldots,\Omega_{m}'\subset\R^{\ell}$ such that, after rotating $\Omega_{1},\ldots,\Omega_{m}$, we have
$$\Omega_{i}=\Omega_{i}'\times\R^{\sdimn-\ell+1}.$$
Moreover $\ell$ can be chosen to be the dimension of the span of
$$\Big\{\Big(\int_{\redb\Omega_{1}}\sum_{\substack{j\in\{1,\ldots,m\}\colon\\ j\neq 1}}\langle v,N_{1j}\rangle\gamma_{\sdimn}(x)dx,
\ldots,\int_{\redb\Omega_{m}}\sum_{\substack{j\in\{1,\ldots,m\}\colon\\ j\neq m}}\langle v,N_{mj}\rangle\gamma_{\sdimn}(x)dx\Big)\in\R^{m}\colon v\in\R^{\adimn}\Big\}.
$$
\end{lemma}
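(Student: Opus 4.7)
The plan is to identify the kernel $K\colonequals\ker T$ of the linear map $T\colon \R^{\adimn}\to\R^{m}$,
\begin{equation*}
T(v)_{i}\colonequals\int_{\redb\Omega_{i}}\sum_{j\neq i}\langle v,N_{ij}\rangle\gamma_{\sdimn}(x)\,dx,
\end{equation*}
with the space of directions $v\in\R^{\adimn}$ along which the constant vector field $X\equiv v$ is first-order volume-preserving for $(\Omega_{1},\ldots,\Omega_{m})$. The divergence theorem combined with $\nabla\gamma_{\adimn}(x)=-x\gamma_{\adimn}(x)$ makes this identification explicit (cf.\ \eqref{zero1}), up to the normalization $\gamma_{\sdimn}=\sqrt{2\pi}\,\gamma_{\adimn}$. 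Since $N_{ij}=-N_{ji}$ and $\Sigma_{ij}=\Sigma_{ji}$, one has $\sum_{i}T(v)_{i}=0$, so $\ell\colonequals\dim\mathrm{image}(T)\le m-1$ and $\dim K=\adimn-\ell$. The lemma will follow once I show that every $v\in K$ is tangent to every $\Sigma_{ij}$, i.e.\ $\langle v,N_{ij}\rangle\equiv 0$ pointwise on $\Sigma_{ij}$: then each $\Sigma_{ij}$ (hence each $\Omega_{i}$) is invariant under translations in $K$, and after rotating $K$ onto $\{0\}^{\ell}\times\R^{\adimn-\ell}$ we obtain the product decomposition $\Omega_{i}=\Omega_{i}'\times\R^{\adimn-\ell}$.

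To produce the tangency, I would apply the second-order necessary condition for the constrained minimization. First-variation gives Lagrange multipliers $\lambda_{1},\ldots,\lambda_{m}\in\R$ and the stationarity PDE $H_{ij}-\langle x,N_{ij}\rangle=\lambda_{i}-\lambda_{j}$ on each $\Sigma_{ij}$. Setting $\Lambda\colonequals\sum_{i<j}\int_{\Sigma_{ij}}\gamma_{\sdimn}\,dx-\sum_{i}\lambda_{i}\gamma_{\adimn}(\Omega_{i})$, minimality forces $\frac{d^{2}}{ds^{2}}|_{s=0}\Lambda\ge 0$ along every first-order-volume-preserving vector field, in particular along $X\equiv v$ for every $v\in K$. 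On the other hand, \eqref{zero2} applies directly to such $X$ (because ``volume-preserving'' in that lemma means first-order volume-preserving) and gives $\frac{d^{2}}{ds^{2}}|_{s=0}\sum_{i<j}\int_{\Sigma_{ij}^{(s)}}\gamma_{\sdimn}\,dx$ in terms of $f_{ij}\colonequals\langle v,N_{ij}\rangle$. The algebraic key is the Jacobi-type identity
\begin{equation*}
L_{ij}\langle v,N_{ij}\rangle=\langle v,N_{ij}\rangle\qquad\text{on }\Sigma_{ij}\text{ for every constant }v\in\R^{\adimn},
\end{equation*}
which I would verify by combining the standard identity $\Delta^{\Sigma}\langle v,N\rangle=-\vnormt{A}^{2}\langle v,N\rangle+\langle v,\nabla^{\Sigma}H\rangle$ with the computation $\nabla^{\Sigma}\langle x,N\rangle=-A(x^{T},\cdot)$ and the stationarity PDE: symmetry of $A$ then cancels the remaining terms and produces $L_{ij}f_{ij}=f_{ij}$. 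Substituting collapses the bulk contribution of \eqref{zero2} to $-\sum_{i<j}\int_{\Sigma_{ij}}f_{ij}^{2}\gamma_{\sdimn}\,dx\le 0$. The triple-junction contributions simplify by $q_{ij}+q_{jk}+q_{ki}=0$ from \eqref{zero1.5} together with $N_{ij}=-N_{ji}$, and the Lagrange correction $-\sum_{i}\lambda_{i}\frac{d^{2}}{ds^{2}}|_{s=0}\gamma_{\adimn}(\Omega_{i}^{(s)})$ is absorbed by an integration by parts against the stationarity PDE. Assembling, $\frac{d^{2}}{ds^{2}}|_{s=0}\Lambda\le 0$; sandwiched with the $\ge 0$ condition this forces $f_{ij}\equiv 0$ on every $\Sigma_{ij}$ for every $v\in K$, which is the required pointwise tangency.

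The main obstacle is the bookkeeping in the previous step: converting the raw second variation of surface area in \eqref{zero2} into a clean formula for $\frac{d^{2}}{ds^{2}}|_{s=0}\Lambda$ when $X\equiv v$ is only first-order (not all-orders) volume-preserving, so that the Lagrange multiplier corrections exactly absorb the non-Jacobi contributions and the triple-junction terms cancel cleanly via \eqref{zero1.5}. The Jacobi-type identity $L_{ij}\langle v,N_{ij}\rangle=\langle v,N_{ij}\rangle$ on the stationary $\Sigma_{ij}$ is the decisive algebraic fact, reflecting that constant translation fields are formal eigen-Jacobi fields of eigenvalue $1$ for $L_{ij}$ in the Gaussian setting. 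Granted these identifications, pointwise vanishing $\langle v,N_{ij}\rangle\equiv 0$ and hence the product decomposition $\Omega_{i}=\Omega_{i}'\times\R^{\adimn-\ell}$ with $\ell\le m-1$ follow at once.
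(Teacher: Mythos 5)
Your overall path coincides with the paper's: identify $K=\ker T$, bound $\dim\mathrm{im}(T)\le m-1$ via rank--nullity and $\sum_{i}T(v)_{i}=0$, and conclude pointwise tangency $\langle v,N_{ij}\rangle\equiv0$ on every $\Sigma_{ij}$ from the non-negativity of a second variation together with the eigenfunction identity $L_{ij}\langle v,N_{ij}\rangle=\langle v,N_{ij}\rangle$ of Lemma~\ref{lemma45}. Substituting a Lagrangian $\Lambda$ for the paper's genuinely volume-preserving deformation of Lemma~\ref{lemma27} is a standard and equivalent reformulation: the multiplier correction cancels the middle term of Lemma~\ref{lemma21} by exactly the algebra of Lemma~\ref{lemma25.3}, so this part of your plan is sound.

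The genuine gap is the treatment of non-compact variations. A constant field $X\equiv v$ is not in $C_{0}^{\infty}$, so you cannot claim that \eqref{zero2} (or Lemma~\ref{lemma28}, or Lemma~\ref{lemma29.9}) ``applies directly'': those formulas are derived by integrations by parts whose boundary contributions only cancel for compactly supported $X$ kept away from the singular strata $M_{\sdimn-2}\cup M_{\sdimn-3}$, and the bulk integrand contains $\|A\|^{2}$, whose $\gamma_{\sdimn}$-summability on $\Sigma$ is not free. The paper addresses this explicitly: it works with $X=\phi v$ for a cutoff $\phi$ (Lemma~\ref{lemma60}), uses the curvature bounds of Lemmas~\ref{lemma29b}, \ref{lemma28.5}, and \ref{lemma61} to control $\int_{\Sigma}\|A\|^{2}\gamma_{\sdimn}\,dx$, and then passes $\phi\uparrow1$ via Lemma~\ref{lemma97}. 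Without this approximation and integrability step the inequality $\frac{d^{2}}{ds^{2}}\big|_{s=0}\Lambda\ge0$ is not rigorously obtained for $X\equiv v$, and the sandwich argument does not close. Separately, your parenthetical that ``volume-preserving'' in \eqref{zero2} means only first-order preservation is a misreading: that formula is stated for the exactly volume-preserving field produced by Lemma~\ref{lemma27}; your Lagrangian route is precisely the fix that avoids needing Lemma~\ref{lemma27}, but it should be asserted as such rather than conflated with \eqref{zero2}.
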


\begin{lemma}[\embolden{Flatness}]\label{nolowdim}
For all $1\leq i\leq m$, $\partial\Omega_{i}$ consists of a union of relatively open subsets of hyperplanes.
\end{lemma}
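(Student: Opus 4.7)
The plan is to test the second variation formula \eqref{zero2} against a family of volume-preserving vector fields built from constant translations $X(x)=v$, $v\in\R^{\adimn}$, and then sum over an orthonormal basis of $v$'s so as to force $|A|^{2}\equiv 0$ on each interface $\Sigma_{ij}$. The Euler--Lagrange equation at a minimizer gives $H_{ij}(x)-\langle x,N_{ij}(x)\rangle=\lambda_{i}-\lambda_{j}$ on $\Sigma_{ij}$ together with the $120^{\circ}$ angle condition along triple junctions. Differentiating this ``generalized Gaussian CMC'' equation in the direction of the constant field $v$ yields the Jacobi-type identity $L_{ij}\langle v,N_{ij}\rangle=\langle v,N_{ij}\rangle$ on $\Sigma_{ij}$ (in other words, $(\Delta-\langle x,\nabla\rangle+|A|^{2})\langle v,N_{ij}\rangle=0$), with compatibility at the triple junctions dictated by \eqref{zero1.5} and the $120^{\circ}$ condition.

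First, I would apply Lemma \ref{dimredthm} to reduce to the case $\adimn=\ell$, so that the linear map $v\mapsto \bigl(\int_{\redb\Omega_{i}}\sum_{j\neq i}\langle v,N_{ij}\rangle\gamma_{\sdimn}(x)dx\bigr)_{i=1}^{m}$ from $\R^{\adimn}$ into $\{y\in\R^{m}\colon\sum y_{i}=0\}$ is injective. Next, for each $v\in\R^{\adimn}$ I would construct a volume-preserving vector field $X_{v}$ by adding to the constant field $v$ a smooth compactly supported correction $\eta_{v}$, localized away from the singular set; the existence follows from a straightforward dimension count (at most $m-1$ first-order volume constraints against the infinite-dimensional space of smooth compactly supported corrections), and the resulting correction contributes only a higher-order, uniformly controllable error to the second variation formula.

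Plugging $X_{v}$ into \eqref{zero2} and using the Jacobi identity together with the Gaussian integration-by-parts formula
$$\int_{\Sigma_{ij}} f\,(L_{ij}-1)f\,\gamma_{\sdimn}(x)dx = -\int_{\Sigma_{ij}}|\nabla f|^{2}\gamma_{\sdimn}(x)dx+\int_{\Sigma_{ij}}|A|^{2}f^{2}\gamma_{\sdimn}(x)dx + \text{bdry},$$
I would then sum over $v=e_{1},\ldots,e_{\adimn}$. Since $\sum_{k}\langle e_{k},N_{ij}\rangle^{2}=1$ and $\sum_{k}|\nabla\langle e_{k},N_{ij}\rangle|^{2}=|A_{ij}|^{2}$ (from $\nabla\langle v,N\rangle=-A(v^{T})$), the bulk integrals almost cancel, leaving an inequality of the form
$$0\;\le\;-\sum_{i<j}\int_{\Sigma_{ij}}|A_{ij}|^{2}\gamma_{\sdimn}(x)dx+\textup{(triple-junction and correction terms)}.$$
The triple-junction boundary contributions in \eqref{zero2} are then handled cyclically: the $120^{\circ}$ angle condition together with \eqref{zero1.5} and the Codazzi equation at the singular set imply that $\sum_{\text{cyc}}[\nabla_{\nu_{ij}}\langle v,N_{ij}\rangle+q_{ij}\langle v,N_{ij}\rangle]\langle v,N_{ij}\rangle$ has the correct sign for the inequality to force $|A_{ij}|\equiv 0$ after summation. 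Consequently each component of $\Sigma_{ij}$ is contained in a hyperplane.

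The main obstacle is the careful treatment of the singular set along triple junctions: justifying the Jacobi identity and the integration-by-parts formula up to the singular boundary, and extracting a definite sign from the cyclic boundary terms in \eqref{zero2}. A second, more technical obstacle is producing the volume-preserving correction $\eta_{v}$ with a second-order contribution that is both controllable and strictly subordinate to the $|A|^{2}$ term. Both should be accessible by combining the regularity theory for Gaussian minimal bubbles developed in \cite{milman18b,heilman18}, the one-bubble rigidity argument of \cite{mcgonagle15}, and the cyclic cancellation \eqref{zero1.5}.
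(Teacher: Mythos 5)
Your proposal goes wrong at two linked points, and the paper's argument is actually quite different from what you describe.

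First, after the dimension reduction of Lemma~\ref{dimredthm} the constant translation fields $X=v$ are precisely the ones that are \emph{not} volume-preserving: the reduction rotated so that the map $T(v)=\bigl(\int_{\redb\Omega_i}\sum_{j\ne i}\langle v,N_{ij}\rangle\gamma_\sdimn\bigr)_i$ is injective. So every nonzero $v$ fails the volume constraint, and the correction $\eta_v$ you propose to add must undo an $O(1)$ volume change; it therefore contributes at first order and at the same order in the second variation, not as a ``higher-order, uniformly controllable error.'' Second, even setting aside the volume constraint, your bookkeeping does not produce $-\int|A|^2$. With $f_{ij}=\langle v,N_{ij}\rangle$ the Jacobi identity gives $L_{ij}f_{ij}=f_{ij}$, so $-\int f_{ij}L_{ij}f_{ij}=-\int f_{ij}^2$, and the triple-junction terms vanish identically for a constant field (Remark~\ref{rk30}, i.e.\ \eqref{four75} holds). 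Summing over an orthonormal basis of $v$'s then gives $-\sum_{i<j}\gamma_\sdimn(\Sigma_{ij})$ on the right, not $-\sum\int|A|^2$; the integration-by-parts identity $\int|\nabla f|^2=\int|A|^2 f^2+\text{bdry}$ that you invoke is just a restatement of the eigenvalue equation, not an independent source of an $|A|^2$ term. If you could make the $v$'s volume-preserving you would conclude $0\le -\sum\gamma_\sdimn(\Sigma_{ij})$, a contradiction rather than flatness, which is exactly the signal that the constraint was violated.

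What the paper actually does is exploit the product structure $\Omega_i=\Omega_i'\times\R$ obtained from the dimension reduction (assuming $\adimn>m-1$). Take \emph{constants} $\alpha_{ij}$ from Lemma~\ref{lemma55} as $f_{ij}$ and multiply by the split coordinate, $g_{ij}:=x_\adimn\alpha_{ij}$. These test functions are automatically volume-preserving because the first moment of $x_\adimn$ vanishes under the Gaussian, so no correction field is needed. More importantly, by the product rule for $L$ (Remark~\ref{rk20}) together with $\mathcal{L}_{ij}x_\adimn=-x_\adimn$, one gets the new eigenvalue identity $L_{ij}g_{ij}=\vnormt{A}^2 g_{ij}$; plugging into \eqref{zero2} produces $-\int x_\adimn^2\alpha_{ij}^2\vnormt{A}^2$ directly, and integrating out $x_\adimn$ by Fubini leaves the desired $-\int_{\Sigma_{ij}'}\alpha_{ij}^2\vnormt{A}^2$. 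The triple-junction boundary term is then killed not by a sign argument but by averaging over circular permutations of $(\alpha_{ij})$ and using the antisymmetry $q_{ij}+q_{jk}+q_{ki}=0$ of \eqref{zero1.5}. So the essential idea you are missing is the multiplication by $x_\adimn$, which simultaneously solves the volume-preservation problem and converts the Jacobi eigenvalue $1$ into the curvature weight $\vnormt{A}^2$.
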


We now complete the proof of Conjecture \ref{conj0}.  By Lemma \ref{nolowdim}, $L_{ij}$ simplifies to just $\Delta-\langle x,\nabla\rangle+1$.  For every connected component $C$ of $\Omega_{1},\ldots,\Omega_{m}$, let $X_{C}$ be the vector field that is equal to the exterior unit normal vector field of $C$ on $\redb C$ and $X_{C}=0$ on every other connected component of $\Omega_{1},\ldots,\Omega_{m}$ that does not intersect $C$.  (Such a vector field exists by Lemma \ref{lemma52.6}.)  Let $U$ be the linear span of all such vector fields $X_{C}$, as $C$ ranges over the connected components of $\Omega_{1},\ldots,\Omega_{m}$. Note that $\frac{d^{2}}{ds^{2}}|_{s=0}\sum_{1\leq i<j\leq m}\int_{\Sigma_{ij}^{(s)}}\gamma_{\sdimn}(x)dx<0$ for all nonzero $X\in U$ by \eqref{zero2}, since the second term of \eqref{zero2} is zero for all $X\in U$.   If there are more than $m$ connected components of $\Omega_{1},\ldots,\Omega_{m}$, then we can form a nontrivial linear combination of these vector fields to get nonzero $(f_{ij})_{1\leq i<j\leq m}$ such that $\frac{d}{ds}|_{s=0}\gamma_{\adimn}(\Omega_{i}^{(s)})=0$ for all $1\leq i\leq m-1$ (and also for $i=m$ since $\cup_{i=1}^{m}\Omega_{i}=\R^{\adimn}$), and $\frac{d^{2}}{ds^{2}}|_{s=0}\sum_{1\leq i<j\leq m}\int_{\Sigma_{ij}^{(s)}}\gamma_{\sdimn}(x)dx<0$.  This contradicts that $\Omega_{1},\ldots,\Omega_{m}$ minimize Problem \ref{prob1}.  So, there must be exactly $m$ connected components of $\Omega_{1},\ldots,\Omega_{m}$.  The regularity condition, Lemma \ref{lemma52.6} then concludes the proof.  We know that each of $\Omega_{1},\ldots,\Omega_{m}$ is connected with flat boundary pieces, the sets $\Omega_{1},\ldots,\Omega_{m}$ meet in threes at $120$ degree angles, and they meet in fours like the cone over the three-dimensional regular simplex.  In the case $m=3$, there are only three possible configurations for the sets (up to rotation).  And for general $m$, there are only finitely many possible configurations of the sets (up to rotation).  So, we can conclude the proof by either (i) checking the Gaussian surface area of each such case directly, (ii) using the matrix-valued partial differential inequality and maximal principle argument of \cite{milman18b}, or (iii) appealing directly to the result of \cite{milman18b}.
\end{proof}
\begin{remark}
The last paragraph of the above proof actually classifies all stable local minima of $\sum_{1\leq i<j\leq m}\int_{\Sigma_{ij}}\gamma_{\sdimn}(x)dx$.  Such stable local minima must satisfy:
\begin{itemize}
\item $\forall$ $1\leq i\leq m$, $\Omega_{i}$ has exactly one connected component.
\item $\forall$ $1\leq i\leq m$, $\partial\Omega_{i}$ is the union of at most $m-1$ relatively open subsets of hyperplanes.
\item Assumption \ref{as1} holds (i.e. the regularity result stated there holds).
\end{itemize}
For a fixed $m$, there are then only finitely many stable local minima of the Gaussian surface area, up to rotations.
\end{remark}
\begin{remark}
This statement is still insufficient to prove the stronger Theorem \ref{mainthm2}.  A priori, there could be a sequence of sets with nearly minimal Gaussian perimeter that only become close to the minimal sets at a very slow rate.
\end{remark}

\begin{proof}[Proof of Lemma \ref{dimredthm}]
For any $v\in\R^{\adimn}$, define
$$T(v)\colonequals\Big(\int_{\redb\Omega_{1}}\sum_{j\in\{1,\ldots,m\}\colon j\neq 1}\langle v,N_{1j}\rangle\gamma_{\sdimn}(x)dx,
\ldots,\int_{\redb\Omega_{m}}\sum_{j\in\{1,\ldots,m\}\colon j\neq m}\langle v,N_{mj}\rangle\gamma_{\sdimn}(x)dx\Big).$$
Then $T\colon\R^{\adimn}\to\R^{m}$ is linear.  By the rank-nullity theorem, the dimension of the kernel of $T$ plus the dimension of the image of $T$ is $\adimn$.  Since the sum of the indices of $T(v)$ is zero for any $v\in\R^{\adimn}$ (since $N_{ij}=-N_{ji}$ $\forall$ $1\leq i<j\leq m$ by Definition \ref{defnote}), the dimension $\ell$ of the image of $T$ is at most $m-1$.

Let $v$ in the kernel of $T$.  For any $1\leq i<j\leq m$, let $f_{ij}\colonequals\phi\langle v,N_{ij}\rangle$.  Let $X\colonequals \phi v$ be the chosen vector field.  Since $\Omega_{1},\ldots,\Omega_{m}$ minimize Problem \ref{prob1},
$$0\leq \frac{d^{2}}{ds^{2}}|_{s=0}\sum_{1\leq i<j\leq m}\int_{\Sigma_{ij}^{(s)}}\gamma_{\sdimn}(x)dx.$$
From Lemmas \ref{lemma28} (with $\epsilon=w=0$), \ref{lemma29.9}, \ref{lemma97}, \ref{lemma27}, and then letting $\phi$ increase monotonically to $1$ (as in Lemma \ref{lemma97}),
\begin{equation}\label{zero2.555}
0\leq\sum_{1\leq i<j\leq m}-\int_{\Sigma_{ij}}f_{ij}L_{ij}f_{ij}\gamma_{\sdimn}(x)dx.
\end{equation}
When $v\in\R^{\adimn}$ is a constant and $X\colonequals v$ is a constant vector field, a calculation shows that \cite{mcgonagle15,barchiesi16} \cite[Lemma 4.2]{heilman17}, Lemma \ref{lemma45} with $\epsilon=0$,
\begin{equation}\label{zero2.5}
L_{ij}\langle v,N_{ij}\rangle=\langle v,N_{ij}\rangle,\qquad\forall\,1\leq i<j\leq m.
\end{equation}
That is, $(\langle v, N_{ij}\rangle)_{1\leq i<j\leq m}$ is an eigenfunction of $(L_{ij})_{1\leq i<j\leq m}$ with eigenvalue $1$ (see Remark \ref{rk30}).  So, \eqref{zero2.555} says
$$0\leq\sum_{1\leq i<j\leq m}-\int_{\Sigma_{ij}}f_{ij}^{2}\gamma_{\sdimn}(x)dx.$$
The last quantity must then be zero.  In summary, for any $v$ in the kernel of $T$, $\forall$ $1\leq i<j\leq m$, $f_{ij}(x)=\langle v,N_{ij}(x)\rangle=0$ for all $x\in\Sigma_{ij}$.  That is, $\exists$ $0\leq\ell\leq m-1$ as stated in the conclusion of Theorem \ref{dimredthm}, since the image of $T$ is the span of
$$\Big\{\Big(\int_{\redb\Omega_{1}}\sum_{\substack{j\in\{1,\ldots,m\}\colon\\ j\neq 1}}\langle v,N_{1j}\rangle\gamma_{\sdimn}(x)dx,
\ldots,\int_{\redb\Omega_{m}}\sum_{\substack{j\in\{1,\ldots,m\}\colon\\ j\neq m}}\langle v,N_{mj}\rangle\gamma_{\sdimn}(x)dx\Big)\in\R^{m}\colon v\in\R^{\adimn}\Big\}.$$

\end{proof}

\begin{proof}[Proof of Lemma \ref{nolowdim}]  
Assume for now that $\adimn>m-1$ so that $\sdimn-\ell+1\geq\sdimn-(m-1)+1>0$ in Theorem \ref{dimredthm}.  Then after rotating $\Omega_{1},\ldots,\Omega_{m}$, we have
$$\Omega_{i}=\Omega_{i}'\times\R.$$
Let $\{\alpha_{ij}\}_{1\leq i<j\leq m}$ be constants guaranteed to exist by Lemma \ref{lemma55}.  Let $f_{ij}\colonequals\alpha_{ij}$ for all $1\leq i<j\leq m$. Define now a new function $g_{ij}\colonequals x_{\adimn}f_{ij}$.  Since $f_{ij}$ is only a function of the variables $x_{1},\ldots,x_{\sdimn}$, we have $\langle\nabla f,\nabla(x_{\adimn})\rangle=0$.  So, for any $1\leq i<j\leq m$, the product rule for $L_{ij}$ (Remark \ref{rk20}) gives
$$
L_{ij}g_{ij}\stackrel{\eqref{zero4}}{=}x_{\adimn}L_{ij}f_{ij}+f_{ij}\mathcal{L}_{ij}x_{\adimn}+\langle\nabla f,\nabla(x_{\adimn})\rangle
=x_{\adimn}L_{ij}f_{ij}-f_{ij}x_{\adimn}
\stackrel{\eqref{zero4}}{=}\vnormt{A}^{2}g_{ij}.
$$
Here $\mathcal{L}_{ij}\colonequals\Delta-\langle x,\nabla\rangle$.  By \eqref{zero1} and Fubini's Theorem, if we first integrate with respect to $x_{\adimn}$, we see that $(g_{ij})_{1\leq i<j\leq m}$ is automatically Gaussian volume-preserving, so that \eqref{zero1} is zero for all $1\leq i<j\leq m$.  Then the second-variation condition (Lemmas \ref{lemma28} with $\epsilon=w=0$) applies, and we get by \eqref{zero2}
\begin{flalign*}
&\frac{d^{2}}{ds^{2}}|_{s=0}\sum_{1\leq i<j\leq m}\int_{\Sigma_{ij}^{(s)}}\gamma_{\sdimn}(x)dx
= -\sum_{1\leq i<j\leq m}\int_{\Sigma_{ij}}x_{\adimn}^{2}\alpha_{ij}^{2}\vnormt{A}^{2}\gamma_{\sdimn}(x)dx\\
&\qquad\qquad\qquad+\sum_{1\leq i<j<k\leq m}\int_{\redS_{ij}\cap\redS_{jk}\cap \redS_{ki}}
x_{\adimn}^{2}\Big(\alpha_{ij}^{2}q_{ij}+\alpha_{jk}^{2}q_{jk}+\alpha_{ki}^{2}q_{ki}\Big)\gamma_{\sdimn}(x)dx.
\end{flalign*}
After integrating in $x_{\adimn}$ and applying Fubini's Theorem, this becomes
\begin{flalign*}
&\frac{d^{2}}{ds^{2}}|_{s=0}\sum_{1\leq i<j\leq m}\int_{\Sigma_{ij}^{(s)}}\gamma_{\sdimn}(x)dx
= -\sum_{1\leq i<j\leq m}\int_{\Sigma_{ij}'}\alpha_{ij}^{2}\vnormt{A}^{2}\gamma_{\sdimn}(x)dx\\
&\qquad\qquad\qquad+\sum_{1\leq i<j<k\leq m}\int_{\redS_{ij}'\cap\redS_{jk}'\cap \redS_{ki}'}
\Big(\alpha_{ij}^{2}q_{ij}+\alpha_{jk}^{2}q_{jk}+\alpha_{ki}^{2}q_{ki}\Big)\gamma_{\sdimn}(x)dx.
\end{flalign*}
By summing over all circular permutations of $(\alpha_{ij})_{1\leq i<j\leq m}$, the last term becomes zero by \eqref{zero1.5}.  So, there exists $(\alpha_{ij})_{1\leq i<j\leq m}$ such that the last term is nonpositive.  That is, there exists nonzero $(\alpha_{ij})_{1\leq i<j\leq m}$ such that
$$
\frac{d^{2}}{ds^{2}}|_{s=0}\sum_{1\leq i<j\leq m}\int_{\Sigma_{ij}^{(s)}}\gamma_{\sdimn}(x)dx
\leq -\sum_{1\leq i<j\leq m}\int_{\Sigma_{ij}}\alpha_{ij}^{2}\vnormt{A}^{2}\gamma_{\sdimn}(x)dx
$$

So, in the case $\adimn>m-1$, we must have $\vnormt{A}=0$ identically, for all $1\leq i<j\leq m$.  Since the minimum value of Problem \ref{prob1} can only decrease as $\adimn$ increases, we also have $\vnormt{A}=0$ in the remaining case $\adimn=m-1$.

\end{proof}

\subsection{Symmetric Sets}

It seems appropriate to now prove Theorem \ref{symthm}.

\begin{proof}[Proof of Theorem \ref{symthm}]
Let $\Omega\subset\R^{\adimn}$ minimize Problem \ref{prob6}.  Let $\Sigma\colonequals\partial\Omega$.  The second inequality of the Theorem was already shown in \cite{heilman17}.  So, it suffices to show the first inequality.  It further suffices by the argument of \cite[Corollary 6.2]{heilman17} to show that $\delta\leq2$ where
$$
\delta=\delta(\Sigma)
\colonequals\sup_{\substack{f\colon\Sigma\to\R\\ f\,\,\mathrm{a}\,\,C^{\infty}\,\,\mathrm{compactly}\\\mathrm{supported}\,\,\mathrm{function}}}
\frac{\int_{\Sigma}fLf\gamma_{\sdimn}(x)dx}{\int_{\Sigma}f^{2}\gamma_{\sdimn}(x)dx}.
$$
We therefore argue by contradiction.  Assume $\delta>2$.  From \cite[Lemma 5.1]{heilman17}, there is a nonzero compactly supported Dirichlet eigenfunction $f$ of $L$ such that $Lf=\delta f$ and such that $f(x)=f(-x)$ for all $x\in\Sigma$.    By the assumption of Theorem \ref{symthm}, we may assume that $\Omega=\Omega'\times\R$ and $f$ does not depend on $x_{\adimn}$.  Consider the function $g(x)\colonequals(x_{\adimn}^{2}-1)f(x)$.  This function also satisfies $g(x)=g(-x)$ for all $x\in\R^{\adimn}$.  By Fubini's Theorem, this function automatically preserves the Gaussian volume of $\Omega$, i.e.
$$\int_{\Sigma}(x_{\adimn}^{2}-1)f\gamma_{\sdimn}(x)dx=\int_{\R}(x_{\adimn}^{2}-1)\gamma_{1}(x_{\adimn})\int_{\Sigma'}f\gamma_{\sdimn-1}(x_{1},\ldots,x_{\sdimn})dx_{1}\cdots dx_{\sdimn}=0.$$
As in the above proof, since $f$ does not depend on $x_{\adimn}$, $\langle\nabla f,\nabla(x_{\adimn}^{2}-1)\rangle=0$, and by the product rule for $L$ (Remark \ref{rk20}) we have
\begin{flalign*}
Lg&\stackrel{\eqref{zero4}}{=}(x_{\adimn}^{2}-1)Lf+f\mathcal{L}(x_{\adimn}^{2}-1)+\langle\nabla f,\nabla(x_{\adimn}^{2}-1)\rangle\\
&=(x_{\adimn}^{2}-1)Lf+f(2-2x_{\adimn}^{2})
\stackrel{\eqref{zero4}}{=}(\delta-2)g.
\end{flalign*}
From Lemmas \ref{lemma28} (with $\epsilon=w=0$) and \ref{lemma27},
$$\frac{d^{2}}{ds^{2}}|_{s=0}\int_{\Sigma^{(s)}}\gamma_{\sdimn}(x)dx=\int_{\Sigma}-fLf\gamma_{\sdimn}(x)dx=-(\delta-2)\int_{\Sigma}f^{2}\gamma_{\sdimn}(x)dx<0.$$
This inequality violates the minimality of $\Omega$, achieving a contradiction, and completing the proof.
\end{proof}

\section{Preliminaries and Notation}\label{secpre}

We say that $\Sigma\subset\R^{\adimn}$ is an $\sdimn$-dimensional $C^{\infty}$ manifold with boundary if $\Sigma$ can be locally written as the graph of a $C^{\infty}$ function on a relatively open subset of $\{(x_{1},\ldots,x_{\sdimn})\in\R^{\sdimn}\colon x_{\sdimn}\geq0\}$.  For any $(\adimn)$-dimensional $C^{\infty}$ manifold $\Omega\subset\R^{\adimn}$ with boundary, we denote
\begin{equation}\label{c0def}
\begin{aligned}
C_{0}^{\infty}(\Omega;\R^{\adimn})
&\colonequals\{f\colon \Omega\to\R^{\adimn}\colon f\in C^{\infty}(\Omega;\R^{\adimn}),\, f(\partial\partial \Omega)=0,\\
&\qquad\qquad\qquad\exists\,r>0,\,f(\Omega\cap(B(0,r))^{c})=0\}.
\end{aligned}
\end{equation}
We also denote $C_{0}^{\infty}(\Omega)\colonequals C_{0}^{\infty}(\Omega;\R)$.  We let $\mathrm{div}$ denote the divergence of a vector field in $\R^{\adimn}$.  For any $r>0$ and for any $x\in\R^{\adimn}$, we let $B(x,r)\colonequals\{y\in\R^{\adimn}\colon\vnormt{x-y}\leq r\}$ be the closed Euclidean ball of radius $r$ centered at $x\in\R^{\adimn}$.  Here $\partial\partial\Omega$ refers to the $(\sdimn-1)$-dimensional boundary of $\Omega$.

\begin{definition}[\embolden{Reduced Boundary}]\label{defred}
A measurable set $\Omega\subset\R^{\adimn}$ has \embolden{locally finite surface area} if, for any $r>0$,
$$\sup\left\{\int_{\Omega}\mathrm{div}(X(x))dx\colon X\in C_{0}^{\infty}(B(0,r),\R^{\adimn}),\, \sup_{x\in\R^{\adimn}}\vnormt{X(x)}\leq1\right\}<\infty.$$
Equivalently, $\Omega$ has locally finite surface area if $\nabla 1_{\Omega}$ is a vector-valued Radon measure such that, for any $x\in\R^{\adimn}$, the total variation
$$
\vnormt{\nabla 1_{\Omega}}(B(x,1))
\colonequals\sup_{\substack{\mathrm{partitions}\\ C_{1},\ldots,C_{m}\,\mathrm{of}\,B(x,1) \\ m\geq1}}\sum_{i=1}^{m}\vnormt{\nabla 1_{\Omega}(C_{i})}
$$
is finite \cite{cicalese12}.  If $\Omega\subset\R^{\adimn}$ has locally finite surface area, we define the \embolden{reduced boundary} $\redb \Omega$ of $\Omega$ to be the set of points $x\in\R^{\adimn}$ such that
$$N(x)\colonequals-\lim_{r\to0^{+}}\frac{\nabla 1_{\Omega}(B(x,r))}{\vnormt{\nabla 1_{\Omega}}(B(x,r))}$$
exists, and it is exactly one element of $S^{\sdimn}\colonequals\{x\in\R^{\adimn}\colon\vnorm{x}=1\}$.
\end{definition}

\begin{lemma}[\embolden{Existence}]\label{lemma51p}
There exist measurable $\Omega_{1},\ldots,\Omega_{m}\subset\R^{\adimn}$ minimizing Problem \ref{prob1p}.
\end{lemma}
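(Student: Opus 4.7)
The plan is to apply the direct method of the calculus of variations. Let $F$ denote the functional to be minimized in Problem \ref{prob1p}. I would take a minimizing sequence $(\Omega_1^{(k)},\ldots,\Omega_m^{(k)})_{k\geq 1}$ of measurable partitions of $\R^{\adimn}$ satisfying $\gamma_{\adimn}(\Omega_i^{(k)})=a_i$ for all $1\leq i\leq m$. Since the penalty term is non-negative and since a configuration of $m$ parallel half-spaces with the prescribed Gaussian volumes gives a finite-valued competitor for $F$, the total Gaussian perimeter along the minimizing sequence is uniformly bounded.

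Next I would invoke a Gaussian BV compactness theorem: uniform bounds on Gaussian perimeter together with the uniform Gaussian volume bounds $\gamma_{\adimn}(\Omega_i^{(k)})=a_i$ force $L^1(\gamma_{\adimn})$-precompactness of $\{1_{\Omega_i^{(k)}}\}_k$ (cf.\ \cite{cicalese12} and the existence arguments used for Problem \ref{prob1} in \cite{milman18b,heilman18}). After passing to a subsequence and running a diagonal argument over $i\in\{1,\ldots,m\}$, there exist measurable $\Omega_1,\ldots,\Omega_m\subset\R^{\adimn}$ with $1_{\Omega_i^{(k)}}\to 1_{\Omega_i}$ in $L^1(\gamma_{\adimn})$ for every $1\leq i\leq m$. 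The partition identity $\sum_{i=1}^m 1_{\Omega_i^{(k)}}\equiv 1$ and the constraints $\gamma_{\adimn}(\Omega_i^{(k)})=a_i$ pass to the limit immediately.

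It then remains to pass to the limit in $F$. The total Gaussian perimeter is lower semicontinuous under $L^1(\gamma_{\adimn})$ convergence of partitions, which is the standard fact for BV functions weighted by the smooth positive density $\gamma_{\adimn}$. The penalty term is $L^1(\gamma_{\adimn})$-continuous by Cauchy--Schwarz: for each $1\leq i\leq m$ and each coordinate,
$$\left|\int_{\Omega_i^{(k)}}(x-\overline{w}^{(i)})\gamma_{\adimn}(x)\,dx-\int_{\Omega_i}(x-\overline{w}^{(i)})\gamma_{\adimn}(x)\,dx\right|\leq \bigl(\vnormt{1_{\Omega_i^{(k)}}-1_{\Omega_i}}_{L^1(\gamma_{\adimn})}\bigr)^{1/2}\cdot\vnormt{x-\overline{w}^{(i)}}_{L^2(\gamma_{\adimn})},$$
using that $|1_A-1_B|^2=|1_A-1_B|$ for indicators. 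The second factor is finite since $\gamma_{\adimn}$ has all moments, and the first factor tends to $0$ by the $L^1$ convergence. Combining lower semicontinuity of the perimeter part with continuity of the penalty part yields $F(\Omega_1,\ldots,\Omega_m)\leq\liminf_k F(\Omega_1^{(k)},\ldots,\Omega_m^{(k)})$, so $(\Omega_1,\ldots,\Omega_m)$ is a minimizer.

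The main potential obstacle, as always in such existence results, is compactness of the minimizing sequence --- in particular, ruling out that mass escapes to infinity or concentrates on sets of Gaussian measure zero. Here this turns out to be essentially automatic: the exponential decay of $\gamma_{\adimn}$ simultaneously localizes the problem and makes the volume constraint genuinely restrictive, so the Gaussian-weighted BV compactness result is stronger than its Lebesgue counterpart. The only non-trivial ingredient is that Gaussian BV compactness itself, but this is well established and is already implicit in the existence arguments used for the unperturbed Problem \ref{prob1} in \cite{milman18b,heilman18}; the presence of the penalty term does not affect compactness since it is a continuous lower-order perturbation.
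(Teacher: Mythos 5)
The paper states Lemma \ref{lemma51p} without proof, apparently treating it as standard; your direct-method argument is the natural (and presumably intended) one, and it is correct. You identify the right ingredients: a finite competitor to get a uniform Gaussian perimeter bound along a minimizing sequence, Gaussian BV compactness to extract a limiting partition, lower semicontinuity of the (total, interface) Gaussian perimeter under $L^{1}(\gamma_{\adimn})$ convergence, and continuity of the barycentric penalty via your Cauchy--Schwarz bound using $|1_{A}-1_{B}|^{2}=|1_{A}-1_{B}|$ together with finiteness of Gaussian second moments. One remark worth making explicit: Problem \ref{prob1p} as stated defines the surface energy via a lower Minkowski content, which need not be lower semicontinuous by itself; the standard resolution (also implicit in \cite{milman18b,heilman18}) is to minimize over sets of locally finite perimeter using the distributional Gaussian perimeter $\tfrac12\sum_{i}P_{\gamma}(\Omega_{i})=\sum_{i<j}\gamma_{\sdimn}(\Sigma_{ij})$, which \emph{is} lower semicontinuous, and then invoke the regularity of the minimizer (Lemma \ref{lemma52.6}) to identify the two notions a posteriori. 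With that understanding, your proof is complete.
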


\begin{lemma}[\embolden{Regularity}]\label{lemma52.6}
Let $\Omega_{1},\ldots\Omega_{m}$ minimize Problem \ref{prob1p}.  Then Assumption \ref{as1} holds.
\end{lemma}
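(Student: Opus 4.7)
The plan is to show that the penalty term in Problem \ref{prob1p} is a sufficiently lower-order perturbation of the Gaussian perimeter functional that it does not affect the local regularity theory for minimizers, and then invoke the known regularity for unpenalized Gaussian multi-bubble minimizers (as in \cite{milman18b}).

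First, I would argue that the penalty term $\epsilon \sum_{i=1}^{m} \|\int_{\Omega_i}(x-\overline{w}^{(i)})\gamma_{\adimn}(x)dx\|^2$ is locally Lipschitz with respect to modifications of the sets by finite Gaussian volume. Concretely, if $(\Omega_i')_{i=1}^m$ differs from $(\Omega_i)_{i=1}^m$ inside a ball $B(x_0,r)$, then the vector $\int_{\Omega_i}(x-\overline{w}^{(i)})\gamma_{\adimn}(x)dx$ changes by at most $C \cdot \gamma_{\adimn}(\Omega_i \triangle \Omega_i')$, with a constant depending only on $m$ and $\|w\|$, because the integrand $(x-\overline{w}^{(i)})\gamma_{\adimn}(x)$ is a bounded continuous vector field. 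Squaring and summing yields a bound of the same order, up to multiplication by the (uniformly bounded) values of those integrals. Consequently, for any competitor obtained by a compactly supported Lipschitz perturbation of $(\Omega_i)$, the change in the penalty term is bounded by $C \gamma_{\adimn}(B(x_0,r)) \leq C' r^{\adimn}$, which is of strictly higher order than the $O(r^{\sdimn})$ scaling of the Gaussian perimeter.

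Next, I would use this to conclude that $(\Omega_i)_{i=1}^m$ is an $(\omega, r_0)$-almost-minimizing cluster for the Gaussian perimeter functional subject to the volume constraints, in the sense of Almgren/Maggi, with modulus $\omega(r) = C r^{\adimn-\sdimn}=Cr$. Since almost-minimality is preserved under the volume constraint (by the usual volume-fixing variations argument), and since the Gaussian density $\gamma_\sdimn$ is smooth and positive, standard regularity theory for almost-minimizing clusters applies locally: each $\redb \Omega_i$ is a $C^{1,\alpha}$ manifold away from a closed singular set $\Sigma^*$ of codimension at least two, and the smoothness is upgraded to $C^\infty$ by elliptic bootstrap (since the Euler--Lagrange equation is a smooth prescribed-mean-curvature equation with a smooth lower-order term coming from the penalty).

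The heart of the argument, and the main obstacle, is the structure of the singular set, i.e. the classification of tangent cones at points of $\Sigma^*$. For this I would invoke Jean Taylor's classification, adapted to the clustering setting as in \cite{milman18b}, \cite{heilman18}: the penalty term vanishes under rescaling at a point (being of higher order), so tangent cones to a minimizing cluster at any boundary point coincide with tangent cones of a plain perimeter-minimizing cluster. Taylor's theorem then forces tangent cones to be either hyperplanes, three half-hyperplanes meeting along a codimension-two subspace at $120^\circ$, or the cone over the $1$-skeleton of a regular tetrahedron; the remaining stratification of the singular set (triple lines, quadruple points, a singular set of codimension at least $4$) follows by Allard-type $\epsilon$-regularity around each of these tangent cones. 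This yields precisely the statement of Assumption \ref{as1}. The only delicate point is ensuring that the penalty term, which is nonlocal (it depends on the global shape of $\Omega_i$ through an integral), does not interfere with the local blow-up analysis; this is guaranteed by the quantitative higher-order bound established in the first step, which makes the penalty disappear from the limit problem after rescaling.
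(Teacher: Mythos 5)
Your proposal is correct and takes essentially the same approach as the paper's proof: both argue that the penalty term is a lower-order (Lipschitz-in-symmetric-difference) perturbation of the Gaussian perimeter, deduce that minimizers of Problem \ref{prob1p} are $\Lambda$-minimizing (equivalently, $(\omega,r_0)$-almost-minimizing with $\omega(r)=Cr$) clusters, and then invoke the known cluster regularity theory (\cite{david09,david10,white90s,colombo17}) together with Taylor's classification of tangent cones to deduce the stratification in Assumption \ref{as1}. The paper is slightly more explicit in deriving the local density estimate $\sum_i P(\Omega_i\cap B_r(x))\le C_0 r^{\sdimn}$ and in converting the Gaussian $\Lambda$-minimality into a quasi-minimality inequality for the Euclidean perimeter (the form in which the cited regularity results are stated); your constants should also carry a dependence on the basepoint $\vnormt{x_0}$, as the paper's do, since one compares localized Gaussian quantities, but this is a minor imprecision that does not affect the argument.
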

\begin{proof}
We need to verify that the sets $\Omega_{1},\ldots,\Omega_{m}$ satisfy the assumptions of the regularity results of \cite{david09,david10} for $m=2,3$,  \cite{white90s} announced for any $m\geq2$, proved in \cite{colombo17}.
In this proof, we let $P$ denote the surface area of a set, and we let $P_{\gamma}$ denote the Gaussian surface area of a set.  Fix $x\in\partial\Omega_{i'}$ for some $i'\in\{1,\ldots,m\}$ and fix $0<r<1$.  Since $\Omega_{1},\ldots\Omega_{m}$ minimize problem \ref{prob1p}, for every disjoint $F_{1},\ldots,F_{m}\subset\R^{\adimn}$ with locally finite perimeter such that $F_{i}\Delta \Omega_{i}\subset B_{2r}(x)$ it holds that
\begin{equation}\label{reg1}
\sum_{i=1}^{m}P_{\gamma}(\Omega_{i}) \leq \sum_{i=1}^{m}P_{\gamma}(F_{i})  + C\gamma_{\adimn}(F_{i}\Delta\Omega_{i})
\end{equation}
for some constant $C$ depending on $\vnormt{x}$. If we choose $F_{1}\colonequals \Omega_{1}\cup  B_{r}(x)$ and $F_{i}\colonequals \Omega_{i}\setminus  B_{r}(x)$ for all $2\leq i\leq m$, we get
$$
\sum_{i=1}^{m}P_{\gamma}(\Omega_{i}) \leq P_{\gamma}(\Omega_{1}\cup B_{r}(x))+\sum_{i=2}^{m}P_{\gamma}(\Omega_{i}\setminus B_{r}(x))  + C\gamma_{\adimn}(B_{r}(x)).
$$
On the other hand, arguing as in \cite[Lemma 12.22]{maggi12},
$$
P_{\gamma}(\Omega_{1}\cup B_{r}(x)) +P_{\gamma}(\Omega_{1}\cap B_{r}(x)) \leq P_{\gamma}(\Omega_{1})  + P_{\gamma}(B_{r}(x)).
$$
$$
P_{\gamma}(\Omega_{i}\cup B_{r}(x)^{c}) +P_{\gamma}(\Omega_{i}\cap B_{r}(x)^{c}) \leq P_{\gamma}(\Omega_{i})  + P_{\gamma}(B_{r}(x)^{c}).\qquad\forall\,2\leq i\leq m.
$$
The previous three inequalities give
$$
P_{\gamma}(\Omega_{1}\cap B_{r}(x))+\sum_{i=2}^{m}P_{\gamma}(\Omega_{i}\cup B_{r}(x)^{c}) \leq \sum_{i=1}^{m} P_{\gamma}(B_{r}(x))+C\gamma_{\adimn}(B_{r}(x))\leq Cr^{\sdimn}.
$$
The left-hand side can be estimated by
$$
P_{\gamma}(\Omega_{1}\cap B_{r}(x))+\sum_{i=2}^{m}P_{\gamma}(\Omega_{i}\cup B_{r}(x)^{c}) \geq ce^{-\vnormt{x}^{2}}\sum_{i=1}^{m}P(\Omega_{i}\cap B_{r}(x)).
$$
Therefore we obtain
\begin{equation}\label{reg2}
\sum_{i=1}^{m}P(\Omega_{i}\cap B_{r}(x))\leq C_{0}r^{\sdimn}
\end{equation}
for some constant $C_{0}=C_{0}(\vnormt{x})$. Note that for every $y\in B_{r}(x)$ and $0<r<1$ we have
$$
\abs{e^{-\vnormt{x}^{2}/2}-e^{-\vnormt{y}^{2}/2}}\leq Cr.
$$
for some constant $C$. Therefore, \eqref{reg1} and \eqref{reg2} imply that for all sets $F_{1},\ldots,F_{m}\subset\R^{\adimn}$ with locally finite perimeter such that $F_{i}\Delta \Omega_{i}\subset B_{r}(x)$ and $0<r<1$ it holds that
$$
\sum_{i=1}^{m}P(\Omega_{i}\cap B_{r}(x)) \leq \sum_{i=1}^{m}P(F_{i}\cap B_{r}(x)) + Cr^{\adimn}.
$$
for some constant $C$ depending on $\vnormt{x}$.
\end{proof}

Let $Y_{1}\subset\R^{2}$ denote three half-lines meeting at a single point with $120$-degree angles between them.  Let $T'\subset\R^{3}$ be the one-dimensional boundary of a regular tetrahedron centered at the origin, and let $T_{2}\subset\R^{3}$ be the cone generated by $T'$, so that $T_{2}=\{rx\in\R^{3}\colon r\geq0,\, x\in T'\}$.

\begin{assumption}\label{as1}
The sets $\Omega_{1},\ldots\Omega_{m}\subset\R^{\adimn}$ satisfy the following conditions.  First, $\cup_{i=1}^{m}\Omega_{i}=\R^{\adimn}$, $\sum_{i=1}^{m}\gamma_{\adimn}(\Omega_{i})=1$.  Also, $\Sigma\colonequals\cup_{i=1}^{m}\partial\Omega_{i}$ can be written as the disjoint union $M_{\sdimn}\cup M_{\sdimn-1}\cup M_{\sdimn-2}\cup M_{\sdimn-3}$ where $0<\alpha<1$ and
\begin{itemize}
\item[(i)] $M_{\sdimn}$ is a locally finite union of embedded $C^{\infty}$ $\sdimn$-dimensional manifolds.
\item[(ii)] $M_{\sdimn-1}$ is a locally finite union of embedded $C^{\infty}$ $(\sdimn-1)$-dimensional manifolds, near which $M$ is locally diffeomorphic to $Y_{1}\times\R^{\sdimn-1}$.
\item[(iii)] $M_{\sdimn-2}$ is a locally finite union of embedded $C^{1,\alpha}$ $(\sdimn-2)$-dimensional manifolds, near which $M$ is locally diffeomorphic to $T_{2}\times\R^{\sdimn-2}$.
\item[(iv)] $M_{\sdimn-3}$ is relatively closed, $(\sdimn-3)$-rectifiable, with locally finite $(n-3)$-dimensional Hausdorff measure.
\end{itemize}
\end{assumption}

Below, when $\Sigma_{ij}\colonequals(\redb\Omega_{i})\cap(\redb\Omega_{j})$ for some $1\leq i<j\leq m$, we denote $\redS_{ij}\colonequals M_{\sdimn-2}\cap(\partial\Omega_{i})\cap(\partial\Omega_{j})$, where $M_{\sdimn-2}$ is defined in Assumption \ref{as1}.

\subsection{Submanifold Curvature}\label{seccurvature}

Here we cover some basic definitions from differential geometry of submanifolds of Euclidean space.

Let $\nabla$ denote the standard Euclidean connection, so that if $X,Y\in C_{0}^{\infty}(\R^{\adimn},\R^{\adimn})$, if $Y=(Y_{1},\ldots,Y_{\adimn})$, and if $u_{1},\ldots,u_{\adimn}$ is the standard basis of $\R^{\adimn}$, then $\nabla_{X}Y\colonequals\sum_{i=1}^{\adimn}(X (Y_{i}))u_{i}$.  Let $N$ be the outward pointing unit normal vector of an $\sdimn$-dimensional orientable hypersurface $\Sigma\subset\R^{\adimn}$.  For any vector $x\in\Sigma$, we write $x=x^{T}+x^{N}$, so that $x^{N}\colonequals\langle x,N\rangle N$ is the normal component of $x$, and $x^{T}$ is the tangential component of $x\in\Sigma$.

Let $e_{1},\ldots,e_{\sdimn}$ be a (local) orthonormal frame of $\Sigma\subset\R^{\adimn}$.  That is, for a fixed $x\in\Sigma$, there exists a neighborhood $U$ of $x$ such that $e_{1},\ldots,e_{\sdimn}$ is an orthonormal basis for the tangent space of $\Sigma$, for every point in $U$ \cite[Proposition 11.17]{lee03}.

Define the \embolden{mean curvature} of $\Sigma$ by
\begin{equation}\label{three0.5}
H\colonequals\mathrm{div}(N)=\sum_{i=1}^{\sdimn}\langle\nabla_{e_{i}}N,e_{i}\rangle.
\end{equation}

Define the \embolden{second fundamental form} $A=(a_{ij})_{1\leq i,j\leq\sdimn}$ of $\Sigma$ so that
\begin{equation}\label{three1}
a_{ij}=\langle\nabla_{e_{i}}e_{j},N\rangle,\qquad\forall\,1\leq i,j\leq \sdimn.
\end{equation}
Compatibility of the Riemannian metric says $a_{ij}=\langle\nabla_{e_{i}}e_{j},N\rangle=-\langle e_{j},\nabla_{e_{i}}N\rangle+ e_{i}\langle N,e_{j}\rangle=-\langle e_{j},\nabla_{e_{i}}N\rangle$, $\forall$ $1\leq i,j\leq \sdimn$.  So, multiplying by $e_{j}$ and summing this equality over $j$ gives
\begin{equation}\label{three2}
\nabla_{e_{i}}N=-\sum_{j=1}^{\sdimn}a_{ij}e_{j},\qquad\forall\,1\leq i\leq \sdimn.
\end{equation}

Using $\langle\nabla_{N}N,N\rangle=0$,
\begin{equation}\label{three4}
H\stackrel{\eqref{three0.5}}{=}\sum_{i=1}^{\sdimn}\langle \nabla_{e_{i}} N,e_{i}\rangle
\stackrel{\eqref{three2}}{=}-\sum_{i=1}^{\sdimn}a_{ii}.
\end{equation}

When $\Sigma\colonequals\partial\Omega$ itself has a boundary that is a $C^{\infty}$ $(\sdimn-1)$-dimensional manifold, we let $\nu$ denote the unit normal of $\partial\Sigma$ pointing exterior to $\Sigma$.

\subsection{Colding-Minicozzi Theory for Mean Curvature Flow}

A key aspect of the Colding \!-Minicozzi theory is the study of eigenfunctions of\ the differential operator $L$, defined for any $C^{\infty}$ function $f\colon\Sigma\to\R$ by
\begin{equation}\label{three4.5}
L f\colonequals \Delta f-\langle x,\nabla f\rangle+f+\vnormt{A}^{2}f.
\end{equation}
\begin{equation}\label{three4.3}
\mathcal{L}f\colonequals \Delta f-\langle x,\nabla f\rangle.
\end{equation}
Note that there is a factor of $2$ difference between our definition of $L$ and the definition of $L$ in \cite{colding12a}.
Here $e_{1},\ldots,e_{\sdimn}$ is a (local) orthonormal frame for an orientable $C^{\infty}$ $\sdimn$-dimensional hypersurface $\Sigma\subset\R^{\adimn}$ with $\redS=\emptyset$, $\Delta\colonequals\sum_{i=1}^{\sdimn}\nabla_{e_{i}}\nabla_{e_{i}}$ be the Laplacian associated to $\Sigma$, $\nabla\colonequals\sum_{i=1}^{\sdimn}e_{i}\nabla_{e_{i}}$ is the gradient associated to $\Sigma$, $A=A_{x}$ is the second fundamental form of $\Sigma$ at $x$, and $\vnormt{A_{x}}^{2}$ is the sum of the squares of the entries of the matrix $A_{x}$.  Let $\mathrm{div}_{\tau}\colonequals\sum_{i=1}^{\sdimn}\nabla_{e_{i}}\langle\cdot,e_{i}\rangle$ be the (tangential) divergence of a vector field on $\Sigma$.  Note that $L$ is an Ornstein-Uhlenbeck-type operator.  In particular, if $\Sigma$ is a hyperplane, then $A_{x}=0$ for all $x\in\Omega$, so $L$ is exactly the usual Ornstein-Uhlenbeck operator, plus the identity map.

\begin{lemma}[\embolden{Linear Eigenfunction of $L$}, {\cite{mcgonagle15,barchiesi16}} {\cite[Lemma 4.2]{heilman17}}]\label{lemma45}
Let $\Sigma\subset\R^{\adimn}$ be an orientable $C^{\infty}$ $\sdimn$-dimensional hypersurface.  Let $\scon\in\R$, $z\in\R^{\adimn}$.  Suppose
\begin{equation}\label{three0n}
H(x)=\langle x,N\rangle+\scon-\epsilon\langle x,z\rangle,\qquad\forall\,x\in\Sigma.
\end{equation}
Let $v\in\R^{\adimn}$.  Then 
\begin{equation}\label{three0p}
L\langle v,N\rangle=\langle v,N\rangle-\epsilon\langle v,z\rangle+\epsilon\langle v,N\rangle\langle z,N\rangle.
\end{equation}
\end{lemma}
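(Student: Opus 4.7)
The plan is a pointwise computation at an arbitrary $x_{0}\in\Sigma$. By the product rule and \eqref{three2}, all derivatives of $\langle v,N\rangle$ reduce to contractions of the second fundamental form $A$ with the constant vector $v$, so the goal is to repackage these contractions into $\nabla H$ (via Codazzi's equation) and then invoke the hypothesized formula for $H$ to finish.

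First I would choose a local orthonormal frame $e_{1},\ldots,e_{\sdimn}$ that is geodesic normal at $x_{0}$, so that $(\nabla_{e_{i}}e_{j})^{T}(x_{0})=0$ and hence $\nabla_{e_{i}}e_{j}(x_{0})=a_{ij}(x_{0})N(x_{0})$. By \eqref{three2}, $e_{i}\langle v,N\rangle=-\sum_{j}a_{ij}\langle v,e_{j}\rangle$, so
\[
\nabla\langle v,N\rangle=-\sum_{i,j}a_{ij}\langle v,e_{j}\rangle e_{i},\qquad \langle x,\nabla\langle v,N\rangle\rangle=-\sum_{i,j}a_{ij}\langle v,e_{j}\rangle\langle x,e_{i}\rangle.
\]
By the symmetry $a_{ij}=a_{ji}$, the latter expression also equals $\langle v,\nabla\langle x,N\rangle\rangle$. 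Differentiating $e_{i}\langle v,N\rangle=-\sum_{j}a_{ij}\langle v,e_{j}\rangle$ once more and summing over $i$ at $x_{0}$ gives
\[
\Delta\langle v,N\rangle = -\sum_{i,j}(e_{i}a_{ij})\langle v,e_{j}\rangle - \vnormt{A}^{2}\langle v,N\rangle.
\]
Codazzi's identity $e_{i}a_{ij}=e_{j}a_{ii}$ in the normal frame at $x_{0}$, combined with $\sum_{i}a_{ii}=-H$ from \eqref{three4}, collapses the first sum to $\langle v,\nabla H\rangle$, yielding the Simons-type formula $\Delta\langle v,N\rangle = \langle v,\nabla H\rangle - \vnormt{A}^{2}\langle v,N\rangle$.

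Finally, the hypothesis $H=\langle x,N\rangle+\scon-\epsilon\langle x,z\rangle$ gives $\nabla H=\nabla\langle x,N\rangle-\epsilon z^{T}$, where $z^{T}$ is the tangential component of $z$. Pairing with $v$, using the symmetry identity $\langle v,\nabla\langle x,N\rangle\rangle=\langle x,\nabla\langle v,N\rangle\rangle$ established above, and $\langle v^{T},z^{T}\rangle=\langle v,z\rangle-\langle v,N\rangle\langle z,N\rangle$, substitution into
\[
L\langle v,N\rangle=\Delta\langle v,N\rangle-\langle x,\nabla\langle v,N\rangle\rangle+(1+\vnormt{A}^{2})\langle v,N\rangle
\]
causes the $\vnormt{A}^{2}\langle v,N\rangle$ contributions to cancel and the $\langle x,\nabla\langle v,N\rangle\rangle$ terms to cancel, leaving exactly $\langle v,N\rangle-\epsilon\langle v,z\rangle+\epsilon\langle v,N\rangle\langle z,N\rangle$, which is \eqref{three0p}. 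The only real delicacy is the Simons-type step for $\Delta\langle v,N\rangle$ (bookkeeping of Codazzi together with the convention $H=-\operatorname{tr}A$); the remaining manipulations are most transparent when performed in a geodesic normal frame, which is why I would set one up at the outset.
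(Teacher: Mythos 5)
Your computation is correct and follows essentially the same route as the paper's proof: set up a geodesic normal frame at a point so that $\nabla_{e_{i}}e_{j}=a_{ij}N$ there, differentiate $e_{i}\langle v,N\rangle=-\sum_{j}a_{ij}\langle v,e_{j}\rangle$ once more, invoke Codazzi and $H=-\sum_{i}a_{ii}$ to arrive at $\Delta\langle v,N\rangle=\langle v,\nabla H\rangle-\vnormt{A}^{2}\langle v,N\rangle$, and then substitute the PDE for $H$ and unwind $\langle v,z^{T}\rangle=\langle v,z\rangle-\langle v,N\rangle\langle z,N\rangle$ into the definition of $L$. The only cosmetic difference is that you state the symmetry identity $\langle x,\nabla\langle v,N\rangle\rangle=\langle v,\nabla\langle x,N\rangle\rangle$ explicitly, whereas the paper performs the same index manipulation inline.
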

\begin{proof}
Let $1\leq i\leq \sdimn$.  Then
\begin{equation}\label{three5g}
\nabla_{e_{i}}\langle v,N\rangle
=\langle v,\nabla_{e_{i}}N\rangle
\stackrel{\eqref{three2}}{=}-\sum_{j=1}^{\sdimn}a_{ij}\langle v,e_{j}\rangle.
\end{equation}
Fix $x\in\Sigma$.  Choosing the frame such that $\nabla_{e_{k}}^{T}e_{j}=0$ at $x$ for every $1\leq j,k\leq \sdimn$, we then have $\nabla_{e_{k}}e_{j}=a_{kj}N$ at $x$ by \eqref{three1}, so using also Codazzi's equation,
\begin{equation}\label{three6.5}
\begin{aligned}
\nabla_{e_{i}}\nabla_{e_{i}}\langle v,N\rangle
&=-\sum_{j=1}^{\sdimn}\nabla_{e_{i}}a_{ij}\langle v,e_{j}\rangle
-\sum_{j=1}^{\sdimn}a_{ij}\langle v,\nabla_{e_{i}}e_{j}\rangle\\
&=-\sum_{j=1}^{\sdimn}\nabla_{e_{j}}a_{ii}\langle v,e_{j}\rangle
-\sum_{j=1}^{\sdimn}a_{ij}a_{ij}\langle v,N\rangle.
\end{aligned}
\end{equation}
Therefore,
$$\Delta\langle v,N\rangle
=\sum_{i=1}^{\sdimn}\nabla_{e_{i}}\nabla_{e_{i}}\langle v,N\rangle
\stackrel{\eqref{three4}\wedge\eqref{three6.5}}{=}\langle v,\nabla H\rangle-\vnormt{A}^{2}\langle v,N\rangle.$$
So far, we have not used any of our assumptions.  Using now \eqref{three0n}, and that $A$ is symmetric,
$$
\langle v,\nabla H\rangle
\stackrel{\eqref{three2}}{=}-\sum_{i,j=1}^{\sdimn}\langle x,e_{j}\rangle a_{ij}\langle v,e_{i}\rangle-\epsilon\sum_{i=1}^{\sdimn}\langle v,e_{i}\rangle\langle e_{i},z\rangle
\stackrel{\eqref{three5g}}{=}\langle x,\nabla\langle v,N\rangle\rangle-\epsilon\langle v,z^{T}\rangle
$$
Since $z=z^{T}+z^{N}$, $z^{T}=z-\langle z,N\rangle N$.  So, in summary,
$$\Delta\langle v,N\rangle
=\langle x,\nabla\langle v,N\rangle\rangle-\vnormt{A}^{2}\langle v,N\rangle
-\epsilon\langle v,z\rangle
+\epsilon\langle v,N\rangle\langle z,N\rangle.
$$
We conclude by \eqref{three4.5}.
\end{proof}%
\begin{remark}\label{rk20}
Let $f,g\in C^{\infty}(\Sigma)$.  Using \eqref{three4.5}, we get the following product rule for $L$.
\begin{flalign*}
L(fg)
&=f\Delta g+g\Delta f+2\langle \nabla f,\nabla g\rangle-f\langle x,\nabla g\rangle-g\langle x,\nabla f\rangle+\vnormt{A}^{2}fg+fg\\
&=fLg+g\mathcal{L}f+2\langle \nabla f,\nabla g\rangle.
\end{flalign*}
\end{remark}

\section{First and Second Variation}\label{fsvar}

We will apply the calculus of variations to solve Problem \ref{prob1p}. Here we present the rudiments of the calculus of variations.

Some of the results in this section are well known to experts in the calculus of variations, and many of these results were re-proven in \cite{barchiesi16}, or adapted from \cite{hutchings02}.

Let $\Omega\subset\R^{\adimn}$ be an $(\adimn)$-dimensional $C^{2}$ submanifold with reduced boundary $\Sigma\colonequals\redb \Omega$.  Let $N\colon\redA\to S^{\sdimn}$ be the unit exterior normal to $\redA$.  Let $X\colon\R^{\adimn}\to\R^{\adimn}$ be a vector field.

Let $\mathrm{div}$ denote the divergence of a vector field.  We write $X$ in its components as $X=(X_{1},\ldots,X_{\adimn})$, so that $\mathrm{div}X=\sum_{i=1}^{\adimn}\frac{\partial}{\partial x_{i}}X_{i}$.  Let $\Psi\colon\R^{\adimn}\times(-1,1)\to\R^{\adimn}$ such that
\begin{equation}\label{nine2.3}
\Psi(x,0)=x,\qquad\qquad\frac{d}{ds}\Psi(x,s)=X(\Psi(x,s)),\quad\forall\,x\in\R^{\adimn},\,s\in(-1,1).
\end{equation}
For any $s\in(-1,1)$, let $\Omega^{(s)}\colonequals\Psi(\Omega,s)$.  Note that $\Omega^{(0)}=\Omega$.  Let $\Sigma^{(s)}\colonequals\redb\Omega^{(s)}$, $\forall$ $s\in(-1,1)$.
\begin{definition}
We call $\{\Omega^{(s)}\}_{s\in(-1,1)}$ as defined above a \embolden{variation} of $\Omega\subset\R^{\adimn}$.  We also call $\{\Sigma^{(s)}\}_{s\in(-1,1)}$ a \embolden{variation} of $\Sigma=\redb\Omega$.
\end{definition}

Equations \eqref{nine1} and \eqref{nine2} below are proven in e.g. \cite[Lemma 3.2]{heilman18}

\begin{lemma}[\embolden{First Variation}]\label{lemma10}  Let $X\in C_{0}^{\infty}(\R^{\adimn},\R^{\adimn})$.  Let $f(x)=\langle X(x),N(x)\rangle$ for any $x\in\redA$.  Then
\begin{equation}\label{nine1}
\frac{d}{ds}|_{s=0}\gamma_{\adimn}(\Omega^{(s)})=\int_{\redA}f(x) \gamma_{\adimn}(x)dx.
\end{equation}
\begin{equation}\label{nine2}
\frac{d}{ds}|_{s=0}\int_{\Sigma^{(s)}}\gamma_{\sdimn}(x)dx
=\int_{\Sigma}(H(x)-\langle N(x),x\rangle)f(x)\gamma_{\sdimn}(x)dx
+\int_{\redS}\langle X,\nu\rangle\gamma_{\sdimn}(x)dx.
\end{equation}
Let $z\colonequals\int_{\Omega}x\gamma_{\adimn}(x)dx$.  Let $w\in\R^{\adimn}$ and define $\overline{w}\colonequals w/\int_{\Omega}\gamma_{\adimn}(x)dx$.  Then
\begin{equation}\label{nine3}
\frac{d}{ds}|_{s=0}\peshs=\int_{\redA}\langle z-w,x-\overline{w}\rangle f(x)\gamma_{\sdimn}(x)dx.
\end{equation}
\end{lemma}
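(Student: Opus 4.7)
Formulas \eqref{nine1} and \eqref{nine2} are standard first-variation identities proven explicitly in \cite[Lemma 3.2]{heilman18}, so my plan is to cite them. Briefly, \eqref{nine1} comes from the change of variables $y=\Psi(x,s)$ with Jacobian expansion $\tfrac{d}{ds}|_{s=0}|J_\Psi|=\mathrm{div}\,X$, followed by the divergence theorem applied to the vector field $\gamma_{\adimn}X$ on $\Omega$. Formula \eqref{nine2} follows from the area formula on $\Sigma^{(s)}$ combined with tangential integration by parts on $\Sigma$; the boundary integral over $\redS$ picks up the tangential component of $X$ along $\redS$.

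For the new identity \eqref{nine3}, my plan is to reduce it to a vector-valued analogue of \eqref{nine1}. Introduce the $\R^{\adimn}$-valued function
\[F(s)\colonequals \int_{\Omega^{(s)}}(x-\overline{w})\,\gamma_{\adimn}(x)\,dx,\]
so that $\pes = \sqrt{\pi/2}\,\vnormt{F(s)}^2$. By the definitions of $z\colonequals\int_\Omega x\,\gamma_{\adimn}(x)\,dx$ and $\overline{w}\colonequals w/\int_\Omega\gamma_{\adimn}(x)\,dx$, we have $F(0)=z-w$, and hence by the chain rule
\[\frac{d}{ds}\Big|_{s=0}\pes \;=\; \sqrt{2\pi}\,\langle z-w,\,F'(0)\rangle.\]
I would then compute $F'(0)$ by applying the divergence-theorem argument behind \eqref{nine1} componentwise, with the scalar integrand $(x_i-\overline{w}_i)\gamma_{\adimn}(x)$ in place of $\gamma_{\adimn}(x)$ for each $1\leq i\leq \adimn$; the proof goes through verbatim and yields
\[F'(0)=\int_\Sigma (x-\overline{w})\,f(x)\,\gamma_{\adimn}(x)\,dx.\]

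Combining the two displays and invoking the elementary identity $\sqrt{2\pi}\,\gamma_{\adimn}(x)=\gamma_{\sdimn}(x)$, immediate from the definition of $\gamma_k$ in the introduction, I obtain
\[\frac{d}{ds}\Big|_{s=0}\pes = \int_\Sigma \langle z-w,\,x-\overline{w}\rangle\, f(x)\,\gamma_{\sdimn}(x)\,dx,\]
which is precisely \eqref{nine3}. In particular, the constant $\sqrt{\pi/2}$ in the definition of the penalty is chosen so that the factor $2$ from differentiating the squared norm cancels the factor $(2\pi)^{-1/2}$ that converts $\gamma_{\adimn}$ to $\gamma_{\sdimn}$, leaving the natural Gaussian surface measure on the right-hand side. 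The proof has no substantial obstacle; the only care required is the vector-valued application of \eqref{nine1} one coordinate at a time and the dimensional bookkeeping of the two Gaussian normalizations.
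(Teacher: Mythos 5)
Your proof is correct and takes essentially the same route as the paper: both reduce \eqref{nine3} to the scalar first-variation mechanism (Jacobian differentiation plus divergence theorem) applied to the integrand $(x-\overline{w})\gamma_{\adimn}$, with the outer chain rule producing the factor $\langle z-w,\,\cdot\,\rangle$. The only presentational difference is that you factor the chain-rule step out explicitly via $F(s)$, whereas the paper carries it out inside a single combined display.
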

\begin{proof}
We only prove \eqref{nine3}.  Let $J\Psi(x,s)\colonequals\abs{\mathrm{det}(D\Psi(x,s))}$ be the Jacobian determinant of $\Psi$, $\forall$ $x\in\R^{\adimn}$, $\forall$ $s\in(-1,1)$.  Then \cite[Equation (2.28)]{chokski07} says
\begin{equation}\label{nine1.0}
\frac{d}{ds}|_{s=0}J\Psi(x,s)=\mathrm{div}X(x),\qquad\forall\,x\in\redA.
\end{equation}
So, the Chain Rule, $J\Psi(x,0)=1$ (which follows by \eqref{nine2.3}), and \eqref{nine2.3} imply
\begin{equation}\label{nine1.01}
\begin{aligned}
&\frac{1}{2}\frac{d}{ds}|_{s=0}\vnormt{\int_{\Omega^{(s)}}(x-\overline{w})\gamma_{\adimn}(x)dx}^{2}
=\frac{1}{2}\frac{d}{ds}|_{s=0}\vnormt{\int_{\Omega}(\Psi(x,s)-\overline{w})J\Psi(x,s)\gamma_{\adimn}(\Psi(x,s))dx}^{2}\\
&\quad=\int_{\Omega}\Big(\langle z-w,X(x)\rangle+\langle z-w,x-\overline{w}\rangle(\mathrm{div}(X(x))-\langle X(x),x\rangle)\Big)\gamma_{\adimn}(x)dx\\
&\quad=\int_{\Omega}\mathrm{div}(X(x)\langle z-w,x-\overline{w}\rangle\gamma_{\adimn}(x))dx
=\int_{\redA}\langle X(x),N(x)\rangle\langle z-w,x-\overline{w}\rangle\gamma_{\adimn}(x)dx.
\end{aligned}
\end{equation}
In the last line, we used the divergence theorem.  We conclude by writing $\gamma_{\adimn}=(2\pi)^{-1/2}\gamma_{\sdimn}$.
\end{proof}

\begin{definition}\label{defnote}
We let $X\in C_{0}^{\infty}(\R^{\adimn},\R^{\adimn})$ and for any $1\leq i<j\leq m$, we denote $f_{ij}(x)\colonequals\langle X(x),N_{ij}(x)\rangle$ for all $x\in\Sigma_{ij}$.  Recall that $N_{ij}$ is the unit normal vector pointing from $\Omega_{i}$ into $\Omega_{j}$, and $H_{ij}\colonequals\mathrm{div}(N_{ij})$ is the mean curvature of $N_{ij}$.  And $\nu_{ij}$ is the unit normal to $\redb\Sigma_{ij}$ pointing exterior to $\Sigma_{ij}$.
\end{definition}

\begin{figure}[h]\label{notationfig}
\centering
\def\svgwidth{.4\textwidth}
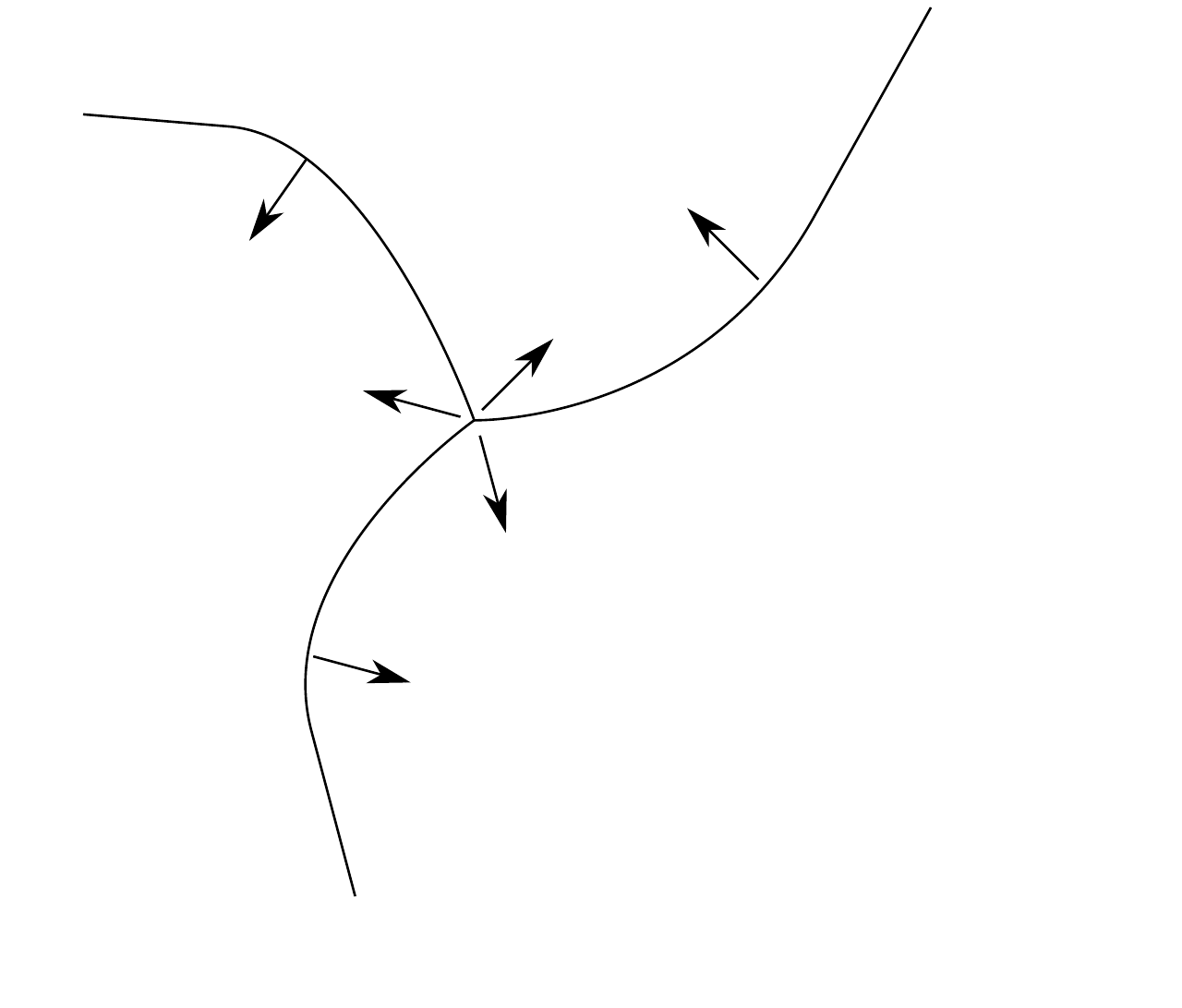
\caption{Notation for sets and normal vectors.}
\end{figure}

The following Lemma is a modification of \cite[Lemma 3.5]{heilman18}.

\begin{lemma}[\embolden{First Variation for Minimizers}]\label{lemma24}
Suppose $\Omega_{1},\ldots,\Omega_{m}$ minimize Problem \ref{prob1p}.  For all $1\leq i\leq m$, let $z^{(i)}\colonequals\int_{\Omega_{i}}x\gamma_{\adimn}(x)dx\in\R^{\adimn}$, let $w\in\R^{\adimn}$ and let $\overline{w}^{(i)}\colonequals w/\int_{\Omega_{i}}\gamma_{\adimn}(x)dx$, $\overline{z}^{(i)}\colonequals z^{(i)}/\int_{\Omega_{i}}\gamma_{\adimn}(x)dx$.  Then $\forall$ $1\leq i<j\leq m$, $\exists$ $\lambda_{ij}\in\R$ such that
\begin{equation}\label{nine5.3}
\left.
\begin{aligned}
\lambda_{ij}&=H_{ij}(x)-\langle x,N_{ij}(x)\rangle+\epsilon\langle x,z^{(i)}-z^{(j)}\rangle+\epsilon\langle w,\overline{w}^{(i)}-\overline{w}^{(j)}+\overline{z}^{(j)}-\overline{z}^{(i)}\rangle,\\
&\qquad\qquad\qquad\qquad\qquad\forall\,x\in\Sigma_{ij},\quad\forall\,1\leq i<j\leq m\\
0&=\lambda_{ij}+\lambda_{jk}+\lambda_{ki},\qquad\forall\,1\leq i<j<k\leq m\,\,\mathrm{such}\,\,\mathrm{that}\,\,\Sigma_{ij}\cap\Sigma_{jk}\cap\Sigma_{ki}\neq\emptyset\\
0&=\nu_{ij}+\nu_{jk}+\nu_{ki},\qquad\forall\,1\leq i<j<k\leq m\,\,\mathrm{such}\,\,\mathrm{that}\,\,\Sigma_{ij}\cap\Sigma_{jk}\cap\Sigma_{ki}\neq\emptyset.
\end{aligned}
\right\}
\end{equation}
\end{lemma}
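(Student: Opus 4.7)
The plan is to use Lagrange multipliers for the $m$ volume constraints $\gamma_{\adimn}(\Omega_i) = a_i$ together with the first variations supplied by Lemma \ref{lemma10}. At a minimizer there must exist real multipliers $\mu_1, \ldots, \mu_m$ such that, for every $X \in C_0^\infty(\R^{\adimn}, \R^{\adimn})$ with associated variations $\{\Omega_i^{(s)}\}$,
\begin{equation*}
\frac{d}{ds}\Big|_{s=0}\Big[\sum_{1 \le i<j\le m}\int_{\Sigma_{ij}^{(s)}}\gamma_{\sdimn}(x)\,dx + \epsilon\,\pens\Big] = \sum_{i=1}^{m}\mu_i\,\frac{d}{ds}\Big|_{s=0}\gamma_{\adimn}(\Omega_i^{(s)}).
\end{equation*}
Using $N_{ji} = -N_{ij}$ on $\Sigma_{ij}$, \eqref{nine1} shows the right side contributes $(\mu_i - \mu_j)\gamma_{\adimn}f_{ij}$ on $\Sigma_{ij}$ for $i<j$, while \eqref{nine2} shows the surface-area piece of the left side contributes $(H_{ij} - \langle N_{ij}, x\rangle)f_{ij}\gamma_{\sdimn}$ on $\Sigma_{ij}$ plus a boundary integrand $\langle X, \nu_{ij}\rangle \gamma_{\sdimn}$ on $\redS_{ij}$.

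Next I would compute the penalty variation. Applying \eqref{nine3} to the $i$-th summand of $\pens$ (with $\Omega = \Omega_i$, $z = z^{(i)}$, $\overline{w} = \overline{w}^{(i)}$) produces the bulk integrand $\langle z^{(i)} - w, x - \overline{w}^{(i)}\rangle \langle X, N_i\rangle \gamma_{\sdimn}$ on $\redb\Omega_i$. Restricting to $\Sigma_{ij}$ (where $N_i = N_{ij}$ and $N_j = -N_{ij}$) and combining the $i$-th and $j$-th summands, the expression $\langle z^{(i)} - w, x - \overline{w}^{(i)}\rangle - \langle z^{(j)} - w, x - \overline{w}^{(j)}\rangle$ simplifies, using $\langle z^{(i)}, \overline{w}^{(i)}\rangle = \langle w, \overline{z}^{(i)}\rangle$ (since $\overline{w}^{(i)} = w/a_i$ and $\overline{z}^{(i)} = z^{(i)}/a_i$), to exactly
\begin{equation*}
\langle x, z^{(i)}-z^{(j)}\rangle + \langle w, \overline{w}^{(i)} - \overline{w}^{(j)} + \overline{z}^{(j)} - \overline{z}^{(i)}\rangle,
\end{equation*}
multiplied by $\epsilon f_{ij}\gamma_{\sdimn}$.

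Matching the bulk coefficient of $f_{ij}$ on each $\Sigma_{ij}$ for arbitrary $X$, and converting $\gamma_{\adimn}$ to $\gamma_{\sdimn}$ via the factor $(2\pi)^{-1/2}$, yields the first equation of \eqref{nine5.3} with $\lambda_{ij} := (2\pi)^{-1/2}(\mu_i - \mu_j)$; the telescoping identity $\lambda_{ij}+\lambda_{jk}+\lambda_{ki}=0$ is then immediate. Since neither the volume-constraint side nor the penalty produces boundary contributions, the boundary terms in \eqref{nine2} must vanish on their own, so summing the three $\partial\Sigma$ pieces that meet at $\Sigma_{ij}\cap\Sigma_{jk}\cap\Sigma_{ki}$ gives
\begin{equation*}
\int_{\Sigma_{ij}\cap\Sigma_{jk}\cap\Sigma_{ki}} \langle X, \nu_{ij} + \nu_{jk} + \nu_{ki}\rangle\gamma_{\sdimn}(x)\,dx = 0\qquad \forall\,X,
\end{equation*}
forcing $\nu_{ij}+\nu_{jk}+\nu_{ki}=0$ along every triple junction.

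The main technical obstacle is justifying Lemma \ref{lemma10} on the possibly singular set $\Sigma = \cup_i \partial\Omega_i$. This is handled by Lemma \ref{lemma52.6} and Assumption \ref{as1}: \eqref{nine2} and \eqref{nine3} apply on each smooth piece of $M_{\sdimn}$; the $(\sdimn-1)$-dimensional stratum $M_{\sdimn-1}$ is exactly where the triple-junction boundary integrals appear; and the higher-codimension strata $M_{\sdimn-2}$, $M_{\sdimn-3}$ have codimension at least two in $\Sigma$ and contribute nothing to either bulk or boundary integrals.
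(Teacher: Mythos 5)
Your proof is correct and follows essentially the same strategy as the paper's: compute the first variation of the penalized functional from \eqref{nine1}--\eqref{nine3}, localize the variation to match the bulk and boundary coefficients, and rely on the regularity of Lemma \ref{lemma52.6} to handle the singular strata. The only stylistic difference is that you organize the coefficient-matching with explicit Lagrange multipliers $\mu_{i}$ (from which the telescoping of $\lambda_{ij}=(2\pi)^{-1/2}(\mu_{i}-\mu_{j})$ is immediate) and deduce $\nu_{ij}+\nu_{jk}+\nu_{ki}=0$ from the vanishing of the lower-dimensional boundary term, whereas the paper reads the conormal identity directly off the $Y_{1}\times\R^{\sdimn-1}$ structure guaranteed by Assumption \ref{as1}(ii); both routes are valid.
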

\begin{proof}
From Lemma \ref{lemma52.6}, Assumption \ref{as1} holds.  From \eqref{nine2} and \eqref{nine3}, if $X\in C_{0}^{\infty}(\R^{\adimn},\R^{\adimn})$
\begin{flalign*}
&\frac{d}{ds}|_{s=0}\Big(\sum_{1\leq i<j\leq m}\int_{\Sigma_{ij}^{(s)}}\gamma_{\sdimn}(x)dx+\epsilon\sqrt{\frac{\pi}{2}}\sum_{i=1}^{m}\vnormtf{\int_{\Omega_{i}^{(s)}}(x-\overline{w}^{(i)})\gamma_{\adimn}(x)dx}^{2}\Big)\\
&\qquad=\sum_{1\leq i<j\leq m}\int_{\Sigma_{ij}}(H_{ij}-\langle N_{ij},x\rangle)f_{ij}\gamma_{\sdimn}(x)dx\\
&\qquad+\sum_{1\leq i<j<k\leq m}\int_{(\redb\Sigma_{ij})\cap (\redb\Sigma_{jk})\cap (\redb\Sigma_{ki})}\langle X,\nu_{ij}+\nu_{jk}+\nu_{ki}\rangle \gamma_{\sdimn}(x)dx\\
&\qquad+\epsilon\sum_{i=1}^{m}\sum_{j\in\{1,\ldots,m\}\colon j\neq i}\int_{\Sigma_{ij}}\langle x-\overline{w}^{(i)},z^{(i)}-w\rangle f_{ij}\gamma_{\sdimn}(x)dx.
\end{flalign*}
We can then choose $X$ to be supported in the neighborhood of two or three points to deduce the above.  The final assertion follows by Lemma \ref{lemma52.6}, i.e. Assumption \ref{as1}(ii).  We also rewrite the last term as (using $N_{ij}=-N_{ji}$ by Definition \ref{defnote}),
\begin{flalign*}
&\sum_{i=1}^{m}\sum_{j\in\{1,\ldots,m\}\colon j\neq i}\int_{\Sigma_{ij}}\langle x-\overline{w}^{(i)},z^{(i)}-w\rangle f_{ij}\gamma_{\sdimn}(x)dx\\
&\qquad=\sum_{1\leq i<j\leq m}\int_{\Sigma_{ij}}f_{ij}\Big(\langle x,z^{(i)}-z^{(j)}\rangle
+\big\langle w, \Big(\frac{1}{\gamma_{\adimn}(\Omega_{i})}-\frac{1}{\gamma_{\adimn}(\Omega_{i})}\Big)w\\
&\qquad\qquad\qquad\qquad\qquad\qquad-\frac{z^{(i)}}{\gamma_{\adimn}(\Omega_{i})}+\frac{z^{(j)}}{\gamma_{\adimn}(\Omega_{j})}\Big\rangle\Big)
\gamma_{\sdimn}(x)dx.
\end{flalign*}
\end{proof}

For any $1\leq i<j\leq m$, let $f\colon C_{0}^{\infty}(\Sigma_{ij})\to\R$ and define

\begin{equation}\label{one3.5n}
L_{ij}f(x)\colonequals \Delta f(x)-\langle x,\nabla f(x)\rangle+\vnormt{A_{x}}^{2}f(x)+f(x),\qquad\forall\,x\in\Sigma_{ij}.
\end{equation}
\begin{equation}\label{one3.5o}
\mathcal{L}_{ij}f(x)\colonequals \Delta f(x)-\langle x,\nabla f(x)\rangle,\qquad\forall\,x\in\Sigma_{ij}.
\end{equation}

The Lemma below can be compared with the corresponding \cite[Proposition 3.3]{hutchings02} and \cite[Proposition 3]{barchiesi16}.

\begin{lemma}[\embolden{Second Variation for Minimizers}]\label{lemma21}
Let $X\in C_{0}^{\infty}(\R^{\adimn},\R^{\adimn})$.  Suppose $\Omega_{1},\ldots,\Omega_{m}$ minimize Problem \ref{prob1}.  Define $z^{(i)}\colonequals\int_{\Omega_{i}}x\gamma_{\adimn}(x)dx$.  Let $(\lambda_{ij})_{1\leq i<j\leq m}$ from \eqref{nine5.3}.  Then
\begin{equation}\label{nine5}
\begin{aligned}
&\frac{d^{2}}{ds^{2}}|_{s=0}\sum_{1\leq i<j\leq m}\int_{\Sigma_{ij}^{(s)}}\gamma_{\sdimn}(x)dx
+\epsilon\pens\\
&=\sum_{1\leq i<j\leq m}\Big(-\int_{\Sigma_{ij}}f_{ij}L_{ij}f_{ij}\gamma_{\sdimn}(x)dx
+\lambda_{ij}\frac{d}{ds}|_{s=0}\int_{\Sigma_{ij}^{(s)}}f_{ij}(x)\gamma_{\sdimn}(x)dx\\
&\quad+\frac{d}{ds}|_{s=0}\int_{\redS_{ij}^{(s)}}\langle X,\nu_{ij}\rangle\gamma_{\sdimn}(x)dx\Big)
+\epsilon\Big(\sum_{i=1}^{m}\sqrt{2\pi}\vnormtf{\sum_{j\in\{1,\ldots,m\}\colon j\neq i}\int_{\Sigma_{ij}}(x-\overline{w}^{(i)})f_{ij}\gamma_{\adimn}(x)dx}^{2}\\
&+\sum_{i=1}^{m}\sum_{j\in\{1,\ldots,m\}\colon j\neq i}\int_{\Sigma_{ij}}f_{ij}\langle X,z^{(i)}-w\rangle\gamma_{\sdimn}(x)dx\\
&+\sum_{i=1}^{m}\sum_{j\in\{1,\ldots,m\}\colon j\neq i}\int_{\Sigma_{ij}}\langle x-\overline{w}^{(i)},z^{(i)}-w\rangle(\mathrm{div}(X)-\langle x,X\rangle)f_{ij}\gamma_{\sdimn}(x)dx
\Big).
\end{aligned}
\end{equation}
\end{lemma}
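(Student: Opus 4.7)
The plan is to differentiate, in the variable $s$, both of the first-variation formulas \eqref{nine2} and \eqref{nine3} a second time at $s=0$, and then use the Euler-Lagrange condition \eqref{nine5.3} from Lemma \ref{lemma24} to convert the derivatives of $H_{ij}-\langle N_{ij},x\rangle$ into the Jacobi-type operator $L_{ij}$ plus $\lambda_{ij}$-type remainders.

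First I would handle the surface-area contribution, for each fixed $1\leq i<j\leq m$. The first-variation formula \eqref{nine2} holds for every time $s$ (with $N_{ij}^{(s)}$, $H_{ij}^{(s)}$, $\nu_{ij}^{(s)}$ and $\tilde f_{ij}(x,s)\colonequals\langle X(x),N_{ij}^{(s)}(x)\rangle$ in place of the time-$0$ quantities). Differentiating in $s$ and evaluating at $s=0$, the Gaussian Jacobi-operator identity
$$\frac{d}{ds}\Big|_{s=0}\bigl(H_{ij}^{(s)}(x)-\langle N_{ij}^{(s)}(x),x\rangle\bigr)=-L_{ij}f_{ij}(x)+(\text{tangential transport terms})$$
together with the product rule yields the bulk term $-\int_{\Sigma_{ij}}f_{ij}L_{ij}f_{ij}\gamma_{\sdimn}$. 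The remaining interior contributions contain the factor $H_{ij}-\langle N_{ij},x\rangle$, which I would rewrite via \eqref{nine5.3} as $\lambda_{ij}-\epsilon\langle x,z^{(i)}-z^{(j)}\rangle-\epsilon\langle w,\ldots\rangle$. The constant $\lambda_{ij}$ multiplies $\frac{d}{ds}|_{s=0}\int_{\Sigma_{ij}^{(s)}}f_{ij}\gamma_{\sdimn}$, producing the middle term on the right-hand side of \eqref{nine5}; the $\epsilon$-pieces cancel into the $\langle X,z^{(i)}-w\rangle$-term stated in the formula. The boundary contribution simply becomes $\frac{d}{ds}|_{s=0}\int_{\redS_{ij}^{(s)}}\langle X,\nu_{ij}\rangle\gamma_{\sdimn}$, and the triple-junction condition $\nu_{ij}+\nu_{jk}+\nu_{ki}=0$ from \eqref{nine5.3} guarantees the surviving boundary term is intrinsic (independent of the extension of $X$).

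Next I would treat the penalty. Writing $v_{i}(s)\colonequals\int_{\Omega_{i}^{(s)}}(x-\overline w^{(i)})\gamma_{\adimn}(x)dx$, so that the $i$-th penalty summand equals $\sqrt{\pi/2}\,\|v_{i}(s)\|^{2}$, use the elementary identity
$$\tfrac{d^{2}}{ds^{2}}\|v_{i}(s)\|^{2}=2\|v_{i}'(s)\|^{2}+2\langle v_{i}(s),v_{i}''(s)\rangle.$$
By \eqref{nine1} applied componentwise, $v_{i}'(0)=\sum_{j\neq i}\int_{\Sigma_{ij}}(x-\overline w^{(i)})f_{ij}\gamma_{\adimn}$, and multiplying $\|v_{i}'(0)\|^{2}$ by $2\sqrt{\pi/2}=\sqrt{2\pi}$ gives exactly the first penalty term on the right-hand side of \eqref{nine5}. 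For $\langle v_{i}(0),v_{i}''(0)\rangle$, write $g_{i}(s)\colonequals\langle z^{(i)}-w,v_{i}(s)\rangle=\int_{\Omega_{i}^{(s)}}\langle z^{(i)}-w,x-\overline w^{(i)}\rangle\gamma_{\adimn}$; the change-of-variables computation in \eqref{nine1.01}, carried one step further (so that the volume integrand is not assumed constant in $s$), gives
$$g_{i}'(s)=\int_{\Omega_{i}^{(s)}}\bigl[\langle z^{(i)}-w,X\rangle+\langle z^{(i)}-w,y-\overline w^{(i)}\rangle(\mathrm{div}\,X-\langle X,y\rangle)\bigr]\gamma_{\adimn}(y)dy.$$
Differentiating once more via \eqref{nine1} and multiplying by $\sqrt{2\pi}$ produces precisely the two surface integrals on the last two lines of \eqref{nine5}.

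\textbf{Main obstacle.} The delicate step is the Jacobi computation $\frac{d}{ds}|_{s=0}(H_{ij}^{(s)}-\langle N_{ij}^{(s)},x\rangle)=-L_{ij}f_{ij}+\ldots$ for a general vector field $X$ with nonzero tangential component along $\Sigma_{ij}$: the tangential piece generates transport derivatives of $H_{ij}$ and $N_{ij}$ which, after an integration by parts against $f_{ij}\gamma_{\sdimn}$, must either cancel or reassemble into boundary data at $\redS_{ij}$. Tracking these cancellations in the Gaussian-weighted setting, and verifying that the remaining boundary contribution is exactly $\frac{d}{ds}|_{s=0}\int_{\redS_{ij}^{(s)}}\langle X,\nu_{ij}\rangle\gamma_{\sdimn}$ after invoking the angle condition $\nu_{ij}+\nu_{jk}+\nu_{ki}=0$, is the most error-prone bookkeeping; this is the step where the penalty's $\epsilon$-terms reintroduce themselves through \eqref{nine5.3} and must be collected correctly.
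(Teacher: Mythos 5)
Your overall plan is the same as the paper's: differentiate the first-variation formula \eqref{nine2} once more in $s$, identify the integrand with $-L_{ij}f_{ij}$ via the Colding--Minicozzi Jacobi-operator identity $(H-\langle N,x\rangle)'=-Lf$, substitute the Euler--Lagrange relation \eqref{nine5.3} to produce the $\lambda_{ij}$-weighted term, keep the boundary flux term intact, and then add the second variation of the penalty. Where you diverge is in how you compute the penalty's second variation: the paper derives it as a special case of the general ``interaction-energy'' second-variation formula (Theorem \ref{thm4}, instantiated in Lemma \ref{lemma5} with the separable kernel $G(x,y)=\langle x-\overline w,y-\overline w\rangle\gamma_{\adimn}(x)\gamma_{\adimn}(y)$), whereas you instead invoke the elementary pointwise identity $\tfrac{d^2}{ds^2}\|v_i(s)\|^2=2\|v_i'(s)\|^2+2\langle v_i(s),v_i''(s)\rangle$ and compute $v_i''(0)$ by differentiating the scalar $g_i(s)=\langle z^{(i)}-w,v_i(s)\rangle$ one more time through the change-of-variables identity in \eqref{nine1.01}. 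The two routes are equivalent and yield the same three $\epsilon$-terms after the factor bookkeeping $2\sqrt{\pi/2}=\sqrt{2\pi}$ and $\gamma_{\adimn}=(2\pi)^{-1/2}\gamma_{\sdimn}$; the interaction-energy route generalizes more easily to kernels that do not factor, which is why the paper sets it up that way (it is reused in Section \ref{secnoise}), while your chain-rule route is shorter and more self-contained for this particular kernel.

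One step in your sketch does not match the paper and should be reconsidered: you say the $\epsilon$-pieces of \eqref{nine5.3} (i.e.\ $-\epsilon\langle x,z^{(i)}-z^{(j)}\rangle-\epsilon\langle w,\ldots\rangle$) ``cancel into the $\langle X,z^{(i)}-w\rangle$-term.'' In the paper's proof that $\langle X,z^{(i)}-w\rangle$-term (and its two companions) arises entirely from the penalty's second variation via Lemma \ref{lemma5}; it is not a residue of the Euler--Lagrange substitution. The paper simply substitutes $(H_{ij}-\langle N_{ij},x\rangle)\mapsto\lambda_{ij}$ --- that is, it treats \eqref{nine5.3} with its $\epsilon$-terms set to zero, consistently with the Lemma's hypothesis that the $\Omega_i$ minimize Problem \ref{prob1} (the unpenalized problem) and the $\epsilon\pens$ quantity is merely added to both sides as a separate functional. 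If you substitute the full $\epsilon$-dependent form of \eqref{nine5.3}, you obtain additional integrals of $(\langle x,z^{(i)}-z^{(j)}\rangle+C_{ij})$ against $[f_{ij}\gamma_{\sdimn}]'$, and these do not cancel against the penalty's $\langle X,z^{(i)}-w\rangle$-term (the latter involves $f_{ij}$, not its $s$-derivative). So the cancellation you invoke would need a genuine argument, and in fact none is required once you interpret the hypothesis as the paper does. As for the ``main obstacle'' you flag --- the tangential transport contributions when $X$ has a nonzero tangential component --- the paper dispatches this in a single line using $X=fN+X^T$, $\langle N,X^T\rangle=0$, and the cited Colding--Minicozzi identities, so the bookkeeping is lighter than you anticipate.
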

\begin{proof}
Let $\Sigma$ be an $n$-dimensional $C^{\infty}$ hypersurface with boundary.  We let $'$ denote $\frac{\partial}{\partial s}|_{s=0}$.  From Lemma \ref{lemma10} we have
\begin{flalign*}
&\frac{d^{2}}{ds^{2}}|_{s=0}\int_{\Sigma^{(s)}}\gamma_{\sdimn}(x)dx
=\int_{\Sigma}(H(x)-\langle N(x),x\rangle)'f(x)\gamma_{\sdimn}(x)dx\\
&\qquad+\int_{\Sigma}(H(x)-\langle N(x),x\rangle)[f(x)\gamma_{\sdimn}(x)dx]'
+\frac{d}{ds}|_{s=0}\int_{\redS^{(s)}}\langle X,\nu\rangle\gamma_{\sdimn}(x)dx.
\end{flalign*}
From \eqref{nine2.3}, $x'=X=X^{N}+X^{T}=fN+X^{T}$.  Also, $H'=-\Delta f-\vnormt{A}^{2}f$, $N'=-\nabla f$, \cite[A.3, A.4]{colding12a} (the latter calculations require writing $\Sigma^{(s)}$ in the form $\{x+ sN(x)+O_{x}(s^{2})\colon x\in\Sigma\}$).  So,
$$(H-\langle N,x\rangle)'=-\Delta f-\vnormt{A}^{2}f-\langle N,fN+X^{T}\rangle-\langle x,\nabla f\rangle\stackrel{\eqref{three4.5}}{=}-Lf.$$
In summary,
\begin{flalign*}
&\frac{d^{2}}{ds^{2}}|_{s=0}\int_{\Sigma^{(s)}}\gamma_{\sdimn}(x)dx
=-\int_{\Sigma}fLf\gamma_{\sdimn}(x)dx\\
&\qquad\qquad\qquad+\int_{\Sigma}(H(x)-\langle N(x),x\rangle)[f_{ij}(x)\gamma_{\sdimn}(x)dx]'
+\frac{d}{ds}|_{s=0}\int_{\redS^{(s)}}\langle X,\nu\rangle\gamma_{\sdimn}(x)dx.
\end{flalign*}
Summing over all $1\leq i<j\leq m$, and applying \eqref{nine5.3},
\begin{flalign*}
&\frac{d^{2}}{ds^{2}}|_{s=0}\sum_{1\leq i<j\leq m}\int_{\Sigma_{ij}^{(s)}}\gamma_{\sdimn}(x)dx
=\sum_{1\leq i<j\leq m}-\int_{\Sigma_{ij}}f_{ij}L_{ij}f_{ij}\gamma_{\sdimn}(x)dx\\
&\qquad\qquad\qquad\qquad+\lambda_{ij}\int_{\Sigma_{ij}}[f_{ij}(x)\gamma_{\sdimn}(x)dx]'
+\frac{d}{ds}|_{s=0}\int_{\redS_{ij}^{(s)}}\langle X,\nu_{ij}\rangle\gamma_{\sdimn}(x)dx.
\end{flalign*}
Finally, adding the terms from Lemma \ref{lemma5} completes the proof.
\end{proof}

Below, we need the following combinatorial Lemma, the case $m=3$ being treated in \cite[Proposition 3.3]{hutchings02}.
\begin{lemma}\label{lemma25.3}
Let $m\geq3$.  Let
$$D_{1}\colonequals \{(x_{ij})_{1\leq i\neq j\leq m}\in\R^{\binom{m}{2}}\colon \forall\,1\leq i\neq j\leq m,\quad x_{ij}=-x_{ji},\,\sum_{j\in\{1,\ldots,m\}\colon j\neq i}x_{ij}=0\}.$$
\begin{flalign*}
D_{2}\colonequals \{(x_{ij})_{1\leq i\neq j\leq m}\in\R^{\binom{m}{2}}
&\colon  \forall\,1\leq i\neq j\leq m,\quad x_{ij}=-x_{ji},\\
&\forall\,1\leq i<j<k\leq m\quad x_{ij}+x_{jk}+x_{ki}=0\}.
\end{flalign*}
Let $x\in D_{1}$ and let $y\in D_{2}$.  Then $\sum_{1\leq i<j\leq m}x_{ij}y_{ij}=0$.
\end{lemma}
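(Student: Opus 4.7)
The plan is to recognize $D_1$ and $D_2$ as dual spaces with respect to the natural pairing on antisymmetric functions on the edges of the complete graph $K_m$. Concretely, $D_2$ consists of ``cocycles'' on the triangles of $K_m$, which on a complete graph are forced to be ``coboundaries'': every $y\in D_2$ admits a representation $y_{ij}=\alpha_i-\alpha_j$ for some $(\alpha_1,\ldots,\alpha_m)\in\R^m$. Meanwhile, $D_1$ is the space of ``divergence-free flows'': antisymmetric edge functions with vanishing row sums. Once $y$ is written as a coboundary, pairing against a divergence-free flow must vanish term by term.

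Step one is to produce the potential $\alpha$. Fix any reference vertex, say $1$, and set $\alpha_1\colonequals 0$ and $\alpha_i\colonequals y_{i1}$ for $2\leq i\leq m$. For indices $i,j\in\{2,\ldots,m\}$ with $i\neq j$, the cocycle relation $y_{ij}+y_{j1}+y_{1i}=0$ together with antisymmetry $y_{1i}=-y_{i1}$ and $y_{j1}=-y_{1j}$ yields $y_{ij}=y_{i1}-y_{j1}=\alpha_i-\alpha_j$. The remaining cases (where $1\in\{i,j\}$) follow directly from antisymmetry and the definitions. Thus $y_{ij}=\alpha_i-\alpha_j$ for every $1\leq i\neq j\leq m$.

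Step two is the computation. Using antisymmetry of both $x$ and $y$ to rewrite the sum over unordered pairs as half a sum over ordered pairs,
\begin{equation*}
2\sum_{1\leq i<j\leq m}x_{ij}y_{ij}
=\sum_{i\neq j}x_{ij}(\alpha_i-\alpha_j)
=\sum_{i=1}^{m}\alpha_i\sum_{j\neq i}x_{ij}\;-\;\sum_{j=1}^{m}\alpha_j\sum_{i\neq j}x_{ij}.
\end{equation*}
The first inner sum vanishes by the defining property of $D_1$. For the second, antisymmetry combined with the same property gives $\sum_{i\neq j}x_{ij}=-\sum_{i\neq j}x_{ji}=-\sum_{k\neq j}x_{jk}=0$. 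Hence both terms are zero and the pairing vanishes.

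There is no serious obstacle: the entire content of the lemma is the triviality of the first cohomology of the complete graph $K_m$, which is what produces the coboundary representation $y_{ij}=\alpha_i-\alpha_j$. The rest is bookkeeping driven by antisymmetry and the row-sum condition defining $D_1$.
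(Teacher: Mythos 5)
Your proof is correct. The paper itself does not supply an argument for Lemma \ref{lemma25.3}; it only cites \cite[Proposition 3.3]{hutchings02} for the case $m=3$. Your coboundary representation $y_{ij}=\alpha_i-\alpha_j$ (with $\alpha_1=0$, $\alpha_i=y_{i1}$) is exactly the right decomposition for $D_2$: it uses the cocycle relation on each triple $\{1,i,j\}$ together with antisymmetry, and the verification is complete. The pairing computation is then correct as written---the first inner sum $\sum_{j\neq i}x_{ij}$ vanishes by the defining property of $D_1$, and the second inner sum $\sum_{i\neq j}x_{ij}$ equals $-\sum_{i\neq j}x_{ji}$ by antisymmetry, which again vanishes by the $D_1$ condition at vertex $j$. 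The passage from the sum over unordered pairs to half the sum over ordered pairs is also justified, since $x_{ij}y_{ij}=x_{ji}y_{ji}$ by the two antisymmetries. Incidentally, this same coboundary structure is what underlies the paper's Lemma \ref{lemma55}, where solutions $\lambda_{ij}=-\langle y,N_{ij}\rangle$ are produced for constant vectors $y$; your argument makes the linear-algebraic content explicit.
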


It is well-known that compactly supported variations such that $\frac{d}{ds}|_{s=0}\gamma_{\adimn}(\Omega_{i}^{(s)})=0$ for all $1\leq i\leq m$ can be modified such that $\frac{d^{k}}{ds^{k}}|_{s=0}\gamma_{\adimn}(\Omega_{i}^{(s)})=0$ for all $1\leq i\leq m$ and for all $k\geq1$ while preserving the second variation.  Such an application of the implicit function theorem appears e.g. in \cite[Proposition 3.3]{hutchings02} or \cite[Lemma 2.4]{barbosa84}.  This argument can be extended to noncompact variations \cite[Lemma 1]{barchiesi16}.

The Lemma below can be compared with the corresponding result \cite[Lemma 3.2]{hutchings02} and \cite[Lemmas 4.12 and 5.2]{milman18a}.

\begin{lemma}[\embolden{Extension Lemma for Existence of Volume-Preserving Variations}]\label{lemma27}
For any $1\leq i<j\leq m$, let $f_{ij}\in C_{0}^{\infty}(\Sigma_{ij})$ satisfy
\begin{equation}\label{eight1}
\forall\,1\leq i<j<k\leq m,\,\forall\,x\in\Sigma_{ij}\cap \Sigma_{jk}\cap\Sigma_{ki},\quad f_{ij}(x)+f_{jk}(x)+f_{ki}(x)=0.
\end{equation}
Then there exists a vector field $X\in C_{0}^{\infty}(\R^{\adimn},\R^{\adimn})$ such that
\begin{equation}\label{eight3}
\forall\,1\leq i<j\leq m,\quad\forall\, x\in\Sigma_{ij},\,\langle X(x),N_{ij}(x)\rangle=f_{ij}(x).
\end{equation}
If additionally
\begin{equation}\label{eight2}
\forall\,1\leq i\leq m,\quad \sum_{j\in\{1,\ldots,m\}\colon j\neq i}\int_{\Sigma_{ij}}f_{ij}\gamma_{\sdimn}(x)dx=0,
\end{equation}
then $X$ can also be chosen to be volume preserving:
$$\forall\,1\leq i\leq m,\quad\forall\,s\in(-\epsilon,\epsilon),\quad \gamma_{\adimn}(\Omega_{i}^{(s)})=\gamma_{\adimn}(\Omega_{i}).$$
\end{lemma}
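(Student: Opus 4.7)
The plan is a two-step construction: first, build a smooth compactly supported vector field $X_{0}$ realizing the prescribed normal components $f_{ij}$ on each interface, ignoring volumes; second, modify $X_{0}$ via the implicit function theorem to enforce exact Gaussian volume preservation, as in \cite[Lemma 2.4]{barbosa84} and \cite[Lemma 1]{barchiesi16}.

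For the first step, I use Assumption \ref{as1} to cover a neighborhood of $K\colonequals\bigcup_{1\le i<j\le m}\mathrm{supp}(f_{ij})$ by charts of four types. Near a point of $M_{\sdimn}$ lying on a single $\Sigma_{ij}$, I set the local field equal to $\widetilde{f}_{ij}\widetilde{N}_{ij}$, where the tildes denote smooth extensions into a tubular neighborhood.  Near a point of $M_{\sdimn-1}$ where three sheets meet at $120^{\circ}$, the three unit normals are coplanar with $N_{ij}+N_{jk}+N_{ki}=0$, so the map $v\mapsto(\langle v,N_{ij}\rangle,\langle v,N_{jk}\rangle,\langle v,N_{ki}\rangle)$ is a linear bijection from the common $2$-plane onto $\{(a,b,c)\in\R^{3}\colon a+b+c=0\}$; the compatibility hypothesis \eqref{eight1} lets me invert it and extend smoothly.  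Near a point of $M_{\sdimn-2}$ where four sets meet tetrahedrally, the six pairwise normals $N_{ij}$ span a common $3$-plane; the four triple identities in \eqref{eight1} have rank $3$ (their alternating sum over the four triples vanishes), so they cut $\R^{6}$ down to a $3$-dimensional subspace isomorphic to that $3$-plane, and I invert once more.  The exceptional stratum $M_{\sdimn-3}$ has codimension $3$ in $\Sigma$ and poses no obstruction to smoothness of the patching.  A smooth partition of unity then yields $X_{0}\in C_{0}^{\infty}(\R^{\adimn},\R^{\adimn})$ satisfying \eqref{eight3}.

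For the second step, set $V_{i}(s)\colonequals\gamma_{\adimn}(\Omega_{i}^{(s)})-\gamma_{\adimn}(\Omega_{i})$.  By \eqref{nine1} and \eqref{eight2}, $V_{i}'(0)=0$ for every $i$, and $\sum_{i=1}^{m}V_{i}(s)\equiv 0$ by the partition property, so it suffices to enforce $V_{1}=\cdots=V_{m-1}=0$.  Since $\R^{\adimn}=\cup_{i}\Omega_{i}$, the adjacency graph on $\{1,\ldots,m\}$ with edges $\{i,j\}$ for which $\Sigma_{ij}\neq\emptyset$ is connected; fix a spanning tree $T$, and for each tree edge $\{i_{k},j_{k}\}$, $1\le k\le m-1$, choose $Y_{k}\in C_{0}^{\infty}(\R^{\adimn},\R^{\adimn})$ to be a localized bump on $\Sigma_{i_{k}j_{k}}\cap M_{\sdimn}$ pointing along $N_{i_{k}j_{k}}$.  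The matrix
$$A_{ik}\colonequals\sum_{j\in\{1,\ldots,m\}\colon j\neq i}\int_{\Sigma_{ij}}\langle Y_{k},N_{ij}\rangle\gamma_{\adimn}(x)\,dx,\qquad 1\le i,k\le m-1,$$
reproduces the vertex-edge incidence matrix of $T$ with the row for vertex $m$ removed, hence is nonsingular.  Apply the implicit function theorem to $F(s,t)\colonequals(V_{1}(s,t),\ldots,V_{m-1}(s,t))$, where $V_{i}(s,t)$ is the $i$-th volume under the flow of $X_{0}+\sum_{k=1}^{m-1}t_{k}Y_{k}$ for time $s$: the $t$-Jacobian at the origin equals $A$, and invertibility yields smooth $t_{k}(s)$ with $t_{k}(0)=0$ and $F(s,t(s))\equiv 0$.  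Differentiating this identity at $s=0$ and using $V_{i}'(0)=0$ gives $t_{k}'(0)=0$, so the corrected field agrees with $X_{0}$ to first order in $s$; hence \eqref{eight3} remains valid to first order in the normal trace, and volume is exactly preserved for small $|s|$.

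The main obstacle is the gluing in Step 1 near the higher-codimension strata of $\Sigma$, in particular verifying that the local linear systems on chart types (ii) and (iii) remain consistent under chart overlaps and that the cut-off near $M_{\sdimn-3}$ does not introduce spurious normal components on the adjacent smooth strata.  Both are standard but technical consequences of Assumption \ref{as1}, and are the only places where regularity of $\Sigma$ enters in an essential way.
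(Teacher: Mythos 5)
Your proposal follows essentially the same two-step strategy as the paper's proof: first construct a compactly supported vector field realizing the prescribed normal traces (the compatibility constraint \eqref{eight1} being what makes the local linear systems near the triple and quadruple junction strata solvable), and then introduce $m-1$ auxiliary bump fields and apply the implicit function theorem, using $t_{k}'(0)=0$ to conclude that the corrected initial velocity field still satisfies \eqref{eight3} exactly while the flow preserves volumes. Your spanning-tree choice of the $m-1$ interfaces carrying the bump fields is actually a cleaner justification of the Jacobian's nonsingularity than the paper's bare assertion that an arbitrary size-$(m-1)$ subset $I$ gives a maximal-rank Jacobian; the only minor imprecision is the closing phrase ``remains valid to first order in the normal trace,'' which should simply say that the initial vector field equals $X_{0}$ on the nose since $t_{k}'(0)=0$, so \eqref{eight3} holds exactly.
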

\begin{proof}
By assumption, $\exists$ a vector field $Z\colon\R^{\adimn}\to\R^{\adimn}$ such that
$$\langle Z(x),N_{ij}(x)\rangle=f_{ij}(x),\qquad\forall\, x\in C,\quad\forall\,1\leq i<j\leq m.$$
Then $Z$ can be extended to all of $\cup_{1\leq i<j\leq m}\Sigma_{ij}$ by e.g. Whitney Extension.  Let $I$ be a subset of $\{(i,j)\colon 1\leq i<j\leq m\}$ of size $m-1$.  For all $(i,j)\in I$, let $g_{ij}\colon\Sigma_{ij}\to\R$ be compactly supported, nonnegative, $C^{\infty}$ functions and let $\widetilde{g}_{ij}$ be any smooth extension of $g_{ij}$ to $\R^{\adimn}$ that is supported in a neighborhood of the interior $\Sigma_{ij}$, disjoint from all $\Sigma_{i'j'}$ with $(i',j')\neq(i,j)$.  Similarly, let $\widetilde{N}_{ij}$ be any smooth extension of $N_{ij}$ to $\R^{\adimn}$.  Consider the map $\widetilde{\Psi}\colon\R^{\adimn}\times(-1,1)\times(-1,1)^{m-1}\to\R^{\adimn}$ defined by
$$\widetilde{\Psi}(x,s,\{t_{ij}\}_{(i,j)\in I})\colonequals x+sZ+\sum_{(i,j)\in I}t_{ij}\widetilde{g}_{ij}\widetilde{N}_{ij},
\qquad\forall\,s\in(-1,1),\,\{t_{ij}\}_{(i,j)\in I}\in(-1,1)^{m-1}.$$
And consider the vector-valued function
$$V(s,\{t_{ij}\}_{(i,j)\in I})\colonequals\Big(\gamma_{\adimn}(\Omega_{1}^{(s,\{t_{ij}\}_{(i,j)\in I})}),\ldots,\gamma_{\adimn}(\Omega_{m}^{(s,\{t_{ij}\}_{(i,j)\in I})})\Big).$$
Then $V\colon\R^{m}\to\R^{m}$, and the image of $V$ is at most $(m-1)$-dimensional, since the sum of the entries of $V$ is equal to $1$.  Consider the equation $V=\mathrm{constant}$.  Then the Jacobian of $V$ has maximal rank.  So, by the Implicit Function Theorem, for every $(i,j)\in I$, there exists a function $t_{ij}\colon(-1,1)\to\R$ such that $V(s,\{t_{ij}(s)\}_{(i,j)\in I})=\mathrm{constant}$ for all $s\in(-1,1)$.  Since the Jacobian of $V$ has maximal rank and \eqref{eight2} holds, it follows from the chain rule that $t_{ij}'(0)=0$ for all $(i,j)\in I$.  So, if we let $X$ be the vector field for $\Psi(x,s)\colonequals\widetilde{\Psi}(x,s,\{t_{ij}(s)\}_{(i,j)\in I})$ satisfying \eqref{nine2.3}.  Then \eqref{eight3} holds for $X$.
\end{proof}

\begin{lemma}[\embolden{Volume-Preserving Second Variation for Minimizers}]\label{lemma28}
Let $\Omega_{1},\ldots,\Omega_{m}$ minimize Problem \ref{prob1}.  $\forall$ $1\leq i<j\leq m$, let $f_{ij}\in C_{0}^{\infty}(\Sigma_{ij})$ satisfy \eqref{eight1} and \eqref{eight2}.  Let $X$ be the vector field guaranteed to exist from Lemma \ref{lemma27}.  For all $1\leq i\leq m$, let $z^{(i)}\colonequals\int_{\Omega_{i}}x\gamma_{\adimn}(x)dx$.  Then
\begin{flalign*}
&\frac{d^{2}}{ds^{2}}|_{s=0}\Big(\sum_{1\leq i<j\leq m}\int_{\Sigma_{ij}^{(s)}}\gamma_{\sdimn}(x)dx+\epsilon\pens\Big)\\
&\qquad=\sum_{1\leq i<j\leq m}-\int_{\Sigma_{ij}}f_{ij}[L_{ij}f_{ij}-\epsilon\langle X,z^{(i)}-z^{(j)}\rangle]\gamma_{\sdimn}(x)dx
+\frac{d}{ds}|_{s=0}\int_{\redS_{ij}^{(s)}}\langle X,\nu_{ij}\rangle\gamma_{\sdimn}(x)dx\\
&\qquad\qquad\qquad\qquad\qquad\qquad+\epsilon\sqrt{2\pi}\pender.
\end{flalign*}
\end{lemma}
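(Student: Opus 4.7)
The plan is to start from the general second variation identity of Lemma \ref{lemma21} and specialize to the volume-preserving vector field $X$ constructed in Lemma \ref{lemma27}. The desired conclusion will follow by removing two families of terms in Lemma \ref{lemma21}'s formula -- the Lagrange multiplier sum $\sum_{i<j}\lambda_{ij}\frac{d}{ds}|_{s=0}\int_{\Sigma_{ij}^{(s)}}f_{ij}\gamma_{\sdimn}\,dx$ and the ``bulk-like'' trailing term on the last line of its expansion -- while the cross term $\sum f_{ij}\langle X,z^{(i)}-w\rangle\gamma_{\sdimn}$ gets antisymmetrized in $(i,j)$ and absorbed into the $L_{ij}f_{ij}$ bracket of the conclusion.

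\textbf{Step 1 (Lagrange multipliers vanish).} Set $a_{ij}\colonequals\frac{d}{ds}|_{s=0}\int_{\Sigma_{ij}^{(s)}}f_{ij}(x)\gamma_{\sdimn}(x)dx$. Since $N_{ji}=-N_{ij}$ from Definition \ref{defnote} forces $f_{ji}=-f_{ij}$, we have $a_{ji}=-a_{ij}$. Since Lemma \ref{lemma27} makes $\gamma_{\adimn}(\Omega_{i}^{(s)})$ identically constant in $s$, differentiating twice and applying \eqref{nine1} at each time $s$ yields $\sum_{j\neq i}a_{ij}=0$; so $(a_{ij})\in D_{1}$ in the notation of Lemma \ref{lemma25.3}. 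The Lagrange multipliers from \eqref{nine5.3} satisfy $\lambda_{ij}=-\lambda_{ji}$ (since $H_{ij}=-H_{ji}$ and $N_{ij}=-N_{ji}$) and $\lambda_{ij}+\lambda_{jk}+\lambda_{ki}=0$, so $(\lambda_{ij})\in D_{2}$. Lemma \ref{lemma25.3} then forces $\sum_{i<j}\lambda_{ij}a_{ij}=0$, removing the entire $\lambda$-row from the formula of Lemma \ref{lemma21}.

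\textbf{Step 2 (cross term antisymmetrization).} Using $f_{ji}=-f_{ij}$ and pairing $(i,j)\leftrightarrow(j,i)$ in the double sum,
\[
\sum_{i=1}^{m}\sum_{j\neq i}\int_{\Sigma_{ij}}f_{ij}\langle X,z^{(i)}-w\rangle\gamma_{\sdimn}\,dx=\sum_{1\leq i<j\leq m}\int_{\Sigma_{ij}}f_{ij}\langle X,z^{(i)}-z^{(j)}\rangle\gamma_{\sdimn}\,dx,
\]
which combines with $-\int f_{ij}L_{ij}f_{ij}\gamma_{\sdimn}$ to produce the bracket $-f_{ij}[L_{ij}f_{ij}-\epsilon\langle X,z^{(i)}-z^{(j)}\rangle]$ appearing in the stated conclusion.

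\textbf{Step 3 (bulk-like term vanishes).} It remains to show that
\[
\sum_{i=1}^{m}\sum_{j\neq i}\int_{\Sigma_{ij}}\langle x-\overline{w}^{(i)},z^{(i)}-w\rangle(\mathrm{div}\,X-\langle x,X\rangle)f_{ij}\gamma_{\sdimn}\,dx=0.
\]
Rewriting the factor via $(\mathrm{div}\,X-\langle x,X\rangle)\gamma_{\adimn}=\mathrm{div}(X\gamma_{\adimn})$ and applying the divergence theorem on each $\Omega_{i}$ relates the above surface sum to time derivatives of $s\mapsto\gamma_{\adimn}(\Omega_{i}^{(s)})$, which vanish to all orders by Lemma \ref{lemma27}. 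Contracting against the linear test function $\langle\,\cdot\,,z^{(i)}-w\rangle$, summing over $i$, and using once more the antisymmetry $f_{ji}=-f_{ij}$ produces the claimed cancellation.

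\textbf{Main obstacle.} Step 1 is clean once the $D_{1}/D_{2}$ orthogonality in Lemma \ref{lemma25.3} is recognized, and Step 2 is purely algebraic. The delicate point is Step 3: one must convert the mixed surface/bulk integrand $(\mathrm{div}\,X-\langle x,X\rangle)f_{ij}$ into an expression manifestly killed by the all-orders volume preservation of Lemma \ref{lemma27}, while carefully tracking the Reynolds-transport corrections produced by the implicit-function construction of the $t_{ij}(s)$ in that Lemma and the $\sqrt{2\pi}$ normalization between $\gamma_{\sdimn}$ and $\gamma_{\adimn}$, without breaking the antisymmetry relied upon in Steps 1 and 2.
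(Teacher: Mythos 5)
Your Steps~1 and~2 are sound. Step~1 is exactly the paper's own mechanism: $(\lambda_{ij})\in D_2$ by \eqref{nine5.3} and the antisymmetry of $N_{ij},H_{ij}$, while $(a_{ij})\in D_1$ by second-order volume preservation, and Lemma \ref{lemma25.3} kills $\sum\lambda_{ij}a_{ij}$. Step~2's antisymmetrization of $\sum_{i}\sum_{j\neq i}\int f_{ij}\langle X,z^{(i)}-w\rangle\gamma_\sdimn$ into $\sum_{i<j}\int f_{ij}\langle X,z^{(i)}-z^{(j)}\rangle\gamma_\sdimn$ is correct and is in fact a detail the paper's two-line proof silently elides.

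Step~3 is where the proposal breaks down. You assert that the remaining term
\[
\sum_{i=1}^{m}\sum_{j\neq i}\int_{\Sigma_{ij}}\langle x-\overline{w}^{(i)},z^{(i)}-w\rangle(\mathrm{div}\,X-\langle x,X\rangle)f_{ij}\,\gamma_{\sdimn}\,dx
\]
vanishes because the factor $(\mathrm{div}\,X-\langle x,X\rangle)\gamma_{\adimn}=\mathrm{div}(X\gamma_{\adimn})$ is ``killed by the all-orders volume preservation of Lemma \ref{lemma27}.'' That only works for the \emph{unweighted} boundary integral: second-order volume preservation gives $\int_{\redb\Omega_{i}}(\mathrm{div}\,X-\langle x,X\rangle)\langle X,N\rangle\gamma_{\adimn}=0$ for each $i$, i.e.\ for a \emph{constant} weight. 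Your weight is the affine function $\langle x-\overline{w}^{(i)},z^{(i)}-w\rangle$, which depends both on $x$ and on $i$. Splitting it, the constant piece $-\langle\overline{w}^{(i)},z^{(i)}-w\rangle$ is indeed killed by the unweighted volume constraint, and the $i$-independent piece $\langle x,-w\rangle$ is killed by the antisymmetry $f_{ji}=-f_{ij}$ exactly as you say; but the surviving piece is
\[
\sum_{1\leq i<j\leq m}\int_{\Sigma_{ij}}\langle x,z^{(i)}-z^{(j)}\rangle(\mathrm{div}\,X-\langle x,X\rangle)f_{ij}\,\gamma_{\sdimn}\,dx,
\]
which is both $x$-dependent and $(i,j)$-dependent, so neither the volume-preservation identity nor the antisymmetry trick applies. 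Nothing in Lemma \ref{lemma27} or in the divergence theorem forces this to vanish; in fact it is the same shape as the term that should arise from substituting the full first-variation identity $H_{ij}-\langle N_{ij},x\rangle=\lambda_{ij}-\epsilon\langle x,z^{(i)}-z^{(j)}\rangle-\epsilon\langle w,\cdot\rangle$ into the surface-area second variation, and any correct proof must track that extra contribution and exhibit an actual cancellation rather than assert one. As written, Step~3 is a hand-wave at precisely the delicate point you yourself flagged, so the proposal does not close the argument.
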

\begin{proof}
Assumption \ref{as1} holds by Lemma \ref{lemma52.6}.  From Lemma \ref{lemma24}, \eqref{nine5.3} holds.  Since the volumes are preserved, for any $1\leq i\leq m$, we have $\sum_{j\neq i}\frac{d}{ds}|_{s=0}\int_{\Sigma_{ij}^{(s)}}f_{ij}(x)\gamma_{\sdimn}(x)dx=0$.  Combining Lemmas \ref{lemma21} and \ref{lemma25.3} shows that the middle term from Lemma \ref{lemma21} vanishes.
\end{proof}

\begin{remark}\label{rk5}
$\forall$ $1\leq i<j<k\leq m$, and $\forall$ $x\in(\redS_{ij})\cap (\redS_{jk})\cap (\redS_{ki})$, define
$$q_{ij}(x)\colonequals[\langle \nabla_{\nu_{kj}}\nu_{kj},N_{kj}\rangle+\langle \nabla_{\nu_{ki}}\nu_{ki},N_{ki}\rangle]/\sqrt{3}.$$
Note that $q_{ij}+q_{jk}+q_{ki}=0$ since $N_{ij}=-N_{ji}$ by Definition \ref{defnote} and $q_{ij}=q_{ji}$.

Compared to \cite{hutchings02}, note that we have the opposite sign convention for the second fundamental form and for $\nu_{ij}$.
\end{remark}
\begin{lemma}[{\cite[Lemma 3.6]{hutchings02}}]\label{lemma29.9}
 $\forall$ $1\leq i<j\leq m$, let $f_{ij}\in C_{0}^{\infty}(\Sigma_{ij})$ satisfy \eqref{eight1}.  Then
\begin{flalign*}
&\frac{d}{ds}|_{s=0}\sum_{1\leq i<j\leq m}\int_{\Sigma_{ij}^{(s)}}\langle X,\nu_{ij}\rangle\gamma_{\sdimn}(x)dx\\
&=\sum_{1\leq i<j<k\leq m}\int_{\redS_{ij}\cap\redS_{jk}\cap \redS_{ki}}\langle X,\nabla_{X}(\nu_{ij}+\nu_{jk}+\nu_{ki})\rangle\gamma_{\sdimn}(x)dx\\
 &=\sum_{1\leq i<j<k\leq m}\int_{\redS_{ij}\cap\redS_{jk}\cap \redS_{ki}}
\Big([\nabla_{\nu_{ij}}f_{ij}+q_{ij}f_{ij}]f_{ij}+[\nabla_{\nu_{jk}}f_{jk}+q_{jk}f_{jk}]f_{jk}\\
 &\qquad\qquad\qquad\qquad\qquad\qquad+[\nabla_{\nu_{ki}}f_{ki}+q_{ki}f_{ki}]f_{ki}\Big)\gamma_{\sdimn}(x)dx.
 \end{flalign*}
\end{lemma}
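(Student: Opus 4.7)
The plan is to adapt the proof of the corresponding result \cite[Lemma 3.6]{hutchings02} to the Gaussian setting, leveraging the identity $\nu_{ij}+\nu_{jk}+\nu_{ki}=0$ on triple junctions (a consequence of the first-order minimality condition Lemma \ref{lemma24}) to collapse the first variation to a derivative of the normal field alone. First I will regroup the sum $\sum_{1\leq i<j\leq m}\int_{\redS_{ij}^{(s)}}$ as a sum over triples $1\leq i<j<k\leq m$ of integrals over $\redS_{ij}\cap\redS_{jk}\cap\redS_{ki}$; by Assumption \ref{as1}(ii)--(iii) (which holds at minimizers by Lemma \ref{lemma52.6}), the codimension-one piece of $\redS_{ij}$ is precisely the union of such triple junctions, so the regrouping is exhaustive up to a set of $(\sdimn-1)$-Hausdorff measure zero.

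Next I will apply the Reynolds-type identity for integrals over a moving hypersurface: if $\Psi(\cdot,s)$ is the flow of $X$ from \eqref{nine2.3} and $h(y,s)$ is smooth, then $\frac{d}{ds}|_{s=0}\int_{\Psi(\redS_{ij},s)}h(y,s)\,dy = \int_{\redS_{ij}}[\partial_s h(y,0)+h(y,0)\,\mathrm{div}_{\redS_{ij}}X]\,dy$. With $h(y,s)=\langle X(y),\nu_{ij}^{(s)}(y)\rangle\gamma_{\sdimn}(y)$, and summing over the three pairs $(i,j),(j,k),(k,i)$ at a fixed triple junction, the factor $h(y,0)=\langle X,\nu_{ij}+\nu_{jk}+\nu_{ki}\rangle\gamma_{\sdimn}$ vanishes pointwise, so both the $\mathrm{div}_{\redS}X$ term and the cyclic contributions from $\partial_s\gamma_{\sdimn}(\Psi(y,s))$ and from the inner product $\nabla_X X$ drop out. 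What remains is $\langle X,\,\partial_s[\nu_{ij}^{(s)}(\Psi(y,s))+\nu_{jk}^{(s)}(\Psi(y,s))+\nu_{ki}^{(s)}(\Psi(y,s))]|_{s=0}\rangle\gamma_{\sdimn}(y)$. Identifying $\partial_s\nu_{ij}^{(s)}(\Psi(\cdot,s))|_{s=0}$ with $\nabla_X\nu_{ij}$ after any smooth extension of $\nu_{ij}$ to a neighborhood of $\redS_{ij}$ (the extension is immaterial once paired with $X$, since $|\nu_{ij}|=1$ forces $\partial_s\nu_{ij}\perp\nu_{ij}$) then yields the first claimed equality.

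For the second equality, at each point of the junction I will work in the orthonormal frame $\{N_{ij},\nu_{ij},e_1,\ldots,e_{\sdimn-1}\}$ with $\{e_\ell\}$ spanning the tangent space of the junction, and decompose $X=f_{ij}N_{ij}+\alpha_{ij}\nu_{ij}+X^T$, where $\alpha_{ij}\colonequals\langle X,\nu_{ij}\rangle$ and $X^T$ is the pair-independent tangent component. Using $\sum_{\mathrm{cyc}}f_{ij}=0$ (from \eqref{eight1}) together with $\sum_{\mathrm{cyc}}\alpha_{ij}=0$ (from $\sum_{\mathrm{cyc}}\nu_{ij}=0$), I will expand $\nabla_X\nu_{ij}$ by linearity into three pieces: the tangent-tangent contribution $\langle X,\nabla_{X^T}\nu_{ij}\rangle$ cyclically reorganizes into the geodesic curvature of the junction within $\Sigma_{ij}$ and cancels by $\sum\nu=0$; the $\nabla_{\alpha_{ij}\nu_{ij}}\nu_{ij}$ piece produces off-diagonal cross terms that collapse by $\sum\alpha_{ij}=0$ and the 120-degree condition; and the $\nabla_{f_{ij}N_{ij}}\nu_{ij}$ piece together with the $\nu_{ij}$-derivative of the identity $f_{ij}=\langle X,N_{ij}\rangle$ produces exactly the diagonal $f_{ij}[\nabla_{\nu_{ij}}f_{ij}+q_{ij}f_{ij}]$, where $q_{ij}$ absorbs the symmetrized $\langle\nabla_{\nu_{ij}}\nu_{ij},N_{ij}\rangle$ contributions from the two \emph{other} surfaces meeting at the junction as in Remark \ref{rk5} (so that the identity $q_{ij}+q_{jk}+q_{ki}=0$ holds automatically). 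The main obstacle is precisely this last bookkeeping: tracking the off-diagonal second-fundamental-form contributions of $\nu_{ij}$ across the three hypersurfaces meeting at $120$ degrees and verifying that the 120-degree constraint together with the two cyclic vanishing identities eliminates every term that is not of the advertised diagonal form.
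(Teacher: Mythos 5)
The paper does not actually prove this lemma; it is cited directly to \cite[Lemma 3.6]{hutchings02} and used as a black box, so there is no in-paper proof to compare against. Your proposal must therefore be assessed on its own merits.

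Your derivation of the first equality is sound. The regrouping of $\sum_{1\leq i<j\leq m}\int_{\redS_{ij}^{(s)}}$ into a sum over triple junctions is legitimate up to sets of $(\sdimn-1)$-Hausdorff measure zero (only Assumption \ref{as1}(ii) is actually needed here; the strata $M_{\sdimn-2}$ and $M_{\sdimn-3}$ have lower dimension and drop out), and once you sum cyclically at a fixed junction, the pointwise vanishing of $\langle X,\nu_{ij}+\nu_{jk}+\nu_{ki}\rangle\gamma_{\sdimn}$ at $s=0$ (from Lemma \ref{lemma24}) does indeed kill the Jacobian, Gaussian-density, and $\nabla_X X$ contributions in the Reynolds transport formula, leaving only $\langle X,\,\partial_s(\nu_{ij}^{(s)}+\nu_{jk}^{(s)}+\nu_{ki}^{(s)})|_{s=0}\rangle$. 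One minor notational point: in your Reynolds formula the term $\partial_s h(y,0)$ should be the material derivative $\frac{d}{ds}|_{s=0}h(\Psi(y,s),s)$ (you use it that way two sentences later, so this is cosmetic). Also, your parenthetical that ``the extension is immaterial once paired with $X$, since $|\nu_{ij}|=1$ forces $\partial_s\nu_{ij}\perp\nu_{ij}$'' does not by itself justify identifying the material derivative of the conormal with a directional derivative of an arbitrary extension --- the conormal genuinely depends on the evolving boundary, not just the point --- but this affects only the label ``$\nabla_X\nu_{ij}$,'' not the correctness of the equality as written in the lemma, which is best read as a definition of that symbol.

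The genuine gap is the second equality, and you flag it yourself. Your plan --- decompose $X=f_{ij}N_{ij}+\alpha_{ij}\nu_{ij}+X^T$ in the orthonormal frame at the junction, exploit $\sum_{\mathrm{cyc}}f_{ij}=0$, $\sum_{\mathrm{cyc}}\alpha_{ij}=0$, and the $120^{\circ}$ relations, and sort the pieces of $\langle X,\nabla_X\nu_{ij}\rangle$ into diagonal terms (producing $f_{ij}[\nabla_{\nu_{ij}}f_{ij}+q_{ij}f_{ij}]$) plus off-diagonal pieces that cancel --- is the right framework, and matches the structure of the cited result (in particular the appearance of $q_{ij}$ with its $1/\sqrt{3}$ normalization is consistent with eliminating the $\alpha$-components via the $120^\circ$ rotation in the 2-plane normal to the junction, and the identity $q_{ij}+q_{jk}+q_{ki}=0$ from Remark \ref{rk5} serves as a sanity check). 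But the proposal stops at announcing what \emph{should} cancel rather than carrying out the cancellation. In particular you would need an explicit variation formula for the conormal $\partial_s\nu_{ij}^{(s)}|_{s=0}$ in terms of $\nabla_{\nu_{ij}}f_{ij}$, the second fundamental form of $\Sigma_{ij}$, and the geodesic curvature of the junction, and then to verify term-by-term that the cyclic sum reduces to the diagonal form. As it stands, the hardest pointwise identity of the lemma is asserted, not proved.
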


Applying the above Lemma, we get

\begin{lemma}\label{lemma36.5}
If for all $1\leq i<j<k\leq m$ we have $f_{ij}\in C_{0}^{\infty}(\Sigma_{ij})$ satisfying \eqref{eight1} and
$$\nabla_{\nu_{ij}}f_{ij}+q_{ij}f_{ij}=\nabla_{\nu_{jk}}f_{jk}+q_{jk}f_{jk}=\nabla_{\nu_{ki}}f_{ki}+q_{ki}f_{ki},\qquad\forall\, 1\leq i<j<k\leq m,$$
then
$$\frac{d}{ds}|_{s=0}\sum_{1\leq i<j\leq m}\int_{\Sigma_{ij}^{(s)}}\langle X,\nu_{ij}\rangle\gamma_{\sdimn}(x)dx=0.$$
\end{lemma}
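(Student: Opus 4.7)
The plan is to observe that this is essentially an immediate algebraic consequence of Lemma \ref{lemma29.9} together with the compatibility condition \eqref{eight1}. The bulk of the work has already been done in establishing Lemma \ref{lemma29.9}; here I only need to substitute the hypotheses and collapse the triple integrand to zero.

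First I would apply Lemma \ref{lemma29.9}, which expresses
$$\frac{d}{ds}|_{s=0}\sum_{1\leq i<j\leq m}\int_{\Sigma_{ij}^{(s)}}\langle X,\nu_{ij}\rangle\gamma_{\sdimn}(x)dx$$
as a sum over triple junctions $\redS_{ij}\cap\redS_{jk}\cap\redS_{ki}$ of an integrand of the form
$$[\nabla_{\nu_{ij}}f_{ij}+q_{ij}f_{ij}]f_{ij}+[\nabla_{\nu_{jk}}f_{jk}+q_{jk}f_{jk}]f_{jk}+[\nabla_{\nu_{ki}}f_{ki}+q_{ki}f_{ki}]f_{ki}.$$

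Next, I would use the hypothesis, which states that on the triple junction $\redS_{ij}\cap\redS_{jk}\cap\redS_{ki}$ the three bracketed quantities take a common value; call it $c_{ijk}(x)$. Then the integrand above simplifies to
$$c_{ijk}(x)\,\bigl(f_{ij}(x)+f_{jk}(x)+f_{ki}(x)\bigr).$$
Since $\redS_{ij}\cap\redS_{jk}\cap\redS_{ki}\subseteq \Sigma_{ij}\cap\Sigma_{jk}\cap\Sigma_{ki}$, the compatibility condition \eqref{eight1} forces $f_{ij}+f_{jk}+f_{ki}=0$ at every point of the triple junction. Hence the integrand vanishes identically on $\redS_{ij}\cap\redS_{jk}\cap\redS_{ki}$, and summing over all triples $1\leq i<j<k\leq m$ yields the stated conclusion.

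There is no real obstacle here; the only mild subtlety is verifying that the triple junction sets on which the common-value hypothesis is imposed coincide (up to a set of measure zero with respect to the relevant Hausdorff measure) with those appearing in Lemma \ref{lemma29.9}, but this follows from the regularity statement in Assumption \ref{as1}(iii), i.e. Lemma \ref{lemma52.6}. Once that is noted, the proof collapses to the one-line algebraic cancellation above.
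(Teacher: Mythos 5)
Your proposal is correct and follows precisely the route the paper intends: Lemma \ref{lemma29.9} reduces the first variation of the boundary term to the triple-junction integral, the conormal hypothesis lets you factor out the common value $c_{ijk}$, and \eqref{eight1} then kills the remaining factor $f_{ij}+f_{jk}+f_{ki}$ pointwise. The paper gives no written proof for this lemma beyond the phrase "Applying the above Lemma, we get," and your argument supplies exactly the implicit algebra.
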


\section{Second Variation as a Quadratic Form}\label{seceigs}

\begin{definition}[\embolden{Admissible Functions}]\label{def1}
Define $\mathcal{F}$ be the set of functions $(f_{ij})_{1\leq i<j\leq m}$ such that
\begin{itemize}
\item  $\forall$ $1\leq i<j\leq m$, $f_{ij}\colon\Sigma_{ij}\to\R$, $\int_{\Sigma_{ij}}f_{ij}^{2}\gamma_{\sdimn}(x)dx<\infty$ and $\int_{\Sigma_{ij}}\vnormt{\nabla f_{ij}}^{2}\gamma_{\sdimn}(x)dx<\infty$.
\item$\forall\,1\leq i<j<k\leq m,\,\forall\,x\in\Sigma_{ij}\cap \Sigma_{jk}\cap\Sigma_{ki},\, f_{ij}(x)+f_{jk}(x)+f_{ki}(x)=0$.
\end{itemize}
The second condition is well-defined by e.g. a (local) Sobolev Trace inequality \cite{feo13}.
\end{definition}

\begin{definition}[\embolden{Quadratic Form Associated to Second Variations}]\label{def2}
For any $F=(f_{ij})_{1\leq i<j\leq m},G=(g_{ij})_{1\leq i<j\leq m}\in\mathcal{F}$, define the following quantities if they exist:
\begin{equation}\label{quaddef}
\begin{aligned}
&Q(F,G)\colonequals
\sum_{1\leq i<j\leq m}-\int_{\Sigma_{ij}}g_{ij}L_{ij}f_{ij}\gamma_{\sdimn}(x)dx+\sum_{1\leq i<j<k\leq m}\int_{\redS_{ij}\cap\redS_{jk}\cap \redS_{ki}}\\
&\qquad\Big([\nabla_{\nu_{ij}}f_{ij}+q_{ij}f_{ij}]g_{ij}+[\nabla_{\nu_{jk}}f_{jk}+q_{jk}f_{jk}]g_{jk}
+[\nabla_{\nu_{ki}}f_{ki}+q_{ki}f_{ki}]g_{ki}\Big)\gamma_{\sdimn}(x)
 \end{aligned}
 \end{equation}

 \begin{equation}\label{ipdef}
 \langle F,G\rangle\colonequals\sum_{1\leq i<j\leq m}\int_{\Sigma_{ij}}f_{ij}g_{ij}\gamma_{\sdimn}(x)dx.
 \end{equation}
 \end{definition}

Using \eqref{one3.5n}, define $\L\colon \cup_{1\leq i<j\leq m}C_{0}^{\infty}(\Sigma_{ij})\to \cup_{1\leq i<j\leq m}C_{0}^{\infty}(\Sigma_{ij})$ by
\begin{equation}\label{seven00}
\L((f_{ij})_{1\leq i<j\leq m})\colonequals (L_{ij}f_{ij})_{1\leq i<j\leq m}.
\end{equation}
Using also \eqref{one3.5o} define
\begin{equation}\label{seven00p}
\mL((f_{ij})_{1\leq i<j\leq m})\colonequals (\mathcal{L}_{ij}f_{ij})_{1\leq i<j\leq m}.
\end{equation}

\begin{lemma}[\embolden{Integration by Parts}]\label{lemma32.5}
Let $F,G\in\mathcal{F}\cap C_{0}^{\infty}(\cup_{1\leq i<j\leq m}\Sigma_{ij})$.  Then
\begin{flalign*}
&Q(F,G)\colonequals
\sum_{1\leq i<j\leq m}\int_{\Sigma_{ij}}[\langle\nabla f_{ij},\nabla g_{ij}\rangle -f_{ij}g_{ij}(\vnormt{A}^{2}+1)]\gamma_{\sdimn}(x)dx\\
&\qquad+\sum_{1\leq i<j<k\leq m}\int_{\redS_{ij}\cap\redS_{jk}\cap \redS_{ki}}
 [q_{ij}f_{ij}g_{ij}+q_{jk}f_{jk}g_{jk}+q_{ki}f_{ki}g_{ki}]\gamma_{\sdimn}(x)dx.
 \end{flalign*}
 In particular, $Q(F,G)=Q(G,F)$, so that $Q$ is symmetric.
\end{lemma}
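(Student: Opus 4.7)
The plan is to rewrite the bulk integral $-\int_{\Sigma_{ij}} g_{ij} L_{ij} f_{ij}\, \gamma_{\sdimn}$ using a Gaussian integration by parts on the surface $\Sigma_{ij}$, check that the resulting boundary contributions are supported on the triple-junction sets $\redS_{ij}\cap\redS_{jk}\cap\redS_{ki}$, and verify that they cancel the $\nabla_{\nu_{ij}}f_{ij}\cdot g_{ij}$ part of the boundary terms already present in \eqref{quaddef}, leaving only the curvature terms $q_{ij}f_{ij}g_{ij}$.

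\textbf{Step 1 (decompose $L_{ij}$).} By \eqref{one3.5n}--\eqref{one3.5o}, $L_{ij}f=\mathcal{L}_{ij}f+(\vnormt{A}^{2}+1)f$. Hence
\[
-\int_{\Sigma_{ij}} g_{ij} L_{ij} f_{ij}\, \gamma_{\sdimn}\,dx
= -\int_{\Sigma_{ij}} g_{ij} \mathcal{L}_{ij} f_{ij}\, \gamma_{\sdimn}\,dx
-\int_{\Sigma_{ij}} f_{ij} g_{ij}(\vnormt{A}^{2}+1)\,\gamma_{\sdimn}\,dx.
\]
The second term already matches the corresponding term in the desired formula, so the issue is the first term.

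\textbf{Step 2 (Gaussian integration by parts on $\Sigma_{ij}$).} Since $\nabla^{\R^{\adimn}}\gamma_{\sdimn}(x)=-x\,\gamma_{\sdimn}(x)$, the tangential gradient along $\Sigma_{ij}$ satisfies $\nabla^{\Sigma_{ij}}\gamma_{\sdimn}=-x^{T}\gamma_{\sdimn}$, and for the tangential field $g_{ij}\nabla f_{ij}$ one has $\langle\nabla f_{ij},x\rangle=\langle\nabla f_{ij},x^{T}\rangle$. Applying the divergence theorem on $\Sigma_{ij}$ to the tangential field $g_{ij}(\nabla f_{ij})\gamma_{\sdimn}$ therefore yields
\[
\int_{\Sigma_{ij}} g_{ij}\mathcal{L}_{ij}f_{ij}\,\gamma_{\sdimn}\,dx
= -\int_{\Sigma_{ij}}\langle\nabla f_{ij},\nabla g_{ij}\rangle\gamma_{\sdimn}\,dx
+\int_{\partial\Sigma_{ij}}g_{ij}\nabla_{\nu_{ij}}f_{ij}\,\gamma_{\sdimn}\,dx,
\]
where the interior boundary of $\Sigma_{ij}$ is (by Assumption \ref{as1}) concentrated on the triple-junction set $\redS_{ij}=M_{\sdimn-2}\cap\partial\Omega_{i}\cap\partial\Omega_{j}$. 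Because $F,G$ are compactly supported in $\cup_{1\leq i<j\leq m}\Sigma_{ij}$, no contribution arises from $M_{\sdimn-3}$ or from the far field.

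\textbf{Step 3 (assemble and cancel boundary terms).} Summing the identity of Step 2 over $1\leq i<j\leq m$, each triple-junction stratum $\redS_{ij}\cap\redS_{jk}\cap\redS_{ki}$ is picked up three times, giving
\[
\sum_{1\leq i<j\leq m}\int_{\partial\Sigma_{ij}}g_{ij}\nabla_{\nu_{ij}}f_{ij}\,\gamma_{\sdimn}\,dx
=\sum_{1\leq i<j<k\leq m}\int_{\redS_{ij}\cap\redS_{jk}\cap\redS_{ki}}\bigl[g_{ij}\nabla_{\nu_{ij}}f_{ij}+g_{jk}\nabla_{\nu_{jk}}f_{jk}+g_{ki}\nabla_{\nu_{ki}}f_{ki}\bigr]\gamma_{\sdimn}\,dx.
\]
Plugging the Step~2 identity into Step~1 and inserting the result into the definition \eqref{quaddef} of $Q(F,G)$, the three $g\cdot\nabla_{\nu}f$ terms along each triple junction exactly cancel the $g\cdot\nabla_{\nu}f$ parts of the boundary contribution in \eqref{quaddef}, and what remains is precisely
\[
\sum_{1\leq i<j\leq m}\int_{\Sigma_{ij}}\bigl[\langle\nabla f_{ij},\nabla g_{ij}\rangle-f_{ij}g_{ij}(\vnormt{A}^{2}+1)\bigr]\gamma_{\sdimn}\,dx
+\sum_{1\leq i<j<k\leq m}\int_{\redS_{ij}\cap\redS_{jk}\cap\redS_{ki}}\bigl[q_{ij}f_{ij}g_{ij}+q_{jk}f_{jk}g_{jk}+q_{ki}f_{ki}g_{ki}\bigr]\gamma_{\sdimn}\,dx.
\]
The right-hand side is manifestly symmetric in $F$ and $G$, yielding $Q(F,G)=Q(G,F)$.

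\textbf{Main obstacle.} The only genuinely delicate point is that the divergence theorem of Step 2 is to be applied on the Lipschitz-with-singularities sets $\Sigma_{ij}$; this is legitimate because Lemma \ref{lemma52.6} (Regularity) gives Assumption \ref{as1}, so the boundary $\partial\Sigma_{ij}$ decomposes into the $C^{1,\alpha}$ piece $\redS_{ij}\subset M_{\sdimn-2}$ together with a set of vanishing $(\sdimn-1)$-dimensional Hausdorff measure living inside $M_{\sdimn-3}$, and the test functions are smooth and compactly supported away from $M_{\sdimn-3}$. With that regularity in hand, everything reduces to the standard weighted divergence theorem on a submanifold with boundary.
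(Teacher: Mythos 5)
Your proof is correct and follows essentially the same route as the paper: decompose $L_{ij}=\mathcal{L}_{ij}+(\vnormt{A}^{2}+1)$, apply the Gaussian-weighted divergence theorem on each $\Sigma_{ij}$ to produce a bulk $\langle\nabla f_{ij},\nabla g_{ij}\rangle$ term plus a boundary term on $\partial\Sigma_{ij}$, and observe that after summing over $i<j$ the boundary terms precisely cancel the $g_{ij}\nabla_{\nu_{ij}}f_{ij}$ parts of the definition \eqref{quaddef}. You flesh out the regrouping over triple junctions and the regularity justification a bit more explicitly than the paper does, but the argument is the same.
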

\begin{proof}
From the divergence theorem for an $\sdimn$-dimensional $C^{\infty}$ orientable hypersurface $\Sigma$ with $C^{\infty}$ boundary, if $f,g\colon\Sigma\to\R$, then
\begin{flalign*}
&\int_{\Sigma}(\mathcal{L}f)g\gamma_{\sdimn}(x)dx
\stackrel{\eqref{three4.3}}{=}\int_{\Sigma}(\Delta f-\langle x,\nabla f\rangle)g\gamma_{\sdimn}(x)dx
=\int_{\Sigma}\mathrm{div}_{\tau}(\gamma_{\sdimn}(x)\nabla f)gdx\\
&=\int_{\Sigma}\Big([\mathrm{div}_{\tau}(g\gamma_{\sdimn}(x)\nabla f)]-\langle\nabla f,\nabla g\rangle\Big)\gamma_{\sdimn}(x)dx
=\int_{\partial \Sigma}\langle\nabla f,\nu\rangle g\gamma_{\sdimn}(x)-\int_{\Sigma}\langle\nabla f,\nabla g\rangle\gamma_{\sdimn}(x)dx.
\end{flalign*}
As usual, $\nu$ denotes the exterior pointing unit normal to $\partial\Sigma$.  Substituting into the definition of $Q(F,G)$ and using \eqref{three4.3} and \eqref{three4.5} completes the proof.
\end{proof}

\begin{lemma}[{\cite[Lemma 1]{barchiesi16}}]\label{lemma60}
Let $\Omega_{1},\ldots,\Omega_{m}$ satisfy Assumption \ref{as1}.  Then there exists a sequence of $C^{\infty}$ functions $\eta_{1}\leq \eta_{2}\leq\cdots\colon\cup_{i=1}^{m}\redb\Omega_{i}\to[0,1]$ supported in $M_{\sdimn}\cup M_{\sdimn-1}\cup M_{\sdimn-2}$ (using the notation of Assumption \ref{as1}) such that
$$\forall\,x\in \cup_{i,j=1}^{m}\redb\Omega_{ij},\quad\lim_{u\to\infty}\eta_{u}(x)=1,$$
$$\lim_{u\to\infty}\sum_{1\leq i<j\leq m}\int_{\Sigma_{ij}}[(1-\eta_{u})^{2}+\vnorm{\nabla(1-\eta_{u})}^{2}]\gamma_{\sdimn}(x)dx=0.$$
\end{lemma}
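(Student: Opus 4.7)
The strategy is a standard capacitary-cutoff construction adapted to the Gaussian weight and the stratified structure of Assumption \ref{as1}. Since $M_{\sdimn-3}$ is $(\sdimn-3)$-rectifiable with locally finite $(\sdimn-3)$-dimensional Hausdorff measure, while each $\Sigma_{ij}$ is $\sdimn$-dimensional, the singular set sits as a codimension-$3$ subset of the surface. Codimension $\geq 2$ subsets have vanishing $H^{1}$-capacity, so one expects smooth $\eta_{u}\to 1$ vanishing near $M_{\sdimn-3}$ with $L^{2}$-small gradients; the task is simply to carry this out in the Gaussian setting and verify the quantitative decay.

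Concretely, let $d\colon \cup_{i=1}^{m}\redb\Omega_{i}\to[0,\infty)$ be a smooth function bi-Lipschitz comparable to $\mathrm{dist}(\cdot,M_{\sdimn-3})$, obtained by mollifying the Lipschitz distance in local charts. Pick a smooth $\phi\colon[0,\infty)\to[0,1]$ with $\phi\equiv 0$ on $[0,1/2]$, $\phi\equiv 1$ on $[1,\infty)$, $|\phi'|\leq 4$, and strictly increasing on $(1/2,1)$. Set
\begin{equation*}
\eta_{u}(x)\colonequals \phi\bigl(u\,d(x)\bigr),\qquad u\geq 1.
\end{equation*}
Each $\eta_{u}$ is smooth, vanishes on $\{d\leq 1/(2u)\}$ (hence is supported in $M_{\sdimn}\cup M_{\sdimn-1}\cup M_{\sdimn-2}$ as required), equals $1$ on $\{d\geq 1/u\}$, and satisfies $\eta_{u}\leq\eta_{u+1}$ by the strict monotonicity of $\phi$. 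Clearly $\eta_{u}(x)\to 1$ at every $x\notin M_{\sdimn-3}$.

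The $W^{1,2}(\gamma_{\sdimn})$ estimate reduces to a tube volume bound. One has $|\nabla\eta_{u}|\leq Cu$ pointwise, and both $(1-\eta_{u})^{2}$ and $|\nabla\eta_{u}|^{2}$ are supported in the tube $T_{u,ij}\colonequals\Sigma_{ij}\cap\{d\leq 1/u\}$, so
\begin{equation*}
\int_{\Sigma_{ij}}\bigl[(1-\eta_{u})^{2}+\vnormt{\nabla\eta_{u}}^{2}\bigr]\gamma_{\sdimn}(x)\,dx \;\leq\; (1+C^{2}u^{2})\int_{T_{u,ij}}\gamma_{\sdimn}(x)\,dx.
\end{equation*}
The key input is the codimension-$3$ tube estimate: a Vitali/Besicovitch covering of $M_{\sdimn-3}$ by balls of radius $r$, combined with its locally finite $(\sdimn-3)$-Hausdorff measure, yields $\mathcal{H}^{\sdimn}(\Sigma_{ij}\cap K\cap\{d\leq r\})\leq C_{K}r^{3}$ for compact $K\subset\R^{\adimn}$ and small $r>0$. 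A dyadic decomposition of $\R^{\adimn}$ into annuli $B(0,R_{k})\setminus B(0,R_{k-1})$, with $R_{k}\uparrow\infty$ slowly enough that Gaussian decay dominates the at-worst-polynomial growth of $C_{R_{k}}$, then gives $\int_{T_{u,ij}}\gamma_{\sdimn}\leq C\,u^{-3}$, so the full integral is $O(u^{-1})\to 0$. Summing over the finitely many pairs $1\leq i<j\leq m$ concludes.

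The main obstacle is justifying the codimension-$3$ tube estimate in this mixed setting: $M_{\sdimn-3}$ is not a standalone rectifiable set in $\R^{\adimn}$ but sits inside the closure of the stratified boundary $\cup_{i<j}\overline{\Sigma_{ij}}$, and the relevant tubes must be measured intrinsically along the $\sdimn$-sheets $\Sigma_{ij}$. One handles this by working in the $C^{1,\alpha}$ charts supplied by Assumption \ref{as1}(iii)--(iv), where the intrinsic distance on $\Sigma_{ij}$ is bi-Lipschitz comparable to ambient Euclidean distance, and then transferring the classical tube bound for rectifiable sets in Euclidean space through these charts. Constructing the global smooth approximation $d$ compatibly across the charts is a minor additional point resolved by a smooth partition of unity subordinate to the chart cover.
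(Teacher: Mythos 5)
Your construction via a rescaled distance-function cutoff $\eta_{u}=\phi(u\,d(\cdot))$ is a legitimate alternative route (the paper's own ``proof'' is a one-line appeal to \cite[Lemma 1]{barchiesi16} after observing that the singular stratum has zero $(\sdimn-2)$-Hausdorff measure), but the decisive step as you state it has a real gap. You assert that locally finite $\mathcal{H}^{\sdimn-3}$ measure of $M_{\sdimn-3}$, via a Vitali/Besicovitch covering, gives the tube bound $\mathcal{H}^{\sdimn}(\Sigma_{ij}\cap K\cap\{d\leq r\})\leq C_{K}r^{3}$. Finite Hausdorff measure alone controls neither Minkowski content nor the number of $r$-balls needed to cover $M_{\sdimn-3}\cap K$; a set of Hausdorff dimension $s$ can have strictly larger upper Minkowski dimension, so the covering number $N(r)$ need not be $O(r^{-(\sdimn-3)})$. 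To get your tube bound you genuinely need two additional inputs that your write-up does not supply: (i) Federer's Minkowski-content theorem for closed rectifiable sets of finite measure, which is where the \emph{rectifiability} hypothesis of Assumption \ref{as1}(iv) (not just finiteness of $\mathcal{H}^{\sdimn-3}$) enters and yields the ambient Lebesgue bound $\mathcal{L}^{\adimn}(\{d\leq r\}\cap K)\leq C_{K}r^{4}$; and (ii) the perimeter density upper bound $\mathcal{H}^{\sdimn}(\Sigma_{ij}\cap B(x,r))\leq Cr^{\sdimn}$, which follows from the $(\Lambda,r_{0})$-minimality of the cluster (established in the proof of Lemma \ref{lemma52.6}, cf.\ \eqref{reg2}) and is what converts the ambient $r^{4}$ tube bound into the intrinsic $r^{3}$ bound on $\Sigma_{ij}$ via a Vitali subfamily. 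Relatedly, your final paragraph invokes ``$C^{1,\alpha}$ charts supplied by Assumption \ref{as1}(iii)--(iv)'' to handle $M_{\sdimn-3}$; but (iii) concerns $M_{\sdimn-2}$, and (iv) gives only rectifiability of $M_{\sdimn-3}$, not chart structure --- near $M_{\sdimn-3}$, which is exactly where your cutoff is concentrated, there are no such charts to work in.

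A secondary issue: your handling of noncompactness assumes ``at-worst-polynomial growth of $C_{R_{k}}$,'' which does not follow from local finiteness of $\mathcal{H}^{\sdimn-3}(M_{\sdimn-3})$ as stated. If instead you adopt the standard ball-covering cutoff (the likely content of \cite[Lemma 1]{barchiesi16}), these difficulties disappear: since $\mathcal{H}^{\sdimn-2}(M_{\sdimn-3})=0$, for each $\epsilon>0$ one covers $M_{\sdimn-3}$ by balls with $\sum_{i}r_{i}^{\sdimn-2}\gamma_{\sdimn}(x_{i})<\epsilon$ (absorbing the Gaussian weight into the covering), forms Lipschitz bump cutoffs on each ball, and uses only the density bound (ii) to get $\int_{\Sigma_{ij}}\vnormt{\nabla\eta}^{2}\gamma_{\sdimn}\leq C\sum_{i}r_{i}^{-2}\cdot r_{i}^{\sdimn}\gamma_{\sdimn}(x_{i})\leq C\epsilon$; no Federer theorem, no polynomial-growth assumption, and the monotonicity $\eta_{1}\leq\eta_{2}\leq\cdots$ can be enforced by smooth maxima of nested constructions.
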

\begin{proof}
By Assumption \ref{as1}, $\cup_{i=1}^{m}\partial\Omega_{i}\setminus(M_{\sdimn}\cup M_{\sdimn-1}\cup M_{\sdimn-2})$ has zero $(\sdimn-2)$-dimensional Hausdorff measure, so the assertion follows e.g. by \cite[Lemma 1]{barchiesi16}.
\end{proof}

\begin{lemma}[\embolden{Non-Compact Variations}]\label{lemma97}
Let $\Omega_{1},\ldots,\Omega_{m}$ satisfy Assumption \ref{as1}.  Let $F,G\in\mathcal{F}$.  Assume that
$$\sum_{1\leq i<j\leq m}\int_{\Sigma_{ij}}\abs{L_{ij}f_{ij}}^{2}\gamma_{\sdimn}(x)dx<\infty.$$
Assume $\forall\,1\leq i<j<k\leq m$, $\forall$ $x\in(\redS_{ij})\cap(\redS_{jk})\cap(\redS_{ki})$, the following holds at $x$.
\begin{equation}\label{four75}
\nabla_{\nu_{ij}}f_{ij}+q_{ij}f_{ij}=\nabla_{\nu_{jk}}f_{jk}+q_{jk}f_{jk}=\nabla_{\nu_{ki}}f_{ki}+q_{ki}f_{ki}.
\end{equation}
Then $Q(F,F)$ and $Q(F,G)$ are well-defined real numbers.  Moreover,
$$Q(F,F)=-\langle LF,F\rangle,\qquad Q(F,G)=-\langle LF,G\rangle.$$
Also, $\exists$ a sequence $\phi_{1},\phi_{2},\ldots\in C_{0}^{\infty}(\Sigma)$ with  $0\leq\phi_{1}\leq\phi_{2}\leq\cdots \leq1$ on $\R^{\adimn}$ converging pointwise to $1$ such that
$$\lim_{u\to\infty}Q(\phi_{u}F,G)=\lim_{u\to\infty}Q(\phi_{u}F,\phi_{u}G)=Q(F,G).$$
\end{lemma}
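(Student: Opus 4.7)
My approach has three parts: establish the identification $Q(F,G) = -\langle LF, G\rangle$ (which simultaneously settles existence), build a suitable cutoff sequence $\phi_u$, and verify the two convergence statements via a product-rule computation whose key feature is a cancellation between triple-junction boundary contributions and integration-by-parts boundary terms.

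\textbf{Step 1 (existence and identification).} I will first verify that the triple-junction integrand in Definition \ref{def2} vanishes pointwise. At each $x \in \redS_{ij}\cap\redS_{jk}\cap\redS_{ki}$, condition \eqref{four75} forces the three Robin-type quantities $\nabla_{\nu_{ij}}f_{ij}+q_{ij}f_{ij}$, $\nabla_{\nu_{jk}}f_{jk}+q_{jk}f_{jk}$, $\nabla_{\nu_{ki}}f_{ki}+q_{ki}f_{ki}$ to share a common value $c(x)$, so the integrand reduces to $c(x)(g_{ij}(x)+g_{jk}(x)+g_{ki}(x))$, which is zero by the compatibility condition on $G$ built into Definition \ref{def1}. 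Thus $Q(F,G) = \sum_{i<j}-\int_{\Sigma_{ij}} g_{ij} L_{ij} f_{ij} \gamma_{\sdimn}(x)dx$, which is finite by Cauchy--Schwarz using $g_{ij} \in L^2(\gamma_{\sdimn})$ (from $G\in\mathcal{F}$) and the hypothesis $L_{ij} f_{ij} \in L^2(\gamma_{\sdimn})$. This gives $Q(F,G) = -\langle LF,G\rangle$, well-defined and real; symmetry $Q(F,G)=Q(G,F)$ follows from the $C_0^\infty$ case (Lemma \ref{lemma32.5}) once Step 3 is established.

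\textbf{Step 2 (cutoffs) and Step 3 (the key cancellation).} Take the intrinsic cutoffs $\eta_u$ from Lemma \ref{lemma60} together with a spatial cutoff $\chi_u(x) := \chi(\vnormt{x}/u)$, where $\chi\in C_0^\infty([0,\infty))$ equals $1$ on $[0,1]$ and vanishes on $[2,\infty)$. Set $\phi_u := \eta_u\,\chi_u \in C_0^\infty(\Sigma)$; after passing to a subsequence the $\phi_u$ are nondecreasing and converge to $1$ pointwise, and by Leibniz plus the estimates of Lemma \ref{lemma60} together with Gaussian tail bounds, $\int [(1-\phi_u)^2+\vnormt{\nabla\phi_u}^{2}]\gamma_{\sdimn}(x)dx \to 0$. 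To compute $Q(\phi_u F, G)$, apply the product rule of Remark \ref{rk20}, $L_{ij}(\phi_u f_{ij}) = \phi_u L_{ij} f_{ij} + f_{ij}\mathcal{L}_{ij}\phi_u + 2\langle\nabla\phi_u,\nabla f_{ij}\rangle$, and integrate by parts to move $\mathcal{L}_{ij}$ off $\phi_u$, producing the boundary integral $-\int_{\partial\Sigma_{ij}} f_{ij} g_{ij}\nabla_{\nu_{ij}}\phi_u \gamma_{\sdimn}(x)dx$. Meanwhile $\phi_u F$ still satisfies the $\mathcal{F}$-compatibility but breaks \eqref{four75}, so by the Step 1 calculation applied to $\phi_u F$ the triple-junction piece of $Q(\phi_u F, G)$ collapses to $\sum_{\mathrm{triples}}\int(f_{ij} g_{ij}\nabla_{\nu_{ij}}\phi_u+f_{jk} g_{jk}\nabla_{\nu_{jk}}\phi_u+f_{ki} g_{ki}\nabla_{\nu_{ki}}\phi_u)\gamma_{\sdimn}(x)dx$, which upon reorganizing pairs by their triple junction exactly cancels the integration-by-parts boundary terms. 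After cancellation,
\[
Q(\phi_u F, G) = \sum_{i<j}\Big[-\int \phi_u g_{ij} L_{ij} f_{ij}\gamma_{\sdimn}(x)dx + \int(f_{ij}\langle\nabla\phi_u,\nabla g_{ij}\rangle - g_{ij}\langle\nabla\phi_u,\nabla f_{ij}\rangle)\gamma_{\sdimn}(x)dx\Big].
\]
The first sum converges to $-\langle LF,G\rangle = Q(F,G)$ by dominated convergence (dominated by $\absf{g_{ij} L_{ij} f_{ij}} \in L^1(\gamma_{\sdimn})$). The other two vanish in the limit by Cauchy--Schwarz, since $\nabla g_{ij},\nabla f_{ij} \in L^2(\gamma_{\sdimn})$ while $\vnormt{f_{ij}\nabla\phi_u}_{L^2(\gamma_{\sdimn})}, \vnormt{g_{ij}\nabla\phi_u}_{L^2(\gamma_{\sdimn})} \to 0$. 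The case $Q(\phi_u F, \phi_u G)$ follows analogously by replacing $G$ with $\phi_u G$ and noting the additional $\nabla\phi_u$-terms vanish by the same mechanism.

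\textbf{Main obstacle.} The technical heart is the estimate $\int f_{ij}^{2}\vnormt{\nabla\phi_u}^{2}\gamma_{\sdimn}(x)dx \to 0$ when $f_{ij}$ is merely $L^2(\gamma_{\sdimn})$ (and not bounded), since $\vnormt{\nabla\phi_u}_{L^2(\gamma_{\sdimn})} \to 0$ alone does not suffice. The resolution uses the explicit construction of $\eta_u$ from \cite{barchiesi16} underlying Lemma \ref{lemma60}: the support of $\nabla\eta_u$ concentrates near $M_{\sdimn-3}$ (together with the annulus $\{u \leq \vnormt{x} \leq 2u\}$ from $\chi_u$), a set of Gaussian measure tending to zero, while $\vnormt{\nabla\eta_u}$ remains bounded on each fixed compact region. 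Dominated convergence applied to $f_{ij}^{2}\gamma_{\sdimn}$ on the shrinking support of $\vnormt{\nabla\phi_u}$ then yields the needed vanishing.
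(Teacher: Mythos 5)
Your Step 1 is exactly the paper's argument for the first two assertions: once \eqref{four75} makes the Robin quantities agree, the triple-junction integrand collapses to $c(x)(g_{ij}+g_{jk}+g_{ki})=0$ by the compatibility condition in Definition \ref{def1}, and then Cauchy--Schwarz on $g_{ij}\,L_{ij}f_{ij}$ gives finiteness. That part is correct and matches.

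For the third assertion, you take a genuinely different route from the paper. You directly expand $L_{ij}(\phi_u f_{ij})$ with the product rule of Remark \ref{rk20} and verify by hand that the integration-by-parts boundary contribution $-\int_{\partial\Sigma_{ij}} f_{ij}g_{ij}\nabla_{\nu_{ij}}\phi_u\,\gamma_{\sdimn}$ exactly cancels the new triple-junction terms $f_{ij}g_{ij}\nabla_{\nu_{ij}}\phi_u+f_{jk}g_{jk}\nabla_{\nu_{jk}}\phi_u+f_{ki}g_{ki}\nabla_{\nu_{ki}}\phi_u$ coming from $\nabla_{\nu_{ij}}(\phi_u f_{ij})=\phi_u\nabla_{\nu_{ij}}f_{ij}+f_{ij}\nabla_{\nu_{ij}}\phi_u$. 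This cancellation is correct, and it is the honest, bare-hands version. The paper instead never expands $L(\phi F)$: it applies the symmetric form of $Q$ from Lemma \ref{lemma32.5} to conclude that $Q(\phi F,G)-Q(F,\phi G)$ consists solely of the gradient cross terms $f_{ij}\langle\nabla\phi,\nabla g_{ij}\rangle-g_{ij}\langle\nabla\phi,\nabla f_{ij}\rangle$ (the triple-junction part is symmetric in the two arguments, so it drops out of the difference), and then evaluates $Q(F,\phi G)$ directly from \eqref{quaddef} using \eqref{four75}. The paper's device is shorter because it never needs to track the boundary cancellation; both routes ultimately reduce the matter to the same gradient cross terms.

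The genuine gap is in your ``Main obstacle'' paragraph. You need $\int_{\Sigma_{ij}} f_{ij}^{2}\vnormt{\nabla\phi_u}^{2}\gamma_{\sdimn}(x)dx\to 0$. For the radial factor $\chi_u$, one has $\vnormt{\nabla\chi_u}\leq C/u$ with support in an annulus, so this is fine. For the Barchiesi-type cutoff $\eta_u$ from Lemma \ref{lemma60}, however, that lemma only gives $\int\vnormt{\nabla\eta_u}^{2}\gamma_{\sdimn}\to 0$; it does \emph{not} give a uniform pointwise bound on $\vnormt{\nabla\eta_u}$, and in such constructions $\vnormt{\nabla\eta_u}_{\infty}$ typically blows up as the cutoff concentrates near the singular set. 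Your claim that ``$\vnormt{\nabla\eta_u}$ remains bounded on each fixed compact region'' is not supplied by Lemma \ref{lemma60} and is generally false for log-cutoff constructions, so the dominated convergence argument for $\int f_{ij}^{2}\vnormt{\nabla\eta_u}^{2}\gamma_{\sdimn}\to 0$ does not go through for $f_{ij}$ merely in $L^{2}(\gamma_{\sdimn})$. The difficulty is self-inflicted: the paper's cutoff sequence is purely radial, $\phi=1$ on $\{\vnormt{x}\leq r\}$, $\phi=0$ on $\{\vnormt{x}>r+2\}$, with $\vnormt{\nabla\phi}\leq 1$ on $\Sigma$, which makes the estimate immediate from $\int_{\{r\leq\vnormt{x}\leq r+2\}} f_{ij}^{2}\gamma_{\sdimn}\to 0$ by dominated convergence. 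If you drop $\eta_u$ and use only the radial cutoff (or, alternatively, add the hypothesis that $F$ is bounded, which holds in the paper's applications where $f_{ij}=\langle v,N_{ij}\rangle$), the gap closes.
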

\begin{proof}
Let  $\Sigma\colonequals\cup_{1\leq i<j\leq m}\Sigma_{ij}$.  Let $\phi\in C_{0}^{\infty}(\Sigma)$ with $0\leq\phi\leq 1$, $\phi=1$ when $\vnorm{x}\leq r$, $\phi=0$ when $\vnorm{x}>r+2$ and $\vnorm{\nabla\phi}\leq1$ on $\Sigma$.  From Lemma \ref{lemma32.5},
$$Q(\phi F,G)-Q(F,\phi G)=\sum_{1\leq i<j\leq m}\int_{\Sigma_{ij}}\Big(f_{ij}\langle\nabla \phi,\nabla g_{ij}\rangle+g_{ij}\langle\nabla \phi,\nabla g_{ij}\rangle\Big)\gamma_{\sdimn}(x)dx.$$
So, as $r\to\infty$, $\abs{Q(\phi F,G)-Q(F,\phi G)}$ converges to $0$ by the Dominated Convergence Theorem and the Cauchy-Schwarz inequality, using $F,G\in\mathcal{F}$ and Definition \ref{def1}.
By the assumption \eqref{four75} on $F$, $Q(F,\phi G)\stackrel{\eqref{quaddef}}{=}\sum_{1\leq i<j\leq m}-\int_{\Sigma_{ij}}\phi g_{ij}L_{ij} f_{ij}\gamma_{\sdimn}(x)dx$.  So, as $r\to\infty$, $Q(F,\phi G)$ converges to $\langle-\L F,G\rangle$.  Therefore, as $r\to\infty$, $Q(\phi F,G)$ also converges to $\langle-\L F,G\rangle$.  The second assertion follows from the first, since $\abs{Q(\phi F,\phi G)-Q(F,\phi^{2}G)}$ converges to zero as $r\to\infty$ as well.
\end{proof}

\section{Curvature Bounds}

Below we denote $\Sigma\colonequals\cup_{1\leq i<j\leq m}\Sigma_{ij}$.

\begin{remark}\label{rk30}
Let $v\in\R^{\adimn}$.  For all $1\leq i<j\leq m$ let $f_{ij}\colon \Sigma_{ij}\to\R$ be defined by $f_{ij}\colonequals\langle v,N_{ij}\rangle$.  Then for all $1\leq i<j\leq m$, $L_{ij}f_{ij}=f_{ij}$ by Lemma \ref{lemma45} and Lemma \ref{lemma24}.  Also, the term in Lemma \ref{lemma29.9} is zero, since $X\colonequals v$ is the constant vector field in this case, i.e. \eqref{four75} holds.
\end{remark}

\begin{lemma}\label{lemma55}
Let $\Lambda$ be the set of solutions of the middle equation of \eqref{nine5.3}.  Then $\Lambda$ is a vector space of dimension equal to $m-1$.  Also, $\Lambda$ has an orthonormal basis (with respect to $\langle\cdot,\cdot\rangle$ defined in Lemma \ref{lemma25.3}) consisting of vectors all of whose components are nonzero.
\end{lemma}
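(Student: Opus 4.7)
The middle equation of \eqref{nine5.3} is a cocycle condition on the antisymmetric array $(\lambda_{ij})_{1\leq i\neq j\leq m}$ (with $\lambda_{ji}\colonequals-\lambda_{ij}$, as follows from the formula for $\lambda_{ij}$ in \eqref{nine5.3} upon interchanging $i$ and $j$, using $N_{ji}=-N_{ij}$ and $H_{ji}=-H_{ij}$). My approach is to identify $\Lambda$ with the quotient $\R^{m}/\R\cdot(1,\ldots,1)$, which immediately yields the dimension, and then to transport a generic orthonormal basis from the hyperplane $V\colonequals\{a\in\R^{m}\colon\sum_{i=1}^{m}a_{i}=0\}$ to $\Lambda$ via this identification.

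For the dimension count, I would introduce the linear map $\Phi\colon\R^{m}\to\R^{\binom{m}{2}}$ defined by $\Phi(a)_{ij}\colonequals a_{j}-a_{i}$ for $i<j$. A direct check gives $\Phi(a)_{ij}+\Phi(a)_{jk}+\Phi(a)_{ki}=0$ for every triple, so $\mathrm{Image}(\Phi)\subset\Lambda$, and visibly $\ker\Phi=\R\cdot(1,\ldots,1)$. Conversely, given $\lambda\in\Lambda$, setting $a_{1}\colonequals 0$ and $a_{j}\colonequals\lambda_{1j}$ for $2\leq j\leq m$, the cocycle identity applied to the triple $(1,i,j)$ with $1<i<j$ forces $\lambda_{ij}=\lambda_{1j}-\lambda_{1i}=a_{j}-a_{i}$, so $\lambda=\Phi(a)$. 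Hence $\Lambda=\mathrm{Image}(\Phi)$ and $\dim\Lambda=m-1$.

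For the orthonormal basis, a short computation gives
\begin{equation*}
\langle\Phi(a),\Phi(b)\rangle=\sum_{1\leq i<j\leq m}(a_{j}-a_{i})(b_{j}-b_{i})=m\sum_{i=1}^{m}a_{i}b_{i}-\Big(\sum_{i=1}^{m}a_{i}\Big)\Big(\sum_{i=1}^{m}b_{i}\Big),
\end{equation*}
so $\Phi|_{V}\colon V\to\Lambda$ is a conformal linear isomorphism that scales the standard Euclidean inner product on $V\subset\R^{m}$ by a factor of $m$. Under this identification, the condition that every component $\Phi(a)_{ij}$ be nonzero is equivalent to $a_{i}\neq a_{j}$ for all $i\neq j$, i.e., to $a\in V$ avoiding the finite arrangement of hyperplanes $\{a\in V\colon a_{i}=a_{j}\}$. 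Starting from any fixed orthonormal basis $u^{(1)},\ldots,u^{(m-1)}$ of $V$, I would then apply a rotation $R\in O(V)$ chosen so that every $Ru^{(k)}$ has pairwise distinct coordinates; the vectors $\{m^{-1/2}\Phi(Ru^{(k)})\}_{k=1}^{m-1}$ are then the required orthonormal basis of $\Lambda$. The only nontrivial step is this last genericity claim, and it is elementary: the bad locus is a finite union of proper algebraic subvarieties of $O(V)$ (one for each pair $(k,\{i,j\})$), which cannot exhaust $O(V)$, so a generic $R$ suffices.
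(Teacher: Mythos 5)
Your proof is correct, and it takes a genuinely different (and arguably cleaner) route than the paper. The paper identifies $\Lambda$ with $\R^{m-1}$ by importing the geometry of the conjectured simplicial cones: it uses that the unit normals $N_{ij}$ of the regular simplex configuration satisfy $N_{ij}+N_{jk}+N_{ki}=0$, so that $y\mapsto(-\langle y,N_{ij}\rangle)_{i<j}$ is an injective linear map $\R^{m-1}\to\Lambda$, and it invokes Lemma~\ref{lemma25.3} (orthogonality of $D_1$ and $D_2$) for the complementary dimension bound; the orthonormal basis then comes from choosing $m-1$ generic vectors $y$ in $\R^{m-1}$ not perpendicular to any $N_{ij}$. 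You instead identify $\Lambda$ directly with the coboundary space via $\Phi(a)_{ij}=a_j-a_i$, prove $\Lambda=\mathrm{Image}(\Phi)$ by the elementary cocycle argument, compute the pullback of $\langle\cdot,\cdot\rangle$ explicitly (your identity $\langle\Phi(a),\Phi(b)\rangle=m\sum a_ib_i-(\sum a_i)(\sum b_i)$ is correct), and then take a generic orthonormal frame in $V$. The two isomorphisms are actually the same up to scaling, since for the regular simplex the normals are proportional to differences of vertices, $N_{ij}\propto z_j-z_i$, so $\langle y,N_{ij}\rangle\propto a_j-a_i$ with $a_i=\langle y,z_i\rangle$; but your presentation is self-contained and does not rely on any properties of the extremal configuration, which is a modest advantage. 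One small caveat, shared equally by both proofs: the middle equation of \eqref{nine5.3} is imposed only for triples with $\Sigma_{ij}\cap\Sigma_{jk}\cap\Sigma_{ki}\neq\emptyset$, so strictly speaking $\Lambda$ could be larger than the full cocycle space $D_2$ if some triple intersections were empty; both you and the paper implicitly treat $\Lambda=D_2$, which is harmless in the intended application (minimizers with the regularity of Assumption~\ref{as1}) but is an implicit assumption worth flagging.
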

\begin{proof}
From Lemma \ref{lemma25.3}, $\Lambda$ has dimension equal to $m-1$.  Consider the sets described in Conjecture \ref{conj0}.  These sets satisfy $H_{ij}(x)=0$ for every $x\in\Sigma_{ij}$, and they also satisfy all equations from \eqref{nine5.3}, for any $y\in\R^{m-1}$.  We can then treat $N_{ij}$ as being constant functions of $y$, so that $\lambda_{ij}(y)=-\langle y,N_{ij}\rangle$ for all $1\leq i<j\leq m$ is a solution of the equations \eqref{nine5.3}.  By considering any $y\in\R^{m-1}$, linear algebra also implies then that $\Lambda$ has dimension at least $m-1$, since the only $y\in\R^{m-1}$ such that $\langle y,N_{ij}\rangle=0$ for all $1\leq i<j\leq m$ is $y=0$.  Finally, choosing an orthonormal basis of $y$'s of $\R^{m-1}$ so that each basis element is not perpendicular to $N_{ij}$ for all $1\leq i<j\leq m$, then we have $m-1$ nonvanishing solutions of  \eqref{nine5.3}.
\end{proof}

For any hypersurface $\Sigma\subset\R^{\adimn}$ (possibly with boundary), we define
\begin{equation}\label{seven0}
\pcon=\pcon(\Sigma)
\colonequals-\inf_{G\in \mathcal{F}\cap C_{0}^{\infty}(\Sigma)\colon\langle G,G\rangle=1}Q(G,G).
\end{equation}
By the definition of $\pcon$,
\begin{equation}\label{seven1}
\Sigma_{1}\subset\Sigma_{2}\qquad\Longrightarrow\qquad\pcon(\Sigma_{1})\leq\pcon(\Sigma_{2}).
\end{equation}

\begin{lemma}[\embolden{Existence of Fundamental Tone}]\label{lemma33}
Assume $\pcon\colonequals\pcon(\Sigma)<\infty$.  Then there exists $F\in\mathcal{F}$ such that
\begin{equation}\label{eqstar}
Q(F,F)=\min_{G\in\mathcal{F}\colon\langle G,G\rangle=1}
Q(G,G).
\end{equation}
If $F\in\mathcal{F}$ satisfies \eqref{eqstar}, then the following hold.  $F$ is an eigenfunction of $\L$ so that
$$\L F=\pcon F.$$
Moreover, $\forall\,1\leq i<j<k\leq m$, $\forall$ $x\in(\redS_{ij})\cap(\redS_{jk})\cap(\redS_{ki})$, the following holds at $x$.
$$\nabla_{\nu_{ij}}f_{ij}+q_{ij}f_{ij}=\nabla_{\nu_{jk}}f_{jk}+q_{jk}f_{jk}=\nabla_{\nu_{ij}}f_{ij}+q_{ij}f_{ij}.$$
\end{lemma}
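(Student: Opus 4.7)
The plan is to establish existence by the direct method of the calculus of variations, then derive the eigenfunction equation and the junction boundary condition as Euler-Lagrange conditions by testing against suitable admissible variations.

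First I would argue that the infimum in \eqref{seven0}, a priori taken over $\mathcal{F}\cap C_{0}^{\infty}(\Sigma)$, coincides with the infimum over all of $\mathcal{F}$: the Gaussian cutoff sequence $\{\eta_{u}\}$ from Lemma \ref{lemma60}, combined with the density argument already used in the proof of Lemma \ref{lemma97}, lets one approximate any $G\in\mathcal{F}$ by smooth compactly supported admissible functions with $Q(\eta_{u}G,\eta_{u}G)\to Q(G,G)$ and $\langle\eta_{u}G,\eta_{u}G\rangle\to\langle G,G\rangle$. Next I would take a minimizing sequence $F_{k}\in\mathcal{F}$ with $\langle F_{k},F_{k}\rangle=1$ and $Q(F_{k},F_{k})\to-\pcon$. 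Using the integration-by-parts expression in Lemma \ref{lemma32.5}, the bound $Q(F_{k},F_{k})\leq-\pcon+1$ combined with the bounded coefficients $\vnormt{A}^{2}+1$ on each $\Sigma_{ij}$ (finite on compact pieces, controlled at infinity by the Gaussian weight) yields a uniform bound on $\sum_{i<j}\int_{\Sigma_{ij}}\vnormt{\nabla f_{k,ij}}^{2}\gamma_{\sdimn}\,dx$. The Gaussian weight provides a Poincaré-type inequality that gives compact embedding of the weighted Sobolev space into weighted $L^{2}$, so along a subsequence $F_{k}$ converges weakly in the $H^{1}(\gamma_{\sdimn})$ sense and strongly in $L^{2}(\gamma_{\sdimn})$ to some $F$ with $\langle F,F\rangle=1$. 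The junction constraint $f_{ij}+f_{jk}+f_{ki}=0$ is preserved under this limit by continuity of the Sobolev trace, so $F\in\mathcal{F}$; lower semicontinuity of $Q$ then gives $Q(F,F)\leq-\pcon$, hence equality.

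Having produced a minimizer $F$, I would derive the Euler-Lagrange conditions by considering the perturbation $F+tH$ for $H\in\mathcal{F}\cap C_{0}^{\infty}(\Sigma)$. Minimality and the symmetry of $Q$ (Lemma \ref{lemma32.5}) give the first-order condition
\begin{equation*}
Q(F,H)=-\pcon\langle F,H\rangle,\qquad\forall\,H\in\mathcal{F}\cap C_{0}^{\infty}(\Sigma).
\end{equation*}
Testing first with $H$ supported in the interior of a single $\Sigma_{ij}$ (which automatically satisfies the junction constraint since it vanishes on $\redS_{ij}$) kills the boundary term in \eqref{quaddef} and yields
\begin{equation*}
\int_{\Sigma_{ij}}h_{ij}(L_{ij}f_{ij}-\pcon f_{ij})\gamma_{\sdimn}(x)dx=0
\end{equation*}
for every smooth compactly supported $h_{ij}$, so $L_{ij}f_{ij}=\pcon f_{ij}$ in the distributional sense on the smooth stratum $M_{\sdimn}\cap\Sigma_{ij}$, and then classically by elliptic regularity. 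Thus $\L F=\pcon F$, which is the first claimed conclusion.

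Finally, for the junction condition, I would test the identity $Q(F,H)=-\pcon\langle F,H\rangle$ with $H$ supported in a small neighborhood of a point on $(\redS_{ij})\cap(\redS_{jk})\cap(\redS_{ki})$, subject only to the linear constraint $h_{ij}+h_{jk}+h_{ki}=0$ on the triple stratum $M_{\sdimn-2}$. Using $\L F=\pcon F$ already established, the interior contribution to $Q(F,H)$ exactly cancels $-\pcon\langle F,H\rangle$, leaving only the boundary term
\begin{equation*}
\int_{\redS_{ij}\cap\redS_{jk}\cap\redS_{ki}}\Big([\nabla_{\nu_{ij}}f_{ij}+q_{ij}f_{ij}]h_{ij}+[\nabla_{\nu_{jk}}f_{jk}+q_{jk}f_{jk}]h_{jk}+[\nabla_{\nu_{ki}}f_{ki}+q_{ki}f_{ki}]h_{ki}\Big)\gamma_{\sdimn}(x)dx=0.
\end{equation*}
Substituting $h_{ki}=-h_{ij}-h_{jk}$ and taking $h_{ij},h_{jk}$ to range over arbitrary compactly supported smooth functions on the triple stratum, the fundamental lemma of the calculus of variations forces $\nabla_{\nu_{ij}}f_{ij}+q_{ij}f_{ij}=\nabla_{\nu_{ki}}f_{ki}+q_{ki}f_{ki}$ and $\nabla_{\nu_{jk}}f_{jk}+q_{jk}f_{jk}=\nabla_{\nu_{ki}}f_{ki}+q_{ki}f_{ki}$ pointwise on $(\redS_{ij})\cap(\redS_{jk})\cap(\redS_{ki})$, which is the asserted equality of the three quantities.

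The main obstacle I anticipate is the compactness step: one must handle both the non-compactness of $\Sigma$ at infinity (managed by the Gaussian decay, which provides an effective Poincaré inequality and spectral gap for $\mathcal{L}$) and the singular strata $M_{\sdimn-2}$ and $M_{\sdimn-3}$ of the reduced boundary. The singular set $M_{\sdimn-3}$ has vanishing $(\sdimn-2)$-dimensional Hausdorff measure by Assumption \ref{as1}(iv), so the cutoff sequence from Lemma \ref{lemma60} produces no spurious boundary contributions in the limit, and the junction condition across $M_{\sdimn-2}$ passes to weak limits via Sobolev traces along the smooth $(\sdimn-2)$-dimensional manifold stratum. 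The subtlety is verifying that the trace map is sufficiently continuous on the weighted Sobolev space used, which follows from a localized Sobolev trace inequality adapted to the Gaussian weight (as cited in Definition \ref{def1}).
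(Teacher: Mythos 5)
Your approach is genuinely different from the paper's. You use the direct method of the calculus of variations: extract a minimizing sequence, establish coercivity, appeal to weighted Sobolev compactness, and pass to a weak limit. The paper instead exhausts $\Sigma$ by compact pieces $\Sigma_{1}\subset\Sigma_{2}\subset\cdots$, takes a Dirichlet eigenfunction $F_{k}$ of $\L$ on each $\Sigma_{k}$ with eigenvalue $\pcon(\Sigma_{k})$ (which exists by classical spectral theory on a compact domain and can be taken sign-definite on each component), normalizes $F_{k}$ at a fixed interior point, and then uses the Harnack inequality and interior elliptic Schauder estimates to obtain locally uniform $C^{2,\sigma}$ bounds, so Arzelà-Ascoli gives a subsequence converging to a nonzero solution of $\L F=\pcon F$. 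Only after constructing $F$ does the paper run the Euler--Lagrange computation $c(t)\colonequals\frac{Q(F+tG,F+tG)}{\langle F+tG,F+tG\rangle}-Q(F,F)\geq0$, $c'(0)=0$, to obtain $Q(F,G)=\frac{Q(F,F)}{\langle F,F\rangle}\langle F,G\rangle$ and then localize to get the interior equation and the junction condition.

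Unfortunately your existence argument has a genuine gap in the coercivity step. You assert that $Q(F_{k},F_{k})\leq-\pcon+1$ together with ``the bounded coefficients $\vnormt{A}^{2}+1$'' gives a uniform bound on $\sum_{i<j}\int_{\Sigma_{ij}}\vnormt{\nabla f_{k,ij}}^{2}\gamma_{\sdimn}\,dx$. But $\vnormt{A}^{2}$ is \emph{not} pointwise bounded on $\Sigma$: it can blow up near the singular strata $M_{\sdimn-2}$ and $M_{\sdimn-3}$, and the only control available is the integrability stated in Lemmas \ref{lemma28.5} and \ref{lemma61}, namely $\int_{\Sigma}\phi^{2}(\vnormt{A}^{2}+1)\gamma_{\sdimn}\leq\int_{\Sigma}(\vnormt{\nabla\phi}^{2}+(\pcon-1)\phi^{2})\gamma_{\sdimn}$. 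Combining this with the integration-by-parts form of $Q$ from Lemma \ref{lemma32.5} gives, writing $A_{k}\colonequals\sum\int\vnormt{\nabla f_{k,ij}}^{2}\gamma_{\sdimn}$, $B_{k}\colonequals\sum\int f_{k,ij}^{2}(\vnormt{A}^{2}+1)\gamma_{\sdimn}$, and $J_{k}$ for the junction term: $A_{k}-B_{k}+J_{k}=Q(F_{k},F_{k})$ and $B_{k}\leq A_{k}+\pcon-1$; adding these inequalities merely produces $J_{k}\lesssim0$ and leaves $A_{k}$ and $B_{k}$ free to diverge together. So the minimizing sequence is not a priori bounded in the weighted $H^{1}$ norm, and without that bound the weak-compactness and lower-semicontinuity steps cannot proceed. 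Separately, your Poincar\'e/compact-embedding claim is stated for the Gaussian measure on $\R^{\adimn}$, but here the domain is a non-compact singular hypersurface $\Sigma$, and a Gaussian Poincar\'e inequality and compact trace embedding on such a $\Sigma$ is not a standard fact and would itself require proof. The paper's exhaustion-plus-Harnack route avoids both of these difficulties: existence on each compact piece is classical, and convergence is obtained from pointwise Harnack and Schauder estimates rather than from a global weighted $H^{1}$ bound. If you want to salvage the direct method, you would need to first prove a weighted Sobolev compactness theorem for $\mathcal{F}$ on $\Sigma$ and then establish $H^{1}$ coercivity of $Q$ (e.g.\ via a Gärding-type inequality exploiting the curvature bound), neither of which is straightforward given the singular strata.

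Your Euler--Lagrange derivation, once an $\mathcal{F}$-minimizer is granted, is essentially the same as the paper's (and is stated a bit more cleanly: you substitute $h_{ki}=-h_{ij}-h_{jk}$ and vary $h_{ij}$, $h_{jk}$ freely, whereas the paper uses an explicit choice $g_{ij}=-g_{jk}=1$, $g_{ki}=0$ together with a sign-definiteness remark). You would just need to note, as you are implicitly assuming, that arbitrary prescribed boundary data $(h_{ij},h_{jk},h_{ki})$ on $M_{\sdimn-2}$ satisfying the linear constraint extends to some $H\in\mathcal{F}\cap C_{0}^{\infty}(\Sigma)$; that is supplied by the vector-field extension in Lemma \ref{lemma27}.
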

\begin{proof}
First note that the set of functions $G$ specified in \eqref{eqstar} is nonempty by Lemma \ref{lemma55}.

Fix $x$ in the interior of $\Sigma$.  Let $\Sigma_{1}\subset\Sigma_{2}\subset\cdots$ be a sequence of compact $C^{\infty}$ hypersurfaces (with boundary) such that $\cup_{k=1}^{\infty}\Sigma_{k}=\Sigma$.  For each $k\geq1$, let $F_{k}$ be a Dirichlet eigenfunction of $\L$ on $\Sigma_{k}$ such that $\L F_{k}=\pcon(\Sigma_{k})F_{k}$, and such that $F_{k}$ does not change sign on any connected component of $\Sigma$.  By multiplying by a constant, we may assume $F_{k}(x)=1$ for all $k\geq1$.  Since $\pcon(\Sigma_{k})$ increases to $\pcon(\Sigma)<\infty$ as $k\to\infty$ by \eqref{seven0}, the Harnack inequality implies that there exists $c=c(\Sigma_{k},\pcon(\Sigma))$ such that $1\leq\sup_{y\in B}F_{k}(y)\leq c\inf_{y\in B}F_{k}(y)\leq c$ for some neighborhood $B$ of $x$.  Elliptic theory then gives uniform $C^{2,\sigma}$ bounds for the functions $F_{1},F_{2},\ldots$ on each compact subset of $\Sigma$.  So, by Arzel\`{a}-Ascoli there exists a uniformly convergent subsequence of $F_{1},F_{2},\ldots$ which converges to a solution $LF=\pcon(\Sigma)F$ on $\Sigma$ with $F(x)=1$ such that $F$ does not change sign on any connected component of $\Sigma$.  The Harnack inequality then implies $F$ is nonzero on any connected component of $\Sigma$.

Let $G\in\mathcal{F}$.  For any $t\in\R$, define
$$c(t)\colonequals\frac{Q(F+tG,F+tG)}{\langle F+tG,F+tG\rangle}-Q(F,F).$$
By definition of $F$, we have $c(t)\geq0$ $\forall$ $t\in\R$.  Therefore, $c'(0)=0$.  By Lemma \ref{lemma32.5}, $Q(F,G)=Q(G,F)$, so
$$c(t)=\frac{Q(F,F)+2tQ(F,G)+t^{2}Q(G,G)}{\langle F,F\rangle+2t\langle F,G\rangle+t^{2}\langle G,G\rangle}-Q(F,F),$$
$$0=c'(0)=\frac{\langle F,F\rangle Q(F,Q)-Q(F,F)\langle F,G\rangle}{\langle F,F\rangle^{2}}.$$
Therefore, for any $G\in\mathcal{F}$,
$$Q(F,G)=\frac{Q(F,F)}{\langle F,F\rangle}\langle F,G\rangle.$$
Fix $1\leq i<j\leq m$.  Choosing $G$ smooth and localized away from $C$ then implies that $\L F=-\frac{Q(F,F)}{\langle F,F\rangle}F\equalscolon-\pcon F$ on $\Sigma_{ij}$ for every $1\leq i<j\leq m$, away from their boundaries.  Fix $1\leq i<j<k\leq m$.  Choose the vector field $X$ (where $g_{pq}\colonequals\langle X,N_{pq}\rangle$ for all $1\leq p<q\leq m$) now such that $g_{ij}=-g_{jk}=1$ and $g_{ki}=0$ at some $x\in (\redS_{ij})\cap (\redS_{jk})\cap (\redS_{ki})$, and such that $X$ is supported in a neighborhood of $x$.  Then the definition of $Q(F,G)$ implies that $\nabla_{\nu_{ij}}f_{ij}+q_{ij}f_{ij}=\nabla_{\nu_{jk}}f_{jk}+q_{jk}f_{jk}$.  (This argument is valid as long as the sign of $f_{ij},f_{jk},f_{ki}$ are not all the same at $x$.  It cannot occur that all three of these numbers have the same sign, since they must sum to zero at $x$, and by the definition of $F$, these three functions cannot all have the same sign in a neighborhood of $x$, by the limiting definition of $F$.)

\end{proof}

A rearrangement argument implies the following decay for the Gaussian surface area of optimal sets far from the origin.

\begin{lemma}[{\cite[Lemma 4.3]{milman18a}}]\label{lemma28.8}
Let $\Omega_{1},\ldots,\Omega_{m}$ minimize Problem \ref{prob1}.  Then for all $r>\sqrt{\adimn}+\vnormt{w}$
$$\sum_{1\leq i<j\leq m}\gamma_{\sdimn}(\Sigma_{ij}\cap\{x\in\R^{\adimn}\colon\vnorm{x-w}>r\})
\leq 3m\gamma_{\sdimn}(\{x\in\R^{\adimn}\colon\vnorm{x}=r\}).$$
Also, for all $u\in\R^{\adimn}$ with $\vnormt{u}=1$, and for all $r>1+\vnormt{w}$,
$$\sum_{1\leq i<j\leq m}\gamma_{\sdimn}(\Sigma_{ij}\cap\{x\in\R^{\adimn}\colon\abs{\langle x-w,u\rangle}>r\})
\leq 2me^{-(r+\vnormt{w})^{2}/2}.$$
\end{lemma}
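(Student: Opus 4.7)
The plan is to establish both tail estimates by comparing the minimizer $(\Omega_i)_{i=1}^m$ of Problem \ref{prob1} against an explicit truncated competitor and invoking minimality. In both parts, the scheme is to build a competitor that agrees with $(\Omega_i)$ in the ``near'' region (inside $B(w,r)$ or inside the slab $\{|\langle x-w,u\rangle|\leq r\}$) but trivializes the partition in the ``far'' region by placing all of its mass into a single set $\Omega_{i^*}$, then restore the volume constraints $\gamma_{\adimn}(\Omega_i')=a_i$ by a volume-preserving perturbation concentrated near the cut. Minimality then forces $\sum_{i<j}\gamma_{\sdimn}(\Sigma_{ij})$ to be bounded by the surface area of the competitor, which in turn is bounded in terms of the Gaussian measure of the cut surface.

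For the spherical estimate, I would fix $r>\sqrt{\adimn}+\vnormt{w}$ and let $B=B(w,r)$. Define $\Omega_i'\colonequals\Omega_i\cap B$ for $i\neq i^*$ and $\Omega_{i^*}'\colonequals\Omega_{i^*}\cup B^c$, and then correct the volumes by an exchange of Gaussian mass localized in a thin shell around $\partial B$. The key Gaussian fact is that for $r>\sqrt{\adimn}+\vnormt{w}$, the density along $\partial B$ dominates the tail: $\gamma_{\adimn}(B^c)\leq\gamma_{\sdimn}(\partial B)$, which follows from a polar-coordinate computation using that the density $t\mapsto t^{\sdimn}e^{-t^{2}/2}$ is decreasing for $t>\sqrt{\sdimn}$. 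Consequently the volume-restoring exchange can be carried out within an $m$-parameter family of shell perturbations (the existence of such a simultaneous adjustment is the implicit-function-theorem variant used in Lemma \ref{lemma27}), at total surface cost at most $m\gamma_{\sdimn}(\partial B)$. The new boundary created along $\partial B$ adds at most $2m\gamma_{\sdimn}(\partial B)$, so minimality of $(\Omega_i)$ yields
$$\sum_{1\leq i<j\leq m}\gamma_{\sdimn}(\Sigma_{ij}\cap B^{c})\leq 3m\gamma_{\sdimn}(\partial B),$$
which is the first claim.

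For the half-space estimate, I would repeat the construction with $B$ replaced by the slab $S_{r}\colonequals\{x:|\langle x-w,u\rangle|\leq r\}$, trivializing on each of the two half-spaces $H^{\pm}=\R^{\adimn}\setminus S_{r}$ into a single set. The relevant Gaussian bound is $\gamma_{\sdimn}$ of the affine hyperplane $\{x:\langle x,u\rangle=\pm r+\langle w,u\rangle\}$, which is at most $\tfrac{1}{\sqrt{2\pi}}e^{-(r+\vnormt{w})^{2}/2}$ (the worst case being when $w$ aligns with $u$). Because each cut hyperplane is flat and its normal direction aligns with a global translation direction of $\R^{\adimn}$, the volume-restoring perturbation can be realized by a global translation, at essentially no additional surface cost beyond $\gamma_{\sdimn}(\partial H^{\pm})$ per relevant index. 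This explains why the constant improves from $3m$ to $2m$. Summing over the two sides gives the stated bound.

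The main obstacle is the careful implementation of the simultaneous volume-preserving perturbation: one must show that the $m-1$ independent volume constraints can be simultaneously restored at total surface cost controlled by a single multiple of $\gamma_{\sdimn}(\partial B(w,r))$ (resp. $\gamma_{\sdimn}$ of the flat cut). This follows by constructing, as in the proof of Lemma \ref{lemma27}, an $(m-1)$-parameter family of shell perturbations with linearly independent first-order volume derivatives and then applying the implicit function theorem; the constants $3m$ and $2m$ are then a tally of the contributions from the cut itself, the volume-restoring shell, and summation over index pairs.
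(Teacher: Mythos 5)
Your competitor construction differs from the paper's in a way that leaves a genuine gap. The paper does not place all the far-region mass into a single set $\Omega_{i^*}$ and then restore volumes by a perturbation. Instead, after cutting at $H(w,r)$ (ball or slab), it redistributes the far mass of each $\Omega_{i}$ into a concentric shell $H(w,b_{i})\setminus H(w,b_{i-1})$ whose radii $r=b_{0}\leq b_{1}\leq\cdots\leq b_{m}$ are chosen so that $\gamma_{\adimn}(H(w,b_{i})\setminus H(w,b_{i-1}))=\gamma_{\adimn}(\Omega_{i}\cap H(w,r)^{c})$ exactly. Thus $\widetilde{\Omega}_{i}\colonequals(\Omega_{i}\cap H(w,r))\cup(H(w,b_{i})\setminus H(w,b_{i-1}))$ is a competitor with the correct volumes \emph{by construction}, and all the new boundary created is contained in $\partial H(w,r)\cup\bigcup_{i}\partial H(w,b_{i})$, each of whose Gaussian surface measures is bounded by that of $\partial H(w,r)$ (monotonicity of $t\mapsto t^{\sdimn}e^{-t^{2}/2}$ for $t>\sqrt{\sdimn}$, resp. $e^{-t^{2}/2}$). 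Minimality then gives the bound immediately.

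The step you cannot justify is the claim that the volume-restoring ``exchange of Gaussian mass localized in a thin shell around $\partial B$'' has total surface cost at most $m\gamma_{\sdimn}(\partial B)$. The implicit-function-theorem mechanism of Lemma \ref{lemma27} only produces a local, qualitative existence statement for volume-preserving deformations and comes with \emph{no} quantitative control on the perimeter incurred; moreover it applies to small deformations, while here the volume imbalances $\gamma_{\adimn}(\Omega_{i}\cap B^{c})$ need not be small relative to the IFT's validity region. Your translation argument in the slab case has the same defect: a global translation moves the ``near'' region as well, so it does not fix the volume constraints independently for the $m-1$ sets, and a perturbation concentrated near the flat cut will in general create additional perimeter along $\partial\Omega_{i}$ that you have not accounted for. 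The remedy is to adopt the paper's exact rearrangement: do not funnel all far mass into one set; sort the far-region volumes, then carve out nested shells (or parallel half-space strips in the slab case) of exactly matching volume, and observe that the newly created boundary lies entirely on at most $m+1$ concentric spheres (resp. parallel hyperplanes), each of controlled Gaussian measure.
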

\begin{proof}
We prove both statements simultaneously.  Let $H(w,r)$ denote either the ball $\{x\in\R^{\adimn}\colon \vnorm{x-w}\leq r\}$ or the slab $\{x\in\R^{\adimn}\colon \abs{\langle x-w,u\rangle}\leq r\}$. Let $\alpha,r>0$.  Without loss of generality, assume that
$$\gamma_{\adimn}(\Omega_{1}\cap H(w,r)^{c})\leq \gamma_{\adimn}(\Omega_{2}\cap H(w,r)^{c})\leq\cdots\leq \gamma_{\adimn}(\Omega_{m}\cap H(w,r)^{c}).$$
Let $r=b_{0}\leq b_{1}\leq\cdots\leq b_{m}$ such that for all $1\leq i\leq m$,
$\gamma_{\adimn}(\Omega_{i}\cap H(w,r)^{c})=\gamma_{\adimn}(H(w,b_{i})\setminus H(w,b_{i-1}))$.  For any $1\leq i\leq m$, define
$$\widetilde{\Omega}_{i}\colonequals (\Omega_{i}\setminus H(w,r))\cup (H(w,b_{i})\setminus H(w,b_{i-1}))
,\qquad\widetilde{\Sigma}_{ij}\colonequals(\redb\widetilde{\Omega}_{i})\cap(\redb\widetilde{\Omega}_{j}).$$
Then $\gamma_{\adimn}(\Omega_{i})=\gamma_{\adimn}(\widetilde{\Omega}_{i})$ for all $1\leq i\leq m$.  Also, since $\Omega_{1},\ldots,\Omega_{m}$ minimize Problem \ref{prob1},
\begin{flalign*}
&\sum_{1\leq i<j\leq m}\gamma_{\sdimn}(\Sigma_{ij})
+\epsilon\pen\\
&\qquad\qquad\qquad\leq \sum_{1\leq i<j\leq m}\gamma_{\sdimn}(\widetilde{\Sigma}_{ij})
+\epsilon\pentil.
\end{flalign*}
Using $\pentil-\pen\leq0$ and rearranging gives
$$
\sum_{1\leq i<j\leq m}\gamma_{\sdimn}(\Sigma_{ij}\cap\{x\in\R^{\adimn}\colon\vnorm{x}>r\})
 \sum_{i=1}^{m}\gamma_{\sdimn}(\partial H(w,b_{i-1})\cup\partial H(w,b_{i}))
 $$
In the case that $H(w,b_{i})$ is a ball for each $1\leq i\leq m$, this quantity is bounded by $2m\gamma_{\sdimn}(\{x\in\R^{\adimn}\colon\vnorm{x-w}=r\})$, using also $r>\sqrt{\adimn}+\vnormt{w}$.  In the case that $H(w,b_{i})$ is a slab for each $1\leq i\leq m$, this quantity is bounded by $2m\gamma_{\sdimn}(\{x\in\R^{\adimn}\colon\vnorm{x_{1}-\vnormt{w}}=r\})$, using also $r>1+\vnormt{w}$.  In the
\end{proof}

\begin{lemma}\label{zlem}
Let $\Omega\subset\R^{\adimn}$.  Let $z\colonequals\int_{\Omega}x\gamma_{\adimn}(x)dx$.  Then
$$\vnormt{z}\leq \frac{1}{\sqrt{2\pi}}.$$
\end{lemma}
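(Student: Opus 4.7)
The plan is to reduce to a one-dimensional computation via duality and then observe that the maximizing set is a half-space.

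First, by duality of the Euclidean norm,
$$\vnormt{z} = \sup_{v\in S^{\sdimn}} \langle v,z\rangle = \sup_{v\in S^{\sdimn}} \int_{\Omega}\langle v,x\rangle\gamma_{\adimn}(x)dx,$$
so it suffices to bound $\int_{\Omega}\langle v,x\rangle\gamma_{\adimn}(x)dx$ uniformly in $v\in S^{\sdimn}$ and in $\Omega$.

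Next, for fixed unit vector $v$, the integrand $\langle v,x\rangle\gamma_{\adimn}(x)$ has a definite sign depending on the sign of $\langle v,x\rangle$; so among all measurable $\Omega\subset\R^{\adimn}$, the integral is maximized by the half-space $H_v\colonequals\{x\in\R^{\adimn}\colon\langle v,x\rangle\geq0\}$, on which the integrand is nonnegative. Thus
$$\langle v,z\rangle \leq \int_{H_v}\langle v,x\rangle\gamma_{\adimn}(x)dx.$$

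Finally, since under $\gamma_{\adimn}$ the random variable $Y\colonequals\langle v,x\rangle$ has a standard normal distribution (as $\vnormt{v}=1$), a change of variables gives
$$\int_{H_v}\langle v,x\rangle\gamma_{\adimn}(x)dx = \int_{0}^{\infty} y \cdot \frac{1}{\sqrt{2\pi}} e^{-y^{2}/2}\,dy = \frac{1}{\sqrt{2\pi}}.$$
Taking the supremum over $v\in S^{\sdimn}$ completes the proof. There is no genuine obstacle here; the only point to verify is that the supremum over $\Omega$ of $\int_{\Omega}\langle v,x\rangle\gamma_{\adimn}(x)dx$ is indeed attained by the half-space $H_v$, which is immediate from the sign of the integrand.
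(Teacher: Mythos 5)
Your proposal is correct and follows essentially the same route as the paper: duality of the $\ell_2$ norm to reduce to a scalar integral, then observing that the half-space $\{x\colon\langle v,x\rangle\geq0\}$ maximizes it, and finally computing the one-dimensional Gaussian integral $\int_0^\infty y\gamma_1(y)\,dy=\frac{1}{\sqrt{2\pi}}$. The only cosmetic difference is that you phrase it as a supremum over $v\in S^{\sdimn}$ whereas the paper picks a single dual vector achieving the norm.
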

\begin{proof}
By duality of the $\ell_{2}$ norm, let $\omega\in\R^{\adimn}$ with $\vnormt{\omega}=1$ such that
$$\vnormt{\int_{\Omega}x\gamma_{\adimn}(x)dx}=\big\langle\int_{\Omega}x\gamma_{\adimn}(x)dx,\omega\big\rangle
=\int_{\Omega}\langle x,\omega\rangle\gamma_{\adimn}(x)dx.$$
Then
$$\vnormt{\int_{\Omega}x\gamma_{\adimn}(x)dx}
\leq \int_{\{x\in\R^{\adimn}\colon \langle x,\omega\rangle\geq0\}}\langle x,\omega\rangle\gamma_{\adimn}(x)dx
=\int_{0}^{\infty}x_{1}\gamma_{1}(x_{1})dx_{1}
=\frac{1}{\sqrt{2\pi}}.
$$
\end{proof}

\begin{lemma}\label{nlemma}
Let $\Omega_{1},\ldots,\Omega_{m}$ minimize Problem \ref{prob1p}.  Then $\pcon<\infty$.
\end{lemma}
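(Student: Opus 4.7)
The plan is to exhibit a finite constant $C$ such that $Q(G,G)\ge -C\langle G,G\rangle$ for every $G\in\mathcal{F}\cap C_{0}^{\infty}(\Sigma)$; by \eqref{seven0} this forces $\pcon\le C<\infty$. Starting from the integrated form in Lemma \ref{lemma32.5}, the nonnegative kinetic term $\sum_{1\le i<j\le m}\int_{\Sigma_{ij}}\vnormt{\nabla g_{ij}}^{2}\gamma_{\sdimn}(x)dx$ serves as a reservoir that can absorb the triple-junction boundary contribution, so the task reduces to controlling the bulk term $\sum_{1\le i<j\le m}\int_{\Sigma_{ij}}g_{ij}^{2}(\vnormt{A}^{2}+1)\gamma_{\sdimn}(x)dx$ by $\langle G,G\rangle$, i.e.\ to proving a uniform bound on $\vnormt{A}^{2}$.

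First I would show $\sup_{x\in\Sigma_{ij}}\vnormt{A_{x}}^{2}\le M<\infty$ for each $1\le i<j\le m$. By Lemma \ref{lemma52.6} the minimizers satisfy Assumption \ref{as1}, so each sheet $\Sigma_{ij}$ is a smooth hypersurface with locally bounded second fundamental form away from the lower-dimensional strata. The first-variation relation in Lemma \ref{lemma24} tells us that $H_{ij}(x)=\scon_{ij}+\langle x,N_{ij}\rangle + O_{\epsilon}(\vnormt{x})$ is affine in $x$ modulo the $\langle x,N_{ij}\rangle$ term, i.e.\ the minimizing sheets satisfy a perturbed Gaussian self-shrinker equation. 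Applying a Simons-type identity for $\vnormt{A}^{2}$ to this equation, combined with the rapid Gaussian decay of surface area provided by Lemma \ref{lemma28.8}, a maximum-principle argument then rules out blow-up of $\vnormt{A}^{2}$ at infinity and delivers the desired global bound.

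Next I would bound the triple-junction coefficients $q_{ij}$. By Assumption \ref{as1}(iii), $\Sigma$ is locally diffeomorphic to $T_{2}\times\R^{\sdimn-2}$ near each component of $M_{\sdimn-2}$, so $q_{ij}$ (being determined by normal derivatives $\nabla_{\nu_{\bullet}}\nu_{\bullet}$ along the triple junction) is bounded by a constant depending only on this fixed local model. A weighted Sobolev trace inequality along the lines of \cite{feo13} then provides, for every $\eta>0$,
\begin{equation*}
\sum_{1\le i<j<k\le m}\int_{\redS_{ij}\cap\redS_{jk}\cap\redS_{ki}}g_{ij}^{2}\gamma_{\sdimn}(x)dx \le \eta\sum_{1\le i<j\le m}\int_{\Sigma_{ij}}\vnormt{\nabla g_{ij}}^{2}\gamma_{\sdimn}(x)dx + C_{\eta}\langle G,G\rangle,
\end{equation*}
so choosing $\eta$ smaller than $1/(3\sup|q|)$ absorbs the boundary term into the kinetic reservoir at the cost of an extra $C_{\eta}\langle G,G\rangle$ summand.

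Assembling the three ingredients produces $Q(G,G)\ge -(M+1+3C_{\eta}\sup|q|)\langle G,G\rangle$, whence $\pcon$ is bounded by this finite quantity. The hard part will be the first step: the local regularity of Assumption \ref{as1} gives only pointwise control of $\vnormt{A}^{2}$ on compact neighborhoods, so the global $L^{\infty}$ bound really does require a PDE argument that exploits both the affine-$H$ equation and the Gaussian decay of surface measure, carried out carefully in the presence of the nonsmooth triple-junction boundaries.
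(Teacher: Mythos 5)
Your plan --- a direct a priori lower bound $Q(G,G)\ge -C\langle G,G\rangle$ resting on a uniform pointwise estimate $\sup_{\Sigma}\vnormt{A}^{2}\le M$ --- takes a genuinely different route from the paper's. The paper argues by contradiction, using minimality in an essential way: it supposes $\pcon$ is arbitrarily large, takes the associated fundamental-tone eigenfunction $F$ of $\L$ produced by the exhaustion argument of Lemma~\ref{lemma33}, builds a volume-preserving test field from $F$ (either $x_{\adimn}F$ after dimension reduction in the case $\ell<m-1$, or $\langle v,N\rangle+F$ for a translation $v$ in the case $\ell=m-1$), and shows the second variation of the penalized functional becomes strictly negative once $\pcon$ is large, using Lemmas~\ref{lemma28.8} and~\ref{zlem} to control the penalty-induced error terms. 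No pointwise curvature estimate is ever established or needed.

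The gap in your proposal is exactly the step you flag, but it is deeper than ``hard'': minimality of $\Omega_{1},\dots,\Omega_{m}$ enters nowhere in your argument. You invoke only the first-variation identity from Lemma~\ref{lemma24} and the regularity of Assumption~\ref{as1}, both of which hold for any stationary configuration. Yet $\pcon<\infty$ is not a fact about arbitrary critical points of the penalized Gaussian perimeter; it is the statement that the configuration does not admit infinitely many unstable directions, which is a stability statement and must use the second-variation inequality. A Simons-identity/maximum-principle bootstrap applied to the perturbed shrinker equation $H=\lambda+\langle x,N\rangle+O(\epsilon\vnormt{x})$ cannot, by itself, preclude unbounded $\vnormt{A}^{2}$: self-shrinkers with unbounded curvature exist, and nothing in Assumption~\ref{as1} or the Gaussian area decay of Lemma~\ref{lemma28.8} rules that behavior out for a stationary cluster with free triple junctions. (The only curvature estimate the paper does establish, $\int_{\Sigma}\vnormt{A}^{2}\gamma_{\sdimn}<\infty$ in Lemma~\ref{lemma61}, is itself proved as a consequence of $\pcon<\infty$, so quoting it here would be circular.) The proposed uniform bound on $q_{ij}$ has the same defect: Assumption~\ref{as1}(ii)--(iii) gives local regularity near the $Y_{1}$- and $T_{2}$-strata, but being ``locally diffeomorphic'' to a fixed model only controls $q_{ij}$ near each point, not its growth at infinity. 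To push the direct approach through you would have to import a stability-driven curvature estimate of Schoen--Simon--Yau type for multi-phase clusters with triple junctions, which is both a larger undertaking than the paper's contradiction argument and not available off the shelf.
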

\begin{proof}
Consider a Dirichlet eigenfunction supported in a large ball.  By Lemma \ref{lemma33},
$$\L F=\pcon F.$$
Moreover, $\forall\,1\leq i<j<k\leq m$, $\forall$ $x\in(\redS_{ij})\cap(\redS_{jk})\cap(\redS_{ki})$, the following holds at $x$.
\begin{equation}\label{feq}
\nabla_{\nu_{ij}}f_{ij}+q_{ij}f_{ij}=\nabla_{\nu_{jk}}f_{jk}+q_{jk}f_{jk}=\nabla_{\nu_{ij}}f_{ij}+q_{ij}f_{ij}.
\end{equation}

Let $\ell$ from Lemma \ref{nolowdim}.  Suppose $\ell<m-1$, so that, after rotation we may assume that $\Omega_{i}=\Omega_{i}'\times\R$ for all $1\leq i\leq m$.  Assume for the sake of contradiction that $\pcon$ can be arbitrarily large.  By replacing $F$ with $F(\cdots,x_{\adimn})+F(\cdots,-x_{\adimn})$, we may assume that $F(\cdots,x_{\adimn})=F(\cdots,-x_{\adimn})$.  Then $G\colonequals x_{\adimn}F$ contradicts the minimality of $\Omega_{1},\ldots,\Omega_{m}$ since

$$
L_{ij}g_{ij}\stackrel{\eqref{zero4}}{=}x_{\adimn}L_{ij}f_{ij}+f_{ij}\mathcal{L}_{ij}x_{\adimn}+\langle\nabla f,\nabla(x_{\adimn})\rangle
=x_{\adimn}L_{ij}f_{ij}-f_{ij}x_{\adimn}.
$$
Here $\mathcal{L}_{ij}\colonequals\Delta-\langle x,\nabla\rangle$.  By \eqref{zero1} and Fubini's Theorem, if we first integrate with respect to $x_{\adimn}$, we see that $(g_{ij})_{1\leq i<j\leq m}$ is automatically Gaussian volume-preserving, so that \eqref{zero1} is zero for all $1\leq i<j\leq m$.  Then the second variation condition from Lemma \ref{lemma28} applies, and we get
\begin{flalign*}
&\frac{d^{2}}{ds^{2}}|_{s=0}\left(\sum_{1\leq i<j\leq m}\int_{\Sigma_{ij}^{(s)}}\gamma_{\sdimn}(x)dx
+\epsilon\pens\right)\\
&\qquad=\sum_{1\leq i<j\leq m}-\int_{\Sigma_{ij}}f_{ij}[L_{ij}f_{ij}-\epsilon\langle X,z^{(i)}-z^{(j)}\rangle]\gamma_{\sdimn}(x)dx
+\frac{d}{ds}|_{s=0}\int_{\redS_{ij}^{(s)}}\langle X,\nu_{ij}\rangle\gamma_{\sdimn}(x)dx\\
&\qquad\qquad\qquad\qquad\qquad\qquad+\epsilon\sqrt{2\pi}\pender.
\end{flalign*}
The middle term is zero by Lemma \ref{lemma29.9} and \eqref{feq}.  The $\epsilon$ term is bounded by $\vnorm{z^{(i)}-z^{(j)}}f_{ij}^{2}$, and the last term is bounded by the Cauchy-Schwarz inequality as
$$\epsilon\sqrt{2\pi}\sum_{1\leq i,j\leq m}\int_{\Sigma_{ij}}\vnorm{x-\overline{w}^{(i)}}^{2}\gamma_{\adimn}(x)dx\cdot
\int_{\Sigma_{ij}}f_{ij}^{2}\gamma_{\adimn}(x)dx.$$
The term on the left is bounded by Lemma \ref{lemma28.8}.  In summary,
\begin{flalign*}
&\frac{d^{2}}{ds^{2}}|_{s=0}\left(\sum_{1\leq i<j\leq m}\int_{\Sigma_{ij}^{(s)}}\gamma_{\sdimn}(x)dx
+\epsilon\pens\right)\\
&\qquad\leq -\sum_{1\leq i<j\leq m}\int_{\Sigma_{ij}}f_{ij}^{2}\gamma_{\sdimn}(x)dx \cdot\Big(\pcon-\epsilon\vnorm{z^{(i)}-z^{(j)}}-\int_{\Sigma_{ij}}\vnorm{x-\overline{w}^{(i)}}^{2}\gamma_{\adimn}(x)dx\Big).
\end{flalign*}
So, in the case $\ell<m-1$, we cannot have arbitrarily large $\pcon$, since this would contradict the minimality of $\Omega_{1},\ldots,\Omega_{m}$ for Problem \ref{prob1p}.  (The two right-most terms are bounded by Lemmas \ref{lemma28.8} and \ref{zlem}.)

In the case $\ell=m-1$, there exists $v\in\R^{\adimn}$, such that $\langle v,N\rangle+F$ is volume preserving, i.e. \eqref{zero1} holds for all $1\leq i\leq m$.  Then Lemma \ref{directlem} together with Lemma \ref{lemma5} gives
\begin{flalign*}
&\frac{d^{2}}{ds^{2}}|_{s=0}\left(\sum_{1\leq i<j\leq m}\int_{\Sigma_{ij}^{(s)}}\gamma_{\sdimn}(x)dx
+\epsilon\pens\right)\\
&=\sum_{1\leq i<j\leq m}\int_{\Sigma_{ij}}\Big(-f_{ij}L_{ij}f_{ij}-f_{ij}\langle v,N_{ij}\rangle-\langle X,v\rangle+\epsilon\langle X,z^{(i)}-z^{(j)}\rangle-\vnorm{v}^{2}+\langle x,v\rangle^{2}\\
&\qquad\qquad\qquad\quad-2\int_{\Sigma_{ij}}(H_{ij}-\langle x,N_{ij}\rangle)f_{ij}\langle x,v\rangle\gamma_{\sdimn}(x)dx
+\int_{\Sigma_{ij}}(H_{ij}-\langle x,N_{ij}\rangle)[f_{ij}\gamma_{\sdimn}(x)dx]'\\
&\qquad+\epsilon\sum_{1\leq i<j\leq m}\int_{\Sigma_{ij}}\Big(\langle x,z^{(i)}-z^{(j)}\rangle+\langle w,\overline{w}^{(i)}-\overline{w}^{(j)}+\overline{z}^{(j)}-\overline{z}^{(i)}\rangle\Big)[f_{ij}\gamma_{\sdimn}(x)dx]'\\
&\qquad+\epsilon\sqrt{2\pi}\sum_{i=1}^{m}\vnormtf{\sum_{j\neq i}\int_{\Sigma_{ij}}(f_{ij}+\langle v,N_{ij}\rangle)(x-\overline{w}^{(i)})}^{2}\\
\end{flalign*}

Applying \eqref{nine5.3}, and using $\sum_{1\leq i<j\leq m}\lambda_{ij}\int_{\Sigma_{ij}}[f_{ij}\gamma_{\sdimn}(x)dx]'=0$ by Lemma \ref{lemma25.3},
\begin{flalign*}
&\frac{d^{2}}{ds^{2}}|_{s=0}\left(\sum_{1\leq i<j\leq m}\int_{\Sigma_{ij}^{(s)}}\gamma_{\sdimn}(x)dx
+\epsilon\pens\right)\\
&=\sum_{1\leq i<j\leq m}\int_{\Sigma_{ij}}\Big(-f_{ij}L_{ij}f_{ij}-f_{ij}\langle v,N_{ij}\rangle-\langle X,v\rangle+\epsilon\langle X,z^{(i)}-z^{(j)}\rangle-\vnorm{v}^{2}+\langle x,v\rangle^{2}\\
&\qquad\qquad\qquad\qquad-2\int_{\Sigma_{ij}}(H_{ij}-\langle x,N_{ij}\rangle)f_{ij}\langle x,v\rangle\gamma_{\sdimn}(x)dx
+\lambda_{ij}\int_{\Sigma_{ij}}[f_{ij}\gamma_{\sdimn}(x)dx]'\\
&\qquad+\epsilon\sqrt{2\pi}\sum_{i=1}^{m}\vnormtf{\sum_{j\neq i}\int_{\Sigma_{ij}}(f_{ij}+\langle v,N_{ij}\rangle)(x-\overline{w}^{(i)})}^{2}\\
&=\sum_{1\leq i<j\leq m}\int_{\Sigma_{ij}}\Big(-f_{ij}L_{ij}f_{ij}-f_{ij}\langle v,N_{ij}\rangle-\langle X,v\rangle+\epsilon\langle X,z^{(i)}-z^{(j)}\rangle-\vnorm{v}^{2}+\langle x,v\rangle^{2}\\
&\quad-2\int_{\Sigma_{ij}}(H_{ij}-\langle x,N_{ij}\rangle)f_{ij}\langle x,v\rangle\gamma_{\sdimn}(x)dx
+\epsilon\sqrt{2\pi}\sum_{i=1}^{m}\vnormtf{\sum_{j\neq i}\int_{\Sigma_{ij}}(f_{ij}+\langle v,N_{ij}\rangle)(x-\overline{w}^{(i)})}^{2}
\end{flalign*}

Using again \eqref{nine5.3} and several applications of the Cauchy-Schwarz inequality, the first term is $\pcon f_{ij}^{2}$, while the remaining terms are all bounded by products of $(\sum_{1\leq i<j\leq m}\int_{\Sigma_{ij}}f_{ij}^{2}\gamma_{\sdimn}(x)dx)^{1/2}$, $(\sum_{1\leq i<j\leq m}\int_{\Sigma_{ij}}\langle v,N_{ij}\rangle^{2}\gamma_{\sdimn}(x)dx)^{1/2}$ and other finite terms (repeating the bounds from the case $\ell<m-1$.)  So, it is once again impossible that $\pcon$ can be made arbitrarily large.  That is, we have found a contradiction in either case.  We conclude that $\pcon<\infty$.
\end{proof}

\begin{lemma}\label{directlem}
Let $x^{(0)}=x$, ${x^{(s)}}'=v$,  ${x^{(s)}}''=0$, and let $\Omega^{(s)}$, $\Sigma^{(s)}$ as in Section \ref{fsvar}.  Then
\begin{flalign*}
&\frac{d}{ds}\int_{\Sigma^{(s)}+x^{(s)}}\gamma_{\sdimn}(x)dx\\
&=\sum_{1\leq i<j\leq m}\int_{\Sigma_{ij}}\Big(-f_{ij}Lf_{ij}-f_{ij}\langle v,N_{ij}\rangle-\langle X,v\rangle-\vnormt{v}^{2}\Big)\gamma_{\sdimn}(x)dx+\int_{\Sigma_{ij}}\langle x,v\rangle^{2}\gamma_{\sdimn}(x)dx\\
&\qquad
-2\lambda_{ij}\int_{\Sigma_{ij}}f_{ij}\langle x,v\rangle\gamma_{\sdimn}(x)dx
+\lambda_{ij}\int_{\Sigma_{ij}}[f_{ij}\gamma_{\sdimn}(x)dx]'\\
&\qquad+\frac{d}{ds}|_{s=0}\int_{\redS_{ij}^{(s)}}\langle X+v,\nu_{ij}\rangle\gamma_{\sdimn}(x)dx.
\end{flalign*}\end{lemma}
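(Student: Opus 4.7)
The strategy is to separate the combined variation $\Sigma^{(s)}+x^{(s)}$, namely the flow of $X$ followed by the translation by $sv$, into its two independent pieces and then apply a bilinear chain rule. To this end, introduce the two-parameter family
\[
A(s,t) \colonequals \sum_{1\leq i<j\leq m}\int_{\Sigma_{ij}^{(s)}+tv}\gamma_{\sdimn}(x)\,dx,
\]
so that the quantity of interest equals $\frac{d^{2}}{ds^{2}}|_{s=0} A(s,s) = A_{ss}(0,0) + 2A_{st}(0,0) + A_{tt}(0,0)$ by the standard chain rule for second derivatives along the diagonal.

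The piece $A_{ss}(0,0)$ is precisely the second variation of the Gaussian surface area under the pure $X$-flow supplied by Lemma~\ref{lemma21}; using the first-order Euler--Lagrange relation \eqref{nine5.3} to replace $H_{ij}-\langle x,N_{ij}\rangle$ by $\lambda_{ij}$ produces the bulk term $-\int_{\Sigma_{ij}}f_{ij}L_{ij}f_{ij}\gamma_{\sdimn}$, the first-order piece $\lambda_{ij}\int_{\Sigma_{ij}}[f_{ij}\gamma_{\sdimn}]'$, and the pure $X$-boundary contribution $\frac{d}{ds}|_{s=0}\int_{\redS_{ij}^{(s)}}\langle X,\nu_{ij}\rangle\gamma_{\sdimn}$. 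The pure-translation piece $A_{tt}(0,0)$ reduces, by the change of variables $x=y+tv$, to $A(0,t) = \sum_{1\leq i<j\leq m}\int_{\Sigma_{ij}}\gamma_{\sdimn}(y+tv)\,dy$; differentiating twice in $t$ at $t=0$ and using $\partial_{t}^{2}|_{t=0}\gamma_{\sdimn}(y+tv) = (\langle y,v\rangle^{2}-\vnormt{v}^{2})\gamma_{\sdimn}(y)$ yields exactly the $\int_{\Sigma_{ij}}(\langle x,v\rangle^{2}-\vnormt{v}^{2})\gamma_{\sdimn}$ contribution.

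The cross term $A_{st}(0,0)$ is the technical heart of the proof. First differentiate in $t$ at $t=0$ to obtain $A_{t}(s,0) = -\sum_{1\leq i<j\leq m}\int_{\Sigma_{ij}^{(s)}}\langle y,v\rangle\gamma_{\sdimn}(y)\,dy$, and then differentiate this in $s$ at $s=0$ by expanding the integrand under the $X$-flow parametrization $y = \Psi(z,s)$ (using $\partial_{s}|_{s=0}J\Psi = \operatorname{div}_{\tau}X$ and $\partial_{s}|_{s=0}\gamma_{\sdimn}(\Psi) = -\langle z,X\rangle\gamma_{\sdimn}$) and applying the weighted tangential divergence theorem on each $\Sigma_{ij}$ with boundary $\redS_{ij}$. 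Decomposing the ambient gradient $\nabla(\langle y,v\rangle\gamma_{\sdimn}) = (v - \langle y,v\rangle y)\gamma_{\sdimn}$ into its tangential part plus a normal contribution proportional to $\langle v,N_{ij}\rangle$, and invoking \eqref{nine5.3} once more to replace $H_{ij}-\langle y,N_{ij}\rangle$ by $\lambda_{ij}$, one arrives at a $-\lambda_{ij}\int f_{ij}\langle x,v\rangle\gamma_{\sdimn}$ contribution, a $-\int\langle X,v\rangle\gamma_{\sdimn}$ and $-\int f_{ij}\langle v,N_{ij}\rangle\gamma_{\sdimn}$ bulk piece, and a boundary piece $-\int_{\redS_{ij}}\langle x,v\rangle\langle X,\nu_{ij}\rangle\gamma_{\sdimn}$. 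Multiplying $A_{st}(0,0)$ by $2$ and summing with $A_{ss}(0,0)$ and $A_{tt}(0,0)$, the $\lambda_{ij}$-pieces assemble into $-2\lambda_{ij}\int f_{ij}\langle x,v\rangle\gamma_{\sdimn}$, and the boundary contributions recombine with the pure $X$-boundary term into $\frac{d}{ds}|_{s=0}\int_{\redS_{ij}^{(s)}}\langle X+v,\nu_{ij}\rangle\gamma_{\sdimn}$, matching the claimed right-hand side.

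The main obstacle is the careful bookkeeping of the cross term $A_{st}(0,0)$: one must apply the weighted divergence theorem against the weight $\langle y,v\rangle\gamma_{\sdimn}$, correctly decompose the ambient gradient $v$ into its tangential and normal components on $\Sigma_{ij}$ so as to produce the $\langle v,N_{ij}\rangle$ factor at the correct coefficient, and verify that the three boundary contributions (one from $A_{ss}$, zero from $A_{tt}$, and one from $2A_{st}$) recombine into the single expression involving $X+v$. The remaining pieces are direct applications of Lemma~\ref{lemma10}, Lemma~\ref{lemma21}, and the first-order relation \eqref{nine5.3}.
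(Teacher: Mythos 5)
Your decomposition into a two-parameter family $A(s,t)$ with $\Phi(s)=A(s,s)$ and $\Phi''(0)=A_{ss}+2A_{st}+A_{tt}$ is a genuinely different route from the paper's, which directly differentiates the one-parameter first-variation formula for the combined flow (writing the normal velocity as $f+\langle v,N\rangle$ and the boundary velocity as $X+v$) and then expands. Your identifications of $A_{ss}$ (via Lemma~\ref{lemma21} without the penalty term) and $A_{tt}$ (via the change of variables $x\mapsto y+tv$ and the identity $\partial_{t}^{2}|_{t=0}\gamma_{\sdimn}(y+tv)=(\langle y,v\rangle^{2}-\vnormt{v}^{2})\gamma_{\sdimn}(y)$) are both correct and cleanly stated.

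However, your bookkeeping of the cross term $A_{st}(0,0)$ has a gap. Expanding
$$\partial_{s}\partial_{t}A(s,t)\big|_{s=t=0}=-\int_{\Sigma}\Bigl[\langle X,v\rangle-\langle z,v\rangle\langle z,X\rangle+\langle z,v\rangle\,\mathrm{div}_{\tau}X\Bigr]\gamma_{\sdimn}(z)\,dz$$
and then applying the weighted tangential divergence theorem to the $\mathrm{div}_{\tau}X^{T}$ piece, the tangential contribution cancels: one gets $-\langle X^{T},v\rangle\gamma_{\sdimn}$ with the opposite sign from the $\langle X,v\rangle$ already present, plus a matching cancellation in the $\langle z,\cdot\rangle$ terms, so the surviving bulk piece is
$$A_{st}(0,0)=-\sum_{i<j}\int_{\Sigma_{ij}}f_{ij}\Bigl(\langle v,N_{ij}\rangle+\langle x,v\rangle(H_{ij}-\langle x,N_{ij}\rangle)\Bigr)\gamma_{\sdimn}\,dx-\sum_{i<j}\int_{\redS_{ij}}\langle x,v\rangle\langle X,\nu_{ij}\rangle\gamma_{\sdimn}\,dx,$$
i.e.\ the $\langle X,v\rangle$ you claim to retain has already been reduced to $f_{ij}\langle v,N_{ij}\rangle$; no separate $-\langle X,v\rangle$ bulk contribution survives the divergence theorem. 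Concretely, $2A_{st}$ contributes $-2f_{ij}\langle v,N_{ij}\rangle$, which differs from the displayed $-f_{ij}\langle v,N_{ij}\rangle-\langle X,v\rangle$ by $\langle X^{T},v\rangle$. You can check this on the toy case $\Sigma$ a coordinate hyperplane, $X$ a constant vector field with a nonzero tangential part, and no triple junctions: $\Phi''(0)=-(f+\langle v,N\rangle)^{2}\int_{\Sigma}\gamma_{\sdimn}$, with no $\langle X^{T},v\rangle$ term appearing. So the step where you claim the decomposition produces both a $-\langle X,v\rangle$ and a $-f_{ij}\langle v,N_{ij}\rangle$ piece is the part that does not go through; you have effectively reverse-engineered the boxed answer rather than derived it. (You should also be aware that the statement's left-hand side is a typo for $\frac{d^{2}}{ds^{2}}|_{s=0}$, and that the boundary term's $\langle X+v,\nu_{ij}\rangle$ needs to be reconciled carefully with your two-parameter split, where the pure translation produces no boundary contribution but the cross term contributes $-2\int_{\redS_{ij}}\langle x,v\rangle\langle X,\nu_{ij}\rangle\gamma_{\sdimn}$ rather than a $\langle v,\nu_{ij}\rangle$ piece.)
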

\begin{proof}
Let $\Sigma$ be an $n$-dimensional $C^{\infty}$ hypersurface with boundary.  We let $'$ denote $\frac{\partial}{\partial s}|_{s=0}$.

\begin{flalign*}
&\frac{d}{ds}\int_{\Sigma^{(s)}+x^{(s)}}\gamma_{\sdimn}(x)dx
=\int_{\Sigma^{(s)}}\big[(H(x)-\langle N(x),x+x^{(s)}\rangle)f(x)-\langle x+x^{(s)},{x^{(s)}}'\rangle\big]\gamma_{\sdimn}(x+x^{(s)})dx\\
&\qquad\qquad\qquad\qquad\qquad
+\int_{\redS^{(s)}}\langle X+{x^{(s)}}',\nu\rangle\gamma_{\sdimn}(x+{x^{(s)}})dx.
\end{flalign*}

Taking another derivative,
\begin{flalign*}
&\frac{d^{2}}{ds^{2}}|_{s=0}\int_{\Sigma^{(s)}+x^{(s)}}\gamma_{\sdimn}(x)dx
=\int_{\Sigma}(H(x)-\langle N(x),x+{x^{(s)}}\rangle)'f(x)-\langle (x+{x^{(s)}})',{x^{(s)}}'\rangle\gamma_{\sdimn}(x)dx\\
&\qquad+\int_{\Sigma}[(H-\langle x,N\rangle)f(x)-\langle x,{x^{(s)}}'\rangle]^{2}\gamma_{\sdimn}(x)dx\\
&\qquad+\int_{\Sigma}(H-\langle x,N\rangle)f'(x)-\langle x,{x^{(s)}}''\rangle
+\frac{d}{ds}|_{s=0}\int_{\redS^{(s)}}\langle X+{x^{(s)}}',\nu\rangle\gamma_{\sdimn}(x+{x^{(s)}})dx.
\end{flalign*}
From \eqref{nine2.3}, $x'=X=X^{N}+X^{T}=fN+X^{T}$.  Also, $H'=-\Delta f-\vnormt{A}^{2}f$, $N'=-\nabla f$, \cite[A.3, A.4]{colding12a} (the latter calculations require writing $\Sigma^{(s)}$ in the form $\{x+ sN(x)+O_{x}(s^{2})\colon x\in\Sigma\}$).  So,
$$(H-\langle N,x+{x^{(s)}}\rangle)'=-\Delta f-\vnormt{A}^{2}f-\langle N,fN+X^{T}+{x^{(s)}}'\rangle-\langle x,\nabla f\rangle\stackrel{\eqref{three4.5}}{=}-Lf-\langle v,N\rangle.$$
$$\langle (x+{x^{(s)}})',{x^{(s)}}'\rangle=\langle X+v,v\rangle=\langle X,v\rangle+\vnormt{v}^{2}.$$
In summary,
\begin{flalign*}
&\frac{d^{2}}{ds^{2}}|_{s=0}\int_{\Sigma^{(s)}}\gamma_{\sdimn}(x)dx
=\int_{\Sigma}\Big(-fLf-f\langle v,N\rangle-\langle X,v\rangle-\vnormt{v}^{2}\Big)\gamma_{\sdimn}(x)dx\\
&\qquad+\int_{\Sigma}[(H-\langle x,N\rangle)f(x)-\langle x,v\rangle]^{2}\gamma_{\sdimn}(x)dx\\
&\qquad+\int_{\Sigma}(H-\langle x,N\rangle)f'(x)+\frac{d}{ds}|_{s=0}\int_{\redS^{(s)}}\langle X+v,\nu\rangle\gamma_{\sdimn}(x)dx\\
&\qquad+\int_{\redS}\langle X+{x^{(s)}}'',\nu\rangle\gamma_{\sdimn}(x)dx
+\int_{\redS}\langle X+v,\nu\rangle\langle x,-v\rangle\gamma_{\sdimn}(x)dx.
\end{flalign*}
Summing over all $1\leq i<j\leq m$,
\begin{flalign*}
&\frac{d^{2}}{ds^{2}}|_{s=0}\sum_{1\leq i<j\leq m}\int_{\Sigma_{ij}^{(s)}}\gamma_{\sdimn}(x)dx
=\sum_{1\leq i<j\leq m}\int_{\Sigma_{ij}}\Big(-f_{ij}Lf_{ij}-f_{ij}\langle v,N_{ij}\rangle-\langle X,v\rangle\\
&\qquad-\vnormt{v}^{2}\Big)\gamma_{\sdimn}(x)dx+\int_{\Sigma_{ij}}((H_{ij}-\langle x,N_{ij}\rangle)f_{ij}-\langle x,v\rangle)^{2}\gamma_{\sdimn}(x)dx\\
&\qquad+\int_{\Sigma_{ij}}(H_{ij}-\langle x,N_{ij}\rangle)f_{ij}'\gamma_{\sdimn}(x)dx +\frac{d}{ds}|_{s=0}\int_{\redS_{ij}^{(s)}}\langle X+v,\nu_{ij}\rangle\gamma_{\sdimn}(x)dx\\
&=\sum_{1\leq i<j\leq m}\int_{\Sigma_{ij}}\Big(-f_{ij}Lf_{ij}-f_{ij}\langle v,N_{ij}\rangle-\langle X,v\rangle-\vnormt{v}^{2}\Big)\gamma_{\sdimn}(x)dx+\int_{\Sigma_{ij}}\langle x,v\rangle^{2}\gamma_{\sdimn}(x)dx\\
&\qquad
-2\int_{\Sigma_{ij}}(H_{ij}-\langle x,N_{ij}\rangle)f_{ij}\langle x,v\rangle\gamma_{\sdimn}(x)dx\\
&\qquad+\int_{\Sigma_{ij}}(H_{ij}-\langle x,N_{ij}\rangle)(f_{ij}'+H_{ij}-\langle x,N_{ij}\rangle)\gamma_{\sdimn}(x)dx
+\frac{d}{ds}|_{s=0}\int_{\redS_{ij}^{(s)}}\langle X+v,\nu_{ij}\rangle\gamma_{\sdimn}(x)dx
\end{flalign*}
\begin{flalign*}
&=\sum_{1\leq i<j\leq m}\int_{\Sigma_{ij}}\Big(-f_{ij}Lf_{ij}-f_{ij}\langle v,N_{ij}\rangle-\langle X,v\rangle-\vnormt{v}^{2}\Big)\gamma_{\sdimn}(x)dx+\int_{\Sigma_{ij}}\langle x,v\rangle^{2}\gamma_{\sdimn}(x)dx\\
&\qquad
-2\int_{\Sigma_{ij}}(H_{ij}-\langle x,N_{ij}\rangle)f_{ij}\langle x,v\rangle\gamma_{\sdimn}(x)dx
+\int_{\Sigma_{ij}}(H_{ij}-\langle x,N_{ij}\rangle)[f_{ij}\gamma_{\sdimn}(x)dx]'\\
&\qquad+\frac{d}{ds}|_{s=0}\int_{\redS_{ij}^{(s)}}\langle X+v,\nu_{ij}\rangle\gamma_{\sdimn}(x)dx.
\end{flalign*}
\end{proof}

The following curvature bound is adapted from \cite[Lemma 5.1]{mcgonagle15}.

\begin{lemma}\label{lemma29b}
Let $\Omega_{1},\ldots,\Omega_{m}$ minimize Problem \ref{prob1}.  Then $\forall$ $\phi\in C_{0}^{\infty}(\Sigma)$,
$$
\sum_{1\leq i<j\leq m}\int_{\Sigma_{ij}}\phi^{2}\vnorm{A}^{2}\gamma_{\sdimn}(x)dx
\leq \sum_{1\leq i<j\leq m}\int_{\Sigma_{ij}}\Big((\pcon-1)\phi^{2}+\vnorm{\nabla\phi}^{2}\Big)\gamma_{\sdimn}(x)dx.
$$
\end{lemma}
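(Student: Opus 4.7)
The plan is to apply the defining inequality $Q(F,F)\geq -\pcon\langle F,F\rangle$, which holds for every admissible $F\in\mathcal{F}\cap C_{0}^{\infty}(\Sigma)$ by \eqref{seven0}, to a carefully chosen family of test functions indexed by $\ell\in\{1,\ldots,m\}$, and then to sum over $\ell$ so that the bulk $\vnormt{A}^{2}\phi^{2}$ terms align across the sheets while the triple-junction boundary contribution annihilates via the identity $q_{ij}+q_{jk}+q_{ki}=0$ recorded in Remark \ref{rk5}. The construction is the natural multi-surface extension of the test function $F=(\phi)$ used in the single-surface case of \cite{mcgonagle15}, where no triple-junction constraint is present.

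For each $\ell\in\{1,\ldots,m\}$, I would set the purely combinatorial coefficients $\alpha_{ij}^{(\ell)}\colonequals \delta_{i\ell}-\delta_{j\ell}$ and define $F^{(\ell)}\colonequals (\phi\,\alpha_{ij}^{(\ell)})_{1\leq i<j\leq m}$. Antisymmetry $\alpha_{ij}^{(\ell)}=-\alpha_{ji}^{(\ell)}$ is immediate, and the identity $\alpha_{ij}^{(\ell)}+\alpha_{jk}^{(\ell)}+\alpha_{ki}^{(\ell)}=(\delta_{i\ell}-\delta_{j\ell})+(\delta_{j\ell}-\delta_{k\ell})+(\delta_{k\ell}-\delta_{i\ell})=0$ shows that the triple-junction condition in Definition \ref{def1} holds pointwise. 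Since $\phi\in C_{0}^{\infty}(\Sigma)$ and the $\alpha_{ij}^{(\ell)}$ are constants, $F^{(\ell)}\in\mathcal{F}\cap C_{0}^{\infty}(\Sigma)$. Substituting into the symmetric integration-by-parts expression from Lemma \ref{lemma32.5} gives
\begin{align*}
Q(F^{(\ell)},F^{(\ell)})
&=\sum_{1\leq i<j\leq m}(\alpha_{ij}^{(\ell)})^{2}\int_{\Sigma_{ij}}\bigl[\vnormt{\nabla\phi}^{2}-\phi^{2}(\vnormt{A}^{2}+1)\bigr]\gamma_{\sdimn}(x)\,dx\\
&\quad+\sum_{1\leq i<j<k\leq m}\int_{\redS_{ij}\cap\redS_{jk}\cap\redS_{ki}}\!\phi^{2}\bigl[q_{ij}(\alpha_{ij}^{(\ell)})^{2}+q_{jk}(\alpha_{jk}^{(\ell)})^{2}+q_{ki}(\alpha_{ki}^{(\ell)})^{2}\bigr]\gamma_{\sdimn}(x)\,dx,
\end{align*}
with the analogous identity $\langle F^{(\ell)},F^{(\ell)}\rangle=\sum_{i<j}(\alpha_{ij}^{(\ell)})^{2}\int_{\Sigma_{ij}}\phi^{2}\gamma_{\sdimn}(x)\,dx$.

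The combinatorial key is that for every pair $i\neq j$,
\[
\sum_{\ell=1}^{m}(\alpha_{ij}^{(\ell)})^{2}=\sum_{\ell=1}^{m}(\delta_{i\ell}-\delta_{j\ell})^{2}=2,
\]
and this weight is independent of the pair $(i,j)$. Summing the two displayed identities in $\ell$, the bulk integrand collapses to $2\sum_{i<j}\int_{\Sigma_{ij}}[\vnormt{\nabla\phi}^{2}-\phi^{2}(\vnormt{A}^{2}+1)]\gamma_{\sdimn}(x)\,dx$, while the triple-junction term becomes $2\sum_{i<j<k}\int\phi^{2}(q_{ij}+q_{jk}+q_{ki})\gamma_{\sdimn}(x)\,dx$, which vanishes identically by Remark \ref{rk5}. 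Combining this with the pointwise variational bound $\sum_{\ell}Q(F^{(\ell)},F^{(\ell)})\geq -\pcon\sum_{\ell}\langle F^{(\ell)},F^{(\ell)}\rangle=-2\pcon\sum_{i<j}\int_{\Sigma_{ij}}\phi^{2}\gamma_{\sdimn}(x)\,dx$ and dividing by $2$ yields precisely the claimed inequality.

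There is no real obstacle to the argument: the variational characterization of $\pcon$ in \eqref{seven0} and the IBP formula of Lemma \ref{lemma32.5} do all of the analytic work. The only point requiring a moment's thought is the pair-independence of the weights $\sum_{\ell}(\alpha_{ij}^{(\ell)})^{2}$, which is exactly what allows the triple-junction contribution to factor as a constant multiple of $q_{ij}+q_{jk}+q_{ki}$ rather than a weighted combination that could fail to cancel.
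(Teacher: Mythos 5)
Your proof is correct and follows the same route as the paper's: both apply the variational characterization of $\pcon$ from \eqref{seven0} to admissible test directions of the form $\phi\cdot(\alpha_{ij})$, invoke the integration-by-parts identity of Lemma \ref{lemma32.5}, and then average so that the triple-junction contribution collapses to $q_{ij}+q_{jk}+q_{ki}=0$. The only cosmetic difference is that you sum over the explicit family $\alpha^{(\ell)}_{ij}=\delta_{i\ell}-\delta_{j\ell}$, $\ell=1,\dots,m$, whereas the paper fixes one nonvanishing solution $\{\alpha_{ij}\}$ of the middle equations of \eqref{nine5.3} and symmetrizes by summing over all permutations of $\{1,\dots,m\}$; both produce pair-independent weights and hence the same cancellation.
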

\begin{proof}
Let $G\colonequals\{\alpha_{ij}\}_{1\leq i<j\leq m}$ be a solution to the system of middle equations of \eqref{nine5.3}.  Let $\phi\in C_{0}^{\infty}(\R^{\adimn})$.  By the definition \eqref{seven0} of $\pcon$,
$$-Q(\phi G,\phi G)\leq\pcon\langle \phi G,\phi G\rangle.$$
That is, by Lemma \ref{lemma32.5} 
\begin{flalign*}
&\sum_{1\leq i<j\leq m}\int_{\Sigma_{ij}}\alpha_{ij}^{2}(-\vnorm{\nabla\phi}^{2}+\phi^{2}(\vnorm{A}^{2}+1))\gamma_{\sdimn}(x)dx\\
&\qquad\qquad
+\sum_{1\leq i<j<k\leq m}\int_{\redS_{ij}\cap\redS_{jk}\cap \redS_{ki}}
\phi^{2}\Big(q_{ij}\alpha_{ij}^{2}+q_{jk}\alpha_{jk}^{2}+q_{ki}\alpha_{ki}^{2}\Big)\gamma_{\sdimn}(x)\\
&\qquad\leq \pcon\sum_{1\leq i<j\leq m}\int_{\Sigma_{ij}}\phi^{2}\alpha_{ij}^{2}\gamma_{\sdimn}(x)dx.
\end{flalign*}
Summing these quantities over all permutations of $\{1,\ldots,m\}$, i.e. permuting $\{\alpha_{ij}\}_{1\leq i<j\leq m}$, the middle term vanishes by Remark \ref{rk5}, and we get
$$
\sum_{1\leq i<j\leq m}\int_{\Sigma_{ij}}(-\vnorm{\nabla\phi}^{2}+\phi^{2}(\vnorm{A}^{2}+1))\gamma_{\sdimn}(x)dx
\leq \pcon\sum_{1\leq i<j\leq m}\int_{\Sigma_{ij}}\phi^{2}\gamma_{\sdimn}(x)dx.
$$
Rearranging completes the proof.

\end{proof}

\begin{lemma}[{\cite[Lemma 6.2]{zhu16}}]\label{lemma28.5}
Let $\Omega_{1},\ldots,\Omega_{m}$ minimize Problem \ref{prob1}.  If $\int_{\Sigma}(\abs{\phi}^{2}+\vnormt{\nabla \phi}^{2})\gamma_{\sdimn}(x)dx<\infty$ and if $\phi$ is bounded, then

$$\int_{\Sigma}\phi^{2}(\vnormt{A}^{2}+1)\gamma_{\sdimn}(x)dx
\leq \int_{\Sigma}(\vnormt{\nabla\phi}^{2}+(\pcon-1)\phi^{2})\gamma_{\sdimn}(x)dx.$$
\end{lemma}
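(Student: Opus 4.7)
\medskip

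\textbf{Proof Proposal.} The plan is to deduce Lemma \ref{lemma28.5} from Lemma \ref{lemma29b} by a cutoff-and-approximation argument, using the sequence of cutoff functions $\eta_{1}\leq\eta_{2}\leq\cdots$ provided by Lemma \ref{lemma60}. Each $\eta_{u}$ is $C^{\infty}$, takes values in $[0,1]$, is supported in $M_{\sdimn}\cup M_{\sdimn-1}\cup M_{\sdimn-2}$ (where the regularity needed for Lemma \ref{lemma29b} is in force), converges pointwise to $1$, and satisfies
$$\sum_{1\leq i<j\leq m}\int_{\Sigma_{ij}}\bigl[(1-\eta_{u})^{2}+\vnormt{\nabla(1-\eta_{u})}^{2}\bigr]\gamma_{\sdimn}(x)dx\xrightarrow[u\to\infty]{}0.$$
Since $\phi$ is bounded and $\eta_{u}\in C_{0}^{\infty}(\Sigma)$, the product $\phi\eta_{u}$ is a valid test function for Lemma \ref{lemma29b}, and after the rearrangement carried out at the end of its proof it yields
$$\int_{\Sigma}(\phi\eta_{u})^{2}(\vnormt{A}^{2}+1)\gamma_{\sdimn}(x)dx
\leq \int_{\Sigma}\bigl(\vnormt{\nabla(\phi\eta_{u})}^{2}+\pcon(\phi\eta_{u})^{2}\bigr)\gamma_{\sdimn}(x)dx.$$

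Next, I would expand
$$\vnormt{\nabla(\phi\eta_{u})}^{2}=\eta_{u}^{2}\vnormt{\nabla\phi}^{2}+2\phi\eta_{u}\langle\nabla\phi,\nabla\eta_{u}\rangle+\phi^{2}\vnormt{\nabla\eta_{u}}^{2}$$
and estimate each term as $u\to\infty$. The first term converges to $\int_{\Sigma}\vnormt{\nabla\phi}^{2}\gamma_{\sdimn}(x)dx$ by dominated convergence (dominated by the assumed $L^{2}(\gamma_{\sdimn})$ function $\vnormt{\nabla\phi}^{2}$). The cross term is bounded, via Cauchy--Schwarz, by
$$2\sup_{\Sigma}\absf{\phi}\,\Big(\int_{\Sigma}\vnormt{\nabla\phi}^{2}\gamma_{\sdimn}\Big)^{1/2}\Big(\int_{\Sigma}\vnormt{\nabla\eta_{u}}^{2}\gamma_{\sdimn}\Big)^{1/2},$$
which goes to $0$ since $\int_{\Sigma}\vnormt{\nabla\eta_{u}}^{2}\gamma_{\sdimn}=\int_{\Sigma}\vnormt{\nabla(1-\eta_{u})}^{2}\gamma_{\sdimn}\to 0$ by Lemma \ref{lemma60}. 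The last term is bounded by $(\sup_{\Sigma}\abs{\phi})^{2}\int_{\Sigma}\vnormt{\nabla\eta_{u}}^{2}\gamma_{\sdimn}$ and likewise vanishes.

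On the right-hand side the term $\int_{\Sigma}(\phi\eta_{u})^{2}\gamma_{\sdimn}(x)dx$ tends to $\int_{\Sigma}\phi^{2}\gamma_{\sdimn}(x)dx$ by dominated convergence, while on the left-hand side Fatou's lemma applied to the pointwise limit $(\phi\eta_{u})^{2}(\vnormt{A}^{2}+1)\nearrow\phi^{2}(\vnormt{A}^{2}+1)$ gives
$$\int_{\Sigma}\phi^{2}(\vnormt{A}^{2}+1)\gamma_{\sdimn}(x)dx\leq\liminf_{u\to\infty}\int_{\Sigma}(\phi\eta_{u})^{2}(\vnormt{A}^{2}+1)\gamma_{\sdimn}(x)dx.$$
Combining these four limits in the displayed consequence of Lemma \ref{lemma29b} for $\phi\eta_{u}$ and passing to the limit $u\to\infty$ yields the desired bound. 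The only subtle point is that we do not assume $\vnormt{A}^{2}\phi^{2}$ to be a priori integrable, which is exactly why Fatou's lemma, rather than dominated convergence, is used on the left; the finite integrability then falls out of the inequality itself. Once this is checked, the proof is essentially a routine density/limiting argument, and the core analytic input is the existence of the cutoffs $\eta_{u}$ from Lemma \ref{lemma60} together with the boundedness hypothesis on $\phi$ needed to control the cross term.
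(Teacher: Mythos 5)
Your proposal is correct and follows exactly the route the paper indicates, which is just a one-line pointer to Lemma \ref{lemma29b}, Lemma \ref{lemma60}, and Fatou's Lemma. You fill in exactly the limiting argument that pointer compresses: test Lemma \ref{lemma29b} with $\phi\eta_{u}$, expand $\vnormt{\nabla(\phi\eta_{u})}^{2}$, use boundedness of $\phi$ and the $L^{2}$ decay of $\nabla\eta_{u}$ from Lemma \ref{lemma60} to kill the cross and tail terms, and apply Fatou (or monotone convergence, since $\eta_{u}$ is increasing) on the left because $\phi^{2}\vnormt{A}^{2}$ is not a priori integrable. This is the same argument.

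One small observation worth noting. What actually comes out of Lemma \ref{lemma29b} after the rearrangement is $\int_{\Sigma}\phi^{2}(\vnormt{A}^{2}+1)\gamma_{\sdimn}\leq\int_{\Sigma}(\vnormt{\nabla\phi}^{2}+\pcon\,\phi^{2})\gamma_{\sdimn}$ (equivalently $\int\phi^{2}\vnormt{A}^{2}\leq\int(\vnormt{\nabla\phi}^{2}+(\pcon-1)\phi^{2})$), and your intermediate display correctly has $\pcon(\phi\eta_{u})^{2}$. After the limit this gives the constant $\pcon$, not $\pcon-1$, on the right when the left-hand side is written as $\phi^{2}(\vnormt{A}^{2}+1)$. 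The statement of Lemma \ref{lemma28.5} as printed thus appears to be off by one term of $\int\phi^{2}\gamma_{\sdimn}$; your derivation is the consistent one. Also, strictly speaking $\phi$ is only assumed bounded with finite Dirichlet energy, so $\phi\eta_{u}$ need not be $C_{0}^{\infty}$; one should further mollify $\phi$, but that is a routine extra step the paper also elides.
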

\begin{proof}

Apply Lemma \ref{lemma29b}, Lemma \ref{lemma60} and Fatou's Lemma.
\end{proof}

The following Lemmas follow from Lemma \ref{lemma28.5}.

\begin{lemma}\label{lemma61}
Let $\Omega_{1},\ldots,\Omega_{m}$ minimize Problem \ref{prob1}.  Then
$$\sum_{1\leq i<j\leq m}\int_{\redb\Omega_{ij}}\vnormt{A}^{2}\gamma_{\sdimn}(x)dx<\infty.$$
Consequently, for any $v\in\R^{\adimn}$, by \eqref{three2},
$$\sum_{1\leq i<j\leq m}\int_{\redb\Omega_{ij}}\vnorm{\nabla\langle v,N\rangle}^{2}\gamma_{\sdimn}(x)dx<\infty$$
\end{lemma}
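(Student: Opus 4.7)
The plan is to apply Lemma \ref{lemma28.5} with the constant test function $\phi\equiv1$, and then deduce the second inequality from the pointwise relation \eqref{three2} between the gradient of $\langle v,N\rangle$ and the second fundamental form.

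First I would check that $\phi\equiv1$ satisfies the hypotheses of Lemma \ref{lemma28.5}. The function is obviously bounded and $\nabla\phi=0$ identically, so the only thing to verify is that the total Gaussian surface area $\sum_{1\leq i<j\leq m}\int_{\Sigma_{ij}}\gamma_{\sdimn}(x)dx$ is finite. This is immediate from Lemma \ref{lemma28.8}: for $r>\sqrt{\adimn}+\vnormt{w}$ the portion of $\Sigma$ outside the ball $B(w,r)$ has Gaussian surface area bounded by $3m\gamma_{\sdimn}(\{\vnormt{x}=r\})$, which is integrable in $r$, while inside $B(w,r)$ the sets have uniformly bounded perimeter by the regularity bound \eqref{reg2} established in the proof of Lemma \ref{lemma52.6}. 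Together with Lemma \ref{nlemma}, which ensures $\pcon<\infty$, Lemma \ref{lemma28.5} then gives
\begin{equation*}
\sum_{1\leq i<j\leq m}\int_{\Sigma_{ij}}(\vnormt{A}^{2}+1)\gamma_{\sdimn}(x)dx
\leq (\pcon-1)\sum_{1\leq i<j\leq m}\int_{\Sigma_{ij}}\gamma_{\sdimn}(x)dx<\infty,
\end{equation*}
which proves the first displayed inequality.

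For the second assertion, I would use \eqref{three5g}, which gives, for any local orthonormal frame $e_{1},\ldots,e_{\sdimn}$ of $\Sigma_{ij}$,
\begin{equation*}
\nabla_{e_{i}}\langle v,N_{ij}\rangle=-\sum_{j=1}^{\sdimn}a_{ij}\langle v,e_{j}\rangle.
\end{equation*}
The Cauchy--Schwarz inequality then yields the pointwise bound $\vnormt{\nabla\langle v,N_{ij}\rangle}^{2}\leq \vnormt{v}^{2}\vnormt{A}^{2}$, and integrating this against $\gamma_{\sdimn}$ and summing over $1\leq i<j\leq m$ reduces the second inequality to the first.

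The only subtle point in the plan is verifying the hypotheses of Lemma \ref{lemma28.5} for $\phi\equiv1$, i.e. that $\int_{\Sigma}\gamma_{\sdimn}(x)dx<\infty$. The tail bound from Lemma \ref{lemma28.8} handles the behavior at infinity, while the regularity Lemma \ref{lemma52.6} (specifically the perimeter bound \eqref{reg2}) handles local finiteness; no other obstacle arises.
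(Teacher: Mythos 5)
Your proposal is correct and takes essentially the same route the paper does: the paper's own (one-line) proof is ``Use Lemma \ref{lemma28.5}, \eqref{three2} and Lemma \ref{nlemma},'' which is exactly applying Lemma \ref{lemma28.5} with $\phi\equiv1$ (finiteness of $\pcon$ coming from Lemma \ref{nlemma}) and then converting $\vnormt{\nabla\langle v,N\rangle}$ to $\vnormt{A}$ via \eqref{three2}. Your additional verification that $\int_{\Sigma}\gamma_{\sdimn}\,dx<\infty$ --- needed to make $\phi\equiv1$ admissible in Lemma \ref{lemma28.5} --- via the tail bound of Lemma \ref{lemma28.8} and the local density bound \eqref{reg2} is a correct and worthwhile spelling-out of a step the paper leaves implicit.
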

\begin{proof}
Use Lemma \ref{lemma28.5}, \eqref{three2} and Lemma \ref{nlemma}.
\end{proof}

\section{Dimension Reduction}\label{secred}

Recall that for all $1\leq i\leq m$,
\begin{equation}\label{zdef}
z^{(i)}\colonequals\int_{\Omega_{i}}x\gamma_{\adimn}(x)dx\in\R^{\adimn}.
\end{equation}

\begin{theorem}[\embolden{Dimension Reduction for Gaussian Minimal Bubbles}]\label{dimredthmp}
Suppose $\Omega_{1},\ldots\Omega_{m}\subset\R^{\adimn}$ minimize Problem \ref{prob1p} with $\epsilon\colonequals \epschoice$.  There exists $0\leq \ell\leq m-1$ and there exist $\Omega_{1}',\ldots,\Omega_{m}'\subset\R^{\ell}$ such that, after rotating $\Omega_{1},\ldots,\Omega_{m}$, we have
$$\Omega_{i}=\Omega_{i}'\times\R^{\sdimn-\ell+1}.$$
Moreover $\ell$ can be chosen to be the dimension of the span of
$$\Big\{\Big(\int_{\redb\Omega_{1}}\sum_{\substack{j\in\{1,\ldots,m\}\colon\\ j\neq 1}}\langle v,N_{1j}\rangle\gamma_{\sdimn}(x)dx,
\ldots,\int_{\redb\Omega_{m}}\sum_{\substack{j\in\{1,\ldots,m\}\colon\\ j\neq m}}\langle v,N_{mj}\rangle\gamma_{\sdimn}(x)dx\Big)\in\R^{m}\colon v\in\R^{\adimn}\Big\}.
$$
\end{theorem}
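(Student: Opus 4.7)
The strategy is to adapt the unperturbed dimension reduction argument of Lemma \ref{dimredthm}, absorbing the extra $\epsilon$-contributions from the penalty into the main negative term via the specific choice $\epsilon=\epschoice$. As in the proof of Lemma \ref{dimredthm}, I first introduce the linear map $T\colon\R^{\adimn}\to\R^{m}$ defined by
\[
T(v)\colonequals\Big(\int_{\redb\Omega_{1}}\!\sum_{j\neq 1}\langle v,N_{1j}\rangle\gamma_{\sdimn},\,\ldots,\,\int_{\redb\Omega_{m}}\!\sum_{j\neq m}\langle v,N_{mj}\rangle\gamma_{\sdimn}\Big),
\]
whose coordinates sum to zero because $N_{ij}=-N_{ji}$ and $\cup_i\Omega_i=\R^{\adimn}$. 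Hence $\ell\colonequals\dim\mathrm{image}(T)\le m-1$, and by rank-nullity $\dim\ker T=\adimn-\ell$. It suffices to prove that $\langle v,N_{ij}\rangle\equiv 0$ on $\Sigma_{ij}$ for every $v\in\ker T$ and every $1\le i<j\le m$, for then each $N_{ij}$ takes values in the $\ell$-dimensional subspace $(\ker T)^{\perp}$, and a rotation placing this subspace as $\R^{\ell}\times\{0\}$ yields the product decomposition $\Omega_i=\Omega_i'\times\R^{\sdimn-\ell+1}$.

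Fix $v\in\ker T$ and test with the vector field $X=\phi v$ (where $\phi$ is a cutoff from Lemma \ref{lemma60}), so that $f_{ij}=\phi\langle v,N_{ij}\rangle$; at the end I let $\phi\uparrow 1$ via Lemma \ref{lemma97}. The membership $v\in\ker T$ furnishes exactly the volume-preservation hypothesis \eqref{eight2} in the limit, so Lemma \ref{lemma27} produces a truly volume-preserving variation and Lemma \ref{lemma28} applies. The Euler--Lagrange system \eqref{nine5.3} for Problem \ref{prob1p} matches hypothesis \eqref{three0n} of Lemma \ref{lemma45} with $z$ replaced by $z^{(i)}-z^{(j)}$, yielding
\[
L_{ij}\langle v,N_{ij}\rangle=\langle v,N_{ij}\rangle-\epsilon\langle v,z^{(i)}-z^{(j)}\rangle+\epsilon\langle v,N_{ij}\rangle\langle z^{(i)}-z^{(j)},N_{ij}\rangle.
\]
Since $X$ is constant, the triple-junction boundary term in Lemma \ref{lemma28} vanishes by Remark \ref{rk30}. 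Writing $f_{ij}=\langle v,N_{ij}\rangle$ after the limit and substituting gives
\begin{align*}
0&\le\sum_{i<j}\int_{\Sigma_{ij}}\Big(-f_{ij}^2+2\epsilon f_{ij}\langle v,z^{(i)}-z^{(j)}\rangle-\epsilon f_{ij}^2\langle z^{(i)}-z^{(j)},N_{ij}\rangle\Big)\gamma_{\sdimn}(x)\,dx\\
&\quad+\epsilon\sqrt{2\pi}\sum_{i=1}^m\Big\|\sum_{j\ne i}\int_{\Sigma_{ij}}(x-\overline{w}^{(i)})f_{ij}\gamma_{\adimn}(x)\,dx\Big\|^2.
\end{align*}

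The crucial cancellation is that the linear-in-$f_{ij}$ sum vanishes: setting $a_{ij}\colonequals\int_{\Sigma_{ij}}f_{ij}\gamma_{\sdimn}$ with $a_{ji}=-a_{ij}$, the kernel condition $v\in\ker T$ gives $\sum_{j\ne i}a_{ij}=0$ for each $i$, hence $\sum_{i<j}a_{ij}\langle v,z^{(i)}-z^{(j)}\rangle=\sum_i\langle v,z^{(i)}\rangle\sum_{j\ne i}a_{ij}=0$. The remaining $\epsilon$-terms are all quadratic in $f_{ij}$. Normalizing $\|v\|=1$ and applying Lemma \ref{zlem} yields $\|z^{(k)}\|\le(2\pi)^{-1/2}$, so $|\langle z^{(i)}-z^{(j)},N_{ij}\rangle|\le 2(2\pi)^{-1/2}$ pointwise, controlling the middle term by $O(\epsilon)\int f_{ij}^2\gamma_{\sdimn}$. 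For the Jacobian term, duality of $\|\cdot\|$ and Cauchy--Schwarz give
\[
\Big\|\sum_{j\ne i}\int_{\Sigma_{ij}}(x-\overline{w}^{(i)})f_{ij}\gamma_{\adimn}\Big\|^2\le\sup_{\|u\|=1}\Big(\sum_{j\ne i}\int_{\Sigma_{ij}}\langle u,x-\overline{w}^{(i)}\rangle^2\gamma_{\adimn}\Big)\sum_{j\ne i}\int_{\Sigma_{ij}}f_{ij}^2\gamma_{\adimn},
\]
and the crucial point is that the \emph{slab} (not ball) bound of Lemma \ref{lemma28.8} controls the supremum factor by a dimension-independent multiple of $m+\|w\|^2$. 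Combining these estimates, the total $\epsilon$-contribution is at most $C\epsilon(m+\|w\|^2)\sum_{i<j}\int f_{ij}^2\gamma_{\sdimn}$, which with $\epsilon=\epschoice$ is at most $\tfrac{1}{2}\sum_{i<j}\int f_{ij}^2\gamma_{\sdimn}$, forcing $\sum_{i<j}\int f_{ij}^2\gamma_{\sdimn}\le 0$ and hence $\langle v,N_{ij}\rangle\equiv 0$. The principal obstacle is the careful use of the one-dimensional slab bound (rather than the ball bound) in Lemma \ref{lemma28.8} to keep the constant $C$ independent of $\adimn$; this dimension-independence is precisely what makes the factor $\tfrac{1}{100(m+\|w\|^2)}$ in $\epschoice$ achievable.
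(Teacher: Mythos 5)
Your proposal is correct and follows essentially the same route as the paper's proof: the map $T$, rank-nullity to bound $\ell\le m-1$, testing with $X=\phi v$ for $v\in\ker T$ and passing $\phi\uparrow 1$, applying Lemma \ref{lemma45} to the Euler--Lagrange relation \eqref{nine5.3}, and then absorbing the quadratic $\epsilon$-terms via Lemma \ref{zlem} and the slab estimate of Lemma \ref{lemma28.8}. The only (cosmetic) difference is how you kill the linear-in-$f_{ij}$ term: the paper invokes Lemma \ref{lemma80p} to conclude $\langle v,z^{(i)}\rangle=0$ pointwise for $v\in\ker T$ and drops those terms immediately, whereas you prove the sum vanishes by the antisymmetric combinatorial cancellation $\sum_{i<j}a_{ij}\langle v,z^{(i)}-z^{(j)}\rangle=\sum_i\langle v,z^{(i)}\rangle\sum_{j\ne i}a_{ij}=0$. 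Since $\langle v,z^{(i)}-z^{(j)}\rangle$ is a constant on $\Sigma_{ij}$, the two routes are equivalent (both are just reformulations of $T(v)=0$), so this is not a substantive departure. Everything else, including the observation that the slab (rather than ball) bound is what keeps the constant dimension-free so that $\epschoice$ suffices, matches the paper.
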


\begin{proof}
For any $v\in\R^{\adimn}$, define
$$T(v)\colonequals\Big(\int_{\redb\Omega_{1}}\sum_{j\in\{1,\ldots,m\}\colon j\neq 1}\langle v,N_{1j}\rangle\gamma_{\sdimn}(x)dx,
\ldots,\int_{\redb\Omega_{m}}\sum_{j\in\{1,\ldots,m\}\colon j\neq m}\langle v,N_{mj}\rangle\gamma_{\sdimn}(x)dx\Big).$$
Then $T\colon\R^{\adimn}\to\R^{m}$ is linear.  By the rank-nullity theorem, the dimension of the kernel of $T$ plus the dimension of the image of $T$ is $\adimn$.  Since the sum of the indices of $T(v)$ is zero for any $v\in\R^{\adimn}$ (since $N_{ij}=-N_{ji}$ $\forall$ $1\leq i<j\leq m$ by Definition \ref{defnote}), the dimension $\ell$ of the image of $T$ is at most $m-1$.

Let $v$ in the kernel of $T$.  For any $1\leq i<j\leq m$, let $f_{ij}\colonequals\phi\langle v,N_{ij}\rangle$.  Let $X\colonequals \phi v$ be the chosen vector field.  Since $\Omega_{1},\ldots,\Omega_{m}$ minimize Problem \ref{prob1p},
$$0\leq \frac{d^{2}}{ds^{2}}|_{s=0}\sum_{1\leq i<j\leq m}\int_{\Sigma_{ij}^{(s)}}\gamma_{\sdimn}(x)dx+\epsilon\pens.$$
From Lemmas \ref{lemma28}, \ref{lemma29.9}, \ref{lemma97}, \ref{lemma27}, and then letting $\phi$ increase monotonically to $1$ (as in Lemma \ref{lemma97}),
\begin{flalign*}
0&\leq\sum_{1\leq i<j\leq m}-\int_{\Sigma_{ij}}f_{ij}[L_{ij}f_{ij}-\epsilon\langle X,z^{(i)}-z^{(j)}\rangle]\gamma_{\sdimn}(x)dx\\
&\qquad\qquad+\epsilon\sqrt{2\pi}\pender.
\end{flalign*}
Since $T(v)=0$, $\langle v,z^{(i)}\rangle=0$ for all $1\leq i\leq m$ by Lemma \ref{lemma80p}.  So,
$$0\leq\sum_{1\leq i<j\leq m}-\int_{\Sigma_{ij}}f_{ij}L_{ij}f_{ij}\gamma_{\sdimn}(x)dx+\epsilon\sqrt{2\pi}\pender.$$
By Lemma \ref{lemma45},
\begin{flalign*}
0&\leq\sum_{1\leq i<j\leq m}\int_{\Sigma_{ij}}-f_{ij}^{2}+\epsilon f_{ij}\langle v,z^{(i)}-z^{(j)}\rangle-\epsilon f_{ij}^{2}\langle z^{(i)}-z^{(j)},N\rangle\gamma_{\sdimn}(x)dx\\
&\qquad\qquad+\epsilon\sqrt{2\pi}\pender.
\end{flalign*}
Using again $T(v)=0$,  we get
\begin{flalign*}
0&\leq\sum_{1\leq i<j\leq m}\int_{\Sigma_{ij}}-f_{ij}^{2}-\epsilon f_{ij}^{2}\langle z^{(i)}-z^{(j)},N\rangle\gamma_{\sdimn}(x)dx\\
&\qquad\qquad+\epsilon\sqrt{2\pi}\pender.
\end{flalign*}
For each $1\leq i\leq m$, by duality of the $\ell_{2}$ norm, $\exists$ $\omega^{(i)}\in\R^{\adimn}$ with $\vnormtf{\omega^{(i)}}=1$ such that
\begin{flalign*}
&\vnormtf{\sum_{j\in\{1,\ldots,m\}\colon j\neq i}\int_{\Sigma_{ij}}(x-\overline{w}^{(i)})f_{ij}\gamma_{\adimn}(x)dx}
=\Big\langle\sum_{j\in\{1,\ldots,m\}\colon j\neq i}\int_{\Sigma_{ij}}(x-\overline{w}^{(i)})f_{ij}\gamma_{\adimn}(x)dx,\omega^{(i)}\Big\rangle\\
&\qquad\qquad\qquad\qquad\qquad\qquad=\sum_{j\in\{1,\ldots,m\}\colon j\neq i}\int_{\Sigma_{ij}}\langle(x-\overline{w}^{(i)}),\omega^{(i)}\rangle f_{ij}\gamma_{\adimn}(x)dx.
\end{flalign*}
So, we apply the Cauchy-Schwarz inequality to the last term to get
$$0\leq\sum_{1\leq i<j\leq m}-\int_{\Sigma_{ij}}f_{ij}^{2}\gamma_{\sdimn}(x)dx\cdot\left(1-\epsilon\vnorm{z^{(i)}-z^{(j)}}-\epsilon \sqrt{2\pi}\int_{\Sigma_{ij}}\langle x-\overline{w}^{(i)},\omega^{(i)}\rangle^{2}\gamma_{\sdimn}(x)dx\right).$$
By Lemma \ref{zlem} and the second part of Lemma \ref{lemma28.8}, if $\epsilon<\epschoice$, then the last term is positive.  In summary, for any $v$ in the kernel of $T$, $\forall$ $1\leq i<j\leq m$, $f_{ij}(x)=\langle v,N_{ij}(x)\rangle=0$ for all $x\in\Sigma_{ij}$.  That is, $\exists$ $0\leq\ell\leq m-1$ as stated in the conclusion of Theorem \ref{dimredthm}, since the image of $T$ is the span of
$$\Big\{\Big(\int_{\redb\Omega_{1}}\sum_{\substack{j\in\{1,\ldots,m\}\colon\\ j\neq 1}}\langle v,N_{1j}\rangle\gamma_{\sdimn}(x)dx,
\ldots,\int_{\redb\Omega_{m}}\sum_{\substack{j\in\{1,\ldots,m\}\colon\\ j\neq m}}\langle v,N_{mj}\rangle\gamma_{\sdimn}(x)dx\Big)\in\R^{m}\colon v\in\R^{\adimn}\Big\}.$$
\end{proof}

The Divergence Theorem implies the following

\begin{lemma}\label{lemma80p}
Let $\Omega\subset\R^{\adimn}$ be a $C^{\infty}$ manifold with boundary.  Then
\begin{equation}\label{zeq}
\int_{\Omega}x\gamma_{\adimn}(x)dx=-\int_{\partial\Omega}N\gamma_{\adimn}(x)dx.
\end{equation}
\end{lemma}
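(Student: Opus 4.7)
The plan is to reduce the vector identity to its components and apply the divergence theorem with a carefully chosen vector field on each component. The key observation is that the Gaussian density satisfies the differential identity
$$\nabla \gamma_{\adimn}(x) = -x\,\gamma_{\adimn}(x), \qquad \forall\,x\in\R^{\adimn},$$
which follows immediately from $\gamma_{\adimn}(x) = (2\pi)^{-(\adimn)/2} e^{-\vnormt{x}^2/2}$. Thus integrating $x\gamma_{\adimn}(x)$ against indicator functions of nice regions becomes equivalent to integrating a pure gradient, which is exactly the situation to which the divergence theorem applies.

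Concretely, I would fix $1\leq i\leq\adimn$, let $e_{i}$ denote the $i$-th standard basis vector of $\R^{\adimn}$, and consider the smooth vector field $V_{i}(x) \colonequals \gamma_{\adimn}(x)\,e_{i}$. Since $\gamma_{\adimn}$ and all its derivatives decay faster than any polynomial at infinity, $V_{i}$ is integrable on $\Omega$ together with its divergence, so the divergence theorem applies (approximating $\Omega$ by $\Omega\cap B(0,r)$ and letting $r\to\infty$, the boundary contribution on $\partial B(0,r)$ vanishes by the Gaussian decay). Computing the divergence,
$$\mathrm{div}\,V_{i}(x) = \partial_{i}\gamma_{\adimn}(x) = -x_{i}\,\gamma_{\adimn}(x),$$
and applying the divergence theorem to $\Omega$ yields
$$-\int_{\Omega} x_{i}\,\gamma_{\adimn}(x)\,dx = \int_{\Omega}\mathrm{div}\,V_{i}(x)\,dx = \int_{\partial\Omega}\langle V_{i}(x),N(x)\rangle\,\gamma_{\sdimn-\sdimn}(x)\cdot 1\,dx = \int_{\partial\Omega}N_{i}(x)\,\gamma_{\adimn}(x)\,dx,$$
where $N_{i}$ denotes the $i$-th component of the outward unit normal $N$ on $\partial\Omega$.

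Collecting the identity over $i=1,\ldots,\adimn$ into a single vector equation yields exactly \eqref{zeq}. The one point that requires a moment of care is justifying the divergence theorem on the unbounded domain $\Omega$: this is handled by truncating with $B(0,r)$, noting that $\gamma_{\adimn}(x)$ and $|x|\gamma_{\adimn}(x)$ are integrable on $\R^{\adimn}$ and on any hypersurface of locally finite $\sdimn$-dimensional Hausdorff measure, and then sending $r\to\infty$ so the extra boundary piece $\partial B(0,r)\cap\Omega$ contributes a term of size at most $\gamma_{\sdimn}(\partial B(0,r))\to 0$. No structural obstacle arises; the lemma is essentially the assertion that $x\gamma_{\adimn}(x) = -\nabla\gamma_{\adimn}(x)$ combined with Stokes' theorem.
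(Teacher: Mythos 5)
Your proof is correct and matches the paper's approach: the paper introduces Lemma \ref{lemma80p} with the single line ``The Divergence Theorem implies the following'' and gives no further argument, and your write-up supplies exactly those details (the identity $\nabla\gamma_{\adimn}=-x\gamma_{\adimn}$, the componentwise vector field $V_{i}=\gamma_{\adimn}e_{i}$, and the truncation-by-balls justification for the unbounded domain). The only cosmetic blemish is the stray factor ``$\gamma_{\sdimn-\sdimn}(x)\cdot 1$'' in the middle of your display, which appears to be a notational hiccup; the surrounding equalities are nonetheless correct.
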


\section{Flatness}\label{secflat}

Recall the definition of $z^{(i)}$ from \eqref{zdef}.

\begin{lemma}[\embolden{Flatness, Version 1}]\label{nolowdimp}
Suppose $\Omega_{1},\ldots\Omega_{m}\subset\R^{\adimn}$ minimize Problem \ref{prob1p} with $\epsilon\colonequals\epschoice$.   Then for all $1\leq i<j\leq m$, $z^{(i)}-z^{(j)}$ is parallel to $N_{ij}$; moreover if $z^{(i)}-z^{(j)}\neq0$, then $\Sigma_{ij}$ is a union of relatively open subsets of hyperplanes.

\end{lemma}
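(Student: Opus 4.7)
The plan is to mirror the proof of Lemma~\ref{nolowdim} (the unperturbed flatness result), upgrading each step to accommodate the $\epsilon$-penalty in the same way that the proof of Theorem~\ref{dimredthmp} upgrades the dimension-reduction argument of Lemma~\ref{dimredthm}. First I would reduce to the case $\adimn > m-1$ by the usual monotonicity of the minimum of Problem~\ref{prob1p} in $\adimn$, so the $\adimn = m-1$ case follows from the higher-dimensional one. With $\adimn > m-1$, Theorem~\ref{dimredthmp} gives, after rotation, a product decomposition $\Omega_{i} = \Omega_{i}' \times \R^{\sdimn-\ell+1}$ with $\ell \leq m-1$, so at least one coordinate direction, call it $x_{\adimn}$, is ``free.''

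For any $(\alpha_{ij})_{1 \leq i<j \leq m} \in \Lambda$ supplied by Lemma~\ref{lemma55} I set the test function $f_{ij}(x) := x_{\adimn}\alpha_{ij}$. The triple-junction condition \eqref{eight1} is immediate from $\alpha_{ij} + \alpha_{jk} + \alpha_{ki} = 0$, and the volume-preservation condition \eqref{eight2} follows by Fubini from $\int_{\R} x_{\adimn}\gamma_{1}(x_{\adimn})\,dx_{\adimn} = 0$. The product rule for $L_{ij}$ (Remark~\ref{rk20}), together with $L_{ij}\alpha_{ij} = (\vnormt{A}^{2} + 1)\alpha_{ij}$ and $\mathcal{L}_{ij} x_{\adimn} = -x_{\adimn}$ (valid because $\Sigma_{ij} = \Sigma_{ij}' \times \R$), yields $L_{ij} f_{ij} = \vnormt{A}^{2} f_{ij}$, just as in Lemma~\ref{nolowdim}.

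Now I apply the volume-preserving second-variation Lemma~\ref{lemma28}, choosing via Lemma~\ref{lemma27} a vector field $X$ with $X|_{\Sigma_{ij}} = f_{ij}N_{ij}$. Minimality gives $d^{2}/ds^{2} \geq 0$; the $q_{ij}$ triple-junction boundary contribution averages to zero when $(\alpha_{ij})$ is replaced by its circular permutations and \eqref{zero1.5} is invoked; and Fubini in $x_{\adimn}$ (using $\int_{\R} x_{\adimn}^{2}\gamma_{1} = 1$, and that $\vnormt{A}$ and $N_{ij}$ do not depend on $x_{\adimn}$) reduces the inequality to the statement that $\sum_{i<j}\alpha_{ij}^{2}\int_{\Sigma_{ij}'}\vnormt{A}^{2}\gamma$ is bounded by $\epsilon$ times a Gaussian-moment remainder involving $\vnormt{z^{(i)}-z^{(j)}}$ and $\int_{\Sigma_{ij}}\vnormt{x-\overline{w}^{(i)}}^{2}\gamma$. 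The remainder is estimated with Lemma~\ref{zlem} and Lemma~\ref{lemma28.8}, exactly as in the final Cauchy--Schwarz step of the proof of Theorem~\ref{dimredthmp}. For $\epsilon = \epschoice$ the resulting bound is strict enough that, running it against the basis of $\Lambda$ whose components are all nonzero (Lemma~\ref{lemma55}), one forces $\vnormt{A} \equiv 0$ on each $\Sigma_{ij}$ for which $z^{(i)} \neq z^{(j)}$.

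The parallelism conclusion then follows from the first-variation equation Lemma~\ref{lemma24}: on any flat portion of $\Sigma_{ij}$ we have $H_{ij} = 0$, and the relation becomes $\lambda_{ij} = -\langle x, N_{ij}\rangle + \epsilon\langle x, z^{(i)}-z^{(j)}\rangle + \mathrm{const}$, an affine function of $x \in \Sigma_{ij}$; for this to be constant in $x$ the vector $-N_{ij} + \epsilon(z^{(i)}-z^{(j)})$ must be perpendicular to $\Sigma_{ij}$, i.e.\ a scalar multiple of $N_{ij}$, which gives $z^{(i)}-z^{(j)} \parallel N_{ij}$ (trivially when $z^{(i)} = z^{(j)}$). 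The main obstacle will be the step converting the integrated curvature bound into pointwise vanishing $\vnormt{A} = 0$; it requires exploiting the full $(m-1)$-dimensional basis of $\Lambda$ so that the slack from $\epsilon = \epschoice$ is absorbed uniformly in $(i,j)$, while simultaneously ensuring that the (dimension-free) quadratic penalty remainder does not overwhelm the curvature term.
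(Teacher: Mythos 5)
Your setup matches the paper's (dimension reduction via Theorem~\ref{dimredthmp}, test functions $g_{ij}=x_{\adimn}\alpha_{ij}$ with $L_{ij}g_{ij}=\vnormt{A}^{2}g_{ij}$, averaging over circular permutations to kill the $q_{ij}$ boundary term), but the step where you then diverge has a genuine gap. You take $X$ purely normal, apply Lemma~\ref{lemma28}, integrate out $x_{\adimn}$, and try to read off $\vnormt{A}\equiv 0$ from the resulting inequality after estimating the $\epsilon$-remainders via Lemmas~\ref{zlem} and~\ref{lemma28.8}. But after those estimates the inequality has the shape $\sum_{i<j}\alpha_{ij}^{2}\int_{\Sigma_{ij}'}\vnormt{A}^{2}\gamma\leq\epsilon\,C$ with $C>0$ built out of surface measures and moments --- an upper bound, not vanishing. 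In the $\epsilon=0$ case (Lemma~\ref{nolowdim}) the right side is literally zero, so $A\equiv 0$ follows; in the proof of Theorem~\ref{dimredthmp} the Cauchy--Schwarz step isolates a factor $(1-\epsilon\vnorm{z^{(i)}-z^{(j)}}-\cdots)\int f_{ij}^{2}\leq 0$ whose bracket is strictly positive, forcing $f_{ij}=0$. Here $\vnormt{A}^{2}$ sits where the constant $1$ sat there and is not bounded below, so the same bookkeeping does not close: $\int\vnormt{A}^{2}\gamma\leq\epsilon\,C$ simply does not give $\vnormt{A}\equiv 0$. Your parallelism step then needs flatness as input (to freeze $H_{ij}$ and $\langle x,N_{ij}\rangle$ in the first-variation identity), so the gap propagates.

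The paper reverses the order and uses no curvature estimate at all. After integrating out $x_{\adimn}$, the only place the tangential component $X-\langle X,N_{ij}\rangle N_{ij}$ of $X$ appears in the second-variation expression from Lemma~\ref{lemma28} is the term $\epsilon\alpha_{ij}\langle X,z^{(i)}-z^{(j)}\rangle$. At a minimizer the second variation of the total functional depends only on the normal components along the interfaces, so varying the tangential part of $X$ must leave the expression unchanged; this forces $(z^{(i)}-z^{(j)})^{T}=0$ on $\Sigma_{ij}$, i.e.\ $z^{(i)}-z^{(j)}\parallel N_{ij}$, as an exact identity with no $\epsilon$ slack to absorb. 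Flatness then follows at once whenever $z^{(i)}\neq z^{(j)}$, since $N_{ij}$ is then a constant multiple of a fixed vector. To make your route work you would need to adopt this tangential-invariance argument (or find a genuinely different route to flatness); the estimate you propose does not deliver it.
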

\begin{proof}[Proof of Lemma \ref{nolowdimp}]
Assume for now that $\adimn>m-1$ so that $\sdimn-\ell+1\geq\sdimn-(m-1)+1>0$ in Theorem \ref{dimredthm}.  Then after rotating $\Omega_{1},\ldots,\Omega_{m}$, we have
$$\Omega_{i}=\Omega_{i}'\times\R.$$
Let $\{\alpha_{ij}\}_{1\leq i<j\leq m}$ be constants guaranteed to exist by Lemma \ref{lemma55}.  Let $f_{ij}\colonequals\alpha_{ij}$ for all $1\leq i<j\leq m$. Define now a new function $g_{ij}\colonequals x_{\adimn}f_{ij}$.  Since $f_{ij}$ is only a function of the variables $x_{1},\ldots,x_{\sdimn}$, we have $\langle\nabla f,\nabla(x_{\adimn})\rangle=0$.  So, for any $1\leq i<j\leq m$, the product rule for $L_{ij}$ (Remark \ref{rk20}) gives
\begin{equation}\label{deq}
L_{ij}g_{ij}\stackrel{\eqref{zero4}}{=}x_{\adimn}L_{ij}f_{ij}+f_{ij}\mathcal{L}_{ij}x_{\adimn}+\langle\nabla f,\nabla(x_{\adimn})\rangle
=x_{\adimn}L_{ij}f_{ij}-f_{ij}x_{\adimn}.
\end{equation}
Here $\mathcal{L}_{ij}\colonequals\Delta-\langle x,\nabla\rangle$.  By \eqref{zero1} and Fubini's Theorem, if we first integrate with respect to $x_{\adimn}$, we see that $(g_{ij})_{1\leq i<j\leq m}$ is automatically Gaussian volume-preserving, so that \eqref{zero1} is zero for all $1\leq i<j\leq m$.  Then the second-variation condition applies, and we get, using Lemma \ref{lemma28}
\begin{flalign*}
&\frac{d^{2}}{ds^{2}}|_{s=0}\Big(\sum_{1\leq i<j\leq m}\int_{\Sigma_{ij}^{(s)}}\gamma_{\sdimn}(x)dx
+\epsilon\pens\Big)\\
&=\sum_{1\leq i<j\leq m}-\int_{\Sigma_{ij}}g_{ij}[L_{ij}g_{ij}-\epsilon\langle X,z^{(i)}-z^{(j)}\rangle]\gamma_{\sdimn}(x)dx\\
&\qquad\qquad\qquad\qquad\qquad\qquad+\epsilon\sqrt{2\pi}\pender.
\end{flalign*}

Using the definition of $g_{ij}$ this simplifies to

\begin{flalign*}
&\frac{d^{2}}{ds^{2}}|_{s=0}\Big(\sum_{1\leq i<j\leq m}\int_{\Sigma_{ij}^{(s)}}\gamma_{\sdimn}(x)dx
+\epsilon\pens\Big)\\
&=\sum_{1\leq i<j\leq m}\int_{\Sigma_{ij}}[-x_{\adimn}^{2}\alpha_{ij}^{2}\vnormt{A}^{2}+\epsilon\alpha_{ij}x_{\adimn}^{2}\langle X,z^{(i)}-z^{(j)}\rangle]\gamma_{\sdimn}(x)dx\\
&\qquad\qquad\qquad\qquad\qquad\qquad+\epsilon\sqrt{2\pi}\sum_{i=1}^{m}\vnormt{\sum_{j\in\{1,\ldots,m\}\colon j\neq i}\int_{\Sigma_{ij}}(x-\overline{w}^{(i)})x_{\adimn}\alpha_{ij}\gamma_{\adimn}(x)dx}^{2}.
\end{flalign*}
Or, after integrating with respect to $x_{\adimn}$,
\begin{flalign*}
&\frac{d^{2}}{ds^{2}}|_{s=0}\sum_{1\leq i<j\leq m}\int_{\Sigma_{ij}^{(s)}}\gamma_{\sdimn}(x)dx
+\epsilon\pens\\
&=\sum_{1\leq i<j\leq m}\int_{\Sigma_{ij}'}[-\alpha_{ij}^{2}\vnormt{A}^{2}+\epsilon\alpha_{ij}\langle X,z^{(i)}-z^{(j)}\rangle]\gamma_{\sdimn-1}(x)dx\\
&\qquad\qquad\qquad\qquad\qquad\qquad+\epsilon\sqrt{2\pi}\sum_{i=1}^{m}\Big(\sum_{j\in\{1,\ldots,m\}\colon j\neq i}\int_{\Sigma_{ij}'}\alpha_{ij}\gamma_{\sdimn}(x)dx\Big)^{2}.
\end{flalign*}
The quantity $\langle X,z^{(i)}-z^{(j)}\rangle$ is the only term in the above expression that can possibly depend on the tangential component of $X$ (i.e. $X-\langle X,N_{ij}\rangle N_{ij}$).  This term can be changed arbitrarily by adding a tangential component to $X$ while leaving the other terms the same.  Therefore, for all $1\leq i<j\leq m$, we must have $z^{(i)}-z^{(j)}$ parallel to $N_{ij}$.  So, if any $z^{(i)}-z^{(j)}$ is nonzero, $N_{ij}$ must be a constant multiple of $z^{(i)}-z^{(j)}$, so that $\Sigma_{ij}$ is flat
\end{proof}
\begin{remark}
The above argument crucially relies on Theorem \ref{dimredthmp}.  Without Theorem \ref{dimredthmp}, the vector field $X$ could not satisfy the properties used above, while preserving the Gaussian volumes of all of the sets.
\end{remark}
\begin{remark}\label{zrk}
It is possible to show that $z^{(i)}-z^{(j)}\neq0$ for all $1\leq i<j\leq m$ in Lemma \ref{nolowdimp}, albeit with a nonexplicit constant $\epsilon$.

To see this, suppose there exists a sequence $\epsilon_{1}>\epsilon_{2}>\cdots$ tending to zero such that, without loss of generality, $z^{(1)}-z^{(2)}\neq0$ for a minimizer of
$$\sum_{1\leq i<j\leq m}\int_{\Sigma_{ij}^{(s)}}\gamma_{\sdimn}(x)dx
+\epsilon_{k}\pen,\qquad\forall k\geq1.$$
Suppose the minimal sets are $\Omega_{1,k},\ldots,\Omega_{m,k}$.  By Theorem \ref{dimredthmp}, we may assume $\Omega_{1,k},\ldots,\Omega_{m,k}\subset\R^{m-1}$.  In fact, by Theorem \ref{dimredthmp}, we may assume that $\Omega_{1,k},\ldots,\Omega_{m,k}\subset\R^{m-2}$.  We now argue as in \cite[Proposition 1]{barchiesi16}.  By taking a subsequence (and relabeling the original sequence as this subsequence), $\exists$ $\Omega_{1},\ldots,\Omega_{m}\subset\R^{m-2}$ such that $1_{\Omega_{i,k}}$ converges to $1_{\Omega_{i}}$ as $k\to\infty$, for all $1\leq i\leq m$, in the local $L_{1}(\gamma_{m-2})$ sense.  But this violates Conjecture \ref{conj0}, i.e. the (uniqueness part of the) main result of \cite{milman18b}.  The sets of minimal surface area cannot be described as subsets of $\R^{m-2}$.
\end{remark}

In the cases $m\leq 4$, we can upgrade Remark \ref{zrk} in order to get an explicit bound on $\epsilon$.  Cases of larger $m$ seem increasingly difficult at present.

\begin{lemma}[\embolden{Flatness, Version 2}]\label{flat2}
Suppose $\Omega_{1},\ldots\Omega_{m}\subset\R^{\adimn}$ minimize Problem \ref{prob1p} with $\epsilon\colonequals\epschoice$.     If $1\leq m\leq 5$, then for all $1\leq i\leq m$, $\partial\Omega_{i}$ consists of a union of relatively open subsets of hyperplanes.
\end{lemma}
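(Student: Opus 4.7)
The plan is as follows. By Lemma \ref{nolowdimp}, on each $\Sigma_{ij}$ the fixed vector $z^{(i)} - z^{(j)}$ is parallel to the unit normal $N_{ij}(x)$ at every point; so either $\Sigma_{ij}$ is already planar (in which case $N_{ij}$ is constant on each connected component), or $z^{(i)} = z^{(j)}$. It therefore suffices to rule out the second alternative for every pair $i \neq j$ with $\Sigma_{ij} \neq \emptyset$, given the explicit $\epsilon = \epschoice$ and $m \leq 5$.

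Suppose for contradiction that $z^{(i_0)} = z^{(j_0)}$ for some $i_0 < j_0$. The first variation identity from Lemma \ref{lemma24} then reduces on $\Sigma_{i_0 j_0}$ to $H_{i_0 j_0}(x) - \langle x, N_{i_0 j_0}(x)\rangle = \mathrm{const}$, since the $\epsilon\langle x, z^{(i_0)} - z^{(j_0)}\rangle$ term vanishes and the $w$-dependence is $x$-independent. Lemma \ref{lemma45} applies with $z := z^{(i_0)} - z^{(j_0)} = 0$, giving the pointwise identity $L_{i_0 j_0} \langle v, N_{i_0 j_0}\rangle = \langle v, N_{i_0 j_0}\rangle$ for every $v \in \R^{\adimn}$. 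I would choose $v$ so that $\langle v, N_{i_0 j_0}\rangle$ is nontrivial on $\Sigma_{i_0 j_0}$ (possible since non-flatness of $\Sigma_{i_0 j_0}$ forces $N_{i_0 j_0}$ to take more than one direction), then use Lemma \ref{lemma27} to construct a volume-preserving variation whose normal components $(f_{ij})$ satisfy the junction constraint \eqref{eight1}, the volume constraint \eqref{eight2}, and equal a smoothly truncated $\phi\langle v, N_{i_0 j_0}\rangle$ on $\Sigma_{i_0 j_0}$. Apply Lemma \ref{lemma28}; after passing $\phi \to 1$ via Lemma \ref{lemma97}, the dominant contribution is
\[
-\int_{\Sigma_{i_0 j_0}} \langle v, N_{i_0 j_0}(x)\rangle^{2}\, \gamma_{\sdimn}(x)\, dx \;+\; O\bigl(\epsilon \cdot \vnormt{f}_{L^{2}(\gamma_{\sdimn})}^{2}\bigr),
\]
where the $O(\epsilon)$ corrections (namely the penalty contribution visible in \eqref{nine5} and the triple-junction boundary contribution from Lemma \ref{lemma29.9}) are bounded using Lemma \ref{lemma28.8} to control second moments $\int_{\Sigma_{ij}} \vnormt{x - \overline{w}^{(i)}}^{2}\gamma_{\sdimn}(x)dx$ and Lemma \ref{zlem} to control $\vnormt{z^{(i)}} \leq 1/\sqrt{2\pi}$. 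With the explicit $\epsilon = \epschoice$, these corrections are strictly dominated by the negative leading term, contradicting the minimality of $\Omega_{1}, \ldots, \Omega_{m}$.

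The main obstacle will be the triple-junction boundary contribution from Lemma \ref{lemma29.9}, which demands matching $\nabla_{\nu_{ij}} f_{ij} + q_{ij} f_{ij}$ across the three pieces meeting at each $(\sdimn-2)$-dimensional stratum $M_{\sdimn-2}$ of Assumption \ref{as1}. One must carefully extend the chosen $f_{i_0 j_0} = \phi \langle v, N_{i_0 j_0}\rangle$ to the adjacent flat pieces $\Sigma_{ij}$ so that this matching condition \eqref{four75} holds and the volume-preservation constraint \eqref{eight2} is simultaneously satisfied, without introducing a boundary term that swamps the negative eigenfunction contribution. The restriction $m \leq 5$ enters here: for $m \leq 4$ the proven Conjecture \ref{conj0} from Section \ref{secsim} rigidly classifies the possible triple-junction combinatorics, and the matching for a normal-linear eigenfunction follows essentially automatically from the identity $N_{ij} + N_{jk} + N_{ki} = 0$ at $120^{\circ}$ junctions; for $m = 5$ the combinatorial enumeration of local junction configurations is still small enough to handle case by case, using Lemma \ref{lemma55} to parametrize admissible constants $(\alpha_{ij})$ on the flat pieces. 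Beyond $m = 5$ the number of admissible configurations proliferates and the argument must be supplemented by Conjecture \ref{conj5}, which is precisely why the unconditional range in Theorem \ref{mainthm} is $m \leq 4$.
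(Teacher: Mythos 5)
Your proposed strategy diverges from the paper's and has real gaps.

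\textbf{The case split is wrong.} You argue that $z^{(i_0)}=z^{(j_0)}$ for some pair always leads to a contradiction. In fact it need not: if, say, two disjoint pairs satisfy $z^{(1)}=z^{(2)}$ and $z^{(3)}=z^{(4)}$ (with $m=4$), the identity $\sum z^{(i)}=0$ forces the span of $\{z^{(i)}\}$ to be one-dimensional, and Theorem \ref{dimredthmp} together with Lemma \ref{lemma80p} gives $\Omega_{i}=\Omega_{i}'\times\R^{\sdimn}$ with $\Omega_{i}'\subset\R$, so the boundaries are trivially flat and no contradiction is to be derived. The paper's proof proceeds precisely by this trichotomy — all $z$-differences nonzero gives flatness directly by Lemma \ref{nolowdimp}; two (or more) coincidences give dimension reduction; a single coincidence (for $m=4$: WLOG $z^{(1)}=z^{(2)}$ alone) is the genuinely hard case, and only that case is eliminated by a contradiction.

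\textbf{Your variation is not well specified and would trivialize.} You set $f_{i_0 j_0}=\phi\langle v, N_{i_0 j_0}\rangle$ on a single interface but do not say what the other $f_{ij}$ are. If you use $f_{ij}=\langle v,N_{ij}\rangle$ globally (the natural closure making the junction constraint \eqref{eight1} automatic from $N_{ij}+N_{jk}+N_{ki}=0$), the volume constraint \eqref{eight2} forces $\langle v,z^{(i)}\rangle=0$ for all $i$, i.e.\ $v$ in the kernel of $T$; but then Theorem \ref{dimredthmp} shows the minimizer is cylindrical in the $v$-direction, so $\langle v,N_{ij}\rangle\equiv 0$, your test function vanishes, and the second variation gives $0\leq 0$ — no contradiction. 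The paper escapes this trap by instead using $f_{ij}=\alpha_{ij}+\langle v,N_{ij}\rangle$ with $\alpha_{ij}$ the constant solutions of \eqref{nine5.3} supplied by Lemma \ref{lemma55}, a strictly larger admissible family.

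\textbf{The triple-junction term is not $O(\epsilon)$.} You lump the boundary contribution from Lemma \ref{lemma29.9} together with the penalty contribution as "$O(\epsilon)$ corrections," but the $q_{ij}$ quantity is a purely geometric second-fundamental-form term along $M_{\sdimn-2}$ and carries no factor of $\epsilon$. It has to be handled exactly, and the paper does so: after integration by parts and the cancellation from Remark \ref{rk30}/\eqref{zero7p}, the boundary term reduces to $\sum\int_{\redS_{ij}\cap\redS_{jk}\cap\redS_{ki}}(\alpha_{ij}^{2}q_{ij}+\alpha_{jk}^{2}q_{jk}+\alpha_{ki}^{2}q_{ki})\gamma_{\sdimn}(x)dx$, and the paper picks $\alpha_{ij}$ of the form $1_{\{i=3\}}-1_{\{j=3\}}$ or $1_{\{i=4\}}-1_{\{j=4\}}$ (using that $\Sigma_{ij}$ is flat for $(i,j)\neq(1,2)$ and that $q_{23}=-q_{31}$, $q_{24}=-q_{41}$) to make it vanish identically, producing a four-dimensional space of test vector fields; constraint counting against the $m-1$ volume constraints then yields the contradiction. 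Your claim that the matching condition \eqref{four75} is "essentially automatic from $N_{ij}+N_{jk}+N_{ki}=0$" confuses the algebraic junction constraint on $f_{ij}$ with the analytic matching condition on $\nabla_{\nu_{ij}}f_{ij}+q_{ij}f_{ij}$; the latter holds for $\langle v,N_{ij}\rangle$ by Lemma \ref{lemma45} and Remark \ref{rk30}, not by the identity you cite. Finally, your confident sketch for $m=5$ is not supported: the paper explicitly declines to attempt $m=5$, and the feasibility of choosing $\alpha_{ij}$ that annihilate the quadratic form in $q_{ij}$ is precisely what breaks down there.
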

\begin{proof}
From Lemma \ref{lemma55}, there exists an $(m-1)$-dimensional space of $X$ such that for each $1\leq i<j\leq m$, $\langle X,N_{ij}\rangle\equalscolon\alpha_{ij}$ is constant on $\Sigma_{ij}$.  Let $v\in\R^{\adimn}$ and for any $1\leq i<j\leq m$ consider the functions
\begin{equation}\label{zero5.5p}
f_{ij}\colonequals\alpha_{ij}+\langle v,N_{ij}\rangle.
\end{equation}
More specifically, consider the second variation of this function from Lemma \ref{lemma28}.  This quantity is equal to
\begin{flalign*}
&\frac{d^{2}}{ds^{2}}|_{s=0}\Big(\sum_{1\leq i<j\leq m}\int_{\Sigma_{ij}^{(s)}}\gamma_{\sdimn}(x)dx+\epsilon\pens\Big)\\
&\qquad
=\sum_{1\leq i<j\leq m}-\int_{\Sigma_{ij}}(\alpha_{ij}+\langle v,N_{ij}\rangle)[L_{ij}(\alpha_{ij}+\langle v,N_{ij}\rangle)-\epsilon\langle X,z^{(i)}-z^{(j)}\rangle]\gamma_{\sdimn}(x)dx\\
&\qquad\qquad\qquad+\epsilon\sqrt{2\pi}\pender\\
&\qquad\qquad\qquad+\sum_{1\leq i<j<k\leq m}\int_{\redS_{ij}\cap\redS_{jk}\cap \redS_{ki}}
\Big([\nabla_{\nu_{ij}}f_{ij}+q_{ij}f_{ij}]f_{ij}+[\nabla_{\nu_{jk}}f_{jk}+q_{jk}f_{jk}]f_{jk}\\
 &\qquad\qquad\qquad\qquad\qquad\qquad\qquad\qquad\qquad+[\nabla_{\nu_{ki}}f_{ki}+q_{ki}f_{ki}]f_{ki}\Big)\gamma_{\sdimn}(x)dx.
 \end{flalign*}
From Lemma \ref{lemma45}, $L_{ij}\langle v,N_{ij}\rangle=\langle v,N_{ij}\rangle-\epsilon\langle v,z^{(i)}-z^{(j)}\rangle+\epsilon\langle v,N_{ij}\rangle\langle N_{ij},z^{(i)}-z^{(j)}\rangle$ and by Remark \ref{rk30}, $\forall$ $1\leq i<j<k\leq m$, $\forall$ $x\in\redb\Sigma_{ij}\cap\redb\Sigma_{jk}\cap\redb\Sigma_{ki}$,
 \begin{equation}\label{zero7p}
 \nabla_{\nu_{ij}}\langle v,N_{ij}\rangle+q_{ij}\langle v,N_{ij}\rangle
 =\nabla_{\nu_{jk}}\langle v,N_{jk}\rangle+q_{jk}\langle v,N_{jk}\rangle
 =\nabla_{\nu_{ki}}\langle v,N_{ki}\rangle+q_{ki}\langle v,N_{ki}\rangle.
 \end{equation}
 Therefore, using also \eqref{zero4}, $\nabla \alpha_{ij}=0$ for all $1\leq i<j\leq m$, and $f_{ij}+f_{jk}+f_{ki}=\langle X,N_{ij}+N_{jk}+N_{ki}\rangle=0$, since $N_{ij}+N_{jk}+N_{ki}=0$ by Lemma \ref{lemma52.6},
 \begin{equation}\label{zero8p}
 \begin{aligned}
&\frac{d^{2}}{ds^{2}}|_{s=0}\Big(\sum_{1\leq i<j\leq m}\int_{\Sigma_{ij}^{(s)}}\gamma_{\sdimn}(x)dx+\epsilon\pens\Big)\\
&\qquad
=\sum_{1\leq i<j\leq m}-\int_{\Sigma_{ij}}(\alpha_{ij}+\langle v,N_{ij}\rangle)\Big(\alpha_{ij}(\vnormt{A}^{2}+1)+\langle v,N_{ij}\rangle-\epsilon\langle v,z^{(i)}-z^{(j)}\rangle\\
&\qquad\qquad\qquad\qquad\qquad\qquad\qquad\quad+\epsilon\langle v,N_{ij}\rangle\langle N_{ij},z^{(i)}-z^{(j)}\rangle-\epsilon\langle X,z^{(i)}-z^{(j)}\rangle\Big)\gamma_{\sdimn}(x)dx\\
&\qquad\qquad\qquad+\epsilon\sqrt{2\pi}\pender\\
&\qquad\qquad\qquad+\sum_{1\leq i<j<k\leq m}\int_{\redS_{ij}\cap\redS_{jk}\cap \redS_{ki}}
\Big([q_{ij}\alpha_{ij}]f_{ij}+[q_{jk}\alpha_{jk}]f_{jk}+[q_{ki}\alpha_{ki}]f_{ki}\Big)\gamma_{\sdimn}(x)dx.
 \end{aligned}
 \end{equation}
 Integrating by parts, (which is justified by Lemmas \ref{lemma32.5} and \ref{lemma61}),
 \begin{flalign*}
 &\sum_{1\leq i<j\leq m}\int_{\Sigma_{ij}}\alpha_{ij}\Big(\langle v,N_{ij}\rangle-\epsilon\langle v,z^{(i)}-z^{(j)}\rangle+\epsilon\langle v,N_{ij}\rangle\langle N_{ij},z^{(i)}-z^{(j)}\rangle\Big)\gamma_{\sdimn}(x)dx\\
 & \stackrel{\eqref{three0p}}{=}\sum_{1\leq i<j\leq m}\int_{\Sigma_{ij}}\alpha_{ij}L_{ij}\langle v,N_{ij}\rangle\gamma_{\sdimn}(x)dx\\
 &=\sum_{1\leq i<j\leq m}\int_{\Sigma_{ij}}\alpha_{ij}\mathrm{div}_{\tau}[\gamma_{\sdimn}(x)\nabla\langle v,N_{ij}\rangle]+\alpha_{ij}\langle v,N_{ij}\rangle(\vnormt{A}^{2}+1)\gamma_{\sdimn}(x)dx\\
 &=\sum_{1\leq i<j<k\leq m}\int_{\redS_{ij}\cap\redS_{jk}\cap \redS_{ki}}
\Big(\alpha_{ij}\nabla_{\nu_{ij}}\langle v,N_{ij}\rangle
+\alpha_{jk}\nabla_{\nu_{jk}}\langle v,N_{jk}\rangle
+\alpha_{ki}\nabla_{\nu_{ki}}\langle v,N_{ki}\rangle\Big)\gamma_{\sdimn}(x)dx\\
&\qquad+ \sum_{1\leq i<j\leq m}\int_{\Sigma_{ij}}\alpha_{ij}\langle v,N_{ij}\rangle(\vnormt{A}^{2}+1)\gamma_{\sdimn}(x)dx.
 \end{flalign*}
Above we denoted $\mathrm{div}_{\tau}$ as the (tangential) divergence on $\Sigma_{ij}$.  Subtracting the last term from the first term, we get
\begin{flalign*}
&\sum_{1\leq i<j<k\leq m}\int_{\redS_{ij}\cap\redS_{jk}\cap \redS_{ki}}
\alpha_{ij}\nabla_{\nu_{ij}}\langle v,N_{ij}\rangle
+\alpha_{jk}\nabla_{\nu_{jk}}\langle v,N_{jk}\rangle
+\alpha_{ki}\nabla_{\nu_{ki}}\langle v,N_{ki}\rangle\gamma_{\sdimn}(x)dx\\
&\quad=-\sum_{1\leq i<j\leq m}\int_{\Sigma_{ij}}\alpha_{ij}\Big(\langle v,N_{ij}\rangle\vnormt{A}^{2}+\epsilon\langle v,z^{(i)}-z^{(j)}\rangle-\epsilon\langle v,N_{ij}\rangle\langle N_{ij},z^{(i)}-z^{(j)}\rangle\Big)\gamma_{\sdimn}(x)dx.
 \end{flalign*}
 Substituting back into \eqref{zero8p}, and then canceling terms using \eqref{zero7p},

\begin{flalign*}
&\frac{d^{2}}{ds^{2}}|_{s=0}\Big(\sum_{1\leq i<j\leq m}\int_{\Sigma_{ij}^{(s)}}\gamma_{\sdimn}(x)dx+\epsilon\pens\Big)\\
&=\sum_{1\leq i<j\leq m}-\int_{\Sigma_{ij}}\Big((\alpha_{ij}+\langle v,N_{ij}\rangle)^{2}+\vnormt{A}^{2}\alpha_{ij}^{2}\Big)\gamma_{\sdimn}(x)dx\\
&\qquad+\epsilon\sum_{1\leq i<j\leq m}\int_{\Sigma_{ij}}
\alpha_{ij}\Big(
\langle v,z^{(i)}-z^{(j)}\rangle
-\langle v,N_{ij}\rangle\langle N_{ij},z^{(i)}-z^{(j)}\rangle
\Big)
\gamma_{\sdimn}(x)dx\\
&\qquad
+\epsilon\sum_{1\leq i<j\leq m}\int_{\Sigma_{ij}}
(\alpha_{ij}+\langle v,N_{ij}\rangle)\Big(
\langle v,z^{(i)}-z^{(j)}\rangle
-\langle v,N_{ij}\rangle\langle N_{ij},z^{(i)}-z^{(j)}\rangle\\
&\qquad\qquad\qquad\qquad\qquad\qquad\qquad\qquad\qquad
+\langle X,z^{(i)}-z^{(j)}\rangle
\Big)
\gamma_{\sdimn}(x)dx\\
&\qquad+\epsilon\sqrt{2\pi}\pender\\
&\qquad+\sum_{1\leq i<j<k\leq m}\int_{\redS_{ij}\cap\redS_{jk}\cap \redS_{ki}}
\Big(\alpha_{ij}[f_{ij}q_{ij}+\nabla_{\nu_{ij}}\langle v,N_{ij}\rangle]+\alpha_{jk}[f_{jk}q_{jk}+\nabla_{\nu_{jk}}\langle v,N_{jk}\rangle]\\
 &\qquad\qquad\qquad\qquad\qquad\qquad\qquad\qquad\qquad+\alpha_{ki}[f_{ki}q_{ki}+\nabla_{\nu_{ki}}\langle v,N_{ki}\rangle]\Big)\gamma_{\sdimn}(x)dx\\
 &=\sum_{1\leq i<j\leq m}-\int_{\Sigma_{ij}}\Big((\alpha_{ij}+\langle v,N_{ij}\rangle)^{2}+\vnormt{A}^{2}\alpha_{ij}^{2}\Big)\gamma_{\sdimn}(x)dx\\
&\qquad+\epsilon\sum_{1\leq i<j\leq m}\int_{\Sigma_{ij}}
(\alpha_{ij}+\langle v,N_{ij}\rangle)\Big(
\langle v,z^{(i)}-z^{(j)}\rangle
-\langle v,N_{ij}\rangle\langle N_{ij},z^{(i)}-z^{(j)}\rangle
\Big)\gamma_{\sdimn}(x)dx\\
&\qquad+\epsilon\sum_{1\leq i<j\leq m}\int_{\Sigma_{ij}}(\alpha_{ij}+\langle v,N_{ij}\rangle)\langle X,z^{(i)}-z^{(j)}\rangle\gamma_{\sdimn}(x)dx\\
&\qquad+\epsilon\sqrt{2\pi}\pender\\
&\qquad+\sum_{1\leq i<j<k\leq m}\int_{\redS_{ij}\cap\redS_{jk}\cap \redS_{ki}}
\Big(\alpha_{ij}^{2}q_{ij}+\alpha_{jk}^{2}q_{jk}+\alpha_{ki}^{2}q_{ki}\Big)\gamma_{\sdimn}(x)dx.
 \end{flalign*}

 From Lemma \ref{nolowdimp}, $\langle v-\langle v,N_{ij}\rangle, z^{(i)}-z^{(j)}\rangle=0$ for all $1\leq i<j\leq m$.  Since $X=\alpha N_{ij}+v$, we then rewrite the middle term and get
 \begin{flalign*}
 &\frac{d^{2}}{ds^{2}}|_{s=0}\Big(\sum_{1\leq i<j\leq m}\int_{\Sigma_{ij}^{(s)}}\gamma_{\sdimn}(x)dx+\epsilon\pens\Big)\\
&=\sum_{1\leq i<j\leq m}-\int_{\Sigma_{ij}}\Big((\alpha_{ij}+\langle v,N_{ij}\rangle)^{2}(1-\epsilon\vnormt{z^{(i)}-z^{(j)}})+\vnormt{A}^{2}\alpha_{ij}^{2}\Big)\gamma_{\sdimn}(x)dx\\
&\qquad+\epsilon\sqrt{2\pi}\pender\\
&\qquad+\sum_{1\leq i<j<k\leq m}\int_{\redS_{ij}\cap\redS_{jk}\cap \redS_{ki}}
\Big(\alpha_{ij}^{2}q_{ij}+\alpha_{jk}^{2}q_{jk}+\alpha_{ki}^{2}q_{ki}\Big)\gamma_{\sdimn}(x)dx.
 \end{flalign*}

 For each $1\leq i\leq m$, by duality of the $\ell_{2}$ norm, $\exists$ $\omega^{(i)}\in\R^{\adimn}$ with $\vnormtf{\omega^{(i)}}=1$ such that
\begin{flalign*}
&\vnormtf{\sum_{j\in\{1,\ldots,m\}\colon j\neq i}\int_{\Sigma_{ij}}(x-\overline{w}^{(i)})f_{ij}\gamma_{\adimn}(x)dx}
=\Big\langle\sum_{j\in\{1,\ldots,m\}\colon j\neq i}\int_{\Sigma_{ij}}(x-\overline{w}^{(i)})f_{ij}\gamma_{\adimn}(x)dx,\omega^{(i)}\Big\rangle\\
&\qquad\qquad\qquad\qquad\qquad\qquad=\sum_{j\in\{1,\ldots,m\}\colon j\neq i}\int_{\Sigma_{ij}}\langle(x-\overline{w}^{(i)}),\omega^{(i)}\rangle f_{ij}\gamma_{\adimn}(x)dx.
\end{flalign*}
So, we apply the Cauchy-Schwarz inequality to the middle term to get
\begin{flalign*}
 &\frac{d^{2}}{ds^{2}}|_{s=0}\Big(\sum_{1\leq i<j\leq m}\int_{\Sigma_{ij}^{(s)}}\gamma_{\sdimn}(x)dx+\epsilon\pens\Big)\\
&=\sum_{1\leq i<j\leq m}-\int_{\Sigma_{ij}}\Big((\alpha_{ij}+\langle v,N_{ij}\rangle)^{2}\Big[1-\epsilon\vnormt{z^{(i)}-z^{(j)}}-\epsilon\sqrt{2\pi}\int_{\Sigma_{ij}}\langle x-\overline{w}^{(i)},\omega^{(i)}\rangle\gamma_{\sdimn}(x)dx\Big]\\
&\qquad\qquad\qquad\qquad\qquad\qquad\qquad\qquad\qquad+\vnormt{A}^{2}\alpha_{ij}^{2}\Big)\gamma_{\sdimn}(x)dx\\
&\qquad+\sum_{1\leq i<j<k\leq m}\int_{\redS_{ij}\cap\redS_{jk}\cap \redS_{ki}}
\Big(\alpha_{ij}^{2}q_{ij}+\alpha_{jk}^{2}q_{jk}+\alpha_{ki}^{2}q_{ki}\Big)\gamma_{\sdimn}(x)dx.
 \end{flalign*}
By Lemma \ref{zlem} and the second part of Lemma \ref{lemma28.8}, if $\epsilon<\epschoice$, then the first term is positive.  So, it remains to find $\alpha_{ij}$ values such that the last term is nonpositive.

In the case $m=3$, there is nothing to prove.  If (without loss of generality) $z^{(1)}-z^{(2)}=0$, then Theorem \ref{dimredthmp} and Lemma \ref{lemma80p} imply that $\Omega_{1},\ldots,\Omega_{m}\subset\R$.  On the other hand, $z^{(i)}-z^{(j)}\neq0$ for all $1\leq i<j\leq m$, then Lemma \ref{nolowdimp} implies that each $\Sigma_{ij}$ is flat.  So, in either case there is nothing to prove.

We therefore consider $m=4$.  If (without loss of generality) $z^{(1)}-z^{(2)}=0$ and $z^{(3)}-z^{(4)}=0$, then Theorem \ref{dimredthmp} and Lemma \ref{lemma80p} imply that $\Omega_{1},\ldots,\Omega_{m}\subset\R$.  If, $z^{(i)}-z^{(j)}\neq0$ for all $1\leq i<j\leq m$, then Lemma \ref{nolowdimp} implies that each $\Sigma_{ij}$ is flat.  So, the only remaining case to consider is that (without loss of generality) $z^{(1)}-z^{(2)}=0$ and $z^{(i)}-z^{(j)}\neq0$ for every other $1\leq i<j\leq m$.  Lemma \ref{nolowdimp} implies that each $\Sigma_{ij}$ is flat for every $1\leq i<j\leq m$ except $i=1,j=2$.  Looking back at the definition of $q_{ij}$ before \eqref{zero1.5}, we see that most terms of the following sum are zero except for four of them:
\begin{flalign*}
&\sum_{1\leq i<j<k\leq m}\int_{\redS_{ij}\cap\redS_{jk}\cap \redS_{ki}}
\Big(\alpha_{ij}^{2}q_{ij}+\alpha_{jk}^{2}q_{jk}+\alpha_{ki}^{2}q_{ki}\Big)\gamma_{\sdimn}(x)dx\\
&\qquad=\int_{\redS_{12}\cap\redS_{23}\cap \redS_{31}}\Big(\alpha_{23}^{2}q_{23}+\alpha_{31}^{2}q_{31}\Big)\gamma_{\sdimn}(x)dx\\
&\qquad\qquad\qquad+\int_{\redS_{12}\cap\redS_{24}\cap \redS_{41}}\Big(\alpha_{24}^{2}q_{24}+\alpha_{41}^{2}q_{41}\Big)\gamma_{\sdimn}(x)dx.
\end{flalign*}
Moreover, since $N_{ij}=-N_{ji}$, we have $q_{23}=-q_{31}$ and $q_{24}=-q_{41}$ so that
\begin{flalign*}
&\sum_{1\leq i<j<k\leq m}\int_{\redS_{ij}\cap\redS_{jk}\cap \redS_{ki}}
\Big(\alpha_{ij}^{2}q_{ij}+\alpha_{jk}^{2}q_{jk}+\alpha_{ki}^{2}q_{ki}\Big)\gamma_{\sdimn}(x)dx\\
&\quad=\int_{\redS_{12}\cap\redS_{23}\cap \redS_{31}}\Big(\alpha_{23}^{2}-\alpha_{31}^{2}\Big)q_{23}\gamma_{\sdimn}(x)dx
+\int_{\redS_{12}\cap\redS_{24}\cap \redS_{41}}\Big(\alpha_{24}^{2}-\alpha_{41}^{2}\Big)q_{24}\gamma_{\sdimn}(x)dx.
\end{flalign*}

So, choosing $\alpha_{ij}$ such that $\alpha_{ij}=1_{\{i=3\}}-1_{\{j=3\}}$ or $\alpha_{ij}=1_{\{i=4\}}-1_{\{j=4\}}$, or any linear combination of these two, the above quantity is zero.

In summary, there exists a $4$-dimensional space of vector fields of the form $X=v+\alpha_{ij}N_{ij}$ (two dimensions of vectors $v$ since $\Omega_{1},\ldots,\Omega_{m}\subset\R^{2}$, and two dimensions of the $\alpha_{ij}$ terms just described) such that
$$\frac{d^{2}}{ds^{2}}|_{s=0}\Big(\sum_{1\leq i<j\leq m}\int_{\Sigma_{ij}^{(s)}}\gamma_{\sdimn}(x)dx+\epsilon\pens\Big)<0,$$
if the Gaussian volumes of the sets are preserved by the vector field.  Since this space is $4$-dimensional, there must exist a nonzero vector field $X$ such that
 $$\frac{d^{2}}{ds^{2}}|_{s=0}\Big(\sum_{1\leq i<j\leq m}\int_{\Sigma_{ij}^{(s)}}\gamma_{\sdimn}(x)dx+\epsilon\pens\Big)<0,$$
and $\frac{d}{ds}|_{s=0}\gamma_{\adimn}(\Omega_{i}^{(s)})=0$ for all $1\leq i\leq 3$ (and also for $i=4$ since $\cup_{i=1}^{4}\Omega_{i}=\R^{\adimn}$.)  This violates the minimality of the sets, achieving a contradiction, and concluding the proof when $m=4$.

The case $m=5$ seems more difficult since there are more possibilities for the sign of the quadratic form in $\alpha_{ij}^{2}$ terms, so we do not attempt it here.
 \end{proof}

\section{A General Second Variation Formula}\label{secvar}

The main technical ingredient to investigate noise stability is a general second variation formula for quadratic integral functionals, Theorem \ref{thm4} below.  We investigate this formula in the next few sections.

Let $\Omega\subset\R^{\adimn}$ be a $C^{\infty}$ manifold with boundary.  Suppose $\Omega$ has reduced boundary $\redA$, and let $N\colon\redA\to S^{n-1}$ denote the unit exterior normal to $\redA$.  Let $X\colon\R^{\adimn}\to\R^{\adimn}$ be a vector field.
Let $\mathrm{div}$ denote the divergence of a vector field.  We write $X$ in its components as $X=(X_{1},\ldots,X_{\adimn})$, so that $\mathrm{div}X=\sum_{i=1}^{\adimn}\frac{\partial}{\partial x_{i}}X_{i}$.  Let $\Psi\colon\R^{\adimn}\times(-1,1)\to\R^{\adimn}$ such that $\Psi(x,0)=x$ and such that $\frac{d}{ds}|_{s=0}\Psi(x,s)=X(\Psi(x,s))$ for all $x\in\R^{\adimn},s\in(-1,1)$.  For any $s\in(-1,1)$, let $\Omega^{(s)}=\Psi(\Omega,s)$.    Note that $\Omega^{(0)}=\Omega$.  Let $G\colon\R^{\adimn}\times\R^{\adimn}\to\R$ be a Schwartz function.
For any $x\in\R^{\adimn}$ and any $s\in(-1,1)$, define
\begin{equation}\label{two9c}
V(x,s)\colonequals\int_{\Omega^{(s)}}G(x,y)dy.
\end{equation}

Below, when appropriate, we let $dx$ denote Lebesgue measure, restricted to a surface $\redA\subset\R^{\adimn}$.

\begin{theorem}[\embolden{General Second Variation Formula}, {\cite[Theorem 2.6]{chokski07}}; also {\cite[Theorem 1.10]{heilman15}}]\label{thm4}
Let $F(\Omega)\colonequals \int_{\R^{\adimn}}\int_{\R^{\adimn}} 1_{\Omega}(x)G(x,y)1_{\Omega}(y)dxdy$.  Then

\begin{flalign*}
\frac{1}{2}\frac{d^{2}}{ds^{2}}F(\Omega^{(s)})|_{s=0}
&=\int_{\redA}\int_{\redA}G(x,y)\langle X(x),N(x)\rangle\langle X(y),N(y)\rangle dxdy\\
&\qquad+\int_{\redA}\mathrm{div}(V(x,0)X(x))\langle X(x),N(x)\rangle dx.
\end{flalign*}

\end{theorem}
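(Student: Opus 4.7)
The plan is to derive the formula via a flow-based first-variation identity, iterated twice, and then to convert all bulk integrals on $\Omega$ to boundary integrals on $\redA$ using the divergence theorem. First, pulling back along the flow via $x = \Psi(w,s)$ for $w \in \Omega$, and using Jacobi's formula together with $\partial_s\Psi = X\circ\Psi$ to obtain $\partial_s J\Psi(w,s) = J\Psi(w,s)\cdot(\mathrm{div} X)(\Psi(w,s))$, one derives, for any smooth $s$-dependent integrand $f(\cdot,s)$, the identity
\begin{equation}\label{planeqone}
\frac{d}{ds}\int_{\Omega^{(s)}} f(x,s)\,dx = \int_{\Omega^{(s)}}\partial_s f(x,s)\,dx + \int_{\Omega^{(s)}}\mathrm{div}\bigl(f(x,s) X(x)\bigr)\,dx,
\end{equation}
valid at every $s \in (-1,1)$. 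This is the workhorse of the entire proof; all subsequent steps are iterated applications of \eqref{planeqone}.

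Writing $F(\Omega^{(s)}) = \int_{\Omega^{(s)}} V(x,s)\,dx$ and applying \eqref{planeqone} with $f = V$ gives $\phi'(s) := \tfrac{d}{ds}F(\Omega^{(s)}) = \int_{\Omega^{(s)}}(\partial_s V + \mathrm{div}(V X))\,dx$. A second application of \eqref{planeqone} to the integrand $\partial_s V + \mathrm{div}(V X)$, noting that $\partial_s$ commutes with the spatial divergence and that $X$ is $s$-independent, produces
\begin{equation}\label{planeqtwo}
\phi''(0) = \int_\Omega \partial_s^2 V(x,0)\,dx + 2\int_\Omega \mathrm{div}\bigl(\partial_s V(x,0)\, X(x)\bigr)\,dx + \int_\Omega \mathrm{div}\bigl(\mathrm{div}(V(x,0)\, X(x))\, X(x)\bigr)\,dx.
\end{equation}
The quantities $\partial_s V(x,0)$ and $\partial_s^2 V(x,0)$ are themselves obtained by applying \eqref{planeqone} to $V(x,s) = \int_{\Omega^{(s)}} G(x,y)\,dy$, whose integrand has no $s$-dependence: one finds $\partial_s V(x,0) = \int_{\redA} G(x,y) f(y)\,dy$ and, after a further iteration and the divergence theorem in the $y$-variable, $\partial_s^2 V(x,0) = \int_{\redA}\mathrm{div}_y(G(x,y) X(y))\, f(y)\,dy$, where $f(y) := \langle X(y), N(y)\rangle$.

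Finally, one invokes the divergence theorem on $\Omega$ in the second and third bulk integrals of \eqref{planeqtwo}, which gives $2\int_{\redA}\int_{\redA} G(x,y) f(x) f(y)\,dx\,dy$ and $\int_{\redA}\mathrm{div}(V(x,0) X(x)) f(x)\,dx$ respectively. For the first bulk integral in \eqref{planeqtwo}, one swaps the order of integration via Fubini, uses the harmless reduction to symmetric $G$ (since $F$ depends only on the symmetric part of $G$) to identify $\int_\Omega G(x,y)\,dx$ with $V(y,0)$, and thereby obtains a further copy of $\int_{\redA}\mathrm{div}(V(x,0) X(x)) f(x)\,dx$. Summing the three boundary contributions and dividing by $2$ yields the claimed identity.

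The main obstacle is purely bookkeeping: tracking which divergence acts in which variable, accounting for the factors of $2$ produced by the symmetry of $G$, and justifying each application of the divergence theorem, Fubini, and differentiation under the integral. Since $G$ is Schwartz and $X \in C_0^\infty(\R^{\adimn},\R^{\adimn})$ by the conventions declared at the start of Section \ref{secvar}, all of these operations are standard.
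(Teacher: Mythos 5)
Your proof is correct and is essentially the standard derivation; the paper does not give its own proof of this statement, instead citing it from Choksi--Sternberg (\cite[Theorem 2.6]{chokski07}) and \cite[Theorem 1.10]{heilman15}, so there is no in-paper proof to compare against. Your workhorse identity \eqref{planeqone} (Reynolds/Liouville transport), iterated twice and then flushed to the boundary by the divergence theorem, is exactly the right route, and I verified the bookkeeping: the three bulk contributions become $\int_{\redA}\mathrm{div}(V X)\langle X,N\rangle$, $2\int_{\redA}\int_{\redA}G\,\langle X,N\rangle\langle X,N\rangle$, and $\int_{\redA}\mathrm{div}(V X)\langle X,N\rangle$, whose sum is twice the claimed right-hand side.

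One remark on your symmetry step, since the phrasing slightly undersells what is going on. You invoke a ``harmless reduction to symmetric $G$,'' but the reduction is not free of content: while $F(\Omega)$ indeed depends only on the symmetric part of $G$, the quantity $V(x,0)=\int_{\Omega}G(x,y)\,dy$ on the right-hand side does not. Without symmetry, your first bulk term produces $\int_{\redA}\mathrm{div}_y\bigl(\tilde V(y,0)X(y)\bigr)\langle X(y),N(y)\rangle\,dy$ where $\tilde V(y,0)=\int_{\Omega}G(x,y)\,dx$ integrates over the \emph{first} argument, and this is not equal to $\int_{\redA}\mathrm{div}(V X)\langle X,N\rangle$ in general. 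So the formula as literally written holds when $G$ is symmetric, and your ``reduction'' is in fact the (implicit) hypothesis under which the statement is correct. This is entirely consistent with the cited sources and with every application of the theorem in this paper (the kernels in Lemmas \ref{lemma5p} and \ref{lemma5} are both symmetric), so nothing is lost, but it is worth stating the symmetry as a hypothesis rather than as an innocuous normalization.
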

\begin{remark}
The functional $F(\Omega)$ is sometimes called an interaction energy in the optimal transport literature.
\end{remark}

\section{Applications of the Second Variation Formula}

\subsection{Second Variation of Gaussian Measure}\label{secmeas}

Our first use of Theorem \ref{thm4} will be the computation of the second variation of the Gaussian measure of a set.

\begin{lemma}\label{lemma5p}
Let $G(x,y)=\gamma_{\adimn}(x)\gamma_{\adimn}(y)$, $\forall$ $x,y\in\R^{\adimn}$.  Let $\Omega\subset\R^{\adimn}$ be a $C^{\infty}$ manifold with boundary.  Let
$$F(\Omega)\colonequals \int_{\R^{\adimn}}\int_{\R^{\adimn}} 1_{\Omega}(x)G(x,y)1_{\Omega}(y)dxdy=(\gamma_{\adimn}(\Omega))^{2}.$$
Let $z=\int_{\Omega}yd\gamma_{\adimn}(y)\in\R^{\adimn}$.  Then
\begin{flalign*}
\frac{1}{2}\frac{d^{2}}{ds^{2}}F(\Omega^{(s)})|_{s=0}
&=(\int_{\redA}\langle X(x),N(x)\rangle \gamma_{\adimn}(x)dx)^{2}\\
&\quad+\gamma_{\adimn}(\Omega)\int_{\redA}(\mathrm{div}(X(x))-\langle X(x),x\rangle)\langle X(x),N(x)\rangle \gamma_{\adimn}(x).
\end{flalign*}
\end{lemma}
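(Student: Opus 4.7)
The plan is to derive this as a direct specialization of Theorem \ref{thm4}. Since the kernel $G(x,y) = \gamma_{\adimn}(x)\gamma_{\adimn}(y)$ is a product, the first term of Theorem \ref{thm4} factors:
\begin{equation*}
\int_{\redA}\int_{\redA} \gamma_{\adimn}(x)\gamma_{\adimn}(y)\langle X(x),N(x)\rangle\langle X(y),N(y)\rangle\, dx\, dy = \Big(\int_{\redA}\langle X(x),N(x)\rangle\gamma_{\adimn}(x)\, dx\Big)^{2}.
\end{equation*}
This accounts for the first term in the stated formula.

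For the second term, I would compute $V(x,0)$ by separating the product:
\begin{equation*}
V(x,0) = \int_{\Omega}\gamma_{\adimn}(x)\gamma_{\adimn}(y)\, dy = \gamma_{\adimn}(x)\gamma_{\adimn}(\Omega).
\end{equation*}
Then, using the Euclidean product rule for divergence together with the identity $\nabla\gamma_{\adimn}(x) = -x\,\gamma_{\adimn}(x)$, one gets
\begin{equation*}
\mathrm{div}\bigl(V(x,0)X(x)\bigr) = \gamma_{\adimn}(\Omega)\,\gamma_{\adimn}(x)\bigl(\mathrm{div}(X(x)) - \langle X(x),x\rangle\bigr).
\end{equation*}
Substituting back into Theorem \ref{thm4} produces exactly the second term of the claimed formula.

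There is no real obstacle here: the statement is a direct corollary of Theorem \ref{thm4} for the special choice of product kernel $G$, and the only manipulation beyond factoring is the standard computation $\nabla\gamma_{\adimn}(x) = -x\,\gamma_{\adimn}(x)$. (The factor of $z = \int_{\Omega} y\,\gamma_{\adimn}(y)\, dy$ appearing in the hypothesis is not actually needed in the final expression, so I would simply omit it from the derivation, or note that it could be used to rewrite intermediate expressions if one wanted to split off the first-variation-squared contribution more explicitly.)
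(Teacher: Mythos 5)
Your proposal is correct and takes essentially the same route as the paper: apply Theorem \ref{thm4}, factor the double integral because $G$ is a product kernel, compute $V(x,0)=\gamma_{\adimn}(\Omega)\gamma_{\adimn}(x)$, and simplify $\mathrm{div}(\gamma_{\adimn}(x)X(x))$ via $\nabla\gamma_{\adimn}(x)=-x\gamma_{\adimn}(x)$. Your remark that $z$ is not used in the final formula is also accurate.
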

\begin{proof}
We compute $\frac{d^{2}}{ds^{2}}F(\Omega^{(s)})=\sum_{i=1}^{\adimn}\frac{d^{2}}{ds^{2}}F_{i}(\Omega^{(s)})$.  Using \eqref{two9c} define

\begin{equation}\label{two20p}
V(x,s)=\int_{\Omega^{(s)}}G(x,y)dy=(\int_{\Omega^{(s)}}d\gamma_{\adimn}(y))\gamma_{\adimn}(x),\qquad\forall\,x\in\R^{\adimn},\,\forall\,s\in(-1,1).
\end{equation}
Applying Theorem \ref{thm4},
\begin{flalign*}
&\frac{1}{2}\frac{d^{2}}{ds^{2}}F(s)|_{s=0}
=\int_{\redA}\int_{\redA}\langle X(x),N(x)\rangle\langle X(y),N(y)\rangle \gamma_{\adimn}(x)\gamma_{\adimn}(y)dxdy\\
&\qquad\qquad+\int_{\redA}\mathrm{div}(V(x,0)X(x))\langle X(x),N(x)\rangle dx\\
&\stackrel{\eqref{two20p}}{=}(\int_{\redA}\langle X(x),N(x)\rangle \gamma_{\adimn}(x)dx)^{2}
+\int_{\redA}\mathrm{div}(\gamma_{\adimn}(x)X(x))\langle X(x),N(x)\rangle dx(\int_{\Omega}\gamma_{\adimn}(y)dy)\\
&=(\int_{\redA}\langle X(x),N(x)\rangle \gamma_{\adimn}(x)dx)^{2}
+\gamma_{\adimn}(\Omega)\int_{\redA}(\mathrm{div}(X(x))-\langle X(x),x\rangle)\langle X(x),N(x)\rangle \gamma_{\adimn}(x).
\end{flalign*}
\end{proof}

\subsection{Second Variation of Gaussian Moments}\label{secmoments}

Our next use of Theorem \ref{thm4} will be the computation of the second variation of the squared Gaussian moment of a set.

\begin{lemma}\label{lemma5}
Let $G(x,y)=\sum_{i=1}^{\adimn}(x_{i}-\overline{w}_{i})(y_{i}-\overline{w}_{i})\gamma_{\adimn}(x)\gamma_{\adimn}(y)$, $\forall$ $x,y\in\R^{\adimn}$.  Let $\Omega\subset\R^{\adimn}$ be a $C^{\infty}$ manifold with boundary $\Sigma$.  Let
$$F(\Omega)\colonequals \int_{\R^{\adimn}}\int_{\R^{\adimn}} 1_{\Omega}(x)G(x,y)1_{\Omega}(y)dxdy
=\vnormtf{\int_{\Omega}(x-\overline{w})\gamma_{\adimn}(x)dx}^{2}.$$
Let $z\colonequals\int_{\Omega}yd\gamma_{\adimn}(y)\in\R^{\adimn}$ and let $f(x)\colonequals\langle X(x),N(x)\rangle$ for all $x\in\Sigma$.  Then
\begin{flalign*}
\frac{1}{2}\frac{d^{2}}{ds^{2}}F(\Omega^{(s)})|_{s=0}
&=\vnormtf{\int_{\Sigma}(x-\overline{w}) f(x)\gamma_{\adimn}(x)dx}^{2}+\int_{\redA}f(x)\langle X(x),z-w\rangle \gamma_{\adimn}(x)dx\\
&\quad+\int_{\redA}\langle x-\overline{w},z-w\rangle\Big(\mathrm{div}(X(x))-\langle X(x),x\rangle\Big)\langle X(x),N(x)\rangle\gamma_{\adimn}(x).
\end{flalign*}
\end{lemma}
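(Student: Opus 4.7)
The plan is to apply Theorem \ref{thm4} directly with the given kernel
$$G(x,y)=\sum_{i=1}^{\adimn}(x_{i}-\overline{w}_{i})(y_{i}-\overline{w}_{i})\gamma_{\adimn}(x)\gamma_{\adimn}(y),$$
in exact analogy with the proof of Lemma \ref{lemma5p}. The only new features are that $G$ depends on $x$ and $y$ in a non-separable fashion, and that $V(x,0)$ is no longer a constant multiple of $\gamma_{\adimn}(x)$. So the main work is the bookkeeping for the divergence term.

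First I would compute the auxiliary quantity $V(x,0)=\int_{\Omega}G(x,y)dy$. Since $\int_{\Omega}(y_{i}-\overline{w}_{i})\gamma_{\adimn}(y)dy=z_{i}-w_{i}$ (using $w=\overline{w}\gamma_{\adimn}(\Omega)$), we get $V(x,0)=\gamma_{\adimn}(x)\langle x-\overline{w},z-w\rangle$. Next, the double-integral term in Theorem \ref{thm4} splits over coordinates: each factor $\int_{\redA}(x_{i}-\overline{w}_{i})f(x)\gamma_{\adimn}(x)dx$ appears squared and summed, yielding exactly $\vnormtf{\int_{\Sigma}(x-\overline{w})f(x)\gamma_{\adimn}(x)dx}^{2}$, which matches the first term in the conclusion.

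The second (divergence) term requires applying the product rule to $\mathrm{div}(V(x,0)X(x))$. Using $\nabla\gamma_{\adimn}(x)=-x\gamma_{\adimn}(x)$ together with $\nabla_{x}\langle x-\overline{w},z-w\rangle=z-w$, one obtains
$$\mathrm{div}(V(x,0)X(x))=\gamma_{\adimn}(x)\langle X,z-w\rangle+\gamma_{\adimn}(x)\langle x-\overline{w},z-w\rangle\bigl(\mathrm{div}(X)-\langle X,x\rangle\bigr).$$
Multiplying by $\langle X,N\rangle=f$ and integrating over $\redA$ produces precisely the two remaining terms in the claimed formula.

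I do not anticipate a real obstacle here: the statement is a direct calculation from Theorem \ref{thm4}, and the structure exactly parallels Lemma \ref{lemma5p}. The only mildly delicate point is making sure that the gradient of the product $\gamma_{\adimn}(x)\langle x-\overline{w},z-w\rangle$ is expanded correctly (i.e.\ that the two sign conventions, one from differentiating the Gaussian density and one from the linear factor, are combined correctly); once this is done the identity simply reads off from Theorem \ref{thm4}.
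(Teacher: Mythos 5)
Your proposal is correct and follows essentially the same route as the paper's proof: apply Theorem \ref{thm4} to the kernel $G$, compute $V(x,0)=\gamma_{\adimn}(x)\langle x-\overline{w},z-w\rangle$, and expand $\mathrm{div}(V(x,0)X)$ by the product rule. The only cosmetic difference is that the paper works coordinatewise (decomposing $G=\sum_i G_i$ and using equation \eqref{two22}), whereas you keep the sum assembled into inner-product notation throughout; the computations are identical.
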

\begin{proof}
For any $i\in\{1,\ldots,n\}$, let $G_{i}(x,y)= (x_{i}-\overline{w}_{i})(y_{i}-\overline{w}_{i})\gamma_{\adimn}(x)\gamma_{\adimn}(y)$, for all $x,y\in\R^{\adimn}$, and let
$$F_{i}(\Omega)\colonequals\int_{\R^{\adimn}} \int_{\R^{\adimn}} 1_{\Omega}(x) G_{i}(x,y)1_{\Omega}(y)dxdy
=(\int_{\Omega} (x_{i}-\overline{w}_{i})d\gamma_{\adimn}(x))^{2}.$$
Then $F(\Omega)=\sum_{i=1}^{\adimn}F_{i}(\Omega)$.
So, let us compute $\frac{d^{2}}{ds^{2}}F(\Omega^{(s)})=\sum_{i=1}^{\adimn}\frac{d^{2}}{ds^{2}}F_{i}(\Omega^{(s)})$.  For any $i\in\{1,\ldots,n\}$, using \eqref{two9c} define

\begin{equation}\label{two20}
\begin{aligned}
V_{i}(x,s)&=\int_{\Omega^{(s)}}G_{i}(x,y)dy=(\int_{\Omega^{(s)}}(y_{i}-\overline{w}_{i})d\gamma_{\adimn}(y))(x_{i}-\overline{w}_{i})\gamma_{\adimn}(x),\\
&\qquad\qquad\qquad\qquad\forall\,x\in\R^{\adimn},\,\forall\,s\in(-1,1).
\end{aligned}
\end{equation}
Applying Theorem \ref{thm4}, for any $i\in\{1,\ldots,n\}$,
\begin{equation}\label{two21}
\begin{aligned}
\frac{1}{2}\frac{d^{2}}{ds^{2}}F_{i}(s)|_{s=0}
&=\int_{\redA}\int_{\redA}(x_{i}-\overline{w}_{i})(y_{i}-\overline{w}_{i})\langle X(x),N(x)\rangle\langle X(y),N(y)\rangle \gamma_{\adimn}(x)\gamma_{\adimn}(y)dxdy\\
&\qquad+\int_{\redA}\mathrm{div}(V_{i}(x,0)X(x))\langle X(x),N(x)\rangle dx\\
&\stackrel{\eqref{two20}}{=}(\int_{\redA}(x_{i}-\overline{w}_{i})\langle X(x),N(x)\rangle \gamma_{\adimn}(x)dx)^{2}\\
&\qquad+\int_{\redA}\mathrm{div}((x_{i}-\overline{w}_{i})\gamma_{\adimn}(x)X(x))\langle X(x),N(x)\rangle dx(\int_{\Omega}(y_{i}-\overline{w}_{i})\gamma_{\adimn}(y)dy).
\end{aligned}
\end{equation}

Note that, for any $j\in\{1,\ldots,n\}$,
\begin{equation}\label{two22}
\frac{\partial}{\partial x_{j}}((x_{i}-\overline{w}_{i})\gamma_{\adimn}(x)X^{(j)}(x))
=(x_{i}-\overline{w}_{i})(-x_{j}X^{(j)}+\frac{\partial}{\partial x_{j}}X^{(j)})\gamma_{\adimn}(x)+1_{\{i=j\}}X^{(j)}\gamma_{\adimn}(x).
\end{equation}

Substituting \eqref{two22} into \eqref{two21},
\begin{equation}\label{two23}
\begin{aligned}
&\frac{1}{2}\frac{d^{2}}{ds^{2}}F_{i}(s)|_{s=0}
=(\int_{\redA}(x_{i}-\overline{w}_{i})\langle X(x),N(x)\rangle \gamma_{\adimn}(x)dx)^{2}\\
&\qquad+\int_{\redA}\Big(-(x_{i}-\overline{w}_{i})\langle X(x),x\rangle+(x_{i}-\overline{w}_{i})\mathrm{div}(X)+X^{(i)}\Big)\\
&\qquad\qquad\qquad\qquad\qquad\qquad\cdot\langle X(x),N(x)\rangle \gamma_{\adimn}(x)dx(\int_{\Omega}(y_{i}-\overline{w}_{i})d\gamma_{\adimn}(y)).
\end{aligned}
\end{equation}

The definition of $z$ then concludes the proof.

\end{proof}

\section{Proof of Main Theorem}\label{secmain}

Given the results of Sections \ref{secred} and \ref{secflat}, the proof of Theorem \ref{mainthm} is essentially identical to the argument from the $\epsilon=0$ case given in Section \ref{secsim}.

\begin{proof}[Proof of Theorem \ref{mainthm}]

Let $\Omega_{1},\ldots,\Omega_{m}\subset\R^{\adimn}$ minimize Problem \ref{prob1p}.  By Lemma \ref{nolowdimp}, $L_{ij}$ simplifies to just $\Delta-\langle x,\nabla\rangle+1$.  For every connected component $C$ of $\Omega_{1},\ldots,\Omega_{m}$, let $X_{C}$ be the vector field that is equal to the exterior unit normal vector field of $C$ on $\redb C$ and $X_{C}=0$ on every other connected component of $\Omega_{1},\ldots,\Omega_{m}$ that does not intersect $C$.  (Such a vector field exists by Lemma \ref{lemma52.6}.)  Let $U$ be the linear span of all such vector fields $X_{C}$, as $C$ ranges over the connected components of $\Omega_{1},\ldots,\Omega_{m}$.  If there are more than $m$ connected components of $\Omega_{1},\ldots,\Omega_{m}$, then $\frac{d^{2}}{ds^{2}}|_{s=0}\sum_{1\leq i<j\leq m}\int_{\Sigma_{ij}^{(s)}}\gamma_{\sdimn}(x)dx<0$ for all nonzero $X\in U$.  So, there exist nonzero $(f_{ij})_{1\leq i<j\leq m}$ such that $\frac{d}{ds}|_{s=0}\gamma_{\adimn}(\Omega_{i}^{(s)})=0$ for all $1\leq i\leq m-1$ (and also for $i=m$ since $\cup_{i=1}^{m}\Omega_{i}=\R^{\adimn}$).  we form a nontrivial linear combination $X$ of each of these vector fields to once again get $\frac{d}{ds}|_{s=0}\gamma_{\adimn}(\Omega_{i}^{(s)})=0$ for all $1\leq i\leq m$ and $\frac{d^{2}}{ds^{2}}|_{s=0}\sum_{1\leq i<j\leq m}\int_{\Sigma_{ij}^{(s)}}\gamma_{\sdimn}(x)dx<0$.  So, there must be exactly $m$ connected components of $\Omega_{1},\ldots,\Omega_{m}$.  The regularity condition, Lemma \ref{lemma52.6} then concludes the proof.  We know that each of $\Omega_{1},\ldots,\Omega_{m}$ is connected with flat boundary pieces, the sets $\Omega_{1},\ldots,\Omega_{m}$ meet in threes at $120$ degree angles, and they meet in fours like the cone over the three-dimensional regular simplex.  In the case $m=3$, there are only three possible configurations for the sets (up to rotation).  And for general $m$, there are only finitely many possible configurations of the sets.  So, we can conclude the proof by either (i) checking the surface area of each such case directly, (ii) using the matrix-valued partial differential inequality and maximal principle argument of \cite{milman18b}, or (iii) appealing directly to the result of \cite{milman18b}.
\end{proof}

\section{Noise Stability}\label{secnoise}

\begin{proof}[Proof of Corollary \ref{mainthm3}]

For any $s>0,x\in\R^{\adimn}$, and for any $f\colon\R^{\adimn}\to[0,1]$, let $T_{\rho}f(x)$ as in \eqref{oudef}.  It is well known \cite[Section 9]{heilman12} that, for all $0<\rho<1,x\in\R^{\adimn}$.
$$\frac{d}{d\rho}T_{\rho}f=\frac{1}{\rho}(-\overline{\Delta}T_{\rho}f+\langle x,\overline{\nabla}T_{\rho}f\rangle).$$
Using the divergence theorem,
\begin{equation}\label{ten1}
\begin{aligned}
&\frac{d}{d\rho}\int_{\Omega}T_{\rho}1_{\Omega}(x)\gamma_{\adimn}(x)dx
=\frac{1}{\rho}\int_{\Omega}-\overline{\mathcal{L}}T_{\rho}1_{\Omega}(x)\gamma_{\adimn}(x)dx
=-\frac{1}{\rho}\int_{\Omega}\mathrm{div}(\gamma_{\adimn}(x)\nabla T_{\rho}1_{\Omega}(x))dx\\
&\qquad\qquad=-\frac{1}{\rho}\int_{\partial\Omega}\langle\nabla T_{\rho}1_{\Omega}(x), N(x)\rangle dx.
\end{aligned}
\end{equation}
Changing variables and differentiating,
$$\nabla T_{\rho}1_{\Omega^{c}}(x)=\frac{\rho}{\sqrt{1-\rho^{2}}}\int_{\R^{\adimn}}y 1_{\Omega}(\rho x+y\sqrt{1-\rho^{2}})\gamma_{\adimn}(y)dy,\qquad\forall\,x\in\R^{\adimn}.$$
Therefore, $\lim_{\rho\to1^{-}}\rho^{-1}\sqrt{2\pi(1-\rho^{2})}\,\nabla T_{\rho}1_{\Omega^{c}}(x)=-N(x)$ for all $x\in\partial\Omega$.  That is,
\begin{equation}\label{ten2}
\nabla T_{\rho}1_{\Omega}(x)=-\frac{\sqrt{2\pi(1-\rho^{2})}}{\rho}N(x)+o(\sqrt{1-\rho^{2}}),\qquad\forall\,x\in\partial\Omega.
\end{equation}
Also, $\lim_{\rho\to1^{-}}\rho^{-1}\sqrt{2\pi(1-\rho^{2})}\,\nabla T_{\rho}1_{\Omega^{c}}(x)=0$ for all $x\notin\partial\Omega$.  So, using $f=1_{\Omega}$,
\begin{flalign*}
&\gamma_{\adimn}(\Omega)-\int_{\R^{\adimn}}1_{\Omega}(x)T_{\rho}1_{\Omega}(x))\gamma_{\adimn}(x)dx\\
&\quad=\int_{\eta=\rho}^{\eta=1}\frac{d}{d\eta}\int_{\R^{\adimn}}1_{\Omega}(x)T_{\eta}1_{\Omega}(x))\gamma_{\adimn}(x)dxd\eta\\
&\quad\stackrel{\eqref{ten1}}{=}\int_{\eta=\rho}^{\eta=1}-\frac{1}{\eta}\int_{\partial\Omega}\langle\nabla T_{\eta}1_{\Omega}(x), N(x)\rangle\gamma_{\adimn}(x)dxd\eta\\
&\quad\stackrel{\eqref{ten2}}{=}\int_{\eta=\rho}^{\eta=1}\Big(o((1-\eta^{2})^{-1/2})+\frac{1}{\sqrt{2\pi(1-\eta^{2})}}\int_{\partial\Omega}\langle N(x), N(x)\rangle \gamma_{\adimn}(x)dx\Big)d\eta\\
&\quad=o(\sqrt{1-\rho^{2}})+\frac{\sqrt{1-\rho^{2}}}{\sqrt{\pi}}\int_{\partial\Omega}\gamma_{\adimn}(x)dx
=o(\sqrt{1-\rho^{2}})+\frac{\sqrt{1-\rho^{2}}}{2\pi}\int_{\partial\Omega}\gamma_{\sdimn}(x)dx.
\end{flalign*}
\end{proof}

\section{A Closing Remark}

When $m>3$, Theorem \ref{mainthm} relies on a solution of Problem \ref{prob3}.  We include here an intriguing observation concerning Problem \ref{prob3}.

\begin{lemma}\label{endlemma}
Let $\Omega_{1},\ldots,\Omega_{m}$ maximize Problem \ref{prob3}.  Suppose $\Omega_{i}=\Omega_{i}'\times\R$ for all $1\leq i\leq m$.  Let $v\in\R^{\adimn}$ be a fixed vector.  Let $X$ be the vector field
$$X\colonequals x_{\adimn}v.$$
Then
$$\frac{d^{2}}{ds^{2}}|_{s=0}\pens=0.$$
\end{lemma}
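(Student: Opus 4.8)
The plan is to evaluate $\frac{d^{2}}{ds^{2}}|_{s=0}\pens$ directly from the second variation formula for squared Gaussian moments (Lemma \ref{lemma5}), applied to each set $\Omega_{i}$ with the vector field $X=x_{\adimn}v$, and then to collapse the resulting terms using the product structure $\Omega_{i}=\Omega_{i}'\times\R$ together with the first–variation (Euler–Lagrange) conditions for Problem \ref{prob3}.

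First I would set $P_{i}(s)\colonequals\int_{\Omega_{i}^{(s)}}(x-\overline{w}^{(i)})\gamma_{\adimn}(x)dx$, so that $\pens=\sqrt{\pi/2}\sum_{i=1}^{m}\vnorm{P_{i}(s)}^{2}$ and $\frac{d^{2}}{ds^{2}}|_{s=0}\pens=\sqrt{2\pi}\sum_{i=1}^{m}\big(\vnorm{P_{i}'(0)}^{2}+\langle P_{i}(0),P_{i}''(0)\rangle\big)$. Applying Lemma \ref{lemma5} to each $\Omega_{i}$ (with $f_{i}=\langle X,N_{i}\rangle=x_{\adimn}\langle v,N_{i}\rangle$ on $\Sigma_{i}\colonequals\redb\Omega_{i}$) writes $\vnorm{P_{i}'(0)}^{2}+\langle P_{i}(0),P_{i}''(0)\rangle$ as a sum of three integrals. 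Now write $x=(x',x_{\adimn})$, use $\gamma_{\adimn}(x)=\gamma_{\sdimn}(x')\gamma_{1}(x_{\adimn})$, and note $\Sigma_{i}=\Sigma_{i}'\times\R$ with outer normal $N_{i}=(N_{i}',0)$; integrating out $x_{\adimn}$ first and using $\int x_{\adimn}\gamma_{1}=\int x_{\adimn}^{3}\gamma_{1}=0$, $\int x_{\adimn}^{2}\gamma_{1}=1$ kills every term carrying an odd power of $x_{\adimn}$ and collapses the rest to integrals over $\Omega_{i}'$ and $\Sigma_{i}'$ in $\R^{\sdimn}$. In particular $z^{(i)}$ has vanishing $x_{\adimn}$–component, $P_{i}'(0)=-\langle z^{(i)},v\rangle e_{\adimn}$, and the cross term becomes $\int_{\Sigma_{i}}f_{i}\langle X,z^{(i)}-w\rangle\gamma_{\adimn}=-\langle z^{(i)},v\rangle\langle z^{(i)}-w,v\rangle$. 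Summing over $i$ and using $\sum_{i}z^{(i)}=\int_{\R^{\adimn}}x\gamma_{\adimn}(x)dx=0$, the first two terms telescope to $\langle v,w\rangle\sum_{i}\langle z^{(i)},v\rangle=0$.

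It then remains to treat $\sum_{i}\int_{\Sigma_{i}}\langle x-\overline{w}^{(i)},z^{(i)}-w\rangle(\mathrm{div}\,X-\langle X,x\rangle)f_{i}\gamma_{\adimn}dx$. Regrouping over the interfaces $\Sigma_{ij}$ (using $N_{ij}=-N_{ji}$) turns it into $\sum_{1\le i<j\le m}\lambda_{ij}W_{ij}$, where $\lambda_{ij}$ is the restriction to $\Sigma_{ij}$ of $\langle z^{(i)}-w,x-\overline{w}^{(i)}\rangle-\langle z^{(j)}-w,x-\overline{w}^{(j)}\rangle$ and $W_{ij}\colonequals\int_{\Sigma_{ij}}(\mathrm{div}\,X-\langle X,x\rangle)\langle X,N_{ij}\rangle\gamma_{\adimn}dx$ is the $\Sigma_{ij}$–part of $\frac{d^{2}}{ds^{2}}|_{s=0}\gamma_{\adimn}(\Omega_{i}^{(s)})$. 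Since $\Omega_{1},\ldots,\Omega_{m}$ maximize Problem \ref{prob3}, the first–variation argument behind \eqref{nine3} (equivalently \cite[Lemma 3.3]{khot09}) forces $\langle z^{(i)}-z^{(j)},\cdot\rangle$ to be constant on $\Sigma_{ij}$, so each $\lambda_{ij}$ is a genuine constant, these constants satisfy the cocycle relation $\lambda_{ij}+\lambda_{jk}+\lambda_{ki}=0$, and hence (the solution space of that relation being the space of coboundaries, as in Lemma \ref{lemma55}) $\lambda_{ij}=c_{i}-c_{j}$ for constants $c_{1},\ldots,c_{m}$. A reindexing then gives $\sum_{i<j}\lambda_{ij}W_{ij}=\sum_{i}c_{i}\,\frac{d^{2}}{ds^{2}}|_{s=0}\gamma_{\adimn}(\Omega_{i}^{(s)})$, and a direct computation (or Lemma \ref{lemma5p}) shows $\frac{d^{2}}{ds^{2}}|_{s=0}\gamma_{\adimn}(\Omega_{i}^{(s)})=\int_{\Omega_{i}'}(\langle x',v'\rangle^{2}-\vnorm{v'}^{2})\gamma_{\sdimn}(x')dx'$ with $v'=(v_{1},\ldots,v_{\sdimn})$.

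The hard part is the final identity $\sum_{i}c_{i}\int_{\Omega_{i}'}(\langle x',v'\rangle^{2}-\vnorm{v'}^{2})\gamma_{\sdimn}(x')dx'=0$. Here one must use the precise description of the maximizer of Problem \ref{prob3}: each $\Omega_{i}'$ is (a translate by the $\R^{\sdimn}$–part of $w$ of) a cone over a face of a regular simplex, all sharing a common apex, and $c_{i}$ is the common value on that apex of the affine function $x\mapsto\langle z^{(i)}-w,x-\overline{w}^{(i)}\rangle$. When $w=0$ these constants all vanish and there is nothing to prove; for general $w$ — which by \eqref{wdef} may be taken in the span of the simplex, hence orthogonal to the $e_{\adimn}$ direction — one translates to the common apex and exploits the scaling homogeneity of the cones against the Gaussian weight, together with $\sum_{i}\int_{\Omega_{i}'}(\langle x',v'\rangle^{2}-\vnorm{v'}^{2})\gamma_{\sdimn}=0$, to conclude. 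This bookkeeping, rather than any single hard estimate, is the main obstacle.
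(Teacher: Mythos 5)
Your handling of the first two terms of Lemma \ref{lemma5} coincides with the paper's proof: both arguments integrate out $x_{\adimn}$, use \eqref{zeq} to identify $\int_{\Sigma_{ij}}\langle v,N_{ij}\rangle\gamma_{\adimn}(x)dx$ with $-\langle v,z^{(i)}\rangle$, and cancel $\sum_{i}\langle v,z^{(i)}\rangle^{2}$ against $-\sum_{i}\langle v,z^{(i)}\rangle\langle v,z^{(i)}-w\rangle$ via $\sum_{i}z^{(i)}=0$. The divergence is entirely in the third term of Lemma \ref{lemma5}. The paper disposes of it in one sentence: because $\int_{\Sigma_{ij}}f_{ij}\gamma_{\adimn}(x)dx=0$ by Fubini, the variation is declared volume-preserving and that term is declared automatically zero. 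You instead reduce it, via the Lagrange conditions for Problem \ref{prob3} and a Lemma \ref{lemma25.3}-type regrouping, to the identity $\sum_{i}c_{i}\,\frac{d^{2}}{ds^{2}}|_{s=0}\gamma_{\adimn}(\Omega_{i}^{(s)})=0$ --- and this last identity you do not prove.

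That is a genuine gap, and the route you sketch for closing it is not available to you: you propose to use ``the precise description of the maximizer of Problem \ref{prob3}'' as simplicial cones with a common apex, but for $m>3$ that description is exactly Conjecture \ref{conj5}, which is open and which the surrounding argument is meant to inform rather than assume. Your reduction up to that point is sound: on each $\Sigma_{ij}$ the potential difference $\langle x-\overline{w}^{(i)},z^{(i)}-w\rangle-\langle x-\overline{w}^{(j)},z^{(j)}-w\rangle$ is constant by first variation, and $W_{ij}\colonequals\int_{\Sigma_{ij}}(\mathrm{div}X-\langle X,x\rangle)\langle X,N_{ij}\rangle\gamma_{\adimn}(x)dx$ sums over $j\neq i$ to $\frac{d^{2}}{ds^{2}}|_{s=0}\gamma_{\adimn}(\Omega_{i}^{(s)})$, which after integrating out $x_{\adimn}$ equals $\gamma_{\adimn}(\Omega_{i})\int_{\Omega_{i}'}(\langle x',v'\rangle^{2}-\vnormt{v'}^{2})\gamma_{\sdimn}(x')dx'$ and is not zero in general (the flow of $x_{\adimn}v$ shears the cylinders and preserves each Gaussian volume only to first order), so Lemma \ref{lemma25.3} cannot be applied directly. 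To complete the proof you must either supply the missing identity without invoking Conjecture \ref{conj5}, or else justify the paper's much shorter claim that the $(\mathrm{div}X-\langle X,x\rangle)$ term of Lemma \ref{lemma5} vanishes outright for this variation. As written, the proposal does not establish the lemma.
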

\begin{proof}
As above, we denote $f_{ij}\colonequals\langle X,N_{ij}\rangle$ and $z^{(i)}\colonequals \int_{\Omega_{i}}x\gamma_{\adimn}(x)dx$.   By \eqref{zero1} and Fubini's Theorem, if we first integrate with respect to $x_{\adimn}$, we see that $(f_{ij})_{1\leq i<j\leq m}$ is automatically Gaussian volume-preserving, so that \eqref{zero1} is zero for all $1\leq i<j\leq m$.  Consequently, the last term of Lemma \ref{lemma5} is automatically zero, and we get
\begin{flalign*}
&\frac{d^{2}}{ds^{2}}|_{s=0}\pens\\
&\qquad=\sum_{i=1}^{m}\sqrt{2\pi}\vnormtf{\sum_{j\in\{1,\ldots,m\}\colon j\neq i}\int_{\Sigma_{ij}}(x-\overline{w}^{(i)})f_{ij}\gamma_{\adimn}(x)dx}^{2}\\
&\qquad\qquad+\sum_{1\leq i<j\leq m}\int_{\Sigma_{ij}} f_{ij}\langle v,z^{(i)}-w\rangle\gamma_{\sdimn}(x)dx
\end{flalign*}
We now show that each of these terms are negatives of each other.
\begin{flalign*}
&\sum_{1\leq i<j\leq m}\int_{\Sigma_{ij}} f_{ij}\langle v,z^{(i)}-w\rangle\gamma_{\sdimn}(x)dx
=\sum_{1\leq i<j\leq m}\int_{\Sigma_{ij}} x_{\adimn}^{2}\langle v,N_{ij}\rangle\langle v,z^{(i)}-w\rangle\gamma_{\sdimn}(x)dx\\
&\,=\sum_{1\leq i<j\leq m}\int_{\Sigma_{ij}'} \langle v,N_{ij}\rangle\langle v,z^{(i)}-w\rangle\gamma_{\sdimn-1}(x)dx
=\sum_{1\leq i<j\leq m}\langle v,z^{(i)}-w\rangle\int_{\Sigma_{ij}'} \langle v,N_{ij}\rangle\gamma_{\sdimn-1}(x)dx\\
&\,=\sum_{1\leq i<j\leq m}\langle v,z^{(i)}-w\rangle\int_{\Sigma_{ij}} \langle v,N_{ij}\rangle\gamma_{\sdimn}(x)dx
=\sum_{1\leq i<j\leq m}\langle v,z^{(i)}-w\rangle \left\langle v,\int_{\Sigma_{ij}}N_{ij}\gamma_{\sdimn}(x)dx\right\rangle\\
&\,=\sqrt{2\pi}\sum_{1\leq i<j\leq m}\langle v,z^{(i)}-w\rangle \left\langle v,\int_{\Sigma_{ij}}N_{ij}\gamma_{\adimn}(x)dx\right\rangle
\stackrel{\eqref{zeq}}{=}\sqrt{2\pi}\sum_{i=1}^{m}\langle v,z^{(i)}-w\rangle \langle v,z^{(i)}\rangle\\
&=-\sqrt{2\pi}\sum_{i=1}^{m}\langle v,z^{(i)}\rangle^{2}.
\end{flalign*}
The last line used $\sum_{i=1}^{m}z_{i}=0$ which follows by $\cup_{i=1}^{m}\Omega_{i}=\R^{\adimn}$.  For the other term we first integrate with respect to $x_{\adimn}$ and use Fubini's Theorem to get
\begin{flalign*}
&\sum_{i=1}^{m}\sqrt{2\pi}\vnormtf{\sum_{j\in\{1,\ldots,m\}\colon j\neq i}\int_{\Sigma_{ij}}(x-\overline{w}^{(i)})f_{ij}\gamma_{\adimn}(x)dx}^{2}\\
&\qquad=\sum_{i=1}^{m}\sqrt{2\pi}\Big(\sum_{j\in\{1,\ldots,m\}\colon j\neq i}\int_{\Sigma_{ij}}(x_{\adimn}-\overline{w}^{(i)}_{\adimn})x_{\adimn}\langle v,N_{ij}\rangle\gamma_{\adimn}(x)dx\Big)^{2}\\
&\qquad=\sum_{i=1}^{m}\sqrt{2\pi}\Big(\sum_{j\in\{1,\ldots,m\}\colon j\neq i}\int_{\Sigma_{ij}'}\langle v,N_{ij}\rangle\gamma_{\sdimn}(x)dx\Big)^{2}\\
&\qquad=\sum_{i=1}^{m}\sqrt{2\pi}\Big(\sum_{j\in\{1,\ldots,m\}\colon j\neq i}\int_{\Sigma_{ij}}\langle v,N_{ij}\rangle\gamma_{\adimn}(x)dx\Big)^{2}
\stackrel{\eqref{zeq}}{=}\sum_{i=1}^{m}\sqrt{2\pi}\langle v,z^{(i)}\rangle^{2}.
\end{flalign*}
\end{proof}

\medskip
\noindent\textbf{Acknowledgement}.  Thanks to Larry Goldstein, Elchanan Mossel and Joe Neeman for helpful discussions.

\bibliographystyle{amsalpha}
\bibliography{12162011}

\end{document}